\documentclass[12pt, reqno]{amsart}%
\usepackage{amssymb}
\usepackage{amsmath,esint}
\usepackage[shortlabels]{enumitem}
\usepackage{amsthm}
\usepackage{amscd,mdwlist}
\usepackage[margin=1in]{geometry}
\usepackage{color}
\usepackage{cite}
\usepackage{bbm, mathrsfs,pgf,tikz}
\usepackage{tcolorbox}
\usepackage{thmtools}
\usepackage[backgroundcolor=white, bordercolor=blue,
linecolor=blue]{todonotes}
\usepackage{url}
\usepackage{dsfont}

\usetikzlibrary{arrows}
\usepackage[
pagebackref=true, 
pdfpagelabels, 
plainpages=false]{hyperref}

\hypersetup{
colorlinks=true,
linkcolor=red,
filecolor=magenta,
urlcolor=cyan,
citecolor=blue,
}

\pagestyle{plain} 

\theoremstyle{plain}
\declaretheorem[title=Theorem, parent=section]{theorem}
\declaretheorem[title=Lemma,sibling=theorem]{lemma}
\declaretheorem[title=Proposition,sibling=theorem]{proposition}
\declaretheorem[title=Corollary,sibling=theorem]{corollary}

\theoremstyle{definition}
\declaretheorem[title=Definition,sibling=theorem]{definition}
\declaretheorem[title=Remark,sibling=theorem]{remark}
\declaretheorem[title=Remark, numbered=no]{remark*}
\declaretheorem[title=Example, sibling=theorem]{example}

\numberwithin{equation}{section}

\DeclareMathOperator*{\essinf}{ess\,inf}
\DeclareMathOperator{\dist}{dist}
\DeclareMathOperator{\supp}{supp}
\DeclareMathOperator{\diam}{diam}

\DeclareMathOperator{\loc}{loc}

\DeclareMathOperator{\pv}{\operatorname{p.\!v.}}

\DeclareMathOperator{\cE}{\mathcal{E}}

\DeclareMathOperator{\cN}{\mathcal{N}\!\!}

%

\newcommand{\il}{\int\limits}
\newcommand{\iil}{\iint\limits}
\DeclareMathOperator{\R}{\mathbb{R}}

\renewcommand{\d}{\mathrm{d}}
\newcommand{\nuxminy}{\nu(x\!-\!y)}


\newcommand{\VnuOm}{V_{\nu}(\Omega|\R^d)}
\newcommand{\VnuE}{V_{\nu}(\Omega|E)}
\newcommand{\VnuOmOm}{V_{\nu,0}(\Omega|\R^d)}
\newcommand{\VnuOma}{V_{\nu_\alpha}(\Omega|\R^d)}

\newcommand{\HnuOm}{H_{\nu}(\Omega)}
\newcommand{\HnuOma}{H_{\nu_\alpha}(\Omega)}
\newcommand{\TnuOm}{T_{\nu} (\Omega^c)}
\newcommand{\TnuOma}{T_{\nu_\alpha} (\Omega^c)}
\newcommand{\eps}{\varepsilon}

\newcommand{\classA}[1]{\mathbf{\mathscr{A}_{#1}}}



\newcommand{\vertiii}[1]{{\left\vert\kern-0.25ex\left\vert\kern-0.25ex\left\vert #1 \right\vert\kern-0.25ex\right\vert\kern-0.25ex\right\vert}}

\setcounter{tocdepth}{1}

\usepackage[english]{babel}

\addto\extrasenglish{%

}

\parindent0ex

\begin{document}

\title{A general framework for nonlocal Neumann problems}

\author{Guy Foghem$^{\dagger}$}
\address{{\tiny Technische Universit\"{a}t Dresden, Fakult\"{a}t f\"{u}r Mathematik, Institut f\"{u}r Wissenschaftliches Rechnen,  Zellescher Weg 23-25 01217, Dresden, Germany.}}
\email{guy.foghem@tu-dresden.de}

\author{Moritz Kassmann$^{\ddagger}$\\
	\url{https://dx.doi.org/10.4310/CMS.2024.v22.n1.a2}
	} 
\address{Universit\"at Bielefeld, Fakult\"at f\"ur Mathematik, Postfach 100131, 33501 Bielefeld, Germany}
\email{moritz.kassmann@uni-bielefeld.de}

\thanks{$^{\dagger,\ddagger}$Financial support by the DFG via IRTG 2235: ``Searching for the regular in the irregular: Analysis of singular and random systems'' is gratefully acknowledged. $^{\dagger}$Guy Foghem gratefully acknowledges financial support by the DFG via the Research Group 3013: ``Vector-and Tensor-Valued Surface PDEs''.}

\begin{abstract}
Within the framework of Hilbert spaces, we solve nonlocal problems in bounded domains with prescribed conditions on the complement of the domain. Our main focus is on the inhomogeneous Neumann problem in a rather general setting. We also study the transition from complement value problems to local boundary value problems. Several results are new even for the fractional Laplace operator. The setting also covers relevant models in the framework of peridynamics.
\end{abstract}

\keywords{Neumann problem, nonlocal Sobolev spaces, integro-differential operators, integro-differential equations, Dirichlet forms}
\subjclass[2020]{28A80, 35J20, 35J92, 46B10, 46E35, 47A07, 49J40, 49J45}

\maketitle

%

\section{Introduction}\label{sec:intro}

\subsection{Main Results}

Over the last years, there have been several studies of nonlocal Neumann problems of the following type: Given a bounded open set $\Omega \subset \R^d$, one is interested in well-posedness for 
\begin{align}\label{eq:nonlocal-Neumann-intro}\tag{$N$}
L u = f \text{ in } \Omega, \qquad \mathcal{N} u= g \text{ on } \R^d\setminus\Omega \,,
\end{align} 
where $L$ is an integral or integro-differential operator and $\cN$ \ is a related integral operator, which plays the role of some kind of normal derivative on $\R^d\setminus\Omega$. The main goal of this article is to prove well-posedness results for \eqref{eq:nonlocal-Neumann-intro} in a general setting. We assume:
\begin{alignat*}{2}
L u(x) &= \pv \int_{\R^d} \big(u(x)-u(y)\big) k(x,y) \mathrm{d} y \qquad &&(x\in \R^d) \,, \\
\mathcal{N} u (y)&= \int_{\Omega}(u(y)-u(x)) k(x,y) \d x \qquad &&(y\in \Omega^c) \,.
\end{alignat*}
Here, $k: \R^d \times \R^d \setminus \operatorname{diag} \to [0, \infty)$ is measurable and satisfies
\begin{align}\label{eq:k-elliptic-intro}\tag{E}
\Lambda^{-1} \nu(y-x) \leq k(x,y) \leq \Lambda \nu(y-x) \quad (x,y \in \R^d) \,,
\end{align}
where $\nu:\R^d \setminus\{0\}\to [0,\infty)$ is the density of a symmetric L\'{e}vy measure, i.e., $\nu$ satisfies 
\begin{align}\label{eq:levy-cond}\tag{L}
\nu(h) = \nu(-h) \text{ for all } h \neq 0 \,\,\,\text{ and } \,\,\,\int_{\R^d}  \big( 1\land |h|^2 \big) \nu(h) \d h < \infty\,. 
\end{align}

The main new contributions of the present article include the following: 

\begin{enumerate}[(a)]
\item Extending previous results, e.g. from \cite{DROV17}, \cite{MuPr19}, \cite{DTZ22}, we treat the inhomogeneous problem for natural choices of data $g$. The corresponding results are new for the fractional Laplace operator.
\item We provide a general framework that includes integrable and singular kernels at the same time. 
\item We show that the trace spaces introduced in \cite{DyKa19} and \cite{BGPR20} coincide. 
\item We introduce a new Dirichlet-to-Neumann operator based on the operator $\cN$.
\item Our well-posedness results are aligned with classical results for second order partial differential operators. We show convergence of nonlocal to local problems, where we treat singular and bounded kernels together. 
\end{enumerate}

\medskip

Let us explain condition \eqref{eq:k-elliptic-intro}. We denote $a\land b= \min(a,b)$ for $a,b\in \R$. In the case $k(x,y) = \nu(y-x)$ with $\nu$ as above, the operator $L$ is translation invariant and generates a symmetric L\'evy process. The density $\nu$ defines the ``order'' of the operator $L$, which becomes apparent in the case of $\nu(h) =C_{d,\alpha} |h|^{-d-\alpha}$ for $h\neq 0$ where $ \alpha \in (0,2)$ is fixed and $C_{d,\alpha}$ is an appropriate constant. The resulting operator is the so-called fractional Laplace operator $(-\Delta)^{\alpha/2}$. 
%
The choice of $C_{d,\alpha}$ ensures the relation $\widehat{(-\Delta)^{\alpha/2} u}(\xi)= |\xi|^\alpha \widehat{u}(\xi)$ for all functions $u$ in $C^\infty_c(\R^d)$. Let us mention that asymptotically one has $C_{d, \alpha}\asymp \alpha (2-\alpha)$. This will play an important role for our analysis. Further details about the fractional Laplacian $(-\Delta)^{\alpha/2}$ and the constant $C_{d,\alpha}$ can be found in \cite{AAS67,Hitchhiker,guy-thesis}. Finally, let us mention that, the assumptions \eqref{eq:k-elliptic-intro} and \eqref{eq:levy-cond} are not sufficient in order to guarantee the existence of the pointwise expression $Lu(x)$ in the general case, even if $u$ is smooth. The Hilbert space approach used in \autoref{sec:existence} avoids this issue because we only deal with the corresponding quadratic forms. 
It is worth to mention that the nonlocal operator $\mathcal{N}$ was initially introduced by \cite{DROV17} and is called the \emph{nonlocal normal derivative operator} across the boundary of  $\Omega$ with respect to $\nu$. Another type of such an operator appeared earlier in the literature see for instance \cite{DGLZ12}. 

\medskip

Let us quickly review the classical Neumann problem, for the reader's convenience. Let $\Omega\subset \R^d $ be a bounded open subset whose boundary $\partial\Omega$ is sufficiently regular. Given $f:\Omega\to\mathbb{R}$ and $g: \partial\Omega \to \mathbb{R}$ measurable, the classical inhomogeneous Neumann problem associated to the data $f$ and $g$ consists in finding a function $u:\Omega\to \mathbb{R}$ satisfying
\begin{align}\label{eq:local-Neumann}
-\Delta u = f \quad\text{in}~~~ \Omega \quad\quad\text{ and } \quad\quad \frac{\partial u}{\partial n}= g ~~~ \text{on}~~~ \partial \Omega.
\end{align}
Here $\frac{\partial u}{\partial n} $ denotes the outward normal derivative of $u$ on $\partial\Omega$.  From a weak formulation point of view, $u$ is said to be a  weak solution of  \eqref{eq:local-Neumann} if  $u\in H^1(\Omega)$ satisfies
\begin{align*}
\int_{\Omega} \nabla u(x)\cdot \nabla v(x) \d x = \int_{\Omega} f(x)v(x)\d x +\int_{\partial\Omega} g(y)v(y)\d \sigma(y),\quad \mbox{for all}~~v \in H^1(\Omega)\,.
\end{align*} 
The Neumann boundary problem has received considerably less attention in the literature compared to the Dirichlet boundary problem. Classical textbooks like \cite{Mikhailov78} treat the basic aspects.  A rigorous treatment including regularity up to the boundary, Schauder estimates, $L^p$ estimates and the variational formulation can be found in the lecture notes \cite{Giovanni13}. A recent article covering classical results for elliptic equations in divergence form is \cite{DjVj09}.


\medskip

Following \cite{FKV15, SV14, SV13} we introduce a bilinear form $\cE$ by 
\begin{align}\label{eq:def-Euu-intro}
\mathcal{E}(u,v) =\frac{1}{2} \!\!\iil_{(\Omega^c\times \Omega^c)^c} \!\! \big(u(x)-u(y) \big) \big(v(x)-v(y) \big) \, \nuxminy \d x \, \d y 
\end{align}
for all smooth functions with compact support. As in the local case, a main tool in the study of Neumann problems, is a Gauss-Green type formula for $u,v \in C^\infty_c(\R^d)$, see \autoref{prop:gauss-green}:
\begin{align}\label{eq:green-gauss-nonlocal-intro}
\int_{\Omega}  Lu(x) v(x)\d x= \mathcal{E}(u,v) -\int_{\Omega^c}\mathcal{N}u(y)v(y)\d y.
\end{align}
Relation \eqref{eq:green-gauss-nonlocal-intro} motivates us to introduce an energy space $ V_{\nu} (\Omega|\R^d)$ as the vector space of all measurable functions $u: \R^d \to \R$ such that the restriction $u|_\Omega$ belongs to $L^2(\Omega)$ and $\cE(u,u)$ is finite. See \autoref{subsec:function-spaces} for more details. The energy space $\VnuOm$ can be seen as a nonlocal analog of $H^1(\Omega)$. Let us make an interesting observation. Let $f \in L^2(\Omega)$ and $u \in \VnuOm$ be a minimizer of the functional $v\mapsto \frac12 \cE(v,v) - \int_{\Omega} fv$ in the space $\VnuOm$. Then $\cE(u,v) = 0$ for all smooth functions with compact support in $\R^d \setminus \overline{\Omega}$. Since $u, v \in \VnuOm$, the Fubini theorem implies 
\begin{align*}
\int_{\Omega^c} \cN u(y) v(y) \d y  = \cE(u,v) = 0  \,,
\end{align*}
which implies $\cN u = 0$ in $\Omega^c$, see Corollary \ref{cor:natural-cond}. On the one hand, this observation is aligned with the classical theory where the normal derivative appears naturally when minimizing the energy. On the other hand, and this is interesting, here we do not need to assume any regularity of the kernel $k(x,y)$ and the boundary $\partial \Omega$. 

\medskip

Let us summarize the main results of this work.
\begin{enumerate}[(i)]
\item The first step is to define a base space $L^2(\R^d, \widetilde{\nu})$, in which we can define the complement value problems. We define $\widetilde{\nu}$ and two alternative options $\overline{\nu}, \nu^*$ in \autoref{def:different-nus}. In \autoref{subsec:function-spaces} we study embedding results of corresponding function spaces. 
\item The next step is to introduce $\TnuOm$ as the trace space of $\VnuOm$ in \autoref{subsec:trace-space}. In this section, we study equivalent norms of the trace space and a density result.
\item An important tool in the proof of well-posedness results is the compact embedding $\VnuOm \hookrightarrow L^2(\Omega)$, which is a core result in \autoref{sec:compactness}, see \autoref{thm:embd-compactness}.
\item \autoref{sec:existence} is dedicated to well-posedness results. We focus on the Neumann problem in \autoref{subsec:neumann}. An existence result for problem \eqref{eq:nonlocal-Neumann-intro} is given in \autoref{thm:nonlocal-Neumann-var}. We also discuss a more general Robin-type complement value problem. 
\item The setup of this work allows to define a fully nonlocal Dirichlet-to-Neumann map with the help of the nonlocal Neumann-type derivative $\cN$. For $\Omega \subset \R^d$, the Dirichlet data are given on $\Omega^c$ and mapped to $\cN u$ on $\Omega^c$, where $u$ satisfies the nonlocal equation in $\Omega$. Thus, this map can be viewed as a nonlocal analog of the well-known Dirichlet-to-Neumann operator given in \cite{CaSi07}. Basic properties are formulated in \autoref{thm:DN-map} together with spectral properties in \autoref{thm:DN-map-spectral}.
\item The analogy between the classical Neumann problem and problem \eqref{eq:nonlocal-Neumann-intro} leads to a convergence result when considering a sequence of complement value problems for the fractional Laplace operator $(-\Delta)^{\alpha/2}$ where $\alpha \to 2$. \autoref{thm:phase-transition} establishes the convergence of the corresponding sequence of solutions $u_{\alpha}$ as $\alpha\to2$.
\end{enumerate}

\subsection{Related literature}
Nonlocal complements value problems have been studied in several works. In particular, the Dirichlet problem is studied in many articles. For translation invariant problems, see the survey \cite{Ros16,AFR20} for fine regularity results and \cite{FKV15, BuVa16} for the Hilbert space approach in a similar setting as in this work.  

\medskip

An early contribution to nonlocal Neumann problems is \cite{DROV17}, where also the Gauss-Green formula appears for a special case. There is a difference between our approach and the one in \cite{DROV17}, which explains why we are able to study the inhomogeneous Neumann problem. Let us explain our approach for the simplest setup of the fractional Laplace operator $(-\Delta)^{\alpha/2}$, i.e., $\nu(h) =C_{d,\alpha} |h|^{-d-\alpha}$. Given $f \in L^2(\Omega)$ and $g \in L^2(\Omega^c, (1+|x|)^{d+\alpha} \d x)$, motivated by the Gauss-Green formula \eqref{eq:green-gauss-nonlocal-intro} we say that $u \in \VnuOm$ is a weak solution or a variational solution of the inhomogeneous Neumann problem \eqref{eq:nonlocal-Neumann-intro} if 
\begin{align}\label{eq:var-nonlocal-Neumann-intro}\tag{$V$}
\mathcal{E}(u,v) = \int_{\Omega} f(x)v(x)\d x +\int_{\Omega^c} g(y)v(y)\d y \quad \text{ for all } v \in \VnuOm\,.
\end{align}
By testing \eqref{eq:var-nonlocal-Neumann-intro} with $v=1$ gives the following necessary compatibility condition 
\begin{align}\label{eq:compatible-nonlocal-intro}\tag{$C$}
\int_{\Omega} f(x)\d x +\int_{\Omega^c} g(y)\d y=0 \,.
\end{align}
Equality \eqref{eq:var-nonlocal-Neumann-intro} should be contrasted with the variational formulation of \eqref{eq:local-Neumann} in the classical case: Given $f \in L^2(\Omega)$ and $g \in L^2(\partial \Omega)$, find  $u\in H^1(\Omega)$ such that
\begin{align*}
\int_{\Omega} \nabla u(x)\cdot \nabla v(x) \d x = \int_{\Omega} f(x)v(x)\d x +\int_{\partial\Omega} g(y)v(y)\d \sigma(y) \quad \text{ for all } v \in H^1(\Omega)\,.
\end{align*} 
Note that \cite[Def. 3.6]{DROV17} and subsequent definitions like \cite[Definition 2.7]{MuPr19} look very similar to \eqref{eq:var-nonlocal-Neumann} at first glance. However, the norm of the test space defined in \cite[Eq. (3.1)]{DROV17}, \cite[Section 2]{MuPr19} depends on the Neumann data $g$, which is not natural. Our test space $\VnuOm$ in the weak formulation \eqref{eq:var-nonlocal-Neumann} does not depend on the Neumann data $g$. For the general case, we refer the reader to \autoref{def:neumann-var-sol}. It is worth mentioning that the weighted space $L^2(\Omega^c, (1+|x|)^{d+\alpha} \d x)$ is the natural function space for the Neumann data $g$. In fact in Theorem \ref{thm:non-existence-Neumann} we are able to find some $g$ not belonging to $L^2(\Omega^c, (1+|x|)^{d+\alpha} \d x)$, $\Omega=B_1(0)$ for which the variational Neumann problem \eqref{eq:var-nonlocal-Neumann-intro} with $f=0$ does not have any weak solution in $\VnuOm$.

\medskip

The aforementioned issue does not show up for \emph{homogeneous} nonlocal Neumann problems. For such problems, several results have been proved, e.g., regularity up to a boundary of a domain for the fractional Laplace operator in \cite{AFR20}. A particular observation linking the homogeneous Neumann problem to the regional fractional Laplace operator is provided in \cite{Aba20}. Eigenvalues of nonlocal mixed problems are studied in \cite{LMPPS18}. Various nonlinear Neumann problems are studied in \cite{Che18, MuPr19, CiCo20, AlTo20, MuPr21, BSS21}. Some higher order nonlocal Neumann problems are treated in \cite{BMPS18}. The classical Neumann problem is closely linked to reflected diffusions. It turns out to be a challenging problem to establish a similar link between the nonlocal Neumann problem and a Markov jump process together with its reflection. An attempt is made in \cite{Von21}, which we comment on in detail in Remark \ref{rem:vondra-strange}.

\medskip

\subsection{Peridynamics and volume constraints on bounded sets}\label{subsec:peri-intro}{\ }

In the literature, several nonlocal problems are studied in the area of peridynamics. Most of these models require complement conditions (a.k.a. volume constraints) not necessarily on the whole complement of the domain $\Omega$ but only often on a part of the complement. Here, we would like to point out that our setting can be adapted to fit such requirements. Let us exemplify this with a simple model. Consider the symmetric kernel of  the form $k(x,y)= \nu(x-y)$ with $\nu$ supported around the origin, say $\supp\,\nu\subset B_{\delta}(0)$ for some $\delta>0$. A popular example of a kernel in peridynamic models is given by $\nu(h)= \mathds{1}_{B_\delta}(h)$. In this case it is natural to assume the complement condition not on the whole complement $\R^d\setminus\Omega$ but only on $\Omega(\delta)=\{x\in \R^d\setminus\Omega: \dist(x, \partial \Omega) <\delta\}$. A nonlocal problem of the form  $Lu = f$ in $\Omega$ is then supplemented with a complement condition prescribed on the volume constraint $\Omega(\delta)$, e.g., $u=g$ on $\Omega(\delta)$ for a Dirichlet problem or  $\mathcal{N}u=g$ on $\Omega(\delta)$ for a Neumann problem. Here, $f:\Omega\to \R$ and $g:\Omega(\delta)\to \R$ are given data. Our approach can easily be adopted to cover this case. In comparison to the weak formulation \eqref{eq:var-nonlocal-Neumann-intro}, one would need to replace
$\VnuOm$ by the space $\VnuE$ defined as in \eqref{eq:def-VnuE}, with $E=\Omega\cup \Omega(\delta)$ 
and recall that $\nu(h)=\mathds{1}_{B_\delta} (h)$. Indeed, assuming for simplicity that $\Omega$ is  bounded Lipschitz and connected,  the well-posedness of the Neumann and the Dirichlet problem can be formulated  as follows. 

\begin{corollary}\label{cor:peri-Neumann} 
Let  $f\in L^2(\Omega)$ and $g\in L^2(\Omega(\delta))$. Then there is a unique variational solution $u\in \VnuE^\perp=\VnuE\cap\{\int_{ \Omega}u\d x=0\}$  to the Neumann problem $Lu = f$ in $\Omega$ and $\mathcal{N}u=g$ on $\Omega(\delta)$, i.e., 
\begin{align}\label{eq:peri-Neumann-intro}
\mathcal{E}(u,v) = \int_{\Omega} f(x)v(x)\d x +\int_{\Omega(\delta)} g(y)v(y)\d y \quad \text{ for all } v \in \VnuE^\perp\,.
\end{align}
Moreover, there is a constant $C>0$ independent of $f$ and $g$ such that 
\begin{align*}
\|u\|_{\VnuE}\leq C\big(\|f\|_{L^2(\Omega)}+ \|g\|_{L^2(\Omega(\delta))}\big). 
\end{align*}
\end{corollary}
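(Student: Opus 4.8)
The plan is to specialize the variational argument of \autoref{sec:existence} to the bounded set $E=\Omega\cup\Omega(\delta)$. The one genuinely new point is a localization: since $\supp\nu\subset B_\delta(0)$ and $\Omega(\delta)=\{x\in\R^d\setminus\Omega:\dist(x,\partial\Omega)<\delta\}$, the kernel $\nu(x-y)$ vanishes whenever $x\in\Omega$ and $y\notin E$, because the segment $[x,y]$ crosses $\partial\Omega$ at some point $z$ and hence $|x-y|\ge|z-y|\ge\dist(y,\partial\Omega)\ge\delta$. Consequently both the bilinear form $\cE$ and the restriction of $\cN u$ to $\Omega(\delta)$ depend only on $u|_E$, so $\VnuE$ is the natural ambient space and \eqref{eq:peri-Neumann-intro} is posed on the closed subspace $\VnuE^\perp=\VnuE\cap\{\int_\Omega u\,\d x=0\}$, which is exactly the quotient that removes the constants, that is, the kernel of $\cE$; in particular no compatibility condition between $f$ and $g$ is required (testing instead against all of $\VnuE$ would force $\int_\Omega f+\int_{\Omega(\delta)}g=0$). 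First I would check that $\VnuE$ with its natural inner product $\int_\Omega uv\,\d x+\cE(u,v)$ is a Hilbert space and that $\VnuE^\perp$ is a closed subspace.

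The crux is coercivity of $\cE$ on $\VnuE^\perp$, that is, a nonlocal Poincaré inequality $\|u\|_{\VnuE}^2\le C\,\cE(u,u)$ for $u\in\VnuE^\perp$. For the bounded kernel $\nu=\mathds{1}_{B_\delta}$ this cannot be deduced from a compact embedding $\VnuE\hookrightarrow L^2(\Omega)$, which does not hold in this case, but it can be proved directly: since $\Omega$ is open, bounded and connected, $\overline\Omega$ is covered by finitely many balls of radius less than $\delta$, and chaining through overlapping such balls gives $\iint_{\Omega\times\Omega}(u(x)-u(y))^2\,\d x\,\d y\le C\,\cE(u,u)$; on $\VnuE^\perp$ the left-hand side equals $2|\Omega|\,\|u\|_{L^2(\Omega)}^2$, and the energy term of the $\VnuE$-norm is then controlled for free. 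Granting this, $\cE$ is a bounded, symmetric, coercive bilinear form on the Hilbert space $\VnuE^\perp$, and $v\mapsto\int_\Omega fv\,\d x+\int_{\Omega(\delta)}gv\,\d y$ is a continuous linear functional on it, the $f$-part by Cauchy--Schwarz in $L^2(\Omega)$ and the $g$-part by the trace and embedding properties of $\VnuE$ on the collar $\Omega(\delta)$ from \autoref{subsec:trace-space} and \autoref{subsec:function-spaces}, where the Lipschitz regularity of $\Omega$ enters. The Lax--Milgram theorem then yields a unique $u\in\VnuE^\perp$ solving \eqref{eq:peri-Neumann-intro}, and inserting $v=u$ and using coercivity gives $c\|u\|_{\VnuE}^2\le\cE(u,u)=\int_\Omega fu\,\d x+\int_{\Omega(\delta)}gu\,\d y\le C\big(\|f\|_{L^2(\Omega)}+\|g\|_{L^2(\Omega(\delta))}\big)\|u\|_{\VnuE}$, which is the stated stability estimate.

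I expect the main obstacle to lie in the two quantitative estimates behind the abstract step, namely the nonlocal Poincaré inequality on $\VnuE^\perp$ and the continuity of the $g$-functional. Both are delicate because a range-$\delta$ kernel only couples points at distance less than $\delta$: one cannot piggyback on a compact embedding and must instead exploit connectedness of $\Omega$ through an explicit chaining and covering argument, and the $g$-term requires care near the outer part of $\Omega(\delta)$, where the interaction weight $|\Omega\cap B_\delta(y)|$ degenerates and one has to use the precise structure of the trace space on $\Omega(\delta)$ together with the Lipschitz regularity of $\Omega$. Once these ingredients are secured, the remainder of the proof is a routine specialization of the argument establishing \autoref{thm:nonlocal-Neumann-var}.
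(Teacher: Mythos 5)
Your overall strategy --- restrict everything to $E=\Omega\cup\Omega(\delta)$ via the observation that $\nu(x-y)=0$ for $x\in\Omega$, $y\notin E$, prove coercivity of $\cE$ on $\VnuE^\perp$, and conclude by Lax--Milgram --- is exactly the specialization of \autoref{thm:nonlocal-Neumann-var-trivial} and \autoref{thm:nonlocal-Neumann-var} that the paper intends (it gives no separate argument and refers to \cite{Fog23s}). Your localization computation is correct, and you are right that the Poincar\'e machinery of Section 3 does not apply here (condition \eqref{eq:non-integrability-condition} fails for $\nu=\mathds{1}_{B_\delta}$ and $\nu$ is not fully supported), so coercivity must be proved directly; the chaining argument over finitely many overlapping balls of radius $<\delta$, using that $\Omega$ is bounded, Lipschitz and connected, is a standard and adequate substitute, and together with $|u|^2_{\VnuE}\le 2\,\cE(u,u)$ it yields $\|u\|^2_{\VnuE}\le C\,\cE(u,u)$ on $\VnuE^\perp$ as you claim.

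The gap is the continuity of the datum functional $v\mapsto\int_{\Omega(\delta)}g(y)v(y)\,\d y$ on $\VnuE^\perp$ for plain $g\in L^2(\Omega(\delta))$, which you attribute to ``trace and embedding properties of $\VnuE$ on the collar''. The estimate this requires, $\|v\|_{L^2(\Omega(\delta))}\le C\|v\|_{\VnuE}$, is false, and Lipschitz regularity cannot repair it: the interaction weight $\omega(y)=|\Omega\cap B_\delta(y)|$ vanishes as $y$ approaches the outer boundary $\{\dist(\cdot,\partial\Omega)=\delta\}$ of the collar. Concretely, for $d=1$, $\Omega=(0,1)$, $\nu=\mathds{1}_{(-\delta,\delta)}$, the function $v$ with $v=0$ on $\Omega$ and $v(y)=(1+\delta-y)^{-1/2}$ on $(1,1+\delta)$ lies in $\VnuE^\perp$, since $|v|^2_{\VnuE}=\int_1^{1+\delta}(1+\delta-y)^{-1}\,(1+\delta-y)\,\d y=\delta<\infty$, yet $v\notin L^2(\Omega(\delta))$; truncations of this $v$ show that the restriction map $\VnuE\to L^2(\Omega(\delta))$ is unbounded, and picking $g\ge 0$, $g\in L^2(\Omega(\delta))$, with $\int_{\Omega(\delta)}gv=\infty$ even shows that for such $g$ no $u$ can satisfy \eqref{eq:peri-Neumann-intro} at the admissible test function $v$, the left-hand side being finite by Cauchy--Schwarz. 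So this step is not a technicality one can push through by inspecting the trace space: with the test space $\VnuE^\perp$, your scheme (and the statement itself) only goes through for $g$ in the weighted space $L^2(\Omega(\delta),\omega^{-1})$ --- the faithful analog of $L^2(\Omega^c,\widetilde{\nu}^{-1})$ in \autoref{thm:nonlocal-Neumann-var}, obtained from the embedding $\VnuE\hookrightarrow L^2(\Omega(\delta),\omega)$ proved exactly as in \autoref{lem:natural-norm-on-V} --- and the degeneracy at the outer edge of $\Omega(\delta)$ plays the same role that \autoref{thm:non-existence-Neumann} exhibits for the fractional kernel. To complete the proof you must either pass to this weighted data space or otherwise restrict $g$ near the outer edge; the unweighted bound you hope for does not exist.
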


\medskip

\begin{corollary} \label{cor:peri-Dirichlet} 
Let  $f\in L^2(\Omega)$ and $g\in \VnuE$. Then there is a unique variational solution $u\in \VnuE$ to the Dirichlet  problem 
$Lu = f$ in $\Omega$ and $u=g$ on $\Omega(\delta)$, i.e.,
\begin{align}\label{eq:peri-Dirichlet-intro}
u-g\in V_{\nu,0}(\Omega|E)\qquad\text{and}\qquad \mathcal{E}(u,v) = \int_{\Omega} f(x)v(x)\d x  \quad \text{ for all } v \in V_{\nu,0}(\Omega|\R^d)\,. 
\end{align}
Here we denote $V_{\nu,0}(\Omega|E)= \VnuE\cap\{u|_{\Omega(\delta) }=0\}$. Moreover, there is a constant $C>0$ independent of $f$ and $g$ such that 
\begin{align*}
\|u\|_{\VnuE}\leq C\big(\|f\|_{L^2(\Omega)}+ \|g\|_{\VnuE}\big). 
\end{align*}
\end{corollary}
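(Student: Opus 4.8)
The plan is to run the standard Hilbert space argument on the closed subspace $V_{\nu,0}(\Omega|E)\subset\VnuE$, exactly as in the full-space Dirichlet case. The first observation is that, since $\supp\nu\subset B_\delta(0)$ and $\Omega(\delta)$ is precisely the $\delta$-collar of $\Omega$, the kernel $\nu(x-y)$ vanishes unless $x\in\Omega$ or $y\in\Omega$ \emph{and} $|x-y|<\delta$, so that $x,y\in E$ in any case. Consequently $\mathcal{E}(u,v)$, evaluated on functions that are defined on $E$ (resp.\ that vanish off $\Omega$), only sees their values on $E$; extension by zero identifies $V_{\nu,0}(\Omega|\R^d)$ with $V_{\nu,0}(\Omega|E)$, isometrically for $\sqrt{\mathcal{E}(\cdot,\cdot)}$, and it makes $\mathcal{E}$ a well-defined symmetric bilinear form on $V_{\nu,0}(\Omega|E)$ with $\mathcal{E}(v,v)\le\|v\|_{\VnuE}^2$. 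In particular $\mathcal{E}(g,v)$ is well defined for $g\in\VnuE$ and $v\in V_{\nu,0}(\Omega|E)$, with $|\mathcal{E}(g,v)|\le\mathcal{E}(g,g)^{1/2}\mathcal{E}(v,v)^{1/2}\le\|g\|_{\VnuE}\|v\|_{\VnuE}$.

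The core step is coercivity: I would prove a Poincaré--Friedrichs inequality $\|v\|_{L^2(\Omega)}^2\le C\,\mathcal{E}(v,v)$ for all $v\in V_{\nu,0}(\Omega|E)$, which together with the previous bound gives $\|v\|_{\VnuE}^2\le(1+C)\,\mathcal{E}(v,v)$. This follows from a compactness--contradiction argument: if it fails, there are $v_n\in V_{\nu,0}(\Omega|E)$ with $\|v_n\|_{L^2(\Omega)}=1$ and $\mathcal{E}(v_n,v_n)\to0$; the compact embedding $\VnuE\hookrightarrow L^2(\Omega)$ (proved as in \autoref{thm:embd-compactness}) extracts a subsequence converging in $L^2(\Omega)$ to some $v$ with $\|v\|_{L^2(\Omega)}=1$ and, by lower semicontinuity of $\mathcal{E}$, $\mathcal{E}(v,v)=0$. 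Since $\nu=\mathds{1}_{B_\delta}$ and $\Omega$ is bounded, Lipschitz and connected, the set $E$ is connected at scale $\delta$ (one can chain balls of radius $\delta$), so $\mathcal{E}(v,v)=0$ forces $v$ to be a.e.\ constant on $E$; vanishing on $\Omega(\delta)$, which has positive measure, then forces $v\equiv0$, contradicting $\|v\|_{L^2(\Omega)}=1$. This transport of the compact embedding and of the connectivity argument to the truncated configuration with the discontinuous compactly supported kernel is the main obstacle; everything else is routine.

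Given coercivity, existence and uniqueness follow by reduction to homogeneous data. Put $w=u-g$; the problem is equivalent to finding $w\in V_{\nu,0}(\Omega|E)$ with $\mathcal{E}(w,v)=\int_{\Omega}fv-\mathcal{E}(g,v)$ for all $v\in V_{\nu,0}(\Omega|E)$. The right-hand side is a bounded linear functional on $V_{\nu,0}(\Omega|E)$, because $|\int_{\Omega}fv|\le\|f\|_{L^2(\Omega)}\|v\|_{L^2(\Omega)}\le\|f\|_{L^2(\Omega)}\|v\|_{\VnuE}$ and $|\mathcal{E}(g,v)|\le\|g\|_{\VnuE}\|v\|_{\VnuE}$. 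By the coercivity just proved, $\mathcal{E}$ is an equivalent scalar product on $V_{\nu,0}(\Omega|E)$, so the Riesz representation theorem (or Lax--Milgram, $\mathcal{E}$ being symmetric) yields a unique such $w$. Then $u:=w+g\in\VnuE$ satisfies $u-g\in V_{\nu,0}(\Omega|E)$, and, using the identification of the first paragraph, $\mathcal{E}(u,v)=\mathcal{E}(w,v)+\mathcal{E}(g,v)=\int_{\Omega}fv$ for every $v\in V_{\nu,0}(\Omega|\R^d)$. Uniqueness: two solutions differ by an element $z\in V_{\nu,0}(\Omega|E)$ with $\mathcal{E}(z,v)=0$ for all test $v$; testing with $v=z$ and invoking coercivity gives $z=0$.

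Finally, the a priori bound: testing the equation for $w$ with $v=w$ and using coercivity, $c\,\|w\|_{\VnuE}^2\le\mathcal{E}(w,w)=\int_{\Omega}fw-\mathcal{E}(g,w)\le\big(\|f\|_{L^2(\Omega)}+\|g\|_{\VnuE}\big)\|w\|_{\VnuE}$, so $\|w\|_{\VnuE}\le c^{-1}\big(\|f\|_{L^2(\Omega)}+\|g\|_{\VnuE}\big)$ and hence $\|u\|_{\VnuE}\le\|w\|_{\VnuE}+\|g\|_{\VnuE}\le C\big(\|f\|_{L^2(\Omega)}+\|g\|_{\VnuE}\big)$ with $C$ depending only on $\Omega$, $\delta$ and the Poincaré constant. \hfill$\qed$
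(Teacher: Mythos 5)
Your overall architecture (zero-extension identification of $V_{\nu,0}(\Omega|\R^d)$ with $V_{\nu,0}(\Omega|E)$, reduction to $w=u-g$, Lax--Milgram with coercivity from a Poincar\'e--Friedrichs inequality, then the a priori bound) is exactly the route the paper intends, since it declares the proof analogous to \autoref{thm:nonlocal-Dirichlet-var}. However, there is a genuine gap in the one step you yourself flag as the main obstacle: the coercivity. You propose to prove the Poincar\'e--Friedrichs inequality on $V_{\nu,0}(\Omega|E)$ by a compactness--contradiction argument based on a compact embedding $\VnuE\hookrightarrow L^2(\Omega)$ ``proved as in \autoref{thm:embd-compactness}''. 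That theorem cannot be invoked here: all three classes $\classA{1},\classA{2},\classA{3}$ presuppose the non-integrability condition \eqref{eq:non-integrability-condition}, i.e.\ $\int_{\R^d}\nu=\infty$, whereas the peridynamic kernel $\nu=\mathds{1}_{B_\delta}$ is integrable. And the failure is not just of the cited hypothesis but of the conclusion: for $\nu\in L^1(\R^d)$ one has $|u|^2_{\VnuE}\leq 2\|\nu\|_{L^1}\|u\|^2_{L^2(\Omega)}+2\int_{E}u^2(y)\omega(y)\d y$ with $\omega(y)=\int_\Omega\nu(x-y)\d x$, so an $L^2(\Omega)$-orthonormal sequence extended by zero to $\Omega(\delta)$ is bounded in $\VnuE$ but has no $L^2(\Omega)$-convergent subsequence; the embedding is simply not compact (compare the paper's remark that $\HnuOm=L^2(\Omega)$ when $\nu\in L^1$). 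So your contradiction argument cannot get started, and the subsequent rigidity/chaining step never becomes relevant.

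The fix is to obtain coercivity the way the paper does for the Dirichlet problem: by \autoref{thm:poincare-friedrichs-ext}. Case (i) there does not apply (for $R=\diam(\Omega)>\delta$ the tail $\nu\mathds{1}_{\R^d\setminus B_R}$ vanishes), but case (ii) does, since $\nu=\mathds{1}_{B_\delta}\in L^1(\R^d)$ and $|\{\nu>0\}|>0$; this is the result of [FKV15, Lemma 2.7], which handles integrable kernels without any compactness and without connectedness of $\Omega$. Via your zero-extension identification it transfers verbatim from $V_{\nu,0}(\Omega|\R^d)$ to $V_{\nu,0}(\Omega|E)$, giving $\|v\|^2_{\VnuE}\leq(1+C)\,\cE(v,v)$ on that subspace. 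With this substitution for your second paragraph, the rest of your argument (boundedness of the right-hand side, Riesz/Lax--Milgram for $w$, uniqueness, and the estimate $\|u\|_{\VnuE}\leq C(\|f\|_{L^2(\Omega)}+\|g\|_{\VnuE})$) is correct and coincides with the proof the paper has in mind. Incidentally, for this Dirichlet corollary connectedness of $\Omega$ plays no role; it is needed only for the Neumann analogue, where a Poincar\'e inequality on the mean-zero subspace is required.
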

The proofs of \autoref{cor:peri-Neumann} and \autoref{cor:peri-Dirichlet} are analogous to the ones of \autoref{thm:nonlocal-Neumann-var} and \autoref{thm:nonlocal-Dirichlet-var}; we also refer to \cite{Fog23s} for a more general setting. Both results are well known for special cases of $\nu$ in the area of peridynamics, see \cite[Section 3.2]{DTZ22} and \cite{KMS19}. We refer the reader to the exposition and the references in \cite{Du19}. Let us mention some related results. An early work is \cite{BT10} where several nonlocal complement value problems are studied for integrable kernels with fixed support (horizon). Problems for nonlocal nonlinear problems involving nonlocal operators of regional type are studied in \cite{BM14, BMP15}. Nonlocal Dirichlet problems driven by nonsymmetric singular kernels are considered in \cite{FKV15} for scalar functions and in \cite{KMS19} for vector-valued functions.  The vanishing-horizon limit has been considered in several works, see \autoref{sec:transition}. For references related to numerical results see \cite{DEYu21}. We provide $\Gamma$-convergence results for vanishing horizons in \autoref{exa:vanishing-horizon} and mention related results from peridynamics. Our systematic approach in terms of functional analysis allows to treat general cases of $\nu$ resp. general data $g$ in comparison with \cite{DTZ22}. 

\medskip

The paper is organized as follows. In \autoref{sec:function-spaces} we introduce some nonlocal function spaces and the corresponding trace spaces that will be used in the sequel. In \autoref{sec:compactness} 
we establish global compact embedding of nonlocal Sobolev spaces in to $L^2$-spaces. This allows us to prove Poincar\'e type inequalities for various ranges of L\'evy integrable kernel $\nu$.  The \autoref{sec:existence} is devoted to the study of the well-posedness of nonlocal problems with Dirichlet, Neumann and Robin conditions associated with the L\'evy operator $L$. Afterwards, we investigate the Dirichlet-to-Neumann  map for the L\'evy operator $L$. In \autoref{sec:transition} we show that local elliptic problems can be viewed as the limit of the nonlocal ones.  Last, in \autoref{sec:appendix} we highlight some elementary properties of the L\'evy operator $L$.

\medskip

\emph{Acknowledgment:} During his PhD studies Guy Foghem has spent a research stay at Seoul National University in the framework of the International Research Training Group 2235 ``Searching for the regular in the irregular: Analysis of random and singular systems'' between Bielefeld University and Seoul National University. The authors thank his host, Prof. Ki-Ahm Lee, for helpful discussions on trace spaces.

\medskip

\emph{Further remarks:} Several results of this work are based on the PhD thesis of the first author  \cite{guy-thesis}.  Related research questions are subject to an ongoing PhD project of Michael Vu from Trier University. Equivalent norms of the trace space $\TnuOm$ in case of the fractional Laplace operator have recently been identified by F. Grube and Th. Hensiek together with convergence results that recover $H^{1/2}(\partial \Omega)$ in the limit $\alpha \to 2^-$.

\section{L\'{e}vy measures and nonlocal function spaces}\label{sec:function-spaces}

In this section we introduce generalized Sobolev-Slobodeckij-like function spaces with respect to a L\'{e}vy measure $\nu$ and an open subset $\Omega \subset \R^d$, in particular $\VnuOm$ and nonlocal trace spaces $\TnuOm$. The function spaces are tailor-made for nonlocal elliptic complement value problems including the Neumann problem. We prove the existence of an embedding of $\VnuOm$ into  weighted spaces $L^2(\R^d,\nu')$ for different measures $\nu'$ and into the nonlocal trace space $\TnuOm$. We are able to compare $\TnuOm$ with known trace spaces, see \autoref{prop:trace-comparability}. A main result of this section is \autoref{thm:Dform-reflected}, which proves that the bilinear form $(\cE, \VnuOm)$ is a regular Dirichlet form on $L^2(\R^d, \nu')$. This result allows to construct jump processes with some sort of reflection. 

\medskip

Throughout this work, let $\nu$ be a L\'{e}vy measure whose density is a measurable symmetric function $\nu:\R^d\setminus \{0\} \to [0, \infty)$ satisfying \eqref{eq:levy-cond}. We will impose further conditions on $\nu$ where needed. For simplicity, we assume in our main results that $\nu$ has full support. See \autoref{subsec:peri-intro} for a discussion on how this can be relaxed.

\subsection{L\'{e}vy condition and energy forms}
Before we begin, let us make an observation that nicely links  \eqref{eq:levy-cond} with nonlocal energies.

\begin{theorem}\label{thm:levy-meets-energy}
Assume $\nu: \mathbb{R}^d \setminus \{0\} \to [0,\infty)$ is measurable and radial \footnotemark . Then the energy
\begin{align*}
\iint_{\R^d \R^d}\big(u(y) \!-\! u(x) \big)^2 \nuxminy \; \d y \d x
\end{align*}
is finite for every $u \in C^\infty_c(\R^d)$ if and only if $\nu$ satisfies \eqref{eq:levy-cond}. \end{theorem}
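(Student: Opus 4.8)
The plan is to prove the two implications separately, working with a single well-chosen test function. For the "if" direction, suppose $\nu$ satisfies \eqref{eq:levy-cond} and fix $u \in C^\infty_c(\R^d)$. I would split the integral according to whether $|x-y|$ is small or large. Substituting $h = y - x$ and using Fubini, the energy equals $\int_{\R^d}\big(\int_{\R^d}(u(x+h)-u(x))^2\,\d x\big)\nu(h)\,\d h$. For $|h|\le 1$, the inner integral is bounded by $\|\nabla u\|_{L^2}^2\,|h|^2$ via the fundamental theorem of calculus and Cauchy--Schwarz (or Jensen), so this part is controlled by $\|\nabla u\|_{L^2}^2\int_{|h|\le1}|h|^2\nu(h)\,\d h < \infty$. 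For $|h| > 1$, the inner integral is bounded by $4\|u\|_{L^2}^2$, so this part is controlled by $4\|u\|_{L^2}^2\int_{|h|>1}\nu(h)\,\d h < \infty$. Both pieces are finite by \eqref{eq:levy-cond}, giving finiteness of the energy.

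For the "only if" direction, the idea is to pick one explicit nonnegative $u \in C^\infty_c(\R^d)$, say with $u \equiv 1$ on $B_1(0)$ and $\supp u \subset B_2(0)$, and to extract both integrability conditions from finiteness of its energy. To get $\int_{|h|>1}\nu(h)\,\d h < \infty$: whenever $x \in B_1(0)$ and $|h|$ is large enough that $x + h \notin B_3(0)$ (say $|h| > 4$), we have $u(x+h) = 0$ and $u(x) = 1$, so $(u(x+h)-u(x))^2 = 1$; integrating over $x \in B_1(0)$ shows the energy dominates $|B_1|\int_{|h|>4}\nu(h)\,\d h$, and since $\nu$ is radial and locally integrable away from $0$ on the annulus $1 \le |h| \le 4$ — wait, that local integrability is not a priori given, so instead I would choose the test function more carefully, or rather use a dilated bump: actually the cleanest route is to note that radiality lets us reduce to a one-dimensional statement and to use two test functions or one test function evaluated along a family of translates. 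Concretely, for the tail I use the bump above; for the annulus $1 \le |h| \le 4$ I exploit that $\int_{\R^d}(u(x+h)-u(x))^2\,\d x$ is bounded below by a positive constant uniformly for $h$ in that annulus (since $u$ is nonconstant), which forces $\int_{1\le|h|\le4}\nu(h)\,\d h < \infty$, and combining yields $\int_{|h|\ge1}\nu\,\d h<\infty$. For the near-diagonal condition $\int_{|h|\le1}|h|^2\nu(h)\,\d h < \infty$, I would choose $u$ with $\nabla u \neq 0$ on an open set and Taylor-expand: for small $h$, $u(x+h)-u(x) \approx \nabla u(x)\cdot h$, so $\int_{\R^d}(u(x+h)-u(x))^2\,\d x \gtrsim |h|^2$ for $|h|$ small, using radiality of $\nu$ to average out the direction of $h$ (the cross term $\int (\nabla u(x)\cdot h)^2\,\d x = \sum_{i,j} h_i h_j \int \partial_i u\,\partial_j u$, and integrating $\nu(h)$ over spheres kills off-diagonal terms, leaving $c|h|^2\int|\nabla u|^2$ times $\nu$); hence finiteness of the energy gives $\int_{|h|\le1}|h|^2\nu(h)\,\d h < \infty$.

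The main obstacle is the lower bound in the "only if" direction near the diagonal: one must show $\int_{\R^d}(u(x+h)-u(x))^2\,\d x \ge c|h|^2$ for small $h$ with $c > 0$ independent of the direction of $h$, and then pair this with the radial symmetry of $\nu$ to conclude. The Taylor remainder must be controlled uniformly (it is $O(|h|^3)$ in $L^2_x$ since $u \in C^\infty_c$, hence absorbable once $|h|$ is small), and one needs $\int|\nabla u|^2 > 0$, which holds for any nonconstant $u$. The radial hypothesis on $\nu$ is exactly what makes the directional average of $(\nabla u(x)\cdot h)^2$ collapse to a multiple of $|h|^2|\nabla u(x)|^2$ after integrating over spheres, so I would phrase that step as: $\int_{S^{d-1}} (\omega\cdot e)(\omega\cdot e')\,\d\sigma(\omega) = \frac{|S^{d-1}|}{d}\,\delta_{ee'}$, which converts the energy restricted to $|h|\le1$ into a constant multiple of $\|\nabla u\|_{L^2}^2 \int_{|h|\le1}|h|^2\nu(h)\,\d h$ plus a lower-order error. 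Everything else is the routine splitting and Fubini already sketched.
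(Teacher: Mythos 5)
Your proposal is correct and follows essentially the same route as the paper's proof: sufficiency is the standard translation estimate behind \eqref{eq:H1-inequality}, and for necessity you bound the energy from below near $h=0$ by $c\,\|\nabla u\|_{L^2}^2\int_{B_\delta}|h|^2\nu(h)\,\d h$ using the spherical average $\fint_{\mathbb{S}^{d-1}}|w\cdot e|^2\,\d\sigma_{d-1}(w)=\tfrac1d$ (the paper controls the first-order error via $L^2$-continuity of translates of $\nabla u$ and the inequality $\tfrac12 b^2\le a^2+(b-a)^2$ rather than a Taylor remainder), and you get integrability of $\nu$ away from the origin from support considerations (the paper uses a second bump supported in a small ball, while your fixed bump plus the continuity--compactness argument on compact annuli works equally well, since a nontrivial compactly supported function is never periodic). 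The only detail to make explicit is that your Taylor bound covers only $|h|\le\delta$, so the claimed conclusion $\int_{|h|\le1}|h|^2\nu(h)\,\d h<\infty$ additionally uses $|h|^2\nu\le\nu$ on $\{\delta\le|h|\le1\}$ together with your annulus estimate --- the same combination of the two regimes that the paper performs at the end of its proof.
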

\footnotetext{Note added in proof: As we learned from Florian Grube, the assertion of the theorem holds true for any Borel measure $\nu$. The $L^p$-setting is treated in \cite{Fog23s}.}

\begin{proof}
By \eqref{eq:H1-inequality}, it is easy to see that condition \eqref{eq:levy-cond} implies finiteness of the energy for $u\in C^\infty_c(\R^d)$. For the converse, let $u\in C^\infty_c(\R^d)$ be nontrivial and $\varepsilon>0$, then there is $\delta>0$ such that 
\begin{align*}
\|\nabla u(\cdot+h)-\nabla u\|^2_{L^2(\R^d)}< \varepsilon\quad  \text{ if } |h|\leq \delta\,. 
\end{align*}  

Using the fundamental theorem of calculus, $\frac{1}{2}b^2\leq a^2+(b-a)^2$ and polar coordinates yields 
\begin{align*}
\begin{split}
\iint_{\R^d \R^d} & \big(u(y) \!-\! u(x) \big)^2 \nuxminy \; \d y \d x \geq \int_{\R^d} \int_{B_\delta(0)} \Big| \int_0^1\nabla u(x+th)\cdot h\d t\Big|^2 \nu(h) \d h\,\d x\\
& \geq \frac{1}{2} \int_{\R^d} \int_{\mathbb{S}^{d-1}} |\nabla u(x)\cdot w|^2 \d\sigma_{d-1}(w) \Big( \int_{0}^{\delta} r^{d+1} \nu(r)\mathrm{d}r \Big) \d x- \varepsilon \int_{B_\delta(0)} |h|^2\nu(h)\d h\\
& \geq \Big(\frac{1}{2} K_{d,2}\|\nabla u\|^2_{L^2(\R^d)}-\varepsilon\Big)\int_{B_\delta(0)} |h|^2\nu(h)\d h.
\end{split}
\end{align*}
Recall that, invariance of the Lebesgue measure under rotations implies for all $z\in \R^d$ 
\begin{align*}
\fint_{\mathbb{S}^{d-1}} |w\cdot z|^2 \d\sigma_{d-1}(w) =K_{d,2} |z|^2\quad \text{with}\quad  K_{d,2}= \fint_{\mathbb{S}^{d-1}} |w\cdot e|^2 \d\sigma_{d-1}(w)=\frac1d .
\end{align*} 
Therefore, choosing $\varepsilon= \frac{1}{4}K_{d,2}\|\nabla u\|^2_{L^2(\R^d)},$ we obtain  
\begin{align*}
\iint_{\R^d \R^d}\big(u(y) \!-\! u(x) \big)^2 \nuxminy \; \d y \d x &\geq \frac{1}{4}K_{d,2}\|\nabla u\|^2_{L^2(\R^d)} \int_{B_\delta(0)} |h|^2\nu(h)\d h.
\end{align*}
It remains to show that $\nu$ is integrable away from the origin.
Consider another $u\in C^\infty_c(\R^d)$ with $\supp u\subset B_\tau(0)$ and $0<\tau<\delta/2$. For all $x\in B_\tau$ we have $B_\tau(0)\subset B_\delta(x)$ and hence 
\begin{align*}
\iint_{\R^d \R^d}  \big(u(y) \!-\! u(x) \big)^2 \nuxminy \; \d y \d x &\geq 2\int_{B_\tau(0)} |u(x)|^2\d x \int_{\R^d\setminus B_\tau(0)} \nu(x-y)\d y\\
&\geq 2\int_{B_\tau(0)} |u(x)|^2\d x \int_{\R^d\setminus B_\delta(0)} \nu(h)\d h.
\end{align*} 
This together with the previous estimate implies that $\nu\in L^1(\R^d, 1\land|h|^2 \d h)$, i.e., $\nu$ satisfies condition $\eqref{eq:levy-cond}$. 
\end{proof}

\subsection{Sobolev-Slobodeckij-like spaces}\label{subsec:function-spaces}

Let $\Omega \subset \R^d$ be open. Define the space $\HnuOm$ by
\begin{align*}
H_{\nu} (\Omega)= \Big\{u \in L^2(\Omega)\,:\, \cE_\Omega(u,u) <\infty \Big \}\,,
\end{align*}
equipped with the norm $\|u\|^2_{H_{\nu} (\Omega)}= \|u\|^2_{L^{2} (\Omega)}+ \cE_\Omega(u,u)$, where 
\begin{align}\label{def:EOm}
\cE_\Omega(u,v) = \iil_{\Omega\Omega} \big(u(x)-u(y) \big)\big(v(x)-v(y) \big) \, \nuxminy \d x\d y \,.
\end{align}

When  $\nu\in L^1(\R^d)$, e.g., in the case $\nu(h) = \mathbbm{1}_{B_1}(h)$, the space $\HnuOm$ equals $L^2(\Omega)$. 

Following \cite{FKV15} we introduce the vector space $ V_{\nu} (\Omega|\R^d)$ as follows:
\begin{align*}
\VnuOm = \Big\lbrace u: \R^d \to \R \text{ meas.} : \, u|_\Omega \in L^2(\Omega), |u|^2_{\VnuOm} < \infty \Big\rbrace,
\end{align*}
where the seminorm is defined by 
\begin{align*}
|u|^2_{\VnuOm} = \iil_{\Omega \R^d} \big(u(x)-u(y) \big)^2 \, \nuxminy \d x \, \d y <\infty \,. 
\end{align*}
We endow the vector space $\VnuOm$ with the norm $\|\cdot\|_{\VnuOm}$ given by 
\begin{align*}
\|u\|^2_{\VnuOm} = \|u\|^2_{L^2(\Omega)} + |u|^2_{\VnuOm} \,.
\end{align*}
%
 Next, given functions $u,v \in \VnuOm$, we define a bilinear form $\cE$ by 
\begin{align*}
\mathcal{E}(u,v) =\frac{1}{2} \!\!\iil_{(\Omega^c\times \Omega^c)^c} \!\! \big(u(x)-u(y) \big) \big(v(x)-v(y) \big) \, \nuxminy \d x \, \d y \,.
\end{align*}

\begin{lemma}\label{lem:energy-vs-seminorm}
We have $|u|^2_{\VnuOm} \leq \mathcal{E}(u,u) \leq 2 |u|^2_{\VnuOm}$ for any measurable function $u$. 
\end{lemma}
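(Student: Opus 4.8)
The plan is to reduce the claim to an elementary comparison of two integration regions for one and the same nonnegative, symmetric integrand, using nothing beyond monotonicity of the integral, Tonelli's theorem, and the symmetry $\nu(-h)=\nu(h)$. Set $F(x,y):=\big(u(x)-u(y)\big)^2\,\nuxminy\ge 0$; this function is measurable and, under the substitution $(x,y)\mapsto(y,x)$ combined with $\nu(-h)=\nu(h)$, it is invariant. Hence every integral below is a well-defined element of $[0,\infty]$ that is unchanged by swapping the two variables. If $|u|^2_{\VnuOm}=+\infty$ both asserted inequalities are trivially true, so I may assume $|u|^2_{\VnuOm}<\infty$.

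First I would record the two set decompositions that drive the proof. Since a pair $(x,y)$ fails to belong to $\Omega^c\times\Omega^c$ exactly when $x\in\Omega$ or $y\in\Omega$, one has the disjoint decomposition
\[
(\Omega^c\times\Omega^c)^c \;=\; (\Omega\times\Omega)\ \sqcup\ (\Omega\times\Omega^c)\ \sqcup\ (\Omega^c\times\Omega),
\]
while also $\Omega\times\R^d=(\Omega\times\Omega)\sqcup(\Omega\times\Omega^c)$. Introduce the two nonnegative quantities
\[
I_1:=\iil_{\Omega\times\Omega}F\,\d x\,\d y=\cE_\Omega(u,u)\ge 0,\qquad I_2:=\iil_{\Omega\times\Omega^c}F\,\d x\,\d y\ge 0 .
\]
From the second decomposition, $|u|^2_{\VnuOm}=\iil_{\Omega\times\R^d}F\,\d x\,\d y=I_1+I_2$. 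From the first decomposition, together with additivity of the integral over disjoint sets and the swap symmetry of $F$ — which gives $\iil_{\Omega^c\times\Omega}F\,\d x\,\d y=\iil_{\Omega\times\Omega^c}F\,\d x\,\d y=I_2$ — one obtains $\iil_{(\Omega^c\times\Omega^c)^c}F\,\d x\,\d y=I_1+2I_2$, that is, $\mathcal{E}(u,u)=I_1+2I_2$ by the definition of $\mathcal{E}$.

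The conclusion is then pure arithmetic on the two nonnegative numbers $I_1,I_2$: since $0\le I_1\le I_1+I_2$ and $0\le I_2\le I_1+I_2$,
\[
|u|^2_{\VnuOm}=I_1+I_2\ \le\ I_1+2I_2=\mathcal{E}(u,u)\ \le\ 2I_1+2I_2=2\,|u|^2_{\VnuOm},
\]
which is exactly the assertion. As an equivalent route avoiding the explicit split, one may argue directly from the nested inclusions $\Omega\times\R^d\subseteq(\Omega^c\times\Omega^c)^c\subseteq(\Omega\times\R^d)\cup(\R^d\times\Omega)$, integrating $F$ over all three sets and invoking its swap symmetry to convert the right-hand inclusion into the factor $2$.

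I do not expect a genuine obstacle here. The only point that requires a moment's attention is the bookkeeping of the ``cross'' region: the $\Omega$--$\Omega^c$ interaction is counted once in $|u|^2_{\VnuOm}$ but twice — as $\Omega\times\Omega^c$ and as $\Omega^c\times\Omega$ — in the region defining $\mathcal{E}$, and it is precisely the symmetry of $\nu$ that equates these two contributions and thereby produces the constant $2$ on the right. The left inequality, in turn, simply records that the interior part $\cE_\Omega(u,u)=I_1$ enters both expressions with the same weight and can only be discarded when passing from $I_1+2I_2$ to $I_1+I_2$, never inflated.
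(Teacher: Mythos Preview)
Your approach is correct and essentially identical to the paper's: both rest on the inclusion $\Omega\times\R^d\subseteq(\Omega^c\times\Omega^c)^c$ and the swap-symmetry of the integrand, and the paper's device $\mathds{1}_\Omega(x)\lor\mathds{1}_\Omega(y)\le\mathds{1}_\Omega(x)+\mathds{1}_\Omega(y)$ is just a repackaging of your inequality $I_1+2I_2\le 2(I_1+I_2)$.

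There is, however, a bookkeeping slip. The bilinear form $\mathcal{E}$ is defined with a prefactor $\tfrac12$, so the correct identification is $\mathcal{E}(u,u)=\tfrac12(I_1+2I_2)=\tfrac{I_1}{2}+I_2$, not $I_1+2I_2$. With that fix your arithmetic yields
\[
\tfrac12\,|u|^2_{\VnuOm}\ \le\ \mathcal{E}(u,u)\ \le\ |u|^2_{\VnuOm},
\]
which is exactly what the paper's own proof establishes (note the $2\cE(u,u)$ appearing on the right of its first display and the final equality $\cE(u,u)\le\iil_{\Omega\R^d}\ldots$). The inequality as printed in the lemma, $|u|^2_{\VnuOm}\le\mathcal{E}(u,u)\le2|u|^2_{\VnuOm}$, thus seems to carry a factor-of-$2$ typo; your omission of the $\tfrac12$ happens to compensate for it, but you should be aware that the argument, done correctly, produces the other pair of constants.
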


\begin{proof}
On the one hand, the inequality 
\begin{align*}
\iil_{\Omega\R^d} \!\!\big(u(x)-u(y) \big)^2\, \nuxminy \d y\,\d x \
\leq  \iil_{(\Omega^c\times \Omega^c)^c} \hspace{-2ex}\big(u(x)-u(y) \big)^2 \nuxminy \d y\,\d x = 2 \cE (u,u)
\end{align*}
holds trivially true. On the other hand,
\begin{align*}
\mathcal{E}(u,u) &= \frac{1}{2} \!\!\iil_{\R^d\R^d} \!\!\big(u(x)-u(y) \big)^2 \, \big[\mathds{1}_\Omega (x)\lor \mathds{1}_\Omega(y)\big]\nuxminy \d y\,\d x  \\
&\leq \frac12 \iil_{\R^d\R^d} \!\!\big(u(x)-u(y) \big)^2 \, \big[\mathds{1}_\Omega (x) +\mathds{1}_\Omega(y)\big] \nuxminy \d y\,\d x \\
&= \iil_{\Omega\R^d} \big(u(x)-u(y) \big)^2\, \nuxminy \d y\,\d x \,,
\end{align*}
which completes the proof. 
\end{proof}

%

Some authors find it convenient to work with the smaller space $\VnuOm \cap L^2(\R^d) $ equipped with its corresponding norm. For the study of nonlocal Dirichlet problems this restriction is not necessary, though. On the other hand, the requirement $u|_\Omega \in L^2(\Omega)$ for $u \in \VnuOm$ is natural as shown by the following observation.

\begin{proposition}\label{prop:natural-norm-on-V}
Let $\nu$ be a unimodal L\'{e}vy measure and $\Omega\subset \R^d$ be a bounded open set. Assume $\Omega\subset B_{R/2}(0) $ for some $ R\geq1 $ with $\nu(R)\neq 0$. Then $\cE(u,u) < \infty$ implies $u|_\Omega \in L^2(\Omega)$. 
\end{proposition}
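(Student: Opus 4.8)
The plan is to show that $\cE(u,u)<\infty$ forces $\int_\Omega |u(x)|^2\,\d x < \infty$ by exploiting the mass of $\nu$ that connects $\Omega$ to a region where $u$ is under control. The key point is that $\cE(u,u)$ controls the full double integral over $(\Omega^c\times\Omega^c)^c$, and in particular it controls
\[
\iil_{\Omega\,\Omega^c} \big(u(x)-u(y)\big)^2\,\nuxminy\,\d y\,\d x\,.
\]
First I would fix $R\geq 1$ with $\Omega\subset B_{R/2}(0)$ and $\nu(R)\neq 0$, and use unimodality (radial non-increasing) of $\nu$: for $x\in\Omega\subset B_{R/2}(0)$ and $y$ in an annular shell $A=B_{2R}(0)\setminus B_R(0)$, we have $|x-y|\leq 2R+R/2 \leq 3R$, hence $\nu(x-y)\geq \nu(3R) >0$ (note $\nu(3R)\le\nu(R)$, but we still need $\nu(3R)>0$; since $\nu$ is unimodal with full support this holds, or one replaces $A$ by a shell closer to $\Omega$ so that the relevant radius is $\le R$ and uses $\nu(R)\ne 0$ directly). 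Thus there is a constant $c=c(d,R,\nu)>0$ with $\nu(x-y)\geq c$ for all $x\in\Omega$, $y\in A$.

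Next, using $\frac12 a^2 \le (a-b)^2 + b^2$ with $a=u(x)$, $b=u(y)$, I would estimate
\begin{align*}
c\,|A|\int_\Omega |u(x)|^2\,\d x
&\le c\int_\Omega\int_A \tfrac12\big(u(x)-u(y)\big)^2\,\d y\,\d x + c\int_\Omega\int_A |u(y)|^2\,\d y\,\d x\\
&\le \tfrac12\iil_{\Omega\,\Omega^c}\big(u(x)-u(y)\big)^2\,\nuxminy\,\d y\,\d x + c\,|\Omega|\int_A |u(y)|^2\,\d y\,,
\end{align*}
where in the first term I used $\nu(x-y)\geq c$ on $\Omega\times A$ and $A\subset\Omega^c$ (since $A$ is disjoint from $B_{R/2}(0)\supset\Omega$). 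The first term on the right is bounded by $\cE(u,u)<\infty$ by Lemma \ref{lem:energy-vs-seminorm} (or directly, since it is dominated by $2\cE(u,u)$). So it remains to show $\int_A |u(y)|^2\,\d y<\infty$, i.e. that $u$ is square-integrable on the bounded shell $A\subset\Omega^c$.

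For that last point — which I expect to be the main obstacle, since a priori $u$ need not lie in any $L^2$ space off $\Omega$ — I would again use finiteness of $\cE(u,u)$, now pairing the shell $A$ against $\Omega$ itself in the reverse role: for $y\in A$ and $x$ ranging over $\Omega$, the same estimate $\frac12 u(y)^2 \le (u(y)-u(x))^2 + u(x)^2$ integrated in $x$ over $\Omega$ gives
\[
\tfrac12\,|\Omega|\int_A|u(y)|^2\,\d y \le \frac1c\cdot\tfrac12\iil_{\Omega^c\,\Omega}\big(u(y)-u(x)\big)^2\,\nuxminy\,\d x\,\d y + |A|\int_\Omega|u(x)|^2\,\d x\,.
\]
This reintroduces $\int_\Omega|u|^2$, so I would instead run a bootstrap/absorption argument: choose the shell $A$ thin enough (or the constant carefully) so that the coefficient of $\int_\Omega |u|^2$ appearing after back-substitution is strictly less than $c|A|$, allowing absorption into the left-hand side. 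Concretely, combine the two displayed inequalities, collect the $\int_\Omega|u|^2$ and $\int_A|u|^2$ terms, and verify the resulting $2\times 2$ linear system in these two (possibly infinite) quantities has a bounded solution because its "matrix" is dominated by finite multiples of $\cE(u,u)$; since both quantities are nonnegative and their weighted sum is finite, each is finite. An alternative, cleaner route avoiding the system: since $A\subset\Omega^c$ is bounded and $\nu$ is bounded below on $A\times A'$ for a slightly larger concentric shell, one can use a Poincaré-type/averaging argument on $A$ — but the absorption argument above is the most direct and is what I would write up. Once $\int_A|u|^2<\infty$ is established, the first chain of inequalities yields $\int_\Omega|u|^2<\infty$, completing the proof.
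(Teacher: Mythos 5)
Your argument has a genuine gap at exactly the point you flag as the main obstacle. The two inequalities you derive, schematically $X\le aY+b$ and $Y\le a'X+b'$ with $X=\int_\Omega|u|^2$, $Y=\int_A|u|^2$ and $b,b'$ controlled by $\cE(u,u)$, cannot be closed by absorption. First, since $X$ and $Y$ are a priori possibly infinite, the pair $X=Y=+\infty$ satisfies both inequalities no matter what the constants are, so no purely algebraic manipulation of them yields finiteness; you never actually derive that any ``weighted sum is finite''. Second, even as a formal absorption the coefficients cannot be made small: constant functions $u\equiv M$ have $\cE(u,u)=0$, $X=M^2|\Omega|$, $Y=M^2|A|$, so any valid pair of such inequalities must have $a\ge|\Omega|/|A|$ and $a'\ge|A|/|\Omega|$, hence $aa'\ge1$; thinning the shell cannot help. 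There is also a secondary issue: your lower bound $\nu(x-y)\ge c>0$ for $x\in\Omega$, $y\in A\subset\Omega^c$ invokes full support (or positivity of $\nu$ at radii up to $3R$), which is not assumed here --- only $\nu(R)\neq0$ is --- and the fallback of taking $A$ closer to $\Omega$ can fail, e.g.\ when $\Omega=B_{R/2}(0)$, since then pairs $x\in\Omega$, $y\in\Omega^c$ can be at distance arbitrarily close to (or beyond) $R$.

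The paper's proof avoids the exterior entirely, and this is the missing idea: for $x,y\in\Omega\subset B_{R/2}(0)$ one has $|x-y|\le R$, so unimodality and $\nu(R)\neq0$ give $\nuxminy\ge c'>0$ on $\Omega\times\Omega$; hence $\cE(u,u)<\infty$ already forces
\begin{align*}
\iil_{\Omega\Omega}\big(|u(x)|-|u(y)|\big)^2\,\d x\,\d y<\infty\,,
\end{align*}
and by Jensen (or Fubini, fixing one good $y$) this yields finiteness of $\fint_\Omega|u|$ together with $\int_\Omega\big(|u(x)|-\fint_\Omega|u|\big)^2\d x<\infty$, whence $u|_\Omega\in L^2(\Omega)$ since $\Omega$ is bounded. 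In other words, the a priori finiteness that your bootstrap lacks is exactly what the intra-$\Omega$ part of the energy provides; pairing $\Omega$ against an exterior shell is neither needed nor, with the stated hypotheses, available.
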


The condition that $\nu$ is unimodal is not restrictive at all and we recall its definition for the readers' convenience. 

\begin{definition}\label{def:unimodality}
A L\'{e}vy density $\nu$ is called \emph{unimodal} if it is radial with an almost decreasing profile, i.e., there is a constant $c \geq 1$ such that $\nu(r) \leq c\, \nu(s)$ for all $r,s > 0$ with $ s \leq r$. 
\end{definition}

\begin{remark}
There are radial L\'{e}vy measures, which are not almost decreasing such as
\[ \nu (x) = |x|^{-d-1} \big(\tfrac{2+\cos(|x|)}{3}\big)^{|x|^4} \,. \] 
\end{remark}

\begin{proof}[Proof of \autoref{prop:natural-norm-on-V}]
First, since $\Omega\subset B_{R/2}(0)$, then for all $x,y\in \Omega$ we have $\nuxminy\geq c'$ with $c'= c\nu(R)>0 $. By Jensen's inequality, we have 

\begin{align*}
\iil_{(\Omega^c\times \Omega^c)^c} \big(u(x)-u(y)\big)^2 \nuxminy \d x \, \d y&\geq c' \iil_{\Omega\Omega} (|u(x)|-|u(y)|)^2\d x \, \d y\\
&\geq c'|\Omega| \int_{\Omega} \Big(|u(x)|-\hbox{$\fint_{\Omega}|u|$}\Big)^2\d x.
\end{align*}
This shows that the mean value $\fint_{\Omega}|u|$ is finite. We conclude $u \in L^2(\Omega)$ because of 
\[\int_{\Omega} |u(x)|^2 \d x\leq 2\int_{\Omega} \Big(|u(x)|-\hbox{$\fint_{\Omega}$}|u|\Big)^2\d x+ 2|\Omega| \Big(\hbox{$\fint_{\Omega}|u|$}\Big)^2\,. \]
\end{proof}

\begin{definition}
We define $	V_{\nu,0}(\Omega|\R^d)$ as follows:
\begin{align}\label{eq:def-VnuOm-vanish}
\begin{split}
V_{\nu,0}(\Omega|\R^d)
&= \{ u\in \VnuOm~| ~u=0~~\text{a.e. on } \R^d\setminus \Omega\}\\
&=\{ u\in H_\nu(\R^d)~| ~u=0~~\text{a.e. on } \R^d\setminus \Omega\}\,.
\end{split}
\end{align} 
\end{definition}	

\begin{remark}
The nonlocal Sobolev spaces $\HnuOm$, $\VnuOm$ and $\VnuOmOm$ are well suited for nonlocal linear  problems. Interested readers may consult \cite{guy-thesis, Fog21, Fog21b, GrHe22, Fog23s, GrKa23} for further expositions on this type of nonlocal function spaces including the $L^p$-setting and trace resp. extension results. 
\end{remark}

\begin{remark}
The function space $\big(H_\nu (\Omega), \|\cdot\|_{H_{\nu} (\Omega)}\big)$ is a separable Hilbert space, see \cite{FKV15, FGKV20}. The norms $\|\cdot \|_{\VnuOm} $ and $ \|\cdot \|_{H_\nu(\R^d)}$  agree on $V_{\nu,0}(\Omega|\R^d)$ and $V_{\nu,0}(\Omega|\R^d)$ is a closed subspace of $H_\nu(\R^d)$, hence a Hilbert space. 
\end{remark}

\begin{proposition}
If $\nu$ has full support, then $\big(\VnuOm, \|\cdot\|_{\VnuOm}\big)$ is a separable Hilbert space. 
\end{proposition}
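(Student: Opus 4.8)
The plan is to establish completeness and separability directly, using the completeness of the $L^2$-spaces involved and the elementary fact that a Cauchy sequence in $\VnuOm$ splits into a Cauchy sequence in $L^2(\Omega)$ and a Cauchy sequence in a weighted $L^2$-space of functions on $\R^d\times\R^d$ built from the difference quotient. First I would note that $\VnuOm$ is an inner product space: the bilinear form
\[
\langle u,v\rangle_{\VnuOm} = \int_\Omega u(x)v(x)\,\d x + \iil_{\Omega\R^d}\big(u(x)-u(y)\big)\big(v(x)-v(y)\big)\,\nu(x-y)\,\d x\,\d y
\]
is symmetric, bilinear, and positive; positive-definiteness uses that $\nu$ has full support, so that $|u|_{\VnuOm}=0$ forces $u(x)=u(y)$ for a.e.\ $(x,y)$, hence $u$ is a.e.\ constant, and then $u|_\Omega\in L^2(\Omega)$ with $\|u\|_{L^2(\Omega)}=0$ forces that constant to be $0$ (here $|\Omega|$ may be infinite, but the constant must still vanish).

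For completeness, let $(u_n)$ be Cauchy in $\VnuOm$. Then $(u_n|_\Omega)$ is Cauchy in $L^2(\Omega)$, so it converges to some $w\in L^2(\Omega)$. Consider the map $D\colon \VnuOm\to L^2(\Omega\times\R^d,\, \nu(x-y)\,\d x\,\d y)$ given by $Du(x,y)=u(x)-u(y)$; the sequence $(Du_n)$ is Cauchy in that weighted $L^2$-space, hence converges to some $G(x,y)$. Passing to a subsequence, $u_n|_\Omega\to w$ a.e.\ on $\Omega$ and $Du_n\to G$ a.e.\ on $\Omega\times\R^d$. The key step is to reconstruct a single measurable function $u\colon\R^d\to\R$ with $u|_\Omega=w$ a.e.\ and $u(x)-u(y)=G(x,y)$ a.e. For $x\in\Omega$ set $u(x)=w(x)$; for a.e.\ fixed $x_0\in\Omega$ the identity $u_n(y)=u_n(x_0)-Du_n(x_0,y)$ for a.e.\ $y\in\R^d$ (valid because $\nu$ has full support, so $\nu(x_0-y)>0$ a.e.) shows $u_n\to u(x_0)-G(x_0,\cdot)=:u$ a.e.\ on $\R^d$, and this limit is independent of the choice of $x_0$ on the full-measure set where things converge. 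One checks $u|_\Omega=w$ a.e.\ and $Du=G$ a.e.\ by Fubini, whence $u\in\VnuOm$ and $\|u_n-u\|_{\VnuOm}\to0$ along the subsequence; since $(u_n)$ is Cauchy the whole sequence converges.

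For separability, I would observe that the isometry $u\mapsto (u|_\Omega,\, Du)$ embeds $\VnuOm$ isometrically onto a \emph{closed} subspace of the Hilbert space $L^2(\Omega)\oplus L^2(\Omega\times\R^d,\nu(x-y)\,\d x\,\d y)$ — closedness is exactly the completeness just proved. A closed subspace of a separable Hilbert space is separable, and the ambient space is separable because $L^2$ of a $\sigma$-finite measure space (with countably generated $\sigma$-algebra) is separable; here $\nu(x-y)\,\d x\,\d y$ is $\sigma$-finite on $\Omega\times\R^d$ thanks to the L\'evy condition \eqref{eq:levy-cond}. This yields separability of $\VnuOm$.

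The main obstacle is the reconstruction step in the completeness argument: turning the a.e.\ limits $w$ on $\Omega$ and $G$ on $\Omega\times\R^d$ into one globally defined measurable $u$ on $\R^d$ with consistent differences. This is where the full-support hypothesis on $\nu$ is essential — it guarantees $\nu(x-y)>0$ for a.e.\ $(x,y)$, so that a.e.\ convergence of the weighted difference quotients genuinely controls a.e.\ convergence of $u_n(y)$ once $u_n(x_0)$ is pinned down, and it ensures the limit does not depend on the reference point $x_0$. Everything else (the inner product axioms, the Fubini bookkeeping, separability of the ambient $L^2$) is routine.
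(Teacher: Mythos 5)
Your overall architecture is the standard one and matches the route taken in the sources the paper defers to for this statement (the paper gives no inline proof, citing \cite{FKV15, DROV17} and \cite[Thm.\ 3.46]{guy-thesis}): completeness by splitting a Cauchy sequence into $u_n|_\Omega$ in $L^2(\Omega)$ and the difference map $Du_n$ in $L^2(\Omega\times\R^d,\nu(x-y)\,\d x\,\d y)$, reconstruction of the limit from a.e.\ convergent subsequences plus Fatou, and separability via the isometric embedding $u\mapsto (u|_\Omega, Du)$ onto a closed subspace of a separable $L^2$-direct sum. That skeleton is correct, and your $\sigma$-finiteness remark for the weighted space is fine (cover $\{x\neq y\}$ by $\{x\in B_k\cap\Omega,\ |x-y|>1/m\}$, which have finite mass by \eqref{eq:levy-cond}).

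The one step that, as written, would fail is your reading of ``full support'' as ``$\nu>0$ a.e.''; you invoke it twice, in the definiteness argument (``$|u|_{\VnuOm}=0$ forces $u(x)=u(y)$ for a.e.\ $(x,y)$'') and in the reconstruction (``$\nu(x_0-y)>0$ for a.e.\ $y$'' for a \emph{fixed} $x_0$). Full support of the measure $\nu(h)\,\d h$ only says every nonempty open set has positive $\nu$-mass; $\nu$ may still vanish on a set of positive Lebesgue measure (take $\nu(h)=|h|^{-d-\alpha}\mathds{1}_{\R^d\setminus F}(h)$ with $F$ symmetric, closed, of positive measure and empty interior). For such $\nu$, a.e.\ convergence of $Du_{n_k}$ with respect to $\nu(x-y)\,\d x\,\d y$ gives no information on $\{\nu(x-y)=0\}$, so a fixed reference point $x_0$ only controls $u_{n_k}(y)$ for $y\in x_0+\{\nu>0\}$, not for a.e.\ $y\in\R^d$. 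The repair is cheap and keeps your proof intact: for every $y$ the set $\Omega-y$ is open and nonempty, so full support gives $\int_\Omega \nu(x-y)\,\d x>0$, hence $|\{x\in\Omega:\ \nu(x-y)>0\}|>0$ for \emph{every} $y$. By Fubini, for a.e.\ $y$ you may choose a good reference point $x_0=x_0(y)$ in that set; the resulting value $u(y)=\lim_k u_{n_k}(y)$ does not depend on the choice because a numerical sequence has at most one limit, so $u_{n_k}\to u$ a.e.\ on $\R^d$ and your Fatou argument closes the completeness proof. The same observation fixes definiteness: if $u|_\Omega=0$ and $|u|_{\VnuOm}=0$, then $\int_{\R^d}u^2(y)\bigl(\int_\Omega\nu(x-y)\,\d x\bigr)\d y=0$, and positivity of the inner integral forces $u=0$ a.e.\ (no appeal to ``$u$ is a.e.\ constant'' is needed). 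With this adjustment your proposal is a complete proof.
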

We refer the reader to \cite{FKV15, DROV17} for a proof in a special setting and to \cite[Thm 3.46]{guy-thesis} for the general case. 

\medskip

It is worthwhile noticing that $  \|\cdot\|_{\VnuOm}$ is always a norm on  $V_{\nu,0}(\Omega|\R^d)$, but not in general a norm on $\VnuOm$ if $\nu$ is not fully supported. A simple counterexample is given by $\nu(h)=\mathds{1}_{B_1(0)} (h)$ and $\Omega=B_1(0)$. For the function $u(x)= \mathds{1}_{B^c_2(0)}(x)$ we have  $\|u\|_{\VnuOm}=0$ whereas $u\neq 0$. With regard to this comment and the discussion in \autoref{subsec:peri-intro} let us define $V_{\nu}(\Omega|E)$ for $\Omega \subset E \subset \R^d$: 
\begin{align}\label{eq:def-VnuE}
V(\Omega|E) = \Big\lbrace u: E \to \R \text{ meas.} : \, u|_\Omega \in L^2(\Omega), |u|^2_{\VnuE} < \infty \Big\rbrace,
\end{align}
where the seminorm is defined by 

\begin{align*}
|u|^2_{\VnuE} = \iil_{\Omega E} \big(u(x)-u(y) \big)^2 \, \nuxminy \d y\, \d x <\infty \,. 
\end{align*}
$\VnuE$ is a seminormed space with respect to the seminorm  $\|u\|^2_{\VnuE} = \|u\|^2_{L^2(\Omega)} + |u|^2_{\VnuE}$.
We refer to \cite{Fog23s} for the proof of the next result.
\begin{proposition}
If $E = \Omega + \operatorname{supp}(\nu)$ and $0 \in \operatorname{supp}(\nu)$   then $\big(\VnuE, \|\cdot\|_{\VnuE}\big)$ is a separable Hilbert space, whenever and $\omega>0$ a.e. on $E\setminus\Omega$ where we put 
\begin{align*}
\omega(x)= \int_\Omega(1\land\nu(x-y))\d y. 
\end{align*}
\end{proposition}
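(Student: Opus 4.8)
The plan is to establish three facts in order: (1) $\|\cdot\|_{\VnuE}$ is a genuine norm on $\VnuE$ and not merely a seminorm; (2) $(\VnuE,\|\cdot\|_{\VnuE})$ is complete; (3) it is separable. Fact (1) is obvious once we know that the symmetric bilinear form
$\langle u,v\rangle_{\VnuE}=\int_\Omega uv\,\d x+\iint_{\Omega\times E}(u(x)-u(y))(v(x)-v(y))\,\nuxminy\,\d x\,\d y$ (which manifestly induces $\|\cdot\|_{\VnuE}$) is positive definite, so $\VnuE$ is a pre-Hilbert space and (2) upgrades it to a Hilbert space. The engine driving both (1) and (2) is a single auxiliary continuous embedding $\VnuE\hookrightarrow L^2(E\setminus\Omega,\omega\,\d y)$, which I would isolate as a lemma.

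\textbf{The key estimate.} For $v\in\VnuE$ and $y\in E\setminus\Omega$, write $v(y)^2\le 2(v(y)-v(x))^2+2v(x)^2$ and integrate against $(1\land\nuxminy)\,\d x$ over $x\in\Omega$; using $1\land\nuxminy\le\nuxminy$ in the first term and $1\land\nuxminy\le 1$ in the second, one obtains
\begin{align*}
v(y)^2\,\omega(y)\le 2\int_\Omega\big(v(y)-v(x)\big)^2\nuxminy\,\d x+2\int_\Omega v(x)^2\big(1\land\nuxminy\big)\,\d x.
\end{align*}
Integrating in $y$ over $E\setminus\Omega$ and applying Tonelli to the last term, $\int_{E\setminus\Omega}(1\land\nuxminy)\,\d y\le\int_{\R^d}(1\land\nu(h))\,\d h=:c_\nu<\infty$, where finiteness of $c_\nu$ follows from \eqref{eq:levy-cond} by splitting the integral at $|h|=1$. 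Hence
\begin{align*}
\int_{E\setminus\Omega} v(y)^2\,\omega(y)\,\d y\le 2\,|v|_{\VnuE}^2+2c_\nu\,\|v\|_{L^2(\Omega)}^2\le 2(1\lor c_\nu)\,\|v\|_{\VnuE}^2.
\end{align*}
I expect this estimate to be the main point of the proof: the only real obstacle is that the crude bound $v(y)^2\lesssim (v(y)-v(x))^2+v(x)^2$ produces a term with no decay in $y$, and the remedy is precisely to carry the truncated weight $1\land\nuxminy$ (rather than $\nuxminy$) so that its $\d y$-integral is finite by the L\'evy condition — this is also why $\omega$ is defined with $1\land\nu$.

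\textbf{Norm and completeness.} If $\|u\|_{\VnuE}=0$ then $u=0$ a.e.\ on $\Omega$ and $|u|_{\VnuE}=0$; the estimate gives $\int_{E\setminus\Omega}u^2\omega\,\d y=0$, and since $\omega>0$ a.e.\ on $E\setminus\Omega$ we get $u=0$ a.e.\ there too, i.e.\ $u=0$ in $\VnuE$. For completeness, let $(u_n)$ be Cauchy in $\VnuE$. Then $(u_n|_\Omega)$ is Cauchy in $L^2(\Omega)$, hence converges to some $u^{(0)}$, and by the key estimate $(u_n|_{E\setminus\Omega})$ is Cauchy in the complete space $L^2(E\setminus\Omega,\omega\,\d y)$, hence converges to some $w$; since $\omega>0$ a.e., $\omega$-a.e.\ convergence is ordinary a.e.\ convergence. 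Passing to a subsequence we may assume $u_{n_k}\to u:=u^{(0)}\mathbbm{1}_\Omega+w\,\mathbbm{1}_{E\setminus\Omega}$ a.e.\ on $E$. Fatou on $\Omega\times E$ gives $|u|_{\VnuE}^2\le\liminf_k|u_{n_k}|_{\VnuE}^2<\infty$ (Cauchy sequences are bounded) and $u|_\Omega=u^{(0)}\in L^2(\Omega)$, so $u\in\VnuE$; the standard argument (fix $n$, apply Fatou along the a.e.-convergent subsequence to $\|u_n-u_{n_k}\|_{\VnuE}^2$) then yields $u_n\to u$ in $\VnuE$.

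\textbf{Separability.} The map $J\colon\VnuE\to L^2(\Omega)\oplus L^2\big(\Omega\times E;\nuxminy\,\d x\,\d y\big)$, $Ju=\big(u|_\Omega,\ (x,y)\mapsto u(x)-u(y)\big)$, is by construction a linear isometry onto its image (using part (1) so that $\|Ju\|^2=\|u\|_{\VnuE}^2$). The measure $\nuxminy\,\d x\,\d y$ is $\sigma$-finite on the second-countable space $\Omega\times E$ — exhaust by $\{(x,y)\in(\Omega\cap B_n)\times(E\cap B_n):\nuxminy\le n\}$ — so both summands are separable Hilbert spaces, their direct sum is separable, and $\VnuE$, being isometric to a subspace of a separable metric space, is separable as well. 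This completes the proof.
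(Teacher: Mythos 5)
Your proof is correct. There is nothing in the paper to compare it against: the authors state this proposition without proof and refer to \cite{Fog23s}, so I can only assess the argument on its own terms, and it is sound and self-contained. Your key lemma, the continuous embedding $\VnuE\hookrightarrow L^2(E\setminus\Omega,\omega\,\d x)$, is exactly the computation the paper uses elsewhere for the analogous embedding $\VnuOm\hookrightarrow L^2(\R^d,\widetilde{\nu})$ in \autoref{lem:natural-norm-on-V}, and you correctly identify that this estimate, together with the hypothesis $\omega>0$ a.e.\ on $E\setminus\Omega$, is what makes $\|\cdot\|_{\VnuE}$ definite and upgrades $L^2$-Cauchy convergence on $\Omega$ and on $E\setminus\Omega$ to an a.e.-convergent subsequence on all of $E$; Fatou then yields completeness, and the isometric embedding into $L^2(\Omega)\oplus L^2\big(\Omega\times E,\nu(x-y)\,\d x\,\d y\big)$ with a $\sigma$-finite measure gives separability in the standard way. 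You also implicitly (and correctly) observe that the structural hypotheses $E=\Omega+\supp(\nu)$, $0\in\supp(\nu)$ enter only through $\Omega\subset E$ and the plausibility of $\omega>0$; the analytic argument uses only $\omega>0$ a.e. One cosmetic slip: the phrase ``using $1\land\nu\le 1$ in the second term'' does not match your displayed inequality, where the truncated weight $1\land\nu$ is (rightly) retained so that the subsequent Tonelli step with $\int_{\R^d}(1\land\nu)<\infty$ goes through; the display and the rest of the argument are the correct version.
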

As already seen in \autoref{thm:levy-meets-energy}, the condition \eqref{eq:levy-cond} is important for the properties of the spaces $H_\nu(\Omega)$ and $\VnuOm$. 

\begin{proposition} Let $\nu: \R^d\to [0,\infty]$ be symmetric. The following assertions hold true. 
\begin{enumerate}[$(i)$]
\item If $\nu\in L^1(\R^d)$, then $H_\nu(\R^d)= L^2(\R^d)$ with equivalence in norm.
\item If $\nu \in L^1(\R^d, 1\land |h|^2\d h)$ and $\Omega$ is bounded, then $\HnuOm$ and $\VnuOm$ contain all bounded Lipschitz functions. 
\item If $\nu$ is radial and $\int_{B_1} |h|^2\nu(h)\d h= \infty$ and $\int_{B_1 \setminus B_\delta} \nu(h)\d h<  \infty$  for every $\delta>0$, then $u\in C^1(\R^d)\cap H_\nu(\R^d)$ implies that $u$ is constant. 
\item If  $\nu$ is radial, then for every $u\in H^{1} (\R^d)$ there is $\delta=\delta(u)>0$, such that 
\begin{align}\label{eq:equiv-sobolev}
\frac{1}{4d} \Big( \int_{B_\delta(0)} |h|^2\nu(h)\d h \Big) \|\nabla u\|^2_{L^{2} (\R^d)} \leq |u|^2_{H\nu (\R^d)} \leq 4\|\nu\|_{L^1(\R^d, 1\land |h|^2\d h)}\|u\|^2_{H^1 (\R^d)}.
\end{align}
\end{enumerate}
\end{proposition}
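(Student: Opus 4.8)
The plan is to prove the four (essentially independent) assertions in turn, in each case reducing to elementary estimates against $\nuxminy$ together with Fubini. Parts $(i)$ and $(ii)$ are soft. For $(i)$: for any $u\in L^2(\R^d)$ the bound $(u(x)-u(y))^2\le 2u(x)^2+2u(y)^2$ and Fubini give $|u|_{H_\nu(\R^d)}^2\le 4\|\nu\|_{L^1(\R^d)}\|u\|_{L^2(\R^d)}^2$, so that $\|u\|_{H_\nu(\R^d)}^2=\|u\|_{L^2(\R^d)}^2+|u|_{H_\nu(\R^d)}^2$ is comparable to $\|u\|_{L^2(\R^d)}^2$; in particular $H_\nu(\R^d)=L^2(\R^d)$. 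For $(ii)$, let $u$ be bounded and Lipschitz on $\R^d$; then $u|_\Omega\in L^\infty(\Omega)\subset L^2(\Omega)$ because $\Omega$ is bounded, and the pointwise estimate $(u(x)-u(y))^2\le\max\!\big(\lip(u)^2,4\|u\|_{L^\infty}^2\big)\big(1\land|x-y|^2\big)$ together with Fubini gives $|u|_{\VnuOm}^2\le\max\!\big(\lip(u)^2,4\|u\|_{L^\infty}^2\big)\,|\Omega|\,\|\nu\|_{L^1(\R^d,\,1\land|h|^2\d h)}<\infty$; since $\cE_\Omega(u,u)\le|u|_{\VnuOm}^2$, it follows that $u\in\HnuOm\cap\VnuOm$.

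For the two bounds in $(iv)$ I would argue as follows. For the upper bound, split the integral defining $|u|_{H_\nu(\R^d)}^2$ at $|x-y|=1$: on $\{|x-y|>1\}$ the bound $(u(x)-u(y))^2\le 2u(x)^2+2u(y)^2$ and Fubini yield a contribution $\le 4\|u\|_{L^2(\R^d)}^2\int_{|h|>1}\nu(h)\d h$, while on $\{|x-y|\le1\}$ the standard estimate $\|u(\cdot+h)-u\|_{L^2(\R^d)}^2\le|h|^2\|\nabla u\|_{L^2(\R^d)}^2$ (this is \eqref{eq:H1-inequality}; it follows from the Fourier transform, or from density of $C^\infty_c(\R^d)$ in $H^1(\R^d)$) yields a contribution $\le\|\nabla u\|_{L^2(\R^d)}^2\int_{|h|\le1}|h|^2\nu(h)\d h$; adding the two and estimating crudely produces the factor $4\|\nu\|_{L^1(\R^d,\,1\land|h|^2\d h)}$ in front of $\|u\|_{H^1(\R^d)}^2$. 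For the lower bound I would re-run the computation in the proof of \autoref{thm:levy-meets-energy}, this time for $u\in H^1(\R^d)$ rather than $u\in C^\infty_c(\R^d)$: by $L^2$-continuity of translations there is $\delta=\delta(u)>0$ with $\|\nabla u(\cdot+h)-\nabla u\|_{L^2(\R^d)}^2<\eps$ for $|h|\le\delta$, and writing $u(x+h)-u(x)=\int_0^1\nabla u(x+th)\cdot h\,\d t$ and using $\tfrac12 a^2\le b^2+(b-a)^2$, Fubini, Cauchy--Schwarz in $t$, and the identity $\fint_{\mathbb{S}^{d-1}}|w\cdot e|^2\d\sigma_{d-1}(w)=K_{d,2}=\tfrac1d$ (from radiality of $\nu$) one arrives at
\[
|u|_{H_\nu(\R^d)}^2\ \ge\ \Big(\tfrac{1}{2d}\|\nabla u\|_{L^2(\R^d)}^2-\eps\Big)\int_{B_\delta(0)}|h|^2\nu(h)\,\d h .
\]
Choosing $\eps=\tfrac{1}{4d}\|\nabla u\|_{L^2(\R^d)}^2$ (the inequality being trivial if $\nabla u=0$) gives the asserted lower bound.

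For $(iii)$ I would first observe that the hypotheses force $\int_{B_\delta(0)}|h|^2\nu(h)\d h=+\infty$ for every $\delta\in(0,1]$: indeed $\int_{B_1\setminus B_\delta}|h|^2\nu\le\int_{B_1\setminus B_\delta}\nu<\infty$, so the divergence of $\int_{B_1}|h|^2\nu$ must be carried by $B_\delta$. Now fix $x_0\in\R^d$ and let $\omega$ be the nondecreasing modulus of continuity of $\nabla u$ on the compact set $\overline{B_1(x_0)}$, so $\omega(0^+)=0$. For $r,\delta\in(0,\tfrac12)$, $x\in B_r(x_0)$ and $|h|\le\delta$ one has $u(x+h)-u(x)=\nabla u(x_0)\cdot h+E(x,h)$ with $|E(x,h)|\le\omega(r+\delta)|h|$, so the inequality $(a+b)^2\ge\tfrac12 a^2-b^2$ together with radiality of $\nu$ gives
\[
|u|_{H_\nu(\R^d)}^2\ \ge\ |B_r|\Big(\tfrac{K_{d,2}}{2}|\nabla u(x_0)|^2-\omega(r+\delta)^2\Big)\int_{B_\delta(0)}|h|^2\nu(h)\,\d h .
\]
Since the left-hand side is finite while $\int_{B_\delta(0)}|h|^2\nu=+\infty$, the bracket must be nonpositive, i.e. $|\nabla u(x_0)|^2\le\tfrac{2}{K_{d,2}}\omega(r+\delta)^2$; letting $r,\delta\to 0^+$ gives $\nabla u(x_0)=0$, and since $x_0$ was arbitrary and $\R^d$ is connected, $u$ is constant.

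The routine bookkeeping aside, the only delicate points I anticipate are these. In $(iv)$ one must carry the error term $\eps\int_{B_\delta}|h|^2\nu$ through the lower-bound estimate and fix $\eps$ first and then $\delta$ in the right order, exactly as in the proof of \autoref{thm:levy-meets-energy}, so that the constant comes out as $\tfrac{1}{4d}$. In $(iii)$ the key is that full $C^1$-regularity of $u$ — not merely Lipschitz continuity — is used, through the modulus $\omega$ of $\nabla u$: this is what makes the first-order Taylor term $\nabla u(x_0)\cdot h$ dominate the remainder on the scale $|h|\le\delta$. Everything else is elementary.
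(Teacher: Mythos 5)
Your proof is correct and takes essentially the same route the paper intends: parts $(i)$--$(ii)$ are the elementary pointwise bounds plus Fubini, and for $(iii)$--$(iv)$ you re-run the computation of \autoref{thm:levy-meets-energy} (with $L^2$-continuity of translations, resp.\ the modulus of continuity of $\nabla u$, replacing the smoothness of compactly supported test functions), which is precisely the ``analogous'' argument the paper points to, and your constants ($K_{d,2}=\tfrac1d$, the choice $\eps=\tfrac{1}{4d}\|\nabla u\|_{L^2}^2$, the factor $4\|\nu\|_{L^1(\R^d,1\land|h|^2\d h)}$) come out as stated.
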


The proof is analogous to the one of \autoref{thm:levy-meets-energy}. See also \cite[Proposition 2.14]{Fog21} or \cite[Proposition 3.46]{guy-thesis} for a general setting.

\medskip

For many results it is crucial that smooth functions with compact support are dense in the function space under consideration. Let us summarize some important results in this direction.

\begin{theorem}\label{thm:density} Let $\nu$ satisfies \eqref{eq:levy-cond} with full support and let $\Omega \subset \R^d$ be open.
\begin{enumerate}[$(i)$]
\item  $C^\infty(\Omega) \cap H_\nu(\Omega)$ is dense in $H_\nu(\Omega)$. 
\item  If  $\Omega$ has a compact continuous boundary $\partial \Omega$, then $C_c^\infty(\overline{\Omega})$ is dense in $H_\nu(\Omega)$. 
\item  If  $\Omega$ has a compact continuous boundary $\partial \Omega$, then $C_c^\infty(\Omega)$ is dense in $V_{\nu,0}(\Omega|\R^d)$.
\item  If  $\Omega$ has a compact Lipschitz boundary $\partial \Omega$, then $C_c^\infty(\R^d)$ is dense in $\VnuOm$ with respect to the norms $\|\cdot\|_{\VnuOm}$ and $\vertiii{\cdot }_{\VnuOm}$ with $\vertiii{u}^2_{\VnuOm} = \|u\|^2_{L^2(\R^d)} + |u|^2_{\VnuOm}$. 
\end{enumerate}
\end{theorem}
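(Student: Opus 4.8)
The plan is to establish the four density statements in increasing order of difficulty, reusing the earlier ones as building blocks. For part $(i)$, I would proceed as in the classical Meyers–Serrin theorem: given $u \in H_\nu(\Omega)$ and $\eps > 0$, cover $\Omega$ by a locally finite family of bounded open sets $\{\Omega_j\}$ with $\overline{\Omega_j} \subset \Omega$ expanding to $\Omega$, take a subordinate smooth partition of unity $\{\varphi_j\}$, and mollify each $\varphi_j u$ at scale $\eta_j$ small enough that the mollification stays supported in $\Omega$ and the error in the $H_\nu(\Omega)$-norm is at most $\eps 2^{-j}$. The only point requiring care beyond the classical argument is that the seminorm $\cE_\Omega(\cdot,\cdot)$ is nonlocal: one checks that convolution commutes with difference quotients and applies Jensen/Minkowski to the double integral defining $\cE_\Omega$, using dominated convergence together with the finiteness of $\cE_\Omega(u,u)$. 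The sum $\sum_j \rho_{\eta_j} * (\varphi_j u)$ is locally finite, hence smooth on $\Omega$, and lies within $\eps$ of $u$.

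For parts $(ii)$ and $(iii)$, the extra ingredient is the compact continuous boundary, which allows a standard boundary-flattening plus translation argument. Given $u \in H_\nu(\Omega)$, one localizes near $\partial\Omega$ via a finite cover by charts in which $\Omega$ is (the graph of) a continuous function; in each chart one translates $u$ slightly into $\Omega$ (in the "vertical" direction) so that the translated function extends smoothly across $\partial\Omega$, then mollifies. Continuity of the boundary is exactly what is needed to control the translated pieces; the L\'evy condition \eqref{eq:levy-cond} is used to guarantee that translation is continuous in the $H_\nu$-seminorm, i.e. $\cE_\Omega(\tau_h u - u, \tau_h u - u) \to 0$ as $h \to 0$, which follows from dominated convergence once one knows $\cE_\Omega(u,u) < \infty$. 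Away from the boundary one patches in $C^\infty(\Omega)\cap H_\nu(\Omega)$ approximants from $(i)$ via a partition of unity. Part $(iii)$ is the same argument but now translating \emph{outward}, away from $\Omega$, so that the support of the approximant stays compactly inside $\Omega$; here one additionally uses that $u = 0$ a.e. on $\Omega^c$ and that $V_{\nu,0}(\Omega|\R^d)$ carries the $H_\nu(\R^d)$-norm, so the relevant energy is the full $\cE_\Omega$ over $\R^d$.

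Part $(iv)$ is the substantial one and I expect it to be the main obstacle. Here $u$ is a function on all of $\R^d$ with only $u|_\Omega \in L^2(\Omega)$, and we must approximate in $\|\cdot\|_{\VnuOm}$ (and in $\vertiii{\cdot}_{\VnuOm}$) by $C_c^\infty(\R^d)$, which forces us to simultaneously handle: (a) the behavior of $u$ at infinity, and (b) the behavior near $\partial\Omega$, with the Lipschitz assumption now genuinely used. The strategy is a truncation–cutoff–mollification scheme. First, cut off at infinity: multiply $u$ by $\eta_R(x) = \eta(x/R)$ for a fixed smooth $\eta$ equal to $1$ on $B_1$ and supported in $B_2$; the error $|u - \eta_R u|^2_{\VnuOm}$ involves $\iint_{\Omega\times\R^d} (u(x)-u(y))^2 \nuxminy$ restricted to where $\eta_R$ differs from $1$, and since $\Omega$ is bounded and $\nu$ satisfies \eqref{eq:levy-cond}, this tends to $0$ as $R \to \infty$ by dominated convergence — crucially using that $|u|^2_{\VnuOm} < \infty$ controls the tail. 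This also shows $\eta_R u \in L^2(\R^d)$, reconciling $\|\cdot\|_{\VnuOm}$ with $\vertiii{\cdot}_{\VnuOm}$. Once reduced to compactly supported $u$, one uses a Lipschitz boundary chart and the outward/inward translation trick as in $(ii)$–$(iii)$, but now applied near $\partial\Omega$ to both the inside and outside pieces so that the shifted functions become continuous across $\partial\Omega$; the Lipschitz regularity ensures the translated domains still cover a neighborhood of $\partial\Omega$ uniformly and that $\cE(\tau_h u - u, \tau_h u - u) \to 0$. Finally mollify the resulting compactly supported function, which is now regular enough near $\partial\Omega$, using \autoref{thm:levy-meets-energy} and \eqref{eq:H1-inequality} to bound the mollification error in the $\nu$-energy by the $H^1$-energy at small scales. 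Assembling these three limits with a diagonal argument yields the claim; the delicate bookkeeping is in step (b), coordinating the boundary translation with the requirement that supports stay compact and the energy errors stay summable.
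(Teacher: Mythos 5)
Two preliminary remarks. First, the paper does not prove \autoref{thm:density} at all: it quotes the four statements and refers to \cite{guy-thesis,DyKi21} for $(i)$--$(ii)$, to \cite{FSV15,Gri11,guy-thesis,BGPR20} for $(iii)$, and to \cite{FGKV20} for $(iv)$, so there is no in-paper proof to compare against; your outline for $(i)$--$(iii)$ does follow the same standard route taken in those references (nonlocal Meyers--Serrin with a partition of unity and mollification; localization to boundary charts, one-sided translation, then mollification). Second, even there one justification is too glib: continuity of translations in the anisotropic seminorms $\cE_\Omega$, $|\cdot|_{\VnuOm}$ is \emph{not} a plain dominated-convergence statement for arbitrary $h$ (a dominating function for the translated differences is exactly what is missing a priori, and the translate of $u$ need not even have finite energy). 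It holds for translations in the chart-adapted one-sided direction because, for a (continuous or Lipschitz) graph, $\{x_d>\gamma(x')\}+te_d\subset\{x_d>\gamma(x')\}$, so the translated energy is dominated by the original one, and one then concludes by approximating $F(x,y)=u(x)-u(y)$ by continuous functions in the weighted $L^2$-space $L^2(\Omega\times\R^d,\nu(x-y)\,\d x\,\d y)$. This one-sided geometry, not DCT, is the actual engine of $(ii)$ and $(iii)$ (and translation does not make anything ``extend smoothly across $\partial\Omega$''; it only moves the set of possible irregularity to one side so that a subsequent mollification at a much smaller scale is harmless).

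The genuine gap is in $(iv)$. Your final step proposes to ``bound the mollification error in the $\nu$-energy by the $H^1$-energy at small scales'' via \autoref{thm:levy-meets-energy} and \eqref{eq:H1-inequality}, after claiming the translated function is ``regular enough near $\partial\Omega$'' / ``continuous across $\partial\Omega$''. This step fails as stated: the function being mollified is merely a translate of a general element of $\VnuOm\cap L^2(\R^d)$, it is not in $H^1(\R^d)$ and in general still jumps across the translated interface, so \eqref{eq:H1-inequality} is inapplicable. The correct mechanism (this is what \cite{FGKV20} exploits, and it is exactly where Lipschitz rather than merely continuous boundary is needed) is: after localizing in a Lipschitz chart and translating the localized function outward, the possible jump set $\partial\Omega-te_d$ lies at Euclidean distance $\gtrsim t/\sqrt{1+L^2}$ from $\overline\Omega$; mollifying at scale $\eps\ll t$ then converges in $|\cdot|_{\VnuOm}$ because the seminorm only integrates over $x\in\Omega$, so near-diagonal contributions only see a region where the translated function has finite full-space-type energy, while the contributions crossing the jump set have $|x-y|\geq t/2$, where $\nu$ is integrable, and are controlled by $L^2(\R^d)$-convergence of the mollification. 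Without this (or an equivalent) argument your last step does not go through. Two smaller points in the same part: the hypothesis is only a \emph{compact} Lipschitz boundary, so $\Omega$ need not be bounded and your cutoff-at-infinity estimate should not assume it; and in that estimate the cutoff difference $\eta_R(x)-\eta_R(y)$ must be paired with $u(x)$, $x\in\Omega$ (where $u\in L^2$), not with $u(y)$, $y\in\R^d$, since on $\Omega^c$ one only has the weighted integrability $u\in L^2(\Omega^c,\widetilde{\nu})$ from \autoref{lem:natural-norm-on-V}.
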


The proofs of the first and second statement can be found in \cite{guy-thesis} and \cite{DyKi21}. The first statement is similar to a Meyers-Serrin density type result.  Note that $C_c^\infty(\overline{\Omega})$ is defined as $\{v|_{\overline{\Omega}} : v \in C^\infty_c(\R^d)\}$. The proof of the third statement is given in \cite{FSV15,Gri11} for a special choice of $\nu$ and in \cite{guy-thesis}, \cite{BGPR20} for the general case. The proof of the fourth assertion is given in \cite{FGKV20}.

\begin{remark}
Concerning the question, whether is is necessary to assume the continuity of $\partial \Omega$ for the density of $C_c^\infty(\Omega)$ in $V_{\nu,0}(\Omega|\R^d)$ or not, it is interesting to compare \cite[Remark 7]{FSV15} with \cite[Theorem 3.3.9]{CF12}.
\end{remark}
\begin{remark}
For the kernel $\nu(h)=|h|^{-d-\alpha}$, $\alpha\in(0,2)$, let $V^{\alpha/2}(\Omega|\R^d)$, $V^{\alpha/2}_0(\Omega|\R^d)$  and $H^{\alpha/2}(\Omega)$ be the spaces $\VnuOm$, $V_{\nu,0}(\Omega|\R^d)$ and $\HnuOm$ respectively. It is worth nothing that, see \cite{Gri11}, if $\Omega$ has a compact Lipschitz boundary and $\alpha\neq 1$ then 
\begin{align*}
V^{\alpha/2}_0(\Omega|\R^d)=\overline{C^\infty_c(\Omega)}^{V^{\alpha/2}(\Omega|\R^d)}=  \overline{C^\infty_c(\Omega)}^{H^{\alpha/2}(\R^d)}=\overline{C^\infty_c(\Omega)}^{H^{\alpha/2}(\Omega)},
\end{align*}where  the first and the second equality follow from Theorem \ref{thm:density} $(ii)$. Furthermore, if $0<\alpha<1$ then we also have $H^{\alpha/2}(\Omega)= \overline{C^\infty_c(\Omega)}^{H^{\alpha/2}(\Omega)}.$
\end{remark}

\subsection{Weighted $L^2$-spaces}

In order to set up the Dirichlet problem in $L^2$-spaces over $\R^d$, we define a Borel measure on $\R^d$ that captures the behavior of $\nu$ at infinity. There are several possibilities.

\begin{definition}\label{def:different-nus}
Let $\nu$ satisfies \eqref{eq:levy-cond} with full support and $B\subset \R^d$ be non-empty and open. Define the weights $\overline{\nu}, \widetilde{\nu}: \R^d \to [0,\infty]$ by  
\begin{align*}
\widetilde{\nu} (x) &= \int_B \left( 1\wedge \nu(x-y) \right) \d y, \\
\overline{\nu}(x) &= \essinf\limits_{y\in B}\nu(x-y) \,.
\intertext{If $\nu$ is a unimodal L\'{e}vy measure, then we define the Borel measure $\nu^*: \R^d \to [0,\infty]$  by} 
\nu^*(x) &= \nu(R(1+|x|)) \,,
\end{align*}
where $R>1$ is an arbitrary fixed number. 
\end{definition}

\begin{example}\label{exa:standard}
Let $0 < \alpha < 2$ and $\nu(h) = |h|^{-d-\alpha}$ for $h \ne 0$. Let $B\subset \R^d$ be open and bounded, and $R > 1$. Then 
\begin{align*}
\widetilde{\nu} (x) \asymp \overline{\nu}(x) \asymp \nu^*(x) \asymp (1+|x|)^{-d-\alpha} \,,
\end{align*}
where the constants behind the relation $\asymp$ depend on the choice of $B$ and $R$. See \autoref{thm:comp-nu-tilde} for the general case.
\end{example}

Let us discuss important properties of the three measures $\widetilde{\nu}$, $\overline{\nu}$, and $\nu^*$.
The following lemmas show that it is possible to define certain norms on $\VnuOm $ (with nice properties), which are equivalent to the norm $\|\cdot\|_{\VnuOm}$.

\begin{lemma}[Properties of $\widetilde{\nu}$]\label{lem:natural-norm-on-V}
Let $\Omega\subset \R^d $ be open and $\nu$ satisfies \eqref{eq:levy-cond} with full support. Assume $B\subset \Omega$. 

\begin{enumerate}[$(i)$]
\item We have $\widetilde{\nu}\in L^\infty(\R^d)$. Moreover, if $|B|<\infty$, then $\widetilde{\nu}\in L^1(\R^d)$.
\item The embedding $\VnuOm \hookrightarrow L^2(\R^d,\widetilde{\nu})$ is continuous. If $|B|<\infty$, then $L^2(\R^d,\widetilde{\nu}) \hookrightarrow L^1(\R^d,\widetilde{\nu})$ is continuous.
\item If $\nu$ is unimodal and $B$ is bounded, then on $\VnuOm$, the norms $\|\cdot\|^{\#}_{\VnuOm}$ and $\|\cdot\|^{*}_{\VnuOm}$ are equivalent, where
\begin{align*}
\|u\|^{*2}_{\VnuOm}&=\int_{\R^d}  |u(x)|^2 \widetilde{\nu}(x)\d x+ \iil\limits_{(\Omega^c\times \Omega^c)^c} (u(x)-u(y))^2\nuxminy\d x\d y\,,\\
\|u\|^{\#2}_{\VnuOm}&=\int_{\Omega}  |u(x)|^2 \widetilde{\nu}(x)\d x+ \iil\limits_{(\Omega^c\times \Omega^c)^c} (u(x)-u(y))^2\nuxminy\d x\d y\,.
\end{align*} 
\end{enumerate}
Furthermore, if  $\Omega$ is bounded then the norms $\|\cdot\|_{\VnuOm}$ and $\|\cdot\|^{*}_{\VnuOm}$ are also equivalent.
\end{lemma}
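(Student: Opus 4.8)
The plan is to prove the three enumerated items in order, since each builds on the previous, and then deduce the final claim about $\|\cdot\|_{\VnuOm}$ and $\|\cdot\|^{*}_{\VnuOm}$ from item $(ii)$ together with the Poincar\'e-type control that $\widetilde\nu$ is bounded below on $\Omega$. First, for item $(i)$, boundedness of $\widetilde\nu$ is immediate: $\widetilde\nu(x) = \int_B(1\wedge\nu(x-y))\,\d y \le |B|$ if $|B|<\infty$, and in general $\widetilde\nu(x)\le \int_{\R^d}(1\wedge\nu(h))\,\d h<\infty$ by the L\'evy condition \eqref{eq:levy-cond} after a change of variables (note $1\wedge\nu \le 1\wedge(1\vee|h|^{-2})\cdot(\ldots)$; more simply $1\wedge\nu(h)\le (1\wedge|h|^2)\nu(h)$ fails for small $h$, so one splits $B$ into the part within distance $1$ of $x$ and the rest, using $1\wedge\nu\le 1$ on the bounded piece and $1\wedge\nu(h)\le \nu(h)\le (1\wedge|h|^2)\nu(h)$ for $|h|\ge 1$). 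For the $L^1$ bound when $|B|<\infty$, apply Tonelli to $\int_{\R^d}\widetilde\nu(x)\,\d x = \int_B\int_{\R^d}(1\wedge\nu(x-y))\,\d x\,\d y = |B|\,\|1\wedge\nu\|_{L^1(\R^d)}$, which is finite by \eqref{eq:levy-cond}.

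For item $(ii)$, the continuous embedding $\VnuOm\hookrightarrow L^2(\R^d,\widetilde\nu)$ is the heart of the matter. Split $\int_{\R^d}|u(x)|^2\widetilde\nu(x)\,\d x = \int_\Omega + \int_{\Omega^c}$. The integral over $\Omega$ is controlled by $\|\widetilde\nu\|_{L^\infty}\|u\|_{L^2(\Omega)}^2$. For the integral over $\Omega^c$, since $B\subset\Omega$, for $x\in\Omega^c$ and $y\in B$ one has (using $|u(x)|^2\le 2|u(x)-u(y)|^2 + 2|u(y)|^2$ and $1\wedge\nu\le\nu$, and also $1\wedge\nu\le 1$)
\begin{align*}
\int_{\Omega^c}|u(x)|^2\widetilde\nu(x)\,\d x &\le 2\int_{\Omega^c}\int_B |u(x)-u(y)|^2\,\nu(x-y)\,\d y\,\d x + 2\int_{\Omega^c}\int_B |u(y)|^2(1\wedge\nu(x-y))\,\d y\,\d x\\
&\le 2\,|u|^2_{\VnuOm} + 2\,\|1\wedge\nu\|_{L^1}\,\|u\|_{L^2(B)}^2,
\end{align*}
where in the first term we enlarged $\Omega^c\times B$ to $\R^d\times\Omega$ and used $\widetilde\nu(x)\le\int_B\nu(x-y)\d y$, and in the second we applied Tonelli in $x$. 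This gives $\|u\|_{L^2(\R^d,\widetilde\nu)}^2 \le C\|u\|_{\VnuOm}^2$. The inclusion $L^2(\R^d,\widetilde\nu)\hookrightarrow L^1(\R^d,\widetilde\nu)$ when $|B|<\infty$ follows from Cauchy--Schwarz since $\widetilde\nu$ is then a finite measure by $(i)$.

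For item $(iii)$, the inequality $\|u\|^{\#}_{\VnuOm}\le\|u\|^{*}_{\VnuOm}$ is trivial. The reverse requires bounding $\int_{\Omega^c}|u(x)|^2\widetilde\nu(x)\,\d x$ by the $\#$-norm; this is exactly the estimate from $(ii)$ above, except that the term $\|u\|_{L^2(B)}^2$ must now be absorbed into $\int_\Omega|u|^2\widetilde\nu\,\d x$, which works because unimodality with $B$ bounded forces $\widetilde\nu\ge c>0$ on a neighborhood of $B$ (indeed $\widetilde\nu(x)\ge|B|\,(1\wedge\nu(\diam(B\cup\{x\})))$, bounded below on bounded sets by the almost-monotone profile) — so $\|u\|_{L^2(B)}^2\le c^{-1}\int_B|u|^2\widetilde\nu\,\d x\le c^{-1}\int_\Omega|u|^2\widetilde\nu\,\d x$. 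Finally, for the concluding assertion with $\Omega$ bounded: $\|u\|^{*}_{\VnuOm}\le C\|u\|_{\VnuOm}$ follows from $(ii)$ and \autoref{lem:energy-vs-seminorm}; conversely, $\widetilde\nu$ is bounded \emph{below} by a positive constant on the bounded set $\Omega$ (same argument as above, not needing unimodality if one only wants a lower bound on a fixed bounded set — though here one may just invoke unimodality or simply that $\widetilde\nu(x)=\int_B(1\wedge\nu(x-y))\d y>0$ is lower semicontinuous hence bounded below on compacts after noting it is continuous by dominated convergence), so $\|u\|_{L^2(\Omega)}^2\le c^{-1}\int_\Omega|u|^2\widetilde\nu\,\d x\le c^{-1}\|u\|^{*2}_{\VnuOm}$, and combined with \autoref{lem:energy-vs-seminorm} this gives $\|u\|_{\VnuOm}\le C'\|u\|^{*}_{\VnuOm}$.

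I expect the main obstacle to be item $(ii)$: one must handle the tail $\int_{\Omega^c}|u|^2\widetilde\nu\,\d x$ with a function $u$ that need not be in $L^2(\R^d)$, exploiting that $B\subset\Omega$ so that $u$ is $L^2$ on $B$ and that differences $u(x)-u(y)$ with one point in $B$ are controlled by the seminorm. The bookkeeping of which piece of $\widetilde\nu$ (the $1\wedge\nu\le\nu$ part versus the $1\wedge\nu\le 1$ part) pairs with which piece of the splitting $|u(x)|^2\le 2|u(x)-u(y)|^2+2|u(y)|^2$ is the only delicate point; everything else is Tonelli and the L\'evy condition.
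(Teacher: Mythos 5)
Your proposal is correct and takes essentially the same route as the paper: $1\wedge\nu\in L^1(\R^d)$ plus Tonelli for $(i)$, the splitting $|u(x)|^2\le 2(u(x)-u(y))^2+2|u(y)|^2$ paired with $1\wedge\nu$ over $B\subset\Omega$ (using $1\wedge\nu\le\nu$ and symmetry) for $(ii)$, and the unimodality lower bound $\widetilde\nu\ge c'>0$ on $B$, resp.\ on the bounded set $\Omega$, to absorb the $\|u\|_{L^2(B)}^2$ term for $(iii)$ and the final equivalence. The only differences are cosmetic (you recycle the $(ii)$ estimate inside $(iii)$ instead of re-deriving it), and in your aside on avoiding unimodality for the last claim the continuity of $\widetilde\nu=(1\wedge\nu)*\mathds{1}_B$ should be justified by continuity of translations in $L^1$ rather than dominated convergence, since $\nu$ is merely measurable.
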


\begin{proof} Firstly, we observe $1\land \nu\in L^1(\R^d)$. It follows $\widetilde{\nu}(x)\leq  \|1\land\nu\|_{L^1(\R^d)} $ for almost every $x\in \R^d$. If $|B|<\infty$,  then Fubini's theorem implies 
\begin{align*}
\int_{\R^d} \widetilde{\nu}(x)\d x\leq |B|\|1\land\nu\|_{L^1(\R^d)} < \infty \,. 
\end{align*} 


The continuous embedding $ L^2(\R^d, \widetilde{\nu}) \hookrightarrow L^1(\R^d, \widetilde{\nu})$ follows directly.  The continuity of the embedding 
$ \VnuOm\hookrightarrow L^2(\R^d, \widetilde{\nu})$ is obtained as follows
\begin{align*}
\int_{\R^d} |u(x)|^2\widetilde{\nu}(x)\d x
& \leq 2\int_{B} |u(y)|^2 \left( \int_{\R^d} 1\land \nu(x-y)\d x \right) \d y + 2 \iil_{B\R^d}  (u(x)-u(y))^2 1\land \nu(x-y)\d x\d y\\
&\leq C_1\int_{\Omega} |u(y)|^2 \d y+ C_1\iil_{\Omega\R^d}  (u(x)-u(y))^2 \nu(x-y)\d x\,\d y
= C_1\|u\|^2_{\VnuOm}.
\end{align*}
Here $C_1=2\|1\land\nu\|_{L^1(\R^d)} +2$. The following inequalities obviously hold
\begin{align*}
\sqrt{C_1 + 1} \, \|u\|_{\VnuOm}\geq  \| u\|^{*}_{\VnuOm}\geq  \| u\|^{\#}_{\VnuOm}. 
\end{align*}

Next if $\nu$ is unimodal and $B$ is bounded, then there is a constant $c'>0$ such that  $\widetilde{\nu}(x)\geq c'$ for all $x\in B$.  
The following estimates hold
\begin{align*}
\int_{\Omega} |u(x)|^2 & \widetilde{\nu}(x)\d x  + \iint\limits_{\Omega\Omega^c} (u(x)-u(y))^2\nu(x-y)\d y\d x
\\
&\geq c'\int_{B} |u(x)|^2 \d x+\iint\limits_{\Omega\Omega^c} (u(x)-u(y))^2\nu(x-y)\d y\d x
\\
&\geq c'\|1\land\nu\|_{L^1(\R^d)}^{-1}\iint\limits_{B\Omega^c}  |u(x)|^21\land \nu(x-y)\d y\d x
+\iint\limits_{B\Omega^c} (u(x)-u(y))^2\nu(x-y)\d y\d x\\
&\geq ( 1\land c'\|1\land\nu\|_{L^1(\R^d)}^{-1}) \iint\limits_{\Omega^cB}\Big[  |u(x)|^2+ (u(x)-u(y))^2 \Big] 1\land \nu(x-y)\d x\d y\\
&\geq \frac{1}{2} ( 1\land c'\|1\land\nu\|_{L^1(\R^d)}^{-1})\int_{\Omega^c}u^2(y) \widetilde{\nu}(y)\d y\, .
\end{align*}
The first and the last line imply that $\| u\|^{*}_{\VnuOm}\leq  C\| u\|^{\#}_{\VnuOm}$ for some constant $C>0$. Thus the norms $\| \cdot \|^{*}_{\VnuOm}$ and $\| \cdot \|^{\#}_{\VnuOm}$ are equivalent. If in addition $\Omega$ is bounded, then $\|1\land\nu\|_{L^1(\R^d)}\geq \widetilde{\nu}(x)\geq c'$ for all $x\in \Omega$ for some $c'>0$. The equivalence of the  norms  $\| \cdot\|_{\VnuOm}$and $\| \cdot\|^{\#}_{\VnuOm}$ is thus proved. 
%
\end{proof}

\begin{lemma}[Properties of $\overline{\nu}$]\label{lem:natural-norm-on-V-mubarbis}
Assume $\Omega\subset \R^d$ is open.  Let $\nu$ satisfies \eqref{eq:levy-cond} with full support. Assume $B\subset \Omega$ is open and nonempty. 
\begin{enumerate}[$(i)$]
\item  $\overline{\nu}\in L^1 (\R^d)$ and, if $\nu$ is unimodal, then $\overline{\nu}\in L^\infty(\R^d)$. 
\item The embeddings $\VnuOm \hookrightarrow L^2(\R^d,\overline{\nu}) \hookrightarrow L^1(\R^d,\overline{\nu})$ are continuous.
\item If $\nu$ is unimodal and $B$ is bounded then the  norms $\|\cdot\|^{\#}_{\VnuOm}$ and $\|\cdot\|^{*}_{\VnuOm}$ are equivalent, where $\|u\|^{*}_{\VnuOm}$ and $\|u\|^{\#}_{\VnuOm}$ are defined as in \autoref{lem:natural-norm-on-V}(iii) with $\widetilde{\nu}$ replaced by $\overline{\nu}$.
\end{enumerate}
Furthermore, if $\Omega$ is bounded then the norms $\|\cdot\|_{\VnuOm}$ and $\|\cdot\|^{*}_{\VnuOm}$ are equivalent.
\end{lemma}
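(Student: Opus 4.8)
The strategy mirrors the proof of \autoref{lem:natural-norm-on-V} almost verbatim, with $\widetilde{\nu}$ replaced by $\overline{\nu}$; the only genuinely new input is controlling $\overline{\nu}(x)=\essinf_{y\in B}\nu(x-y)$ instead of the average $\widetilde{\nu}(x)=\int_B(1\wedge\nu(x-y))\d y$. First I would prove $(i)$. For integrability, pick a ball $B'\subset\subset B$ of radius $\rho>0$; then for every $x$ and every $y\in B'$ one has $B_\rho(x-y)\subset x-B$, hence $\overline{\nu}(x)=\essinf_{z\in x-B}\nu(z)\leq \essinf_{z\in B_\rho(x-y)}\nu(z)\leq \fint_{B_\rho(x-y)}(1\wedge\nu)$; averaging over $y\in B'$ and using Fubini gives $\int_{\R^d}\overline{\nu}(x)\d x\leq C\|1\wedge\nu\|_{L^1(\R^d)}<\infty$. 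For the $L^\infty$ bound under unimodality, observe that for $|x|$ large, every $z\in x-B$ satisfies $|z|\geq |x|-\diam B>0$ bounded away from $0$, so $\nu(z)$ is bounded above by a constant times $\nu(|x|-\diam B)$ by the almost-decreasing profile; for $|x|$ bounded one uses that $\overline{\nu}\in L^1$ together with monotonicity to get a uniform bound. (Alternatively, $\overline{\nu}\leq\widetilde\nu/(\text{something})$ on compacta is cruder but the unimodal argument is clean.)

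Next, $(ii)$: the continuity $L^2(\R^d,\overline{\nu})\hookrightarrow L^1(\R^d,\overline{\nu})$ is immediate from $\overline{\nu}\in L^1(\R^d)$ and Cauchy--Schwarz. For $\VnuOm\hookrightarrow L^2(\R^d,\overline{\nu})$, I would split $u(x)=u(y)+(u(x)-u(y))$ for $y\in B$, giving
\begin{align*}
\int_{\R^d}|u(x)|^2\overline{\nu}(x)\d x
&\leq 2\fint_{B}\int_{\R^d}|u(y)|^2\overline{\nu}(x)\d x\,\d y
+2\fint_{B}\int_{\R^d}(u(x)-u(y))^2\overline{\nu}(x)\d x\,\d y\,.
\end{align*}
The first term is $\leq 2|B|^{-1}\|\overline{\nu}\|_{L^1}\|u\|_{L^2(\Omega)}^2$ since $B\subset\Omega$. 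For the second term, use the pointwise bound $\overline{\nu}(x)\leq\nu(x-y)$ valid for a.e. $y\in B$ (this is exactly the defining essential-infimum inequality), which turns it into $2|B|^{-1}\iint_{B\times\R^d}(u(x)-u(y))^2\nu(x-y)\d x\,\d y\leq 2|B|^{-1}|u|_{\VnuOm}^2$. Hence $\int_{\R^d}|u|^2\overline{\nu}\leq C\|u\|_{\VnuOm}^2$.

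For $(iii)$ and the final assertion, I would follow the chain of estimates in \autoref{lem:natural-norm-on-V}(iii) line by line. The inequality $\|u\|^{*}_{\VnuOm}\geq\|u\|^{\#}_{\VnuOm}$ is trivial, and $\|u\|^{\#}_{\VnuOm}\leq C\|u\|_{\VnuOm}$ follows from the embedding just proved; so the substance is the reverse bound $\|u\|^{*}_{\VnuOm}\leq C\|u\|^{\#}_{\VnuOm}$, i.e. controlling $\int_{\Omega^c}|u(y)|^2\overline{\nu}(y)\d y$ by $\|u\|^{\#}_{\VnuOm}$. Here unimodality and boundedness of $B$ give $\overline{\nu}(x)\geq c'>0$ for $x\in B$, and then the same four-line computation as before — insert $|u(x)|^2\lesssim |u(y)|^2+(u(x)-u(y))^2$ with $x\in B$, $y\in\Omega^c$, bound $c'\geq c'\|1\wedge\nu\|_{L^1}^{-1}(1\wedge\nu(x-y))$ trivially and $\nu(x-y)\geq 1\wedge\nu(x-y)$, and finally recognize $\int_{\Omega^c}\overline{\nu}(y)|u(y)|^2\d y$ on the right after integrating $x$ over $B$ — yields the claim, using $\overline{\nu}(y)\leq\nu(x-y)$ for $x\in B$ in the last step exactly as $\widetilde\nu$ was handled there. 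If $\Omega$ is additionally bounded, then $\overline{\nu}\geq c'$ on all of $\Omega$ (again by unimodality) and $\overline{\nu}\leq\|\overline{\nu}\|_{L^\infty}$, so $\|\cdot\|_{\VnuOm}$ and $\|\cdot\|^{\#}_{\VnuOm}$ are comparable, whence all three norms are equivalent.

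The main obstacle is $(i)$: establishing $\overline{\nu}\in L^1(\R^d)$ is less automatic than for $\widetilde\nu$ because an essential infimum can a priori be badly behaved. The fix is the ball-inside-$B$ trick above, which dominates $\overline{\nu}$ by a local average of $1\wedge\nu$ and hence reduces to $1\wedge\nu\in L^1$; once $(i)$ is in place the rest is a routine transcription of the $\widetilde\nu$ proof, the only recurring point being the elementary pointwise inequality $\overline{\nu}(x)\leq\nu(x-y)$ for a.e. $y\in B$, which replaces the role played by $1\wedge\nu(x-y)\leq\nu(x-y)$ and by the lower bound on $\widetilde\nu$ on $B$.
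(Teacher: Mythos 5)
Your parts (ii), (iii) and the final assertion do follow the paper's route: the paper proves (ii) by exactly your splitting $|u(x)|^2\lesssim |u(y)|^2+(u(x)-u(y))^2$ together with the pointwise bound $\overline{\nu}(x)\leq \nu(x-y)$ for a.e.\ $y\in B$, and then declares (iii) and the bounded-$\Omega$ statement ``analogous to \autoref{lem:natural-norm-on-V}'', which is what you transcribe. (One small point: you average over all of $B$, which tacitly assumes $|B|<\infty$; since $B$ is only open and nonempty, average instead over a fixed measurable $K'\subset B$ with $0<|K'|<\infty$, as the paper does.) The genuine gap is in (i). Your key inequality
\[
\essinf_{z\in B_\rho(x-y)}\nu(z)\;\leq\;\fint_{B_\rho(x-y)}\bigl(1\wedge\nu\bigr)
\]
is false whenever $\nu>1$ a.e.\ on the ball: the right-hand side is then at most $1$ while the left-hand side exceeds $1$. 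This is precisely the relevant regime, not a pathology. Take $\nu(h)=|h|^{-d-\alpha}$ and $B$ a small ball; for every $x$ in a neighbourhood of $B$ one has $\overline{\nu}(x)\gtrsim(\diam B)^{-d-\alpha}\gg 1$, so the averaged bound $\overline{\nu}(x)\leq\fint_{B'}\fint_{B_\rho(x-y)}(1\wedge\nu)\,\d z\,\d y\leq 1$ fails on a set of positive measure. Nor can you repair it by replacing $1\wedge\nu$ with $\nu$: Fubini then only gives $\int_{\R^d}\overline{\nu}\leq C\int_{\R^d}\nu$, and \eqref{eq:levy-cond} does not make $\nu$ integrable near the origin. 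What your computation actually yields is $1\wedge\overline{\nu}\in L^1(\R^d)$, which is strictly weaker; and since (i) asserts $\overline{\nu}\in L^1(\R^d)$ \emph{without} unimodality, you cannot fall back on an $L^\infty$ bound to close the gap.

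The missing idea is that the essential infimum must be tested against points of $B$ lying at a \emph{definite distance} from $x$, so that only the tail of $\nu$ enters. The paper fixes $x_0$ and $r>0$ with $B_{8r}(x_0)\subset B$ and splits $\R^d$: for $x\in B_{2r}(x_0)$ it bounds $\overline{\nu}(x)\leq\essinf_{y\in B\setminus B_{4r}(x_0)}\nu(x-y)\leq\nu(x-\xi)$ with $|x-\xi|\geq r$, and for $x\notin B_{2r}(x_0)$ it uses $\overline{\nu}(x)\leq\essinf_{y\in B_{r}(x_0)}\nu(x-y)\leq\nu(x-z)$, again with $|x-z|\geq r$; integrating in $x$, each piece is dominated by $\int_{|h|\geq r}\nu(h)\,\d h<\infty$, which is finite by \eqref{eq:levy-cond}. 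The same device also settles the $L^\infty$ bound for unimodal $\nu$ on the region of bounded $|x|$, where your sketch (``use $\overline{\nu}\in L^1$ together with monotonicity'') is not an argument, since integrability never gives pointwise bounds: pick $y\in B$ with $|x-y|\geq r$ and use the almost-decreasing profile to get $\overline{\nu}(x)\leq\nu(x-y)\leq c\,\nu(r)$. With (i) proved this way, your (ii) and (iii) go through as described.
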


\begin{proof}
To prove $(i)$, select $x_0\in B$ and $r>0$ such that $B_{8r}(x_0)\subset B$. Note that, for  $x\in B_{2r}(x_0)$ and $y\in B^c_{4r}(x_0)\cap B$ or for $x\in B^c_{2r}(x_0)$ and $y\in B_{r}(x_0)$ we have  $|h|\geq r$ where  $h=x-y$. Therefore, for almost all $\xi\in B^c_{4r}(x_0)$ and $z\in  B_{r}(x_0)$ we find that 
\begin{align*}
\int_{B_{2r}(x_0)}\underset{y\in B^c_{4r}(x_0)\cap B}{\essinf}\nu(x-y)\d x
&\leq 
\int_{|x-y|\geq r}\nu(x-\xi)\d x=
\int_{|h|\geq r}\nu(h)\d h,
\\
\int_{B^c_{2r}(x_0)}\underset{y\in B_{r}(x_0)}{\essinf}\hspace{2mm}\nu(x-y)\d x
&\leq 
\int_{|x-z|\geq r}\nu(x-z)\d x=
\int_{|h|\geq r}\nu(h)\d h.
\end{align*}
The integrability of $\overline{\nu}$ follows since 
\begin{align*}
\int_{\R^d}\overline{\nu}(x)\d x&\leq \int_{B_{2r}(x_0)}\underset{y\in B^c_{4r}(x_0)\cap B}{\essinf}\nu(x-y)\d x+ \int_{B^c_{2r}(x_0)}\underset{y\in B_{r}(x_0)}{\essinf} \nu(x-y)\d x
\leq 2\int_{|h|\geq r}\hspace{-2ex}\nu(h)\d h<\infty.
\end{align*}
Analogously, assume $\nu$ is unimodal. Since  for  $(x,y)\in B_{2r}(x_0)\times B^c_{4r}(x_0)\cap B$ or for $(x,y)\in B^c_{2r}(x_0)\times B_{2r}(x_0)$  we have $|x-y|\geq r$, it follows that   $\overline{\nu}_B(x)\leq \nu(x-y)\leq c\nu(r)$.

\noindent Next, consider $K'\subset B$ to  be a measurable subset such that  $0<|K'|<\infty$, then we have  
\begin{align*}
\int_{\R^d}|u(x)|^2 \overline{\nu} (x)\d x
&\leq 2|K'|^{-1}\|\overline{\nu}\|_{L^1(\R^d)} \int_{K'}|u(y)|^2\d y+ 2 |K'|^{-1}\iil_{K'\R^d}(u(x)-u(y))^2\nuxminy\d x \d y\\
&\leq C\|u\|^2_{\VnuOm}, \quad\qquad 
\end{align*}
where $C=2|K'|^{-1}( \|\overline{\nu}\|_{L^1(\R^d) }+1).$  This together with the previous step imply the continuity of the embeddings  $\VnuOm \hookrightarrow L^2(\R^d,\overline{\nu})\hookrightarrow L^1(\R^d,\overline{\nu})$. The rest of the proof is analogous to  that of \autoref{lem:natural-norm-on-V}.
\end{proof}

\begin{lemma}[Properties of $\nu^*$]\label{lem:natural-norm-on-V-mustarbis}
Assume $\Omega\subset \R^d$ is open and $R > 1$ satisfies $|B_R(0) \cap \Omega| > 0$ and $|B_R(0) \cap \Omega^c| > 0$.  Let $\nu$ satisfies \eqref{eq:levy-cond} with full support. 
\begin{enumerate}[$(i)$]
\item  $\nu^*\in L^1 (\R^d)\cap L^\infty(\R^d)$. 
\item The embeddings $\VnuOm \hookrightarrow L^2(\R^d,\nu^{*}) \hookrightarrow L^1(\R^d,\nu^{*})$ are continuous.
\item On $\VnuOm$, the norms $\|\cdot\|^{\#}_{\VnuOm}$ and $\|\cdot\|^{*}_{\VnuOm}$ are equivalent, where  $\|u\|^{*}_{\VnuOm}$ and $\|u\|^{\#}_{\VnuOm}$ are defined as in \autoref{lem:natural-norm-on-V}(iii) with $\widetilde{\nu}$ replaced by $\nu^*$.
\end{enumerate}
Furthermore, if $\Omega$ is bounded then the norms $\|\cdot\|_{\VnuOm}$ and $\|\cdot\|^{*}_{\VnuOm}$ are equivalent.
\end{lemma}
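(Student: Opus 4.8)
The plan is to follow the proofs of \autoref{lem:natural-norm-on-V} and \autoref{lem:natural-norm-on-V-mubarbis}; since $\nu^*$ is only defined for unimodal $\nu$, the unimodality constant $c\ge1$ is available throughout, and I record once that a radial, almost decreasing $\nu$ with full support satisfies $0<\nu(r)<\infty$ for every $r>0$. The one genuinely new ingredient is a pointwise comparison for $\nu^*$. Put $A:=B_R(0)\cap\Omega$, which by hypothesis has measure in $(0,\infty)$ and is open, hence non-degenerate and bounded. For every $x\in\R^d$ and every $y\in A$ one has $|x-y|\le|x|+|y|<|x|+R\le R(1+|x|)$ because $R>1$, and hence unimodality gives
\begin{align}\label{eq:nu-star-cmp}
\nu^*(x)=\nu\big(R(1+|x|)\big)\le c\,\nuxminy\qquad\text{for all }x\in\R^d,\ y\in A .
\end{align}
Part $(i)$ is then quick: $\nu^*(x)\le c\,\nu(R)<\infty$ yields $\nu^*\in L^\infty(\R^d)$, while passing to polar coordinates and substituting $s=R(1+r)$ rewrites $\int_{\R^d}\nu^*\,\d x$ as a constant multiple of $\int_R^\infty\nu(s)(s-R)^{d-1}\,\d s\le\int_R^\infty\nu(s)s^{d-1}\,\d s$, which is finite because the tail $\int_{\{|h|\ge1\}}\nu(h)\,\d h$ is finite by \eqref{eq:levy-cond} and $R>1$.

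For part $(ii)$ I would use the standard device: multiply $|u(x)|^2\le2|u(y)|^2+2(u(x)-u(y))^2$ by $\nu^*(x)$, average over $y\in A$, replace $\nu^*(x)$ by $c\,\nuxminy$ in the difference term via \eqref{eq:nu-star-cmp}, and integrate in $x\in\R^d$. Since $A\subset\Omega$ and $\nu^*\in L^1(\R^d)$, the two resulting terms are bounded by $\|u\|^2_{L^2(\Omega)}$ and $|u|^2_{\VnuOm}$, giving the continuous embedding $\VnuOm\hookrightarrow L^2(\R^d,\nu^*)$; the embedding $L^2(\R^d,\nu^*)\hookrightarrow L^1(\R^d,\nu^*)$ is then Cauchy--Schwarz together with $\nu^*\in L^1(\R^d)$.

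For part $(iii)$, the inequality $\|u\|^{\#}_{\VnuOm}\le\|u\|^{*}_{\VnuOm}$ is trivial, and for the converse it suffices to control $\int_{\Omega^c}|u(y)|^2\nu^*(y)\,\d y$. I would repeat the averaging of part $(ii)$ with the roles of $x$ and $y$ interchanged, i.e.\ averaging $|u(y)|^2\le2|u(x)|^2+2(u(x)-u(y))^2$ over $x\in A$ for fixed $y\in\Omega^c$ and invoking \eqref{eq:nu-star-cmp} with the names of $x,y$ swapped (legitimate, since $A$ is the set over which the free variable ranges). Integrating over $y\in\Omega^c$ then bounds $\int_{\Omega^c}|u|^2\nu^*$ by the energy term $\iint_{(\Omega^c\times\Omega^c)^c}(u(x)-u(y))^2\nuxminy\,\d x\,\d y$, using $\Omega\times\Omega^c\subset(\Omega^c\times\Omega^c)^c$, plus $\int_{A}|u|^2\,\d x$; the latter is dominated by $\int_\Omega|u|^2\nu^*\,\d x$ because $\nu^*\ge c^{-1}\nu(R(1+R))>0$ on $A\subset B_R(0)$, and both terms appear in $\|u\|^{\#}_{\VnuOm}$.

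Finally, if $\Omega$ is bounded, say $\Omega\subset B_\rho(0)$, then $c^{-1}\nu(R(1+\rho))\le\nu^*(x)\le c\,\nu(R)$ for $x\in\Omega$, so $\int_\Omega|u|^2\nu^*\,\d x\asymp\|u\|^2_{L^2(\Omega)}$; together with \autoref{lem:energy-vs-seminorm}, which yields $\iint_{(\Omega^c\times\Omega^c)^c}(u(x)-u(y))^2\nuxminy\,\d x\,\d y=2\cE(u,u)\asymp|u|^2_{\VnuOm}$, this gives $\|\cdot\|^{\#}_{\VnuOm}\asymp\|\cdot\|_{\VnuOm}$, and $(iii)$ then finishes the proof. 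I expect the only real obstacle to be stating \eqref{eq:nu-star-cmp} with the correct quantifiers and verifying that $A$ is a non-degenerate bounded set — which is exactly what the hypotheses $R>1$, $|B_R(0)\cap\Omega|>0$ and $|B_R(0)\cap\Omega^c|>0$ secure; everything else is bookkeeping of constants depending on $\Omega$, $R$ and $\nu$.
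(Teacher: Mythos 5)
Your proof is correct and follows essentially the route the paper intends: the paper only sketches the argument (deferring to \cite{FGKV20} and \cite[Lemma 3.24]{guy-thesis}), recording for $(i)$ the unimodality bound $\nu^*(h)\leq C(1\land\nu(h))$ and declaring the rest analogous to \autoref{lem:natural-norm-on-V}, which is exactly the averaging-over-a-reference-set scheme you carry out with $A=B_R(0)\cap\Omega$ playing the role of $B$. Your pointwise comparisons $\nu^*(x)\leq c\,\nuxminy$ for $y\in A$ and $\nu^*\geq c^{-1}\nu(R(1+R))>0$ on $A$ are the correct substitutes for the corresponding bounds on $\widetilde{\nu}$, and your polar-coordinate computation of $\|\nu^*\|_{L^1(\R^d)}$ is a harmless variant of the paper's domination by $1\land\nu$.
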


The proof of \autoref{lem:natural-norm-on-V-mustarbis} is analogous to  that of  \autoref{lem:natural-norm-on-V} and can be found in \cite{FGKV20} or \cite[Lemma 3.24]{guy-thesis}. In order to show $\nu^*\in L^1(\R^d)\cap L^\infty(\R^d)$, one notes that for all $h\in \R^d$ we have $R\leq R(1+|h|)$ and $|h|\leq R(1+|h|)$ and hence $\nu^*(h)\leq C(1\land \nu(h))$. This implies the claim because of  $1\land\nu\in L^1(\R^d)\cap L^\infty(\R^d)$. 

\medskip

The measures $\nu^*$, $\overline{\nu}$ and $\widetilde{\nu}$ turn out to be comparable if $\nu$ satisfies a certain doubling condition at infinity.

\begin{definition}\label{def:doublingatinfinity}
A radial L\'{e}vy density $\nu$  satisfies a doubling condition at infinity if:
\begin{align}\label{eq:global-scaling_infinity}
\text{For every } \theta\geq 1 \text{ there exist } c_1, c_2 > 0 \text{ with } c_1\nu(r)\leq \nu(\theta r)\leq c_2\nu(r) \text{ for all } r\geq 1\,.
\end{align}
Not that the property \eqref{eq:global-scaling_infinity} is indeed equivalent  to saying that 
\begin{align}\label{eq:doubling-scaling_infinity}
\text{ There exist } c_1, c_2 > 0 \text{ with } c_1\nu(r)\leq \nu(2 r)\leq c_2\nu(r) \text{ for all } r\geq 1. 
\end{align}
\end{definition}

\begin{remark}
The doubling condition at infinity imposes some decay of $\nu$ at infinity. The example $\nu(h) = |h|^{-d-1} \mathds{1}_{\{|h| \leq 7\}}$ satisfies \autoref{def:unimodality} but not \eqref{eq:global-scaling_infinity}. Unimodality bounds one-sided oscillations of $\nu$ for all values of $|x|$. Fix $0 < \alpha < \beta < 2$. Define $\nu_1(r) = r^{-d-\alpha}$ for $r \geq 1$. Define $\nu_1(r) = r^{-d-\beta}$ for $\frac{1}{2k+1} \leq r < \frac{1}{2k}$ and $\nu_1(r) = r^{-d-\alpha}$ for $\frac{1}{2k+2} \leq r < \frac{1}{2k+1}$ for $k \in \mathbb{N}$. Then $\nu_1$ is not unimodal but it trivially does satisfy \eqref{eq:global-scaling_infinity}.
\end{remark}

\begin{theorem}\label{thm:comp-nu-tilde}
Assume that $\nu$ is unimodal and satisfies \eqref{eq:global-scaling_infinity}. For $B\subset \R^d$ is bounded  (e.g. B is a ball) and $R\geq 1$ we have $\widetilde{\nu}(x) \asymp \overline{\nu}(x)\asymp \nu^*(x)\asymp 1\land \nu(x)$.
\end{theorem}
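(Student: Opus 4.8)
The plan is to prove all the asymptotic equivalences by reducing everything to the single comparison $\widetilde\nu(x) \asymp 1 \land \nu(x)$, and then chaining. First I would recall what each quantity is: $\widetilde\nu(x) = \int_B (1\land\nu(x-y))\,\d y$, $\overline\nu(x) = \essinf_{y\in B}\nu(x-y)$, and $\nu^*(x) = \nu(R(1+|x|))$. Since $B$ is bounded, fix $r_0 > 0$ and $x_0$ with $B \subset B_{r_0}(x_0)$, and also fix a smaller ball $B_\rho(z_0) \subset B$; these two radii control all the estimates. The strategy is: (1) prove $\widetilde\nu(x) \lesssim 1\land\nu(x)$ and $1\land\nu(x) \lesssim \widetilde\nu(x)$ using unimodality plus the doubling condition \eqref{eq:global-scaling_infinity}; (2) observe $\overline\nu \le \widetilde\nu$ trivially (since $\essinf_{y\in B}\nu(x-y) \le \fint$ of the same quantity, up to $1\land$), and conversely bound $\widetilde\nu \lesssim \overline\nu$; (3) compare $\nu^*$ with $\overline\nu$ directly by monotonicity of the radial profile.

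For step (1), the key point is that for $x$ in a bounded region (say $|x| \le M$ for any fixed $M$) all three quantities are bounded above and below by positive constants, because $\nu$ has full support and is locally bounded below away from the origin (unimodality gives $\nu(s) \ge c^{-1}\nu(t)$ for $s \le t$), so the equivalences are trivial there; the content is the regime $|x| \to \infty$. For large $|x|$: if $y \in B$ then $|x - y| \asymp |x|$ (with constants depending on $B$), and since $|x|$ is large, $1 \land \nu(x-y) = \nu(x-y)$ eventually. Unimodality gives $\nu(|x|+r_0) \le c\,\nu(|x-y|)$ and $\nu(|x-y|) \le c\,\nu(|x|-r_0)$ for the relevant range of $y$; then the doubling condition \eqref{eq:doubling-scaling_infinity} upgrades $\nu(|x| \pm r_0) \asymp \nu(|x|) \asymp \nu(R(1+|x|))$ since $|x|\pm r_0$, $|x|$ and $R(1+|x|)$ are all comparable with ratio bounded for $|x|$ large. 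Integrating over $y \in B$ (using $|B| < \infty$ for the upper bound and $B_\rho(z_0)\subset B$ for the lower bound) yields $\widetilde\nu(x) \asymp \nu(|x|) \asymp 1\land\nu(x)$.

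For steps (2) and (3): $\overline\nu(x) = \essinf_{y\in B}\nu(x-y)$; for large $|x|$ the values $\nu(x-y)$, $y\in B$, all lie between $\nu(|x|-r_0)$ and $\nu(|x|+r_0)$ by unimodality up to the constant $c$, and these two endpoints are $\asymp\nu(|x|)$ by doubling, so $\overline\nu(x)\asymp\nu(|x|)$ as well; and $\nu^*(x) = \nu(R(1+|x|)) \asymp \nu(|x|)$ by doubling again. Combining with step (1) and the local-boundedness remark for small $|x|$ gives $\widetilde\nu \asymp \overline\nu \asymp \nu^* \asymp 1\land\nu$ on all of $\R^d$.

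The main obstacle is the bookkeeping around the origin versus infinity: unimodality only controls one-sided oscillation and says nothing about decay, while the doubling condition \eqref{eq:global-scaling_infinity} is stated only for $r \ge 1$, so one must be careful that all comparisons like $\nu(|x|-r_0)\asymp\nu(|x|+r_0)$ are invoked only when $|x|$ is large enough that $|x|-r_0 \ge 1$; the small-$|x|$ range must be handled separately by the full-support/local-boundedness argument, and the two regimes patched together with a finite constant. A secondary subtlety is that $\essinf$ over $B$ rather than over $B_{r_0}(x_0)$ means one should use the inner ball $B_\rho(z_0)\subset B$ when producing lower bounds for $\widetilde\nu$ and the definition of $\overline\nu$ directly for upper bounds on $\overline\nu$ — but since $\overline\nu \le \widetilde\nu$ pointwise (up to the $1\land$ truncation, which is harmless for large $|x|$), the cleanest route is to sandwich $\overline\nu(x) \le \text{(const)}\,\widetilde\nu(x)$ and $\widetilde\nu(x) \le \text{(const)}\,\overline\nu(x)$ using the two-sided unimodality bound on the integrand.
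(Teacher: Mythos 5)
Your proposal is correct and follows essentially the same route as the paper: split $\R^d$ into a bounded region (where unimodality, full support and the boundedness of $B$ make all four quantities bounded above and below by positive constants, the upper bound on $\overline{\nu}$ coming from an inner ball of $B$ as you indicate) and the far region $|x|\gg R$, where $|x-y|\asymp|x|\asymp R(1+|x|)$ for $y\in B$ and the doubling condition \eqref{eq:global-scaling_infinity} together with unimodality gives $\nu(x-y)\asymp\nu(x)\asymp\nu(R(1+|x|))$, after which one integrates over $B$ resp.\ takes the essential infimum. The only soft spot is the phrase that $\overline{\nu}\leq\widetilde{\nu}$ ``trivially'': because of the $1\land$ truncation this is not a pointwise inequality, but your own caveat (truncation harmless for large $|x|$, bounded region handled by the uniform two-sided bounds) repairs it, exactly as in the paper's argument.
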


\begin{proof}
Let us observe that, $ \widetilde{\nu},\overline{\nu},\nu^*$ and $1\land\nu$ are all bounded above.  Indeed, for $x\in \R^d$ we have $R\leq R(1+|x|)$ and $|x|\leq R(1+|x|)$ and hence $\nu^*(x)\leq C(1\land \nu(x))\leq C$. Obviously, $\widetilde{\nu}(x)\leq \|1\land\nu\|_{L^1(\R^d)}$.  Now, let $r>0$ sufficiently small and $x_0\in B$ such that  $B_{8r}(x_0)\subset B$. If $x\in B_{2r}(x_0)$ and $y\in B^c_{4r}(x_0)\cap B$ or if $x\in B^c_{2r}(x_0)$ and $y\in B_{2r}(x_0)$  then $|x-y|\geq r$. In both cases, $\overline{\nu}(x)\leq \nu(x-y)\leq c\nu(r)$.

Next, there is no loss of generality if we assume that $B\subset B_R.$ Assume  $|x|\leq 4R$ then $|x-y|\leq 5R$ for all  $y\in B$. The unimodality and the foregoing boundedness imply that   $c^{-1}\nu(R(1+R))\leq \nu(R(1+|x|))\leq C$, $c^{-1}\nu(5R)\leq \overline{\nu}(x)$, $1\land \nu(4R)\leq c 1\land \nu(x)\leq C$ and that $1\land \nu(5R)\leq c 1\land \nu(x-y)$ that is $|B|1\land \nu(5R)\leq c \widetilde{\nu}(x)\leq C.$  Thus $\widetilde{\nu}(x)\asymp \overline{\nu}(x)\asymp \nu^*(x)\asymp 1\land\nu(x)$ for $|x|\leq 4R$.

Now assume that $|x|\geq 4R,$ then we have $1\leq \frac{|x|}{2}\leq |x-y|\leq 2|x|$ for all $y\in B$ and $|x|\leq R(1+|x|)\leq 2R|x|$. The doubling condition \eqref{eq:global-scaling_infinity} implies that for some constants $0<c_1<1<c_2$,  we have $c_1\nu(x)\leq \nu(R(1+|x|))\leq c_2\nu(x)$ and $c_1\nu(x)\leq \nu(x-y)\leq c_2\nu(x)$ for all $y\in B$. We get $c_1\nu(x)\leq \overline{\nu}(x)\leq c_2\nu(x)$ and integrating over $B$ implies that $c_1|B|1\land \nu(x)\leq c \widetilde{\nu}(x)\leq c_2|B|1\land \nu(x).$  Together  with the boundedness implies $\widetilde{\nu}(x)\asymp \overline{\nu}(x) \asymp\nu^*(x)\asymp 1\land\nu(x)$ for $|x|\geq 4R$.


\end{proof}

\begin{example}
As in Example \ref{exa:standard}, let $0 < \alpha < 2$ and $\nu(h) = |h|^{-d-\alpha}$ for $h \ne 0$. Then $\widetilde{\nu}(x) \asymp 1\land \nu (x) \asymp (1+|h|)^{-d-\alpha}$ and the space $\HnuOm$ equals the classical Sobolev-Slobodeckij space $H^{\alpha/2}(\Omega)$. In this case, we denote the space $\VnuOm$ by $V^{\alpha/2}(\Omega|\R^d)$. We have $V^{\alpha/2}(\Omega|\R^d) \hookrightarrow L^2(\R^d, (1+|h|)^{-d-\alpha})$.
\end{example}

\begin{remark}{\ } Note that  \autoref{lem:natural-norm-on-V} $(i)$,  \autoref{lem:natural-norm-on-V-mubarbis} $(i)$ and  \autoref{lem:natural-norm-on-V-mustarbis} $(i)$ imply that the weights $\widetilde{\nu}, \nu^*, \overline{\nu}$ respectively define Radon measures on $\R^d$. 
\end{remark}

\subsection{Dirichlet forms}
The discussion of the $L^2$-spaces related to $\widetilde{\nu}, \nu^*, \overline{\nu}$ together with density results in \autoref{thm:density} allows us to define a new interesting Dirichlet form. We refer to \cite{FOT11} for the general theory of Dirichlet forms and their corresponding Markov processes.  

\medskip

The following well-known result is a direct consequence of \autoref{thm:density}, (ii) and (iii) .

\begin{proposition}{\ }
Let $\Omega$ be open and bounded with a continuous boundary. Let $\nu$ be any L\'{e}vy measure. Then each of the three bilinear forms $(\cE, V_{\nu,0}(\Omega|\R^d))$, $(\cE_\Omega, \overline{C^\infty_c(\Omega)}^{\HnuOm})$ and $(\cE_\Omega, \HnuOm)$ is a regular Dirichlet form on $L^2(\Omega)$. The corresponding Markov processes are often called \emph{killed}, \emph{censored} resp. \emph{reflected} L\'{e}vy process.
\end{proposition}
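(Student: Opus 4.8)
The plan is to verify, for each of the three bilinear forms, the two defining properties of a regular Dirichlet form on $L^2(\Omega)$: (1) it is a closed, symmetric, densely defined, Markovian (i.e. contractions operate) bilinear form, and (2) it is regular, meaning that $C_c^\infty(\overline{\Omega})$ (respectively $C_c^\infty(\Omega)$) is a core, i.e. it is dense both in the form domain with respect to the form norm and in $C_c(\overline{\Omega})$ with respect to the uniform norm. The uniform-density part of regularity is the classical Stone--Weierstrass argument and is immediate; the form-norm density is exactly what \autoref{thm:density} (ii) and (iii) provide. So the real work is checking closedness and the Markovian property, and recording that the forms are well-defined on the stated $L^2(\Omega)$.

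\begin{proof}
We check the conditions in the definition of a regular Dirichlet form on $L^2(\Omega)$; see \cite{FOT11}.

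\textbf{Closedness and symmetry.} Each of $\cE$, $\cE_\Omega$ is symmetric by construction. The space $\big(H_\nu(\Omega), \|\cdot\|_{H_\nu(\Omega)}\big)$ is a Hilbert space, hence $(\cE_\Omega, H_\nu(\Omega))$ is a closed form on $L^2(\Omega)$; the form norm $\big(\|u\|_{L^2(\Omega)}^2 + \cE_\Omega(u,u)\big)^{1/2}$ is precisely $\|u\|_{H_\nu(\Omega)}$. The domain $\overline{C_c^\infty(\Omega)}^{H_\nu(\Omega)}$ is a closed subspace of $H_\nu(\Omega)$, so $(\cE_\Omega, \overline{C_c^\infty(\Omega)}^{H_\nu(\Omega)})$ is closed as well. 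For $(\cE, V_{\nu,0}(\Omega|\R^d))$, recall that $V_{\nu,0}(\Omega|\R^d)$ is a closed subspace of the Hilbert space $H_\nu(\R^d)$, on which the norms $\|\cdot\|_{\VnuOm}$ and $\|\cdot\|_{H_\nu(\R^d)}$ agree. For $u$ vanishing a.e. on $\R^d\setminus\Omega$ one has $|u|_{\VnuOm}^2 = \cE(u,u)$ by \autoref{lem:energy-vs-seminorm} together with the fact that the integrand is supported where $x\in\Omega$ or $y\in\Omega$; more precisely $\cE(u,u) = \iint_{\Omega\,\R^d}(u(x)-u(y))^2\nu(x-y)\,\d x\,\d y$ when $u=0$ on $\Omega^c$. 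Hence the form norm of $(\cE, V_{\nu,0}(\Omega|\R^d))$ is comparable to $\|\cdot\|_{H_\nu(\R^d)}$ restricted to $V_{\nu,0}(\Omega|\R^d)$, which is complete; thus the form is closed on $L^2(\Omega)$ (note $\|u\|_{L^2(\Omega)} = \|u\|_{L^2(\R^d)}$ here). Each form is densely defined on $L^2(\Omega)$: $C_c^\infty(\Omega)\subset V_{\nu,0}(\Omega|\R^d)$ and $C_c^\infty(\Omega)$ is dense in $L^2(\Omega)$, and $C_c^\infty(\overline{\Omega})$ is dense in $L^2(\Omega)$ as well.

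\textbf{Markovian property.} We must show that for every $u$ in the form domain, the normal contraction $v = (0\lor u)\land 1$ belongs to the domain and satisfies $\cE(v,v)\leq \cE(u,u)$ (resp. for $\cE_\Omega$). This is the standard pointwise estimate: for all $s,t\in\R$, writing $T(s) = (0\lor s)\land 1$, one has $|T(s)-T(t)|\leq |s-t|$, so
\begin{align*}
\big(v(x)-v(y)\big)^2 \leq \big(u(x)-u(y)\big)^2 \quad\text{for a.e. } (x,y),
\end{align*}
whence integrating against $\nu(x-y)\,\d x\,\d y$ over the relevant region gives $\cE(v,v)\leq\cE(u,u)$, and similarly $\cE_\Omega(v,v)\leq\cE_\Omega(u,u)$; moreover $\|v\|_{L^2}\leq\|u\|_{L^2}$ since $|T(s)|\leq |s|$. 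It remains to see $v$ stays in the domain: $v\in L^2(\Omega)$ and the energy is finite, so $v\in H_\nu(\Omega)$ and $v\in\VnuOm$; if $u=0$ a.e. on $\Omega^c$ then $v = T(u) = T(0) = 0$ a.e. on $\Omega^c$, so $v\in V_{\nu,0}(\Omega|\R^d)$; and if $u$ is a limit in $H_\nu(\Omega)$ of functions in $C_c^\infty(\Omega)$, then since $T$ is a normal contraction and $H_\nu(\Omega)$-convergence together with the contraction estimate yields $T(u_n)\to T(u)$, and $T(u_n)$ can be approximated in $H_\nu$ by $C_c^\infty(\Omega)$ functions (mollification of the Lipschitz truncation, supported in a fixed compact subset of $\Omega$), one gets $v\in\overline{C_c^\infty(\Omega)}^{H_\nu(\Omega)}$. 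This establishes the Markovian property for all three forms.

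\textbf{Regularity.} By \autoref{thm:density}(ii), $C_c^\infty(\overline{\Omega})$ is dense in $H_\nu(\Omega)$ with respect to $\|\cdot\|_{H_\nu(\Omega)}$, i.e. with respect to the form norm of $(\cE_\Omega, H_\nu(\Omega))$; and $C_c^\infty(\overline{\Omega})$ is dense in $C_c(\overline{\Omega}) = C(\overline{\Omega})$ in the uniform norm by Stone--Weierstrass. Hence $C_c^\infty(\overline{\Omega})$ is a core and $(\cE_\Omega, H_\nu(\Omega))$ is regular. By \autoref{thm:density}(iii), $C_c^\infty(\Omega)$ is dense in $V_{\nu,0}(\Omega|\R^d)$ in the form norm; and $C_c^\infty(\Omega)$ is dense in $C_c(\Omega)$ in the uniform norm; since $\overline{C_c^\infty(\Omega)}^{H_\nu(\Omega)}$ has $C_c^\infty(\Omega)$ dense in it by definition, the same two facts show $(\cE, V_{\nu,0}(\Omega|\R^d))$ and $(\cE_\Omega, \overline{C_c^\infty(\Omega)}^{H_\nu(\Omega)})$ are regular Dirichlet forms on $L^2(\Omega)$. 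Finally, the probabilistic interpretation of the associated Hunt processes as killed, censored, and reflected L\'evy processes is classical; see \cite{FOT11} and \cite{CF12} for the censored case.
\end{proof}

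The step I expect to be the main obstacle is confirming that the normal contraction of a domain element remains in the \emph{specific} subspace domain (especially for the censored form $\overline{C_c^\infty(\Omega)}^{H_\nu(\Omega)}$, where membership is defined by an approximation procedure rather than a pointwise condition); the energy inequality itself is the easy pointwise estimate, but propagating it through the closure requires the contraction-continuity argument together with a local mollification that does not enlarge supports.
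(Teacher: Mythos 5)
Your overall route is the same as the paper's: the paper dispatches this proposition by citing \autoref{thm:density} (ii) and (iii) for the core/regularity part and treating closedness and Markovianity as standard Dirichlet-form material, and your closedness and regularity paragraphs are precisely that standard verification. One small slip there: for $u\in \VnuOmOm$ one has $\cE(u,u)=\iint_{\Omega\R^d}\big(u(x)-u(y)\big)^2\nuxminy\,\d x\,\d y-\tfrac12\iint_{\Omega\Omega}\big(u(x)-u(y)\big)^2\nuxminy\,\d x\,\d y$, not equality with $|u|^2_{\VnuOm}$; this is harmless because only the two-sided comparability of the form norm with $\|\cdot\|_{\VnuOm}$ (equivalently $\|\cdot\|_{H_\nu(\R^d)}$ on $\VnuOmOm$) is needed for closedness.

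The genuine gap is in the Markov property for the censored form $(\cE_\Omega,\overline{C^\infty_c(\Omega)}^{\HnuOm})$: you assert that $u_n\to u$ in $\HnuOm$ implies $T(u_n)\to T(u)$ in $\HnuOm$ for the unit contraction $T(s)=(0\lor s)\land 1$. Strong continuity of normal contractions in the form norm is not a general fact and you give no argument for it, so as written this step does not go through. It is repairable without new ideas: the contraction estimate gives $\sup_n\|T(u_n)\|_{\HnuOm}<\infty$ and $T(u_n)\to T(u)$ in $L^2(\Omega)$, hence a subsequence of $T(u_n)$ converges weakly in $\HnuOm$ to $T(u)$; your mollification argument places each $T(u_n)$ in $\overline{C^\infty_c(\Omega)}^{\HnuOm}$, which is a closed, hence weakly closed, subspace, so $T(u)$ lies in it, and weak lower semicontinuity yields $\cE_\Omega\big(T(u),T(u)\big)\le\liminf_n\cE_\Omega(u_n,u_n)=\cE_\Omega(u,u)$. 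Alternatively, one can avoid the issue entirely by noting that $(\cE_\Omega, C^\infty_c(\Omega))$ is a closable Markovian symmetric form and invoking the standard fact that its closure is again Markovian, see \cite[Theorem 3.1.1]{FOT11}. With either patch the proof is complete and consistent with the paper's intended (uncited) argument.
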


An important side result of our work is the following theorem, which implies the existence of a strong Markov process, which can be seen as another kind of \emph{reflected} jump process with regards to $\Omega$. The theorem is an improvement over \cite[Theorem 4.4]{Von21}, see  Remark \ref{rem:vondra-strange}. Its proof follows from Theorem  \ref{thm:density}, (iv).

\begin{theorem}\label{thm:Dform-reflected}{\ } Let $\nu$ be unimodal with full support and $\Omega \subset \R^d$ be open and bounded with a Lipschitz-continuous boundary. Let $\nu'$ be any of the measures $\widetilde{\nu}, \nu^*, \overline{\nu}$ on $\R^d$. Then the bilinear form $(\cE, \VnuOm)$ is a regular Dirichlet form on $L^2(\R^d, \nu')$.
\end{theorem}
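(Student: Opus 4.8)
The plan is to verify the defining axioms of a regular Dirichlet form for the pair $(\cE, \VnuOm)$ on the Hilbert space $L^2(\R^d, \nu')$, where $\nu'$ is any of $\widetilde\nu, \nu^*, \overline\nu$. First I would record the preliminary structural facts already available: by \autoref{lem:energy-vs-seminorm} the form $\cE$ is nonnegative and comparable to $|u|^2_{\VnuOm}$, and by \autoref{lem:natural-norm-on-V}, \autoref{lem:natural-norm-on-V-mubarbis} and \autoref{lem:natural-norm-on-V-mustarbis} (together with the standing hypotheses that $\nu$ is unimodal, $\Omega$ bounded Lipschitz) the norm $\|\cdot\|^*_{\VnuOm}$ built from $\nu'$ is equivalent to $\|\cdot\|_{\VnuOm}$, and $\VnuOm \hookrightarrow L^2(\R^d,\nu')$ continuously. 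In particular $\VnuOm$ is dense in $L^2(\R^d,\nu')$ because it already contains $C^\infty_c(\R^d)$, which is dense in $L^2(\R^d,\nu')$ (here one uses that $\nu'$ is a Radon measure, as noted in the remark after \autoref{thm:comp-nu-tilde}). The key point is that the graph norm $(\|u\|^2_{L^2(\R^d,\nu')} + \cE(u,u))^{1/2}$ is, by the cited lemmas, equivalent to $\|\cdot\|^*_{\VnuOm}$ and hence to $\|\cdot\|_{\VnuOm}$, so $(\cE,\VnuOm)$ is closed on $L^2(\R^d,\nu')$ precisely because $(\VnuOm,\|\cdot\|_{\VnuOm})$ is complete (the Hilbert space property stated just before \autoref{prop:natural-norm-on-V}'s environment).

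Next I would check the Markovian (unit contraction) property: if $u \in \VnuOm$ and $v = (0 \vee u) \wedge 1$, then $v \in \VnuOm$ and $\cE(v,v) \le \cE(u,u)$. This is immediate from the pointwise inequality $|v(x)-v(y)| \le |u(x)-u(y)|$ and the fact that truncation does not increase the $L^2(\Omega)$-norm, so $v$ stays in $\VnuOm$; monotonicity of the double integral defining $\cE$ then gives the contraction estimate. Thus $(\cE,\VnuOm)$ is a Dirichlet form on $L^2(\R^d,\nu')$.

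It remains to prove regularity, i.e.\ that $C_c(\R^d) \cap \VnuOm$ is dense in $C_c(\R^d)$ for the uniform norm and dense in $\VnuOm$ for the $\|\cdot\|^*_{\VnuOm}$-norm (equivalently the graph norm). The first density is trivial since $C^\infty_c(\R^d) \subset C_c(\R^d) \cap \VnuOm$ and $C^\infty_c(\R^d)$ is uniformly dense in $C_c(\R^d)$. The second — that $C^\infty_c(\R^d)$ is dense in $\VnuOm$ in the graph norm — is exactly \autoref{thm:density}(iv), which gives density of $C^\infty_c(\R^d)$ in $(\VnuOm, \|\cdot\|_{\VnuOm})$ (and in the norm $\vertiii{\cdot}_{\VnuOm}$) when $\partial\Omega$ is compact Lipschitz; combining this with the norm equivalence $\|\cdot\|_{\VnuOm} \asymp \|\cdot\|^*_{\VnuOm} \asymp (\|\cdot\|^2_{L^2(\nu')} + \cE(\cdot,\cdot))^{1/2}$ transfers the density to the graph norm on $L^2(\R^d,\nu')$. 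Hence $C^\infty_c(\R^d)$ is a common core and the form is regular.

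The main obstacle is purely the bookkeeping around which norm one is using: the density result \autoref{thm:density}(iv) is phrased with respect to $\|\cdot\|_{\VnuOm}$ and $\vertiii{\cdot}_{\VnuOm}$ (the latter involving $\|u\|_{L^2(\R^d)}$, not $\|u\|_{L^2(\R^d,\nu')}$), whereas the Dirichlet form structure on $L^2(\R^d,\nu')$ requires control in the graph norm $\|u\|^2_{L^2(\R^d,\nu')} + \cE(u,u)$. The reconciliation is supplied by \autoref{lem:natural-norm-on-V}(iii) etc., which show $\|\cdot\|^{*}_{\VnuOm} \asymp \|\cdot\|^{\#}_{\VnuOm}$ and, for bounded Lipschitz $\Omega$, $\|\cdot\|_{\VnuOm}\asymp\|\cdot\|^{*}_{\VnuOm}$, so that the graph norm of $(\cE,\VnuOm)$ on $L^2(\R^d,\nu')$ is equivalent to $\|\cdot\|_{\VnuOm}$; once this chain of equivalences is in place, closedness, density and regularity all follow formally from the corresponding statements for $(\VnuOm,\|\cdot\|_{\VnuOm})$. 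I would therefore organize the proof as: (1) norm equivalence and $L^2(\R^d,\nu')$-density of $\VnuOm$; (2) closedness via completeness of $(\VnuOm,\|\cdot\|_{\VnuOm})$; (3) Markovian property by truncation; (4) regularity via \autoref{thm:density}(iv) and the norm equivalence.
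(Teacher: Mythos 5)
Your proposal is correct and follows the same route as the paper, whose entire proof consists of invoking \autoref{thm:density}(iv); your steps (norm equivalence from \autoref{lem:natural-norm-on-V} and its analogues, closedness from completeness of $(\VnuOm,\|\cdot\|_{\VnuOm})$, Markovianity by normal contraction, regularity from density of $C_c^\infty(\R^d)$) are exactly the standard verification the paper leaves implicit.
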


\subsection{Classical Sobolev spaces}

Let us comment on the connection of the spaces under consideration with classical Sobolev spaces. Recall that for an open set $\Omega\subset \R^d$, $H^1(\Omega)$ denotes the classical Sobolev space endowed with the norm 
\[ \|u\|^2_{H^1(\Omega)} = \|u\|^2_{L^2(\Omega)} +\|\nabla u\|^2_{L^2(\Omega)} \,. \]

\begin{proposition} The following embeddings hold true:
\begin{align*}
H^1(\R^d) \hookrightarrow H_{\nu}(\R^d) \hookrightarrow \VnuOm \hookrightarrow \HnuOm \hookrightarrow L^2(\Omega)\,.
\end{align*}
Here we equip $\VnuOm $ with the norm $\|\cdot \|_{\VnuOm}$.
\end{proposition}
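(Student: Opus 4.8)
The plan is to verify each of the four embeddings separately, working from right to left and then reusing monotonicity of the relevant seminorms. The chain to establish is $H^1(\R^d) \hookrightarrow H_{\nu}(\R^d) \hookrightarrow \VnuOm \hookrightarrow \HnuOm \hookrightarrow L^2(\Omega)$, and in each case ``$\hookrightarrow$'' means the identity map (or restriction map) is bounded between the two normed spaces.

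First I would dispose of the two easy inclusions on the right. For $\HnuOm \hookrightarrow L^2(\Omega)$ there is nothing to prove: by definition $\|u\|^2_{\HnuOm} = \|u\|^2_{L^2(\Omega)} + \cE_\Omega(u,u) \geq \|u\|^2_{L^2(\Omega)}$, so the embedding constant is $1$. For $\VnuOm \hookrightarrow \HnuOm$ one restricts $u$ to $\Omega$; the point is the pointwise inequality of integrands, since $\Omega\times\Omega \subset \Omega\times\R^d$ forces
\begin{align*}
\cE_\Omega(u,u) = \iil_{\Omega\Omega} \big(u(x)-u(y)\big)^2 \nuxminy \,\d x\,\d y \leq \iil_{\Omega\R^d} \big(u(x)-u(y)\big)^2 \nuxminy \,\d x\,\d y = |u|^2_{\VnuOm}\,,
\end{align*}
whence $\|u|_\Omega\|^2_{\HnuOm} = \|u\|^2_{L^2(\Omega)} + \cE_\Omega(u,u) \leq \|u\|^2_{L^2(\Omega)} + |u|^2_{\VnuOm} = \|u\|^2_{\VnuOm}$, again with constant $1$.

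Next I would treat $H_\nu(\R^d) \hookrightarrow \VnuOm$. Here $H_\nu(\R^d) = \VnuOm$ with $\Omega = \R^d$ in the seminorm, so $|u|^2_{H_\nu(\R^d)} = \iil_{\R^d\R^d}(u(x)-u(y))^2\nuxminy\,\d x\,\d y \geq \iil_{\Omega\R^d}(u(x)-u(y))^2\nuxminy\,\d x\,\d y = |u|^2_{\VnuOm}$ by the same domain-monotonicity argument, and $\|u\|_{L^2(\Omega)} \leq \|u\|_{L^2(\R^d)}$ trivially, so the identity map is bounded with constant $1$. (One should note that $u|_\Omega \in L^2(\Omega)$ is automatic from $u \in L^2(\R^d)$, so the target-space membership condition is met.) Finally, for $H^1(\R^d) \hookrightarrow H_\nu(\R^d)$ I would invoke the estimate already recorded in the excerpt: the right-hand inequality of \eqref{eq:equiv-sobolev} gives $|u|^2_{H_\nu(\R^d)} \leq 4\|\nu\|_{L^1(\R^d,\,1\land|h|^2\d h)}\|u\|^2_{H^1(\R^d)}$ for $u \in H^1(\R^d)$ — this is exactly the content referenced as ``\eqref{eq:H1-inequality}'' / the proof of \autoref{thm:levy-meets-energy} — combined with $\|u\|_{L^2(\R^d)} \leq \|u\|_{H^1(\R^d)}$, yielding $\|u\|^2_{H_\nu(\R^d)} \leq (1 + 4\|\nu\|_{L^1(\R^d,\,1\land|h|^2\d h)})\|u\|^2_{H^1(\R^d)}$.

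The main (and essentially only) obstacle is the leftmost embedding $H^1(\R^d)\hookrightarrow H_\nu(\R^d)$, which genuinely uses the L\'evy condition \eqref{eq:levy-cond}: one splits the double integral into the region $|x-y|\leq 1$, handled by the fundamental theorem of calculus / the bound $(u(x)-u(y))^2 \leq |x-y|^2 \int_0^1 |\nabla u(x+t(y-x))|^2\,\d t$ together with Fubini and a translation, and the region $|x-y|>1$, handled by $(u(x)-u(y))^2 \leq 2u(x)^2 + 2u(y)^2$ and the finiteness of $\int_{|h|>1}\nu(h)\,\d h$. All three remaining embeddings are immediate from domain monotonicity of the seminorms and the trivial comparison of $L^2$-norms, so no further work is needed there; I would simply assemble the constants and conclude.
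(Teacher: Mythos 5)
Your proposal is correct and follows essentially the same route as the paper: the only nontrivial step is $H^1(\R^d)\hookrightarrow H_\nu(\R^d)$, which the paper proves via the translation estimate $\|u(\cdot+h)-u\|_{L^2(\R^d)}^2\leq 4(1\land|h|^2)\|u\|_{H^1(\R^d)}^2$ integrated against $\nu(h)\,\d h$ — exactly the near/far splitting with the fundamental theorem of calculus and the tail integrability of $\nu$ that you describe — while the remaining embeddings are dismissed as trivial (your domain-monotonicity computations just spell this out). Only make sure you rely on your self-contained argument in the last paragraph rather than on \eqref{eq:equiv-sobolev}, since the estimate \eqref{eq:H1-inequality} is established inside the proof of this very proposition.
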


\begin{proof}
The proof is standard. For $u \in H^1(\R^d)$ and $h\in \R^d$ we have 
\begin{align*}
\int_{\R^d} (u(x+ h)-u(x))^2\d x= \|u(\cdot+ h)-u(\cdot)\|^2_{L^2(\R^d)} \leq 4(1\land |h|^2)\| u \|^2_{H^1(\R^d)}\,.
\end{align*} 
Integrating both sides over $\R^d$ with respect to the measure $\nu(h)\d h$ yields 
\begin{align}\label{eq:H1-inequality}
\iint\limits_{\R^d\R^d} (u(x)-u(y))^2\nuxminy\d x\d y \leq 4\| \nu\|_{L^1(\R^d, 1\land |h|^2\d h)}  \| u \|^2_{H^1(\R^d)}\,.
\end{align} 
This proves the first embedding, the remaining ones are trivial.  
\end{proof}

Recall that $H^1_0(\Omega)$ is the closure of $C_c^\infty(\Omega)$ with respect to the $H^1(\Omega)$. The space $H^1_0(\Omega)$ also coincides with the closure of $C_c^\infty(\Omega)$ in $H^1(\R^d)$. In addition, the zero extension to $\R^d$ of any function in $ H_0^1(\Omega)$ belongs to $H^1(\R^d)$. Recall the definition of $V_{\nu,0}(\Omega|\R^d)$ from \eqref{eq:def-VnuOm-vanish}. 

\begin{proposition} 
Let $\Omega \subset \R^d$ be open. The following embeddings hold true:
\begin{align*}
&H^1_0(\Omega)\hookrightarrow V_{\nu,0}(\Omega|\R^d) \hookrightarrow H_{\nu}(\Omega) \,\\
\intertext{ where elements of $H_0^1(\Omega)$ are extended by zero off $\Omega$. If additionally $\partial \Omega$ is continuous, }
&V_{\nu,0}(\Omega|\R^d)\hookrightarrow \overline{C^\infty_c(\Omega)}^{\HnuOm} \hookrightarrow L^2(\Omega) \,.
\end{align*}
\end{proposition}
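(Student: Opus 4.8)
The plan is to prove the two chains of embeddings separately, establishing continuity of each link. For the first chain, $H^1_0(\Omega)\hookrightarrow V_{\nu,0}(\Omega|\R^d) \hookrightarrow H_{\nu}(\Omega)$, I would proceed as follows. Take $u\in H^1_0(\Omega)$ and let $\widetilde u$ denote its extension by zero to $\R^d$; the standard fact recalled just above the statement gives $\widetilde u\in H^1(\R^d)$ with $\|\widetilde u\|_{H^1(\R^d)}=\|u\|_{H^1(\Omega)}$. Then the inequality \eqref{eq:H1-inequality} applied to $\widetilde u$ shows $|\widetilde u|^2_{\VnuOm}\le |\widetilde u|^2_{H_\nu(\R^d)}\le 4\|\nu\|_{L^1(\R^d,1\land|h|^2\d h)}\|u\|^2_{H^1(\Omega)}$, and since $\widetilde u$ vanishes a.e. on $\R^d\setminus\Omega$ and $\widetilde u|_\Omega=u\in L^2(\Omega)$, we conclude $\widetilde u\in V_{\nu,0}(\Omega|\R^d)$ with $\|\widetilde u\|_{V_\nu(\Omega|\R^d)}\le C\|u\|_{H^1(\Omega)}$. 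For the second link, if $u\in V_{\nu,0}(\Omega|\R^d)$, then by definition $u|_\Omega\in L^2(\Omega)$ and, using $\cE_\Omega(u,u)\le |u|^2_{\VnuOm}<\infty$ (the integration domain $\Omega\times\Omega$ being contained in $\Omega\times\R^d$), we get $u|_\Omega\in H_\nu(\Omega)$ with $\|u\|_{H_\nu(\Omega)}\le\|u\|_{V_\nu(\Omega|\R^d)}$; this link is trivial and does not even need the vanishing condition.

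For the second chain, assume $\partial\Omega$ is continuous. The middle space $\overline{C^\infty_c(\Omega)}^{\HnuOm}$ is a closed subspace of $H_\nu(\Omega)$, so the embedding $\overline{C^\infty_c(\Omega)}^{\HnuOm}\hookrightarrow L^2(\Omega)$ is immediate from the definition of the $H_\nu(\Omega)$-norm. The substantive link is $V_{\nu,0}(\Omega|\R^d)\hookrightarrow \overline{C^\infty_c(\Omega)}^{\HnuOm}$. Here I would invoke \autoref{thm:density}(iii): since $\partial\Omega$ is compact and continuous, $C_c^\infty(\Omega)$ is dense in $V_{\nu,0}(\Omega|\R^d)$ with respect to $\|\cdot\|_{V_\nu(\Omega|\R^d)}$. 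Given $u\in V_{\nu,0}(\Omega|\R^d)$, take $\varphi_n\in C^\infty_c(\Omega)$ with $\varphi_n\to u$ in $V_\nu(\Omega|\R^d)$. Since on functions supported in $\Omega$ (and so vanishing off $\Omega$) we have $\|\varphi_n-\varphi_m\|^2_{H_\nu(\Omega)}=\|\varphi_n-\varphi_m\|^2_{L^2(\Omega)}+\cE_\Omega(\varphi_n-\varphi_m,\varphi_n-\varphi_m)\le\|\varphi_n-\varphi_m\|^2_{V_\nu(\Omega|\R^d)}$, the sequence $(\varphi_n)$ is Cauchy in $H_\nu(\Omega)$, hence converges to some limit lying in $\overline{C^\infty_c(\Omega)}^{\HnuOm}$; that limit must agree with $u|_\Omega$ because convergence in $V_\nu(\Omega|\R^d)$ forces $L^2(\Omega)$-convergence. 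This identifies $u|_\Omega$ as an element of $\overline{C^\infty_c(\Omega)}^{\HnuOm}$ and the norm estimate is inherited along the way, giving continuity.

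I do not expect any genuinely hard obstacle here — the only place requiring care is the second chain's first link, which is not purely formal because it relies on the density result \autoref{thm:density}(iii), whose hypothesis (compact continuous boundary) must be matched to the stated assumption. One should also be slightly careful about the implicit identification of a function $u\in V_{\nu,0}(\Omega|\R^d)$ defined on all of $\R^d$ (but zero off $\Omega$) with its restriction $u|_\Omega$, which is how it is viewed as an element of $H_\nu(\Omega)$; conversely, going the other direction in the first chain one extends by zero, and the noted fact that the zero extension of an $H^1_0(\Omega)$-function lies in $H^1(\R^d)$ is exactly what makes \eqref{eq:H1-inequality} applicable. Everything else is routine bookkeeping with the norms already in play.
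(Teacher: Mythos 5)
Your proof is correct and follows the route the paper intends (the paper states this proposition without giving a proof): the first chain via the zero-extension fact and inequality \eqref{eq:H1-inequality}, the second chain via the density result \autoref{thm:density}~(iii) combined with the elementary norm comparisons $\|\cdot\|_{\HnuOm}\leq\|\cdot\|_{\VnuOm}$ and $\|\cdot\|_{L^2(\Omega)}\leq\|\cdot\|_{\HnuOm}$. The only caveat — that \autoref{thm:density}~(iii) is stated for a \emph{compact} continuous boundary while the proposition assumes only that $\partial\Omega$ is continuous — is an imprecision in the paper's own formulation, and you rightly flagged that the hypotheses must be matched there.
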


It is worth noticing that not every function $u \in \overline{C^\infty_c(\Omega)}^{\HnuOm}$ has its extension by zero in $H_{\nu}(\R^d)$. Indeed for this to hold, one would need
\begin{align*}
\int_{\Omega}|u(x)|^2\d x\int_{\Omega^c} \nuxminy\d y<\infty. 
\end{align*}
This condition is not always true since the measure $\nu$ might be very singular at the origin. This observation shows that for some appropriate domain $\Omega$ and for some appropriate measure $\nu$, e.g. $\nu(h) = |h|^{-d-3/2}$, the spaces $\overline{C^\infty_c(\Omega)}^{\HnuOm}$ and $ V_{\nu,0}(\Omega|\R^d)$ are strictly different although they both possess $C_c^\infty(\Omega)$ as dense subspace. This effect is purely nonlocal. Recall that elements of $H_0^1(\Omega)$ can be isometrically extended by zero on $\R^d$ as functions of $H^1(\R^d)$. 

\medskip

Now assume $\Omega$ is a Lipschitz domain (or more generally an $H^1$-extension domain). Let $\overline{u}\in H^{1}(\R^d)$ be an extension of a function $u\in H^{1}(\Omega) $ with			 $\|\overline{u}\|_{H^1(\R^d)} \leq C \|u\|_{H^1(\Omega)}$ for some constant $C$ depending only on $\Omega$ and $d$. Within the estimate \eqref{eq:H1-inequality} we easily get the following continuous embedding 
\begin{proposition} Assume $\Omega\subset \R^d$ is an $H^1$-extension domain then 
\begin{align*}
H^1(\Omega)\hookrightarrow H_{\nu}(\Omega)\,. 
\end{align*}
\end{proposition}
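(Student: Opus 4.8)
The plan is to use the already-established nonlocal inequality \eqref{eq:H1-inequality} together with the extension property of $H^1$-extension domains. First I would recall that, since $\Omega$ is an $H^1$-extension domain, every $u \in H^1(\Omega)$ admits an extension $\overline{u} \in H^1(\R^d)$ with $\|\overline{u}\|_{H^1(\R^d)} \leq C\|u\|_{H^1(\Omega)}$, where $C = C(\Omega,d)$. The goal is to bound $\|u\|_{H_\nu(\Omega)}^2 = \|u\|_{L^2(\Omega)}^2 + \cE_\Omega(u,u)$ by a multiple of $\|u\|_{H^1(\Omega)}^2$.

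Next I would estimate $\cE_\Omega(u,u)$ from above by the full-space energy of the extension. Since the integrand $(u(x)-u(y))^2\nu(x-y)$ in \eqref{def:EOm} is nonnegative and the domain of integration $\Omega \times \Omega$ is contained in $\R^d \times \R^d$, and since $\overline{u}|_\Omega = u$, we get
\begin{align*}
\cE_\Omega(u,u) = \iil_{\Omega\Omega} (u(x)-u(y))^2 \nuxminy \d x\, \d y \leq \iint\limits_{\R^d\R^d} (\overline{u}(x)-\overline{u}(y))^2 \nuxminy \d x\, \d y \,.
\end{align*}
Applying \eqref{eq:H1-inequality} to $\overline{u} \in H^1(\R^d)$ bounds the right-hand side by $4\|\nu\|_{L^1(\R^d,\, 1\land|h|^2\d h)} \|\overline{u}\|_{H^1(\R^d)}^2$. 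Combining with $\|u\|_{L^2(\Omega)}^2 = \|\overline{u}\|_{L^2(\Omega)}^2 \leq \|\overline{u}\|_{H^1(\R^d)}^2$ and the extension bound $\|\overline{u}\|_{H^1(\R^d)}^2 \leq C^2 \|u\|_{H^1(\Omega)}^2$ yields
\begin{align*}
\|u\|_{H_\nu(\Omega)}^2 \leq \big(1 + 4\|\nu\|_{L^1(\R^d,\, 1\land|h|^2\d h)}\big) C^2 \|u\|_{H^1(\Omega)}^2 \,,
\end{align*}
which is the claimed continuous embedding; in particular $u \in H_\nu(\Omega)$ since the right-hand side is finite by \eqref{eq:levy-cond}.

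This proof is essentially routine given the tools in the excerpt; there is no serious obstacle. The only point requiring a word of care is that the embedding constant depends on the extension operator of $\Omega$ and hence on $\Omega$ and $d$ (as already noted in the paragraph preceding the statement), but this is harmless for a continuous embedding. One could also remark that the map $u \mapsto u$ is well-defined independently of the choice of extension $\overline{u}$, since $H_\nu(\Omega)$ only sees the values of $u$ on $\Omega$; the extension is merely an auxiliary device to invoke \eqref{eq:H1-inequality}.
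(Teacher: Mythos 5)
Your proposal is correct and follows the same route as the paper: extend $u$ to $\overline{u}\in H^1(\R^d)$ using the $H^1$-extension property, dominate $\cE_\Omega(u,u)$ by the full-space energy of $\overline{u}$, and invoke \eqref{eq:H1-inequality}. The only addition is that you spell out the bookkeeping of constants, which the paper leaves implicit.
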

The latter embedding may fail when $\Omega$ is not an extension domain (see \cite[Counterexample 3.8]{Fog21} or 
\cite[Example 9.1]{Hitchhiker}). Note that $H^1(\Omega)$ can be viewed as limiting space of a sequence of nonlocal spaces of type $\HnuOm$ and $\VnuOm$ see \cite{Fog21,Fog21b,Fog23s} for additional results.

\subsection{Trace space of $\VnuOm$}\label{subsec:trace-space}

The main goal of this subsection is to discuss the trace space for $\VnuOm$ similarly as one does for the classical Sobolev space $H^1(\Omega)$. Note that elements of $\VnuOm$ are defined on the whole of $\R^d$, thus the trace space consists of functions defined on $\Omega^c$. This contrasts with the local situation, where the trace space of $H^1(\Omega)$ consists of functions defined on the boundary $\partial\Omega$. Unless otherwise stated, we assume that $\nu$ is fully supported on $\R^d$ and $\VnuOm$ is endowed with the norm $\|\cdot \|_{\VnuOm} $. Note that, when studying the fractional Laplace operator, trace spaces related to $\VnuOm$ when $\nu(h) = |h|^{-d-\alpha}$ for $h \ne 0$ and $0 < \alpha < 2$, have already been considered in \cite{DyKa19}, \cite{BGPR20} and \cite{DTCY21}. Below, we comment on how our general approach relates to these studies. 
\begin{definition}
We define $\TnuOm$ as the vector space of restrictions to $\R^d\setminus \Omega$ of functions of $\VnuOm$, i.e., 
\begin{align*}
\TnuOm = \{v: \Omega^c\to \mathbb{R} \text{ measurable } \; | \; v = u|_{\Omega^c} \text{ with } u \in \VnuOm\}.
\end{align*}
We endow $\TnuOm $ with its natural norm, 
\begin{align*}
\|v\|_{\TnuOm } = \inf\{ \|u\|_{\VnuOm }: ~~ u \in \VnuOm ~~ \hbox{ with }~~ v = u|_{\Omega^c} \}. 
\end{align*}
\end{definition}

\begin{theorem}
The space $\TnuOm$ is a separable Hilbert space with the scalar product
\begin{align*}
(u,v)_{\TnuOm}= \frac{1}{4}\Big( \|u+v\|^{2}_{\TnuOm }-\|u-v\|^{2}_{\TnuOm } \Big).
\end{align*}
\end{theorem}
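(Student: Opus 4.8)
The plan is to realize $\TnuOm$ as a quotient of the Hilbert space $\VnuOm$, which automatically makes it a Hilbert space, and then to check separability. First I would introduce the subspace
\begin{align*}
W = \{u \in \VnuOm : u|_{\Omega^c} = 0 \text{ a.e.}\} = \{u \in \VnuOm : u = u\,\mathds{1}_\Omega\}\,.
\end{align*}
I claim $W$ is closed in $(\VnuOm, \|\cdot\|_{\VnuOm})$: if $u_n \to u$ in $\VnuOm$, then in particular $u_n \to u$ in $L^2(\Omega)$ (controlling the behaviour on $\Omega$) and, using that $|u|^2_{\VnuOm}$ controls $\iint_{\Omega \times \Omega^c}(u(x)-u(y))^2\nuxminy\,\d x\,\d y$, one extracts convergence on $\Omega^c$ against the weight $\widetilde\nu$ (or $\nu^*$, $\overline\nu$) via \autoref{lem:natural-norm-on-V}, so that the a.e.-zero condition on $\Omega^c$ passes to the limit along a subsequence. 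Hence $W$ is closed and the quotient $\VnuOm / W$ is a Hilbert space with the quotient norm.

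The second step is to identify $\VnuOm / W$ with $\TnuOm$ isometrically. The restriction map $\rho : \VnuOm \to \TnuOm$, $u \mapsto u|_{\Omega^c}$, is linear and surjective by definition of $\TnuOm$, with kernel exactly $W$, so it factors through a linear bijection $\bar\rho : \VnuOm / W \to \TnuOm$. For $v = u|_{\Omega^c} \in \TnuOm$ we have, directly from the definitions,
\begin{align*}
\|v\|_{\TnuOm} = \inf\{\|u'\|_{\VnuOm} : u' \in \VnuOm,\ \rho(u') = v\} = \inf\{\|u + w\|_{\VnuOm} : w \in W\} = \|u + W\|_{\VnuOm / W}\,,
\end{align*}
so $\bar\rho$ is an isometric isomorphism. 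In particular $\|\cdot\|_{\TnuOm}$ is a genuine norm (not merely a seminorm) coming from a complete space, and the stated polarization formula then defines a scalar product on $\TnuOm$ inducing this norm — this is the standard fact that the quotient norm of a Hilbert space by a closed subspace is itself a Hilbert space norm, the inner product being the one transported from the orthogonal complement $W^\perp$ of $W$ in $\VnuOm$.

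For separability, recall from the earlier proposition that $(\VnuOm, \|\cdot\|_{\VnuOm})$ is a separable Hilbert space (it relies on $\nu$ having full support, which we assume here). The continuous surjection $\rho$ (it is norm-decreasing: $\|\rho(u)\|_{\TnuOm} \le \|u\|_{\VnuOm}$) maps a countable dense subset of $\VnuOm$ onto a countable dense subset of $\TnuOm$, so $\TnuOm$ is separable. Assembling: $\TnuOm$ is a separable Hilbert space and its inner product is recovered from the norm by polarization, as claimed.

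\textbf{Main obstacle.} The only non-formal point is the closedness of $W$ in $\VnuOm$, i.e., that $\|\cdot\|_{\VnuOm}$-convergence forces convergence of the complement values in a topology strong enough to preserve vanishing a.e. on $\Omega^c$. This is exactly where the weighted-space embedding $\VnuOm \hookrightarrow L^2(\R^d, \widetilde\nu)$ from \autoref{lem:natural-norm-on-V}(ii) does the work: convergence in $\VnuOm$ implies convergence in $L^2(\R^d,\widetilde\nu)$, hence a.e. convergence along a subsequence on $\{\widetilde\nu > 0\} \supset \Omega^c$ (full support of $\nu$ guarantees $\widetilde\nu > 0$), which is enough. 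Everything else is the standard quotient-space argument.
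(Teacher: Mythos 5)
Your argument is correct and is essentially the paper's proof: you identify $\TnuOm$ isometrically with the quotient $\VnuOm / W$, where your $W$ is exactly $V_{\nu,0}(\Omega|\R^d)$ as defined in \eqref{eq:def-VnuOm-vanish}, and invoke the standard fact that a quotient of a Hilbert space by a closed subspace is a Hilbert space, plus continuity of the restriction map for separability. The only cosmetic difference is that you re-derive closedness of $W$ via the embedding $\VnuOm \hookrightarrow L^2(\R^d,\widetilde\nu)$, whereas the paper simply cites that $V_{\nu,0}(\Omega|\R^d)$ is closed; both are fine.
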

\begin{proof}
Since the norm $\|\cdot \|_{\VnuOm }$  verifies the parallelogram law so does $\|\cdot \|_{\TnuOm }$. Thus $\big(\cdot, \cdot\big)_{\TnuOm}$ is a scalar product on $\TnuOm$ with associated norm $\|\cdot \|_{\TnuOm }$. Noting that $\TnuOm$ and the quotient space $\VnuOm/V_{\nu,0}(\Omega|\R^d)$ are identical with equal norm in space  and that $V_{\nu,0}(\Omega|\R^d)$ is a closed subspace of $\VnuOm,$ one concludes that $\TnuOm$ is complete.
\end{proof}

\medskip

The main question now is whether the same space $\TnuOm$ can be defined intrinsically. In other words, given a measurable function $v: \Omega^c \to \mathbb{R}$, how can one decide whether the function belongs to $\TnuOm$ or not. In the local situation, it is possible to define a scalar product on the space $H^{1/2}(\partial \Omega)$ when $\Omega$ is a Lipschitz domain, see \cite{Din96} for a proof. 

\medskip

We study this question in two settings, the one of \cite{DyKa19} and the one of \cite{BGPR20}. For the special case $\nu(h)= |h|^{-d-\alpha}$ for $h \ne 0$ with $0 < \alpha < 2$, it is proved in \cite[Theorem 3]{DyKa19} that for $v\in \VnuOm$ it holds
\begin{align}\label{eq:finiteness-ext}
\iil\limits_{\Omega^c \Omega^c} \frac{\big(v(x)-v(y) \big)^2 }{(|x-y|+\delta_x+\delta_y)^{d+\alpha}} \d x\d y<\infty\,.
\end{align}
Moreover, it is shown that, if \eqref{eq:finiteness-ext} holds true for $v=g$ on $\Omega^c$, there exists $u_g\in \VnuOm$ such that $u_g|_{\Omega^c}=g$ and 
\begin{align}\label{eq:compare-semi-douglas}
\iil\limits_{(\Omega^c \times \Omega^c)^c} \frac{\big(u_g(x)-u_g(y) \big)^2 }{|x-y|^{d+\alpha}} \d x\d y \asymp \iil\limits_{\Omega^c \Omega^c} \frac{\big(v(x)-v(y) \big)^2 }{(|x-y|+\delta_x+\delta_y)^{d+\alpha}} \d x\d y
\end{align}
with the constants independent of $g$ and $u_g$. Therefore we obtain in this case
\begin{align}\label{eq:trace-DK20}
\TnuOm= \Big\{ v:\Omega^c\to \mathbb{R}~\text{meas.} \big | \, \iil_{\Omega^c\Omega^c} \!\!\frac{\big(v(x)-v(y) \big)^2 }{(|x-y|+\delta_x+\delta_y)^{d+\alpha}}\, \d x \, \d y<\infty \Big\}\,, 
\end{align}
which we will make use of in \autoref{prop:trace-comparability}.

\medskip

Next, let us summarize the results of  \cite{BGPR20}, which are established under the following condition: One assumes that $\nu$ is radial and its profile $\nu \in C^2 ((0,\infty))$ satisfies for some $C_1, C_2>0$ and $\beta \in (0,2)$ 
\begin{align}\label{eq:cond-BGPR}\tag{$A_\nu$}
\begin{split}
|\nu'(r)| +|\nu''(r)| &\leq C_1 \nu(r) \qquad (r > 1) \,, \\
\nu(\lambda r) &\leq C_2 \lambda^{d-\beta}\nu(r)\qquad  (0< r,\lambda\leq 1),\\
\nu(r) &\leq C_2\nu(r+1) \qquad (r\geq 1) \,.
\end{split}
\end{align}

Assume $\Omega^c$ satisfies the volume density condition (in some works, $\Omega^c$ is called a $d$-set), i.e., there exists a constant $c>0$ such that $|\Omega^c\cap B_r(x)|\geq cr^d$ for all $x\in \partial\Omega$ and all $r>0$. By the Lebesgue density theorem, the latter condition automatically implies that $|\partial\Omega|=0$. Then under \eqref{eq:cond-BGPR} \cite[Theorem 2.3]{BGPR20} proves that, for any $g\in \TnuOm$ there  exists a unique $u_g\in\VnuOm $ such that $u_g|_{\Omega^c}= g$ with 
\begin{align}\label{eq:douglas-formula}
\mathcal{H}_\Omega(g,g):= \iil_{\Omega^c\Omega^c} \!\!\big(g(x)-g(y) \big)^2 \, \gamma_\Omega (x, y) \d x \, \d y=\hspace*{-2ex} \iil_{(\Omega^c\times \Omega^c)^c} \!\!\big(u_g(x)-u_g(y) \big)^2 \, \nuxminy \d x \, \d y. 
\end{align}
The function $u_g$ satisfies the weak formulation 
\begin{align}
\iil_{(\Omega^c\times \Omega^c)^c} \!\!\big(u_g(x)-u_g(y) \big)(\phi(x)-\phi(y)) \, \nuxminy \d x \, \d y=0 \quad \text{ for all } \phi\in V_{\nu, 0}(\Omega|\R^d)
\end{align}
and the interaction kernel $\gamma_\Omega (x, y)$ is given via the Poisson kernel of $\Omega$ by the formula
\begin{align*}
\gamma_\Omega (x, y) = \int_{\Omega}P_\Omega(x,z)\nu(z-y) \d z\,\qquad x,y\in \Omega^c \,.
\end{align*}
Furthermore, a precise formula for $u_g$ in $\Omega $ is given by the Poisson integral
\begin{align*}
u_g(x)= P_\Omega[g](x) = \int_{\Omega^c}g(y)P_\Omega(x,y)\d y\, \qquad x\in \Omega\,. 
\end{align*}
From this, it is easy to show that 
\begin{align*}
\TnuOm= \Big\{ v:\Omega^c\to \mathbb{R}~\text{meas.}~~\mathcal{H}_{\Omega}(v,v)= \iil_{\Omega^c\Omega^c} \!\!\big(v(x)-v(y) \big)^2 \, \gamma_\Omega (x, y) \d x \, \d y<\infty \Big\}\,
\end{align*}
which is precisely the exterior space introduced in \cite{BGPR20}.
With this definition, the connection between $\TnuOm$ and $\VnuOm$ is less visible. For $v\in \TnuOm$, by definition of $\|\cdot\|_{\TnuOm }$, we have 
\begin{align*}
\|v\|^{2}_{\TnuOm } &= \inf\{ \|u\|^{2}_{\VnuOm } :\, u \in \VnuOm \text{ with } v = u|_{\Omega^c} \}\\
&\geq \inf\Big\{ \int_{\Omega}|u(x)|^2\d x: \, u \in \VnuOm \text{ with } v = u|_{\Omega^c} \Big\}+ \mathcal{H}_{\Omega}(v,v)\,.
\end{align*}
It is rather challenging to find or to estimate the quantity 
\begin{align*}
\inf\Big\{ \int_{\Omega}|u(x)|^2\d x: ~~ u \in \VnuOm ~~ \hbox{with}~ v = u|_{\Omega^c} \Big\}.
\end{align*}
Remember that our goal here is to explicitly define a norm which is equivalent to $\|\cdot\|_{\TnuOm }$ and has less visible connection to $\VnuOm$. To this end, we bring into play the norm $\|\cdot\|^{*}_{\VnuOm }$ defined in \autoref{lem:natural-norm-on-V}. 
\medskip

\begin{proposition}\label{prop:trace-comparability}
Assume $\Omega$ is open and bounded, such that $\Omega^c$ satisfies the volume density condition. Assume $\nu$ satisfies \eqref{eq:cond-BGPR}. Let $\widetilde{\nu}$ and $\|\cdot\|^{*}_{\VnuOm }$ be respectively the measure and the norm given in \autoref{lem:natural-norm-on-V}. Then 
\begin{align*}
\TnuOm= \Big\{ v:\Omega^c\to \mathbb{R}~\text{meas.}~~\mathcal{H}_{\Omega}(v,v)= \iil_{\Omega^c\Omega^c} \!\!\big(v(x)-v(y) \big)^2 \, \gamma_\Omega (x, y) \d x \, \d y<\infty \Big\}\,
\end{align*}
and the norms $\|\cdot\|_{\TnuOm }$, $\|\cdot\|^{*}_{\TnuOm }$ and $\|\cdot\|^{\dagger}_{\TnuOm }$ are all equivalent, where
\begin{align*}
\|v\|^{*}_{\TnuOm } &= \inf\{ \|u\|^{*}_{\VnuOm } :~~ u \in \VnuOm ~~ \hbox{with}~~ v = u|_{\Omega^c} \}\\
\|v\|^{\dagger 2}_{\TnuOm } &=\int_{\Omega^c} |v(x)|^2\widetilde{\nu}(x)\d x+ \iil_{\Omega^c\Omega^c} \!\!\big(v(x)-v(y) \big)^2 \, \gamma_\Omega (x, y) \d x \, \d y\,.
\end{align*}
Next, consider $\nu(h)= (2-\alpha)|h|^{-d-\alpha}$ for $h \ne 0$ with $0 < \alpha < 2$ fixed and $\widetilde{\nu}(h) = \frac{1}{(1+|h|)^{d+\alpha}}$. Set $\delta_z = \operatorname{dist}(z, \partial\Omega)$. Then 
\begin{align*}
\TnuOm= \Big\{ v:\Omega^c\to \mathbb{R}~\text{meas.} \big| \; \iil_{\Omega^c\Omega^c} \!\!\frac{\big(v(x)-v(y) \big)^2 }{(|x-y|+\delta_x+\delta_y)^{d+\alpha}}\, \d x \, \d y<\infty \Big\}\,
\end{align*}
and the aforementioned norms are equivalent to the norm 
\begin{align*}
\|v\|^{'2}_{\TnuOm} = \int_{\Omega^c} \frac{|v(x)|^2}{(1+|x|)^{d+\alpha}}\d x+ \iil\limits_{\Omega^c \Omega^c} \frac{\big(v(x)-v(y) \big)^2 }{(|x-y|+\delta_x+\delta_y)^{d+\alpha}} \d x\d y\,.
\end{align*}

\end{proposition}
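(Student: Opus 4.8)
The plan is to prove the proposition in two pieces, corresponding to the two settings in the statement. For the first part, I would start from the characterization of $\TnuOm$ as the exterior space of \cite{BGPR20} recalled in \eqref{eq:douglas-formula}, so that the set-theoretic identity $\TnuOm = \{v : \mathcal{H}_\Omega(v,v) < \infty\}$ holds on the nose; this is already available under \eqref{eq:cond-BGPR} and the volume density condition. The real content is the equivalence of the three norms $\|\cdot\|_{\TnuOm}$, $\|\cdot\|^{*}_{\TnuOm}$ and $\|\cdot\|^{\dagger}_{\TnuOm}$. The equivalence $\|\cdot\|_{\TnuOm} \asymp \|\cdot\|^{*}_{\TnuOm}$ follows immediately from \autoref{lem:natural-norm-on-V}: since $\Omega$ is bounded, $\|\cdot\|_{\VnuOm} \asymp \|\cdot\|^{*}_{\VnuOm}$ on $\VnuOm$, and taking the infimum over all extensions $u$ of a given $v$ preserves the two-sided bound. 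So the crux is $\|\cdot\|^{*}_{\TnuOm} \asymp \|\cdot\|^{\dagger}_{\TnuOm}$.

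For that equivalence I would use the minimal (harmonic) extension $u_v = P_\Omega[v]$ provided by \cite{BGPR20}, which realizes $\mathcal{E}(u_v, u_v) = \mathcal{H}_\Omega(v,v)$ exactly. Plugging $u_v$ into the definition of $\|\cdot\|^{*}_{\VnuOm}$ gives
\begin{align*}
\|v\|^{*2}_{\TnuOm} \leq \int_{\R^d} |u_v(x)|^2 \widetilde{\nu}(x) \d x + \mathcal{H}_\Omega(v,v)\,,
\end{align*}
so I need to bound $\int_{\Omega} |u_v|^2 \widetilde{\nu}$ by $\|v\|^{\dagger 2}_{\TnuOm}$. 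Here $\widetilde{\nu}$ is bounded, $\Omega$ is bounded, and $u_v|_\Omega = P_\Omega[v]$, so by \autoref{prop:natural-norm-on-V}-type control (or a direct Poisson-kernel estimate together with \autoref{lem:natural-norm-on-V}(ii), the continuous embedding $\VnuOm \hookrightarrow L^2(\R^d,\widetilde{\nu})$) one controls $\|u_v\|_{L^2(\Omega)}$ by $\mathcal{H}_\Omega(v,v)$ plus a lower-order term, and the $\Omega^c$-part of $\widetilde{\nu}$-mass of $u_v$ equals the $\widetilde{\nu}$-mass of $v$. For the reverse inequality, take an arbitrary extension $u$ of $v$; then $\int_{\Omega^c} |v|^2 \widetilde{\nu} \le \int_{\R^d} |u|^2 \widetilde{\nu}$ and $\mathcal{H}_\Omega(v,v) \le \mathcal{E}(u,u)$ (the minimality from \eqref{eq:douglas-formula}), hence $\|v\|^{\dagger 2}_{\TnuOm} \le \|u\|^{*2}_{\VnuOm}$, and taking the infimum over $u$ gives $\|v\|^{\dagger}_{\TnuOm} \le \|v\|^{*}_{\TnuOm}$.

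For the second part, I would specialize to $\nu(h) = (2-\alpha)|h|^{-d-\alpha}$. The set-theoretic description of $\TnuOm$ via the weighted Douglas-type integral $\iint_{\Omega^c\Omega^c}(v(x)-v(y))^2(|x-y|+\delta_x+\delta_y)^{-d-\alpha}\d x\d y$ is exactly \eqref{eq:trace-DK20}, which holds under the present hypotheses (bounded $\Omega$ with, say, the volume density condition, as in \cite{DyKa19}); and the equivalence \eqref{eq:compare-semi-douglas} says the minimal-extension energy $\mathcal{E}(u_v,u_v)$ is comparable to this integral. So the remaining task is just to compare $\gamma_\Omega(x,y)$ with $(|x-y|+\delta_x+\delta_y)^{-d-\alpha}$ up to constants, or more efficiently, to combine \eqref{eq:compare-semi-douglas} with the first part: replacing $\mathcal{H}_\Omega(v,v)$ by the comparable quantity in \eqref{eq:compare-semi-douglas} and $\widetilde{\nu}(x)$ by $(1+|x|)^{-d-\alpha}$ (legitimate since $\widetilde{\nu} \asymp (1+|\cdot|)^{-d-\alpha}$ by \autoref{exa:standard} / \autoref{thm:comp-nu-tilde}) turns $\|\cdot\|^{\dagger}_{\TnuOm}$ into $\|\cdot\|'_{\TnuOm}$.

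\textbf{Main obstacle.} The step I expect to be most delicate is controlling $\|u_v\|_{L^2(\Omega)}$ — equivalently the $\int_\Omega |u_v|^2 \widetilde{\nu}$ term — by $\mathcal{H}_\Omega(v,v)$ alone in the minimal-extension bound, without picking up an uncontrolled additive constant; this is where one genuinely needs a Poincaré-type argument (the fact that $v$ is not zero somewhere, so constants are pinned down) rather than a soft embedding, and it is the only place where the boundedness of $\Omega$ and the full-support / unimodality hypotheses are really used in an essential way. The Poisson-kernel gymnastics needed to identify $u_v|_\Omega$ quantitatively, if one goes that route, is the other potential source of friction; routing everything through \eqref{eq:compare-semi-douglas} and the abstract minimality identity \eqref{eq:douglas-formula} avoids most of it.
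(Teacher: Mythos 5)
Your easy steps coincide with the paper's: the set identity via \eqref{eq:douglas-formula}, the equivalence $\|\cdot\|_{\TnuOm}\asymp\|\cdot\|^{*}_{\TnuOm}$ from \autoref{lem:natural-norm-on-V}, the one-sided bound $\|v\|^{\dagger}_{\TnuOm}\le\|v\|^{*}_{\TnuOm}$ from minimality of the extension, and the use of \cite[Theorem 3]{DyKa19} in the fractional case. The genuine gap is exactly the step you flag as your ``main obstacle'': the reverse inequality $\|v\|^{*}_{\TnuOm}\lesssim\|v\|^{\dagger}_{\TnuOm}$ is never actually proved. Moreover, the bound as you formulate it --- controlling $\|u_v\|_{L^2(\Omega)}$ ``by $\mathcal{H}_\Omega(v,v)$ alone'' --- is false: for $v\equiv 1$ the minimal extension is $u_v\equiv 1$, so $\mathcal{H}_\Omega(v,v)=0$ while $\|u_v\|_{L^2(\Omega)}>0$. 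There is no Poincar\'e mechanism here, because constants are not annihilated; they must be absorbed by the zero-order term, so the correct target is $\int_{\Omega}|u_v|^2\widetilde{\nu}\,\d x\lesssim \mathcal{H}_\Omega(v,v)+\int_{\Omega^c}|v|^2\widetilde{\nu}\,\d x$. Also, routing this through \autoref{lem:natural-norm-on-V}(ii) is circular, since that embedding bounds $\int_{\R^d}|u_v|^2\widetilde{\nu}$ by $\|u_v\|_{\VnuOm}$, which already contains the $L^2(\Omega)$-term you are trying to control, and \autoref{prop:natural-norm-on-V} gives only finiteness, not a quantitative bound.

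The paper circumvents the hard direction entirely by soft functional analysis: having the continuous identity $(\TnuOm,\|\cdot\|^{*}_{\TnuOm})\to(\TnuOm,\|\cdot\|^{\dagger}_{\TnuOm})$ from the one-sided bound, it observes that the first space is complete (equivalence with $\|\cdot\|_{\TnuOm}$) and that the second is complete (a Fatou argument), and concludes the two-sided equivalence from the open mapping theorem; the same trick is repeated for $\|\cdot\|^{'}_{\TnuOm}$ in the fractional case, where only $\|v\|^{*}_{\TnuOm}\le C\|v\|^{'}_{\TnuOm}$ is proved directly from \cite[Theorem 3]{DyKa19}. If you want to keep your direct route instead, it can be repaired without any Poisson-kernel analysis: fix a ball $K\subset\Omega^c$; under the standing full-support/unimodality assumptions one has $\nu(x-y)\ge c>0$ for $x\in\Omega$, $y\in K$, and $\widetilde{\nu}\ge c>0$ on $K$, so integrating the elementary inequality $|u_v(x)|^2\le 2\big(u_v(x)-v(y)\big)^2+2|v(y)|^2$ over $\Omega\times K$ against $\nu(x-y)\,\d y\,\d x$ gives
\begin{align*}
\|u_v\|^2_{L^2(\Omega)}\;\lesssim\;\iil_{(\Omega^c\times\Omega^c)^c}\big(u_v(x)-u_v(y)\big)^2\nuxminy\,\d x\,\d y+\int_{\Omega^c}|v|^2\widetilde{\nu}\,\d x\;=\;\mathcal{H}_\Omega(v,v)+\int_{\Omega^c}|v|^2\widetilde{\nu}\,\d x\,,
\end{align*}
which is precisely the missing estimate. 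Either the open-mapping argument or this two-point estimate must be written out; as it stands, the crucial direction of the norm equivalence is missing from your proposal.
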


\begin{remark}
In the case $\nu_\alpha(h)=(2-\alpha)|h|^{-d-\alpha}$ for $h \ne 0$ with given $\alpha \in (0,2)$, it is interesting to understand the limiting behaviour of the comparability estimate for $\|v\|^{'}_{\TnuOma}$ and  $\|v\|^{*}_{\TnuOma}$ as $\alpha \to 2^-$. Recent upcoming results of Th. Hensiek and F. Grube show that one can modify the norm $\|v\|^{'}_{\TnuOm}$ so that $\|v\|^{'}_{\TnuOma}$ would converge to $\|v\|_{H^{1/2}(\partial \Omega)}$ as $\alpha \to 2^-$.
\end{remark}

\begin{proof}
The equivalence between $\|\cdot\|_{\TnuOm }$ and $\|\cdot\|^{*}_{\TnuOm }$ is an immediate consequence of \autoref{lem:natural-norm-on-V}. By \eqref{eq:douglas-formula} it follows that, 
\begin{align*}
\|v\|^{*2}_{\TnuOm } &= \inf\{ \|u\|^{*2}_{\VnuOm }: \, u \in \VnuOm \text{ with } v = u|_{\Omega^c} \}\\
&\geq \inf\Big\{ \int_{\R^d}|u(x)|^2\widetilde{\nu}(x) \d x : \, u \in \VnuOm  \text{ with } v = u|_{\Omega^c} \Big\}+ \mathcal{H}_{\Omega}(v,v)\\
&\geq \int_{\Omega^c} |v(x)|^2\widetilde{\nu}(x)\d x+ \mathcal{H}_{\Omega}(v,v) \,,
\end{align*}
which establishes $\|v\|^{\dagger}_{\TnuOm }\leq \|v\|^{*}_{\TnuOm }$. Hence the identity $I: (\TnuOm, \|\cdot\|^{*}_{\TnuOm }) \to (\TnuOm, \|\cdot\|^{\dagger}_{\TnuOm })$ is continuous. The space $(\TnuOm, \|\cdot\|^{*}_{\TnuOm })$ is a Hilbert space since $\|\cdot\|_{\TnuOm }$ and $\|\cdot\|^{*2}_{\TnuOm }$ are equivalent. Also, using the Fatou lemma one can easily show that $(\TnuOm, \|\cdot\|^{\dagger}_{\TnuOm })$ is a Hilbert space. As consequence of the open mapping theorem the norms $ \|\cdot\|^{\dagger}_{\TnuOm }$ and $ \|\cdot\|^{*}_{\TnuOm }$ are equivalent. 

Next, let us consider $\nu(h)= (2-\alpha)|h|^{-d-\alpha}$ for $h \ne 0$ with $0 < \alpha < 2$ fixed. From \cite[Theorem 3]{DyKa19}, see \eqref{eq:compare-semi-douglas} and \eqref{eq:trace-DK20},  we conclude that there exists a constant $C>0$ such that for all $v \in \TnuOm$, $\|v\|^{*}_{\TnuOm} \leq C \|v\|^{'}_{\TnuOm} $. The equivalence between $\|\cdot \|^{*}_{\TnuOm} $ and $ \|\cdot \|^{'}_{\TnuOm} $ follows once again by the open mapping theorem.
\end{proof}

\medskip

\begin{remark}
We emphasize that the nonlocal trace does not need any special construction via 
functional analysis or density arguments. Since $\Omega^c$ is a $d$-dimensional manifold, it makes sense to consider the restriction of a measurable function on $\Omega^c$. No regularity of $\Omega$ resp. $\partial \Omega$ is required. In the classical local situation, the definition of a trace of a Sobolev function $u$ on the boundary $\partial\Omega$ requires some smoothness of both, $u$ and $\partial\Omega$.
\end{remark}

\medskip

Let us collect some basics results results concerning the trace space $\TnuOm$. With the aid of \autoref{lem:natural-norm-on-V} we get the following. 

\begin{proposition}
The trace map $\operatorname{Tr} : \VnuOm \to L^2(\Omega^c, \widetilde{\nu})$ with $u \mapsto \operatorname{Tr}(u) = u\mid_{\Omega^c}$ has the following properties: (a) $\operatorname{Tr}(\VnuOm)= \TnuOm$, (b) $ \ker(\operatorname{Tr} ) = V_{\nu,0}(\Omega|\R^d)$ and (c) $\operatorname{Tr}$ is linear and continuous. Moreover, $ \TnuOm$ is dense in $ L^2(\Omega^c, \widetilde{\nu})$. 
\end{proposition}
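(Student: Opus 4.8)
The plan is to establish the four claims in order, using the equivalence of norms on $\VnuOm$ from \autoref{lem:natural-norm-on-V} as the main tool. The map $\operatorname{Tr}$ is well-defined as a map into $L^2(\Omega^c, \widetilde{\nu})$ precisely because \autoref{lem:natural-norm-on-V}(ii) guarantees $\VnuOm \hookrightarrow L^2(\R^d,\widetilde{\nu})$ continuously, so restricting to $\Omega^c$ lands in $L^2(\Omega^c,\widetilde{\nu})$; linearity is immediate. Claim (a) is essentially the definition of $\TnuOm$ as the space of restrictions $u|_{\Omega^c}$ with $u\in\VnuOm$. Claim (b) is also immediate from the definition \eqref{eq:def-VnuOm-vanish}: $\operatorname{Tr}(u)=0$ in $L^2(\Omega^c,\widetilde{\nu})$ means $u=0$ a.e. on $\Omega^c$ (here I would use that $\widetilde{\nu}>0$ a.e., which follows from full support of $\nu$), which is exactly the condition defining $V_{\nu,0}(\Omega|\R^d)$.

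For claim (c), continuity, I would argue as follows. Given $u\in\VnuOm$, I estimate $\|\operatorname{Tr}(u)\|_{L^2(\Omega^c,\widetilde{\nu})}^2 = \int_{\Omega^c}|u(x)|^2\widetilde{\nu}(x)\d x \leq \int_{\R^d}|u(x)|^2\widetilde{\nu}(x)\d x \leq C_1\|u\|_{\VnuOm}^2$, where the last inequality is exactly the chain of estimates proving the embedding in \autoref{lem:natural-norm-on-V}(ii). Since for any $v\in\TnuOm$ and any representative $u$ with $u|_{\Omega^c}=v$ we have $\|\operatorname{Tr}(u)\|_{L^2(\Omega^c,\widetilde{\nu})} = \|v\|_{L^2(\Omega^c,\widetilde{\nu})}$, taking the infimum over such $u$ gives $\|v\|_{L^2(\Omega^c,\widetilde{\nu})} \leq \sqrt{C_1}\,\|v\|_{\TnuOm}$, so the inclusion $\TnuOm \hookrightarrow L^2(\Omega^c,\widetilde{\nu})$ is continuous; this also re-proves continuity of $\operatorname{Tr}$ itself.

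The density of $\TnuOm$ in $L^2(\Omega^c,\widetilde{\nu})$ is the only part requiring real content, and I expect it to be the main obstacle. The strategy is to show $C_c^\infty(\R^d)|_{\Omega^c}$ is already dense. First, $C_c^\infty(\R^d)|_{\Omega^c} \subset \TnuOm$ since $C_c^\infty(\R^d)\subset \VnuOm$. Next, since $\widetilde{\nu}\in L^1(\R^d)\cap L^\infty(\R^d)$ when $B$ has finite measure (\autoref{lem:natural-norm-on-V}(i)), $\widetilde{\nu}(x)\d x$ is a finite Radon measure on $\R^d$, so $C_c^\infty(\R^d)$ — hence also $C_c(\Omega^c)$ after a cutoff, using that $\Omega^c$ is closed but we only need test functions supported in the open set $\R^d$ and restricted — is dense in $L^2(\Omega^c,\widetilde{\nu})$ by standard measure-theoretic approximation (compactly supported continuous functions are dense in $L^2$ of a finite Radon measure, then mollify). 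More carefully: given $g\in L^2(\Omega^c,\widetilde{\nu})$, extend $g$ by zero to $\R^d$; since $\widetilde{\nu}\d x$ is finite, $C_c^\infty(\R^d)$ is dense in $L^2(\R^d,\widetilde{\nu})$, so pick $\phi_n\in C_c^\infty(\R^d)$ with $\phi_n\to g\mathds{1}_{\Omega^c}$ in $L^2(\R^d,\widetilde{\nu})$; then $\phi_n|_{\Omega^c}\to g$ in $L^2(\Omega^c,\widetilde{\nu})$ and $\phi_n|_{\Omega^c}\in\TnuOm$. One subtlety to handle is that the approximation of $g\mathds{1}_{\Omega^c}$ by smooth functions need not vanish on $\Omega$, but this is irrelevant since we only compare the restrictions to $\Omega^c$; no compatibility with the behavior inside $\Omega$ is needed because $\TnuOm$ contains \emph{all} restrictions of $\VnuOm$-functions and in particular all restrictions of $C_c^\infty(\R^d)$-functions. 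This completes the argument.
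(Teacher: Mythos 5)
Your proposal is correct and follows essentially the paper's own route: well-definedness and the bound $\|\operatorname{Tr}(u)\|_{L^2(\Omega^c,\widetilde{\nu})}\leq \|u\|_{L^2(\R^d,\widetilde{\nu})}\leq C\|u\|_{\VnuOm}$ come from \autoref{lem:natural-norm-on-V}, (a) and (b) are definitional, and density is obtained by approximating with smooth compactly supported functions. The only divergence is in the density step: the paper takes $C_c^\infty(\overline{\Omega}^c)$, extends by zero into $\Omega$ to land in $\VnuOm$, and invokes density of $C_c^\infty(\overline{\Omega}^c)$ in $L^2(\Omega^c,\widetilde{\nu})$, which tacitly uses that $\partial\Omega$ is Lebesgue-null (functions vanishing near $\overline{\Omega}$ cannot approximate mass sitting on a fat boundary); you instead use restrictions of arbitrary $C_c^\infty(\R^d)$ functions together with density of $C_c^\infty(\R^d)$ in $L^2(\R^d,\widetilde{\nu})$, which is legitimate since $\widetilde{\nu}\in L^1(\R^d)\cap L^\infty(\R^d)$ makes $\widetilde{\nu}\,\d x$ a finite Radon measure, and this variant needs no condition on $\partial\Omega$ and no zero-extension argument. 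Both are sound; yours is marginally more robust, the paper's is marginally more economical.
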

\begin{proof}
This is indeed, is a direct consequence of \autoref{lem:natural-norm-on-V} since $u\in L^2(\R^d, \widetilde{\nu})$ for all $u \in \VnuOm$ so that $ \operatorname{Tr}( u) \in L^2(\Omega^c, \widetilde{\nu})$ in particular $\operatorname{Tr}$ is well defined. Moreover, by \autoref{lem:natural-norm-on-V} there exists a constant $C>0$ such that,
\begin{align}\label{eq:trace-inequality} 
\| \operatorname{Tr}( u)\|_{L^2(\Omega^c, \widetilde{\nu})} \leq \| u\|_{L^2(\R^d, \widetilde{\nu})} \leq C \| u\|_{\VnuOm} \qquad \text{for all }~~~u \in \VnuOm. 
\end{align}
The zero extension to $\Omega$ of elements $C_c^\infty(\overline{\Omega}^c )$ are in $\VnuOm$. Thus $C_c^\infty(\overline{\Omega}^c )$ is contained in $\TnuOm$ which implies that $\TnuOm$ is dense in $ L^2(\Omega^c, \widetilde{\nu})$ since $C_c^\infty(\overline{\Omega}^c )$ is dense in $ L^2(\Omega^c, \widetilde{\nu})$ 
\end{proof}
\begin{remark}
One may view the objects $L^2(\Omega^c, \widetilde{\nu})$, $\TnuOm$, $\VnuOm$ and $V_{\nu,0}(\Omega|\R^d)$ respectively as the nonlocal counterpart of $L^2(\partial\Omega)$, $H^{1/2}(\partial\Omega)$, $H^1(\Omega)$ and $H^1_0(\Omega)$.  Indeed,  $(i)$ the classical trace operator $\gamma_0 :H^1(\Omega)\to L^2(\partial\Omega) $ whenever it exists is linear continuous, $(ii)$ $\gamma_0(H^1(\Omega))= H^{1/2}(\partial\Omega)$ and $(iii)$ $\ker(\gamma_0)= H^1_0(\Omega)$. 
\end{remark}

\begin{proposition}
Let $C^\infty_c(\overline{\Omega^c} )=C^\infty_c(\R^d)|_{\Omega^c} $ be set of restrictions on $\Omega^c$ of $C^\infty$ functions on $\R^d$ with compact support. If $\Omega$ is bounded and Lipschitz then $C^\infty_c(\overline{\Omega^c} )$ is dense in $\TnuOm$. 
\end{proposition}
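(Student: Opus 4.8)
The plan is to derive this density statement as a soft consequence of \autoref{thm:density}~$(iv)$ — which is exactly the ingredient that requires $\Omega$ to be bounded with a (compact) Lipschitz boundary — together with the observation that the trace operator is a norm-one surjection onto $\TnuOm$ equipped with its infimum norm. No new estimate is needed; the Lipschitz regularity of $\partial\Omega$ enters only through the invocation of \autoref{thm:density}~$(iv)$.

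First I would record two elementary facts about $\operatorname{Tr}\colon \VnuOm \to \TnuOm$, $u\mapsto u|_{\Omega^c}$. It is surjective by the very definition of $\TnuOm$, and directly from $\|v\|_{\TnuOm}=\inf\{\|u\|_{\VnuOm}:\operatorname{Tr}(u)=v\}$ one has $\|\operatorname{Tr}(u)\|_{\TnuOm}\le\|u\|_{\VnuOm}$ for all $u\in\VnuOm$, so $\operatorname{Tr}$ is linear and continuous of operator norm at most $1$ from $(\VnuOm,\|\cdot\|_{\VnuOm})$ to $(\TnuOm,\|\cdot\|_{\TnuOm})$; moreover $\operatorname{Tr}\big(C^\infty_c(\R^d)\big)=C^\infty_c(\R^d)|_{\Omega^c}=C^\infty_c(\overline{\Omega^c})$ by definition of the latter. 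Then the approximation step is routine: given $v\in\TnuOm$ and $\eps>0$, I would lift $v$ to some $u\in\VnuOm$ with $\operatorname{Tr}(u)=v$, use \autoref{thm:density}~$(iv)$ to pick $\varphi\in C^\infty_c(\R^d)$ with $\|u-\varphi\|_{\VnuOm}<\eps$, and conclude $\varphi|_{\Omega^c}\in C^\infty_c(\overline{\Omega^c})$ with
\[
\|v-\varphi|_{\Omega^c}\|_{\TnuOm}=\|\operatorname{Tr}(u-\varphi)\|_{\TnuOm}\le\|u-\varphi\|_{\VnuOm}<\eps\,,
\]
which gives the density since $v$ and $\eps$ are arbitrary.

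The main (and essentially only) obstacle is packaged inside \autoref{thm:density}~$(iv)$, i.e. the density of $C^\infty_c(\R^d)$ in $\VnuOm$ (cf. \cite{FGKV20}); everything else above is formal. Equivalently, one could argue via the canonical isometry $\TnuOm\cong\VnuOm/\VnuOmOm$ and the openness of the quotient map, which carries dense subsets to dense subsets; the direct $\eps$-argument seems the shortest route. Note finally that it is immaterial whether \autoref{thm:density}~$(iv)$ is used with $\|\cdot\|_{\VnuOm}$ or with the stronger norm $\vertiii{\cdot}_{\VnuOm}$, since $\|\cdot\|_{\VnuOm}\le\vertiii{\cdot}_{\VnuOm}$ and the trace norm is built from $\|\cdot\|_{\VnuOm}$.
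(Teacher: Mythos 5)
Your proposal is correct and follows essentially the same route as the paper: lift $v$ to some $u\in\VnuOm$, approximate $u$ by $u_n\in C^\infty_c(\R^d)$ using the density result for bounded Lipschitz $\Omega$ (from \cite{FGKV20}, i.e. \autoref{thm:density}~$(iv)$), and conclude via $\|v_n-v\|_{\TnuOm}\le\|u_n-u\|_{\VnuOm}$, which holds directly from the infimum definition of the trace norm. The only cosmetic difference is that you invoke this bound from the definition of $\|\cdot\|_{\TnuOm}$ while the paper refers to its trace inequality, but the argument is the same.
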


\begin{proof}
For $v \in \TnuOm$ we write $v= u|_{\Omega^c}$ with $u \in \VnuOm$. From \cite{FGKV20} we know that there exists $u_n\in C^\infty_c(\R^d)$ such that, $\|u_n -u\|_{\VnuOm}\to 0.$ Put, $v_n= u_n|_{\Omega^c}$ by \eqref{eq:trace-inequality} we get
\begin{align*}
\|v_n -v\|_{\TnuOm}\leq \|u_n -u\|_{\VnuOm}\to 0.
\end{align*}
\end{proof}

\begin{remark}\label{rem:vondra-strange} Let us comment on certain function spaces that are introduced in \cite{Von21} in order to study some reflected jump Markov processes. Instead of the natural energy space $\VnuOm$ from \cite{FKV15}, the author considers $\VnuOm \cap L^2(\R^d, m)$ with $m(x) =\mathds{1}_{\Omega}(x)+ \mu(x)\mathds{1}_{\Omega^c}(x)$ and $\mu(x) = \int_{\Omega}\nu (x-y) \d y$ for $x \in \Omega^c$. It is proved in \cite[Lemma 2.2 (iii)]{Von21} that $L^2(\Omega^c, \mu)$ is the trace space of $\VnuOm \cap L^2(\R^d, m)$. Note that $\VnuOm \cap L^2(\R^d, m)$ and its trace space $L^2(\Omega^c, \mu)$ are much smaller than $\VnuOm$ resp. $\TnuOm$, which leads to the following issues. 
\begin{itemize}
\item When $\Omega$ is bounded, constant functions belong to 
$\VnuOm$ and do not belong to $\VnuOm \cap L^2(\R^d, m)$ in general. Thus in term of trace,  $x \mapsto \mathbbm{1}_{\Omega^c}(x)$ belongs to $\TnuOm$ but not necessarily to  $L^2(\Omega^c, \mu)$. Several natural Dirichlet problems, e.g. for the fractional Laplace operator, cannot be formulated with the help of $L^2(\Omega^c, \mu)$.
\item See \autoref{thm:Dform-reflected} for a regular Dirichlet form leading to the existence of reflected jump processes.
\item Given a function $v\in L^2(\Omega^c, \mu)$, its extension by zero $v_0= v\mathds{1}_{\Omega^c} $ belongs to $\VnuOm \cap L^2(\R^d, m)$ because of
\begin{align*}
\iil_{(\Omega^c\times \Omega^c)^c} \!\!\big(v_0(x)-v_0(y) \big)^2\, \nuxminy \d x \, \d y= 2\int_{\Omega^c} |v(x)|^2\mu(x)\d x.
\end{align*}
Moreover, the space $L^2(\Omega^c, \mu)$ is continuously embedded in $\TnuOm$, indeed, 
\begin{align*}
\|v\|_{\TnuOm}\leq \|v_0\|_{\VnuOm}\leq \sqrt{2}\|v\|_{L^2(\Omega^c, \mu)}.
\end{align*}
The fact that the extension by zero belongs to the energy space for any given function in the trace space, is rather particular. 
\end{itemize}
Since $\VnuOm \cap L^2(\R^d, m)$ resp. its trace space $L^2(\Omega^c, \mu)$ are small compared to the spaces $\VnuOm$ resp. $\TnuOm$, the range of possible nonlocal Dirichlet and Neumann problems is rather small.
\end{remark}

\section{Compact embeddings and {P}oincar\'e inequality}\label{sec:compactness}

In this section we prove compact embeddings of the spaces $\HnuOm$, $\VnuOm$ and $V_{\nu,0}(\Omega|\R^d)$ into $L^2(\Omega)$. Our result on global compactness, \autoref{thm:embd-compactness}, requires some regularity assumptions on $\Omega$ and $\nu$, which we introduce and discuss in \autoref{subsec:assum-levy}. In \autoref{subsec:compact-poincare} we establish global compactness using ideas from \cite{JW19} and \cite{DMT18}. Note that \cite[Theorem 2.2]{CP18} is a related result. However, the proof therein seems to be valid only for domains that can be decomposed as a finite union of cubes. We circumvent this issue by an approximation argument near the boundary of $\Omega$.

\subsection{Assumptions on the L\'{e}vy measure}\label{subsec:assum-levy}

The definitions and most of the results of \autoref{subsec:function-spaces} do not require assumptions on the L\'{e}vy measure $\nu$ beyond the classical L\'{e}vy condition \eqref{eq:levy-cond}. In this subsection we collect further conditions on $\nu$ required for the compactness results in \autoref{subsec:compact-poincare}. Recall the concept of unimodality from \autoref{def:unimodality}. We will prove a Poincar\'{e}-inequality in \autoref{thm:poincare-inequality} for unimodal L\'{e}vy measures with full support.

\begin{definition}\label{def:classA0} Assume $\Omega\subset \R^d$ is open and bounded, and $\nu: \R^d\setminus\{0\}\to [0, \infty)$ satisfies \eqref{eq:levy-cond}. We say that $(\nu, \Omega)$ is in the class $\classA{0}$ if 
\begin{itemize}
\item [($\classA{0}$)] $\nu$ is unimodal and has full support.
\end{itemize}	
\end{definition}	

Note that, in the class $\classA{0},$ $\nu$ is not necessarily singular near $0$. In order to establish compactness results in \autoref{subsec:compact-poincare} we will discuss different assumptions. Note that \eqref{eq:levy-cond} and unimodality do not  imply any lower bound on $\nu$, even $\nu = 0$ would be allowed. Furthermore, if $\nu\in L^1(\R^d)$, then the spaces $\VnuOm \cap L^2(\R^d)$ and $H_{\nu}(\R^d)$ coincide with $L^2(\R^d)$, which is not locally compactly embedded into $L^2(\Omega)$. For the remainder of this section, we assume that $\nu$ satisfies \eqref{eq:levy-cond} and 
\begin{align}\tag{I}\label{eq:non-integrability-condition}
\int_{\R^d}\nu(h)\d h= \infty.
\end{align}
Under \eqref{eq:levy-cond}, condition \eqref{eq:non-integrability-condition} obviously follows if $|h|^d \nu(h) \to \infty$ as $|h|\to 0$. As explained in Corollary \ref{cor:local-compatcness}, \eqref{eq:levy-cond} and \eqref{eq:non-integrability-condition} imply local compactness of $\HnuOm$ and $\VnuOm$ in $L^2(\Omega)$. Let us introduce conditions on $\Omega$ and $\nu$ under which we are able to establish global compactness results.

\begin{definition}\label{def:classesAi} Assume $\Omega\subset \R^d$ is open and bounded, and $\nu: \R^d\setminus\{0\}\to [0, \infty)$ satisfies \eqref{eq:levy-cond} and \eqref{eq:non-integrability-condition}. We say that $(\nu, \Omega)$ is in the class $\mathscr{A}_i$ $(i=1,2,3)$, if 
\begin{itemize}
\item [($\classA{1}$)] $\ldots$ there exists an $\HnuOm$-extension operator $E: \HnuOm\to H_{\nu}(\R^d)$, i.e., there is $C(\nu, \Omega,d)>0$ such that for every $u\in \HnuOm$, $\|u\|_{H_\nu(\R^d)}\leq C\|u\|_{\HnuOm}$ and $Eu|_\Omega =u$. 
\item[($\classA{2}$)] $\ldots$  $\partial \Omega$ is Lipschitz-continuous, $\nu$ is radial and $q(\delta)\xrightarrow[]{\delta \to 0}\infty $ where
\begin{align}\label{eq:class-lipschitz}
q(\delta):= \frac{1}{\delta^2}\int\limits_{B_\delta(0)} |h|^2\nu(h)\d h\,.
\end{align}
\item [($\classA{3}$)] $\ldots$ the following condition holds true: $\widetilde{q}(\delta)\xrightarrow[]{\delta \to 0}\infty $ where
\begin{align}\label{eq:class-sing-boundary}
\widetilde{q}(\delta): = \inf_{a\in \partial\Omega}\int_{\Omega_\delta}\nu(h-a)\d h
\end{align}
with $\Omega_\delta= \{x\in \Omega: \operatorname{dist}(x,\partial\Omega)>\delta\}$.
\end{itemize}
\end{definition}

Note that monotonicity of $\nu$ is not necessarily required by any of the conditions above. The class $\classA{1}$ is well studied in the literature for the case of the fractional Laplace operator. For example, it is shown in \cite{Zho15} that $\Omega$ is an extension domain for $H^{\alpha/2}(\Omega),~\alpha\in (0,2)$, if and only if $\Omega$ is a $d$-set and thus, $ (|\cdot|^{-d-\alpha}, \Omega)$ is an element of $\classA{1}$.

\medskip
The class $\classA{2}$ is easy to understand because the conditions on $\Omega$ and $\nu$ do not interact. If $\Omega$ is a bounded Lipschitz domain and $\nu$ satisfies \eqref{eq:levy-cond} and 
\begin{align}\tag{$I'$}\label{eq:limit-at-0-explode}
\lim_{|h|\to 0}|h|^d \nu(h)= \infty \,, 
\end{align}
then $(\nu, \Omega)$ is in the class $\classA{2}$. Indeed, for $R>0$ sufficiently large there is $\delta_0>0$ such that $|h|^d\nu(h)\geq 2R$ whenever $|h|\leq \delta_0$. Thus 
$q(\delta)\geq|\mathbb{S}^{d-1}| R$ if $0<\delta<\delta_0$.
This shows that \eqref{eq:class-lipschitz} is verified.

\medskip

The class $\classA{3}$ and condition \eqref{eq:class-sing-boundary} are more involved due to a certain correlation between $\Omega$ and the singularity of $\nu$ near the origin. Let us first provide an example of $\nu$ and $\Omega$ such that $\classA{3}$ fails. In the Euclidean plane consider $\nu(h)= |h|^{-2-\alpha}\mathds{1}_V(h)$ with $V=\{(x_1,x_2)\in\mathbb{R}^{2}:~ |x_1|<|x_2|\}$ and $\Omega= \{(x_1,x_2)\in\mathbb{R}^{2}:~ 4|x_2-6|<x_1,~0<x_1<4\}$ whose boundary is continuous. Considering $a=(0,6)\in\partial \Omega$ one has $V\cap (\Omega_\delta-a)= \emptyset$ for every $\delta>0$, see Figure \ref{fig:cone-contra-example}. Therefore $\widetilde{q}(\delta)\leq \int_{\Omega_\delta}\nu(h-a)\d h=0$ and the condition \eqref{eq:class-sing-boundary} fails.


\definecolor{zzttqq}{rgb}{0.6,0.2,0.}
\definecolor{ududff}{rgb}{0.30196078431372547,0.30196078431372547,1.}
\definecolor{qqqqff}{rgb}{0.,0.,1.}

\begin{figure}[ht!]
\centering
\resizebox{40ex}{40ex}{
\begin{tikzpicture}[line cap=round,line join=round,>=triangle 45,x=1.0cm,y=1.0cm]
\clip(-8.21682458369895,-5.99754616319308) rectangle (10.973555333341592,7.1980649192076);
\fill[line width=2.pt,color=zzttqq,fill=zzttqq,fill opacity=0.10000000149011612] (0.,6.) -- (4.,5.) -- (4.,7.) -- cycle;
\fill[line width=2.pt,color=zzttqq,fill=zzttqq,fill opacity=0.10000000149011612] (0.,0.) -- (4.262144356803506,1.0450488105997016) -- (4.262144356803506,-0.9549511894002986) -- cycle;
\fill[line width=2.pt,color=zzttqq,fill=zzttqq,fill opacity=0.10000000149011612] (0.5,0.) -- (3.995173232772751,0.7932112702301504) -- (3.9951732327727525,-0.7067887297698496) -- cycle;
\draw[line width=2.pt,dash pattern=on 4pt off 4pt,color=qqqqff,fill=qqqqff,fill opacity=0.25](-8.272328513062774,8.11980649192076)--(-9.71682458369895,8.11980649192076)--(-8.133233107220217,8.11980649192076)--(-8.13323310722022,8.11980649192076)--(-8.272328513062774,8.11980649192076);
\draw[line width=2.pt,dash pattern=on 4pt off 4pt,color=qqqqff,fill=qqqqff,fill opacity=0.25](8.272328627307077,8.11980649192076)--(-8.133233107220217,8.11980649192076)--(-8.13323310722022,8.11980649192076)--(-1.4479309161685987E-7,-1.819987695346988E-7)--(8.272328627307077,8.11980649192076);
\draw[line width=2.pt,dash pattern=on 4pt off 4pt,color=qqqqff,fill=qqqqff,fill opacity=0.25](-9.71682458369895,-6.186754616319308)--(-6.339276648144188,-6.186754616319308)--(-6.219972376982327,-6.186754616319308)--(-6.219972376982321,-6.186754616319308)--(-9.71682458369895,-6.186754616319308);
\draw[line width=2.pt,dash pattern=on 4pt off 4pt,color=qqqqff,fill=qqqqff,fill opacity=0.25](-1.4479309161685987E-7,-1.819987695346988E-7)--(-6.219972376982327,-6.186754616319308)--(-6.219972376982321,-6.186754616319308)--(6.339276049093379,-6.186754616319308)--(-1.4479309161685987E-7,-1.819987695346988E-7);
\draw [line width=2.pt,color=zzttqq] (0.,6.)-- (4.,5.);
\draw [line width=2.pt,color=zzttqq] (4.,5.)-- (4.,7.);
\draw [line width=2.pt,color=zzttqq] (4.,7.)-- (0.,6.);
\draw [line width=2.pt,color=zzttqq] (0.,0.)-- (4.262144356803506,1.0450488105997016);
\draw [line width=2.pt,color=zzttqq] (4.262144356803506,1.0450488105997016)-- (4.262144356803506,-0.9549511894002986);
\draw [line width=2.pt,color=zzttqq] (4.262144356803506,-0.9549511894002986)-- (0.,0.);
\draw [line width=2.pt,dash pattern=on 7pt off 7pt,color=zzttqq] (0.5,0.)-- (3.995173232772751,0.7932112702301504);
\draw [line width=2.pt,dash pattern=on 7pt off 7pt,color=zzttqq] (3.995173232772751,0.7932112702301504)-- (3.9951732327727525,-0.7067887297698496);
\draw [line width=2.pt,dash pattern=on 7pt off 7pt,color=zzttqq] (3.9951732327727525,-0.7067887297698496)-- (0.5,0.);
\draw (2.3624731976874207,6.4420647841314125) node[anchor=north west] {\huge $\Omega$};
\draw (5.354249737968875,-2.3737052804344287) node[anchor=north west] {\huge $\Omega_\delta-a$};
\draw (-3.2471078153403057,6.564082362879729) node[anchor=north west] {\huge $V$};
\draw (1.0161737545607663,-4.661534881965356) node[anchor=north west] {\huge $V$};
\draw (-2.4520418911714332,0.5242122148380798) node[anchor=north west] {\huge $(0,0)$};
\draw [->,line width=2.pt] (2.7738424719761205,0.15815947859313134) -- (5.3916469447223925,-2.2821920963731914);
\begin{scriptsize}
\draw [fill=ududff] (0.,6.) circle (2.5pt);
\draw[color=ududff] (0.2682296194904028,6.579334560223269) node {$a$};
\draw[color=black] (4.307127948870365,-0.7112157699886212) node {$v$};
\end{scriptsize}
\end{tikzpicture}
} 
\caption{Example of $(\nu, \Omega) \notin \classA{3} $}\label{fig:cone-contra-example}
\end{figure}

Next, let us provide a positive result. Note that for every domain $\Omega$ and every $\delta > 0$ we know $\widetilde{q}(\delta)<\infty$ because, for each $a\in \partial\Omega$ and each $\delta>0$, $ \Omega_\delta\subset B^c_\delta(a)$, which implies by \eqref{eq:levy-cond} $\widetilde{q}(\delta)\leq \int_{ B^c_\delta(0) }\nu(h)\d h<\infty$. We will show that $\partial \Omega \in C^{1,1}$ is sufficient. Recall that $\Omega$ is of class $C^{1,1} $ if for every $a\in \partial \Omega$ there is $r>0$ for which $B_r(a)\cap \partial \Omega= \{x=(x',x_d)\in B_r(a)~: x_d=\gamma(x')\}$ represents the graph of a $C^{1,1}$ function $\gamma: \mathbb{R}^{d-1}\to \mathbb{R} $. That is to say $\gamma$ is a $C^1$ function whose gradient is Lipschitz. 
The main result in \cite{BaS09} shows that an open set $\Omega$ is $C^{1,1} $ if and only if $\Omega$ satisfies the interior and exterior sphere condition. We say that $\Omega$ satisfies the interior and exterior sphere condition at some scale $r>0$ if for every $ a\in \partial \Omega$ one can find $a'\in \Omega$ and $a''\in \overline{\Omega}^c$ for which $B_r(a')\subset \Omega $, $B_r(a'')\subset \overline{\Omega}^c$ and $ \overline{B_r(a')}\cap \overline{B_r(a'')}= \{a\}$. 
Note that, the interior and exterior sphere condition holds for every scale $r\in (0,r_0)$ once it holds for $r_0$. This characterization entails that a $C^{1,1}$ set $\Omega$ is a $d$-set (or \emph{volume density condition} according to some authors): that is, there exist two positive constants $ c>0$ and $ r_0>0$ such that for every $r\in (0,r_0)$ and every $a \in \partial \Omega$
\begin{align*}
|\Omega \cap B_r(a)|\geq cr^d.
\end{align*}

\begin{proposition}
Assume $\nu$ satisfies \eqref{eq:levy-cond} and \eqref{eq:limit-at-0-explode}. Assume that $\Omega$ satisfies the following strong volume density condition: there exist positive constants $\tau>1$, $\delta_0>0$ and $c>0$ such that for all $\delta\in (0, \delta_0)$ and 
$a\in \partial\Omega$ 
\begin{align*}
|\Omega_\delta \cap B_{\tau \delta}(a)|\geq c\delta^d.
\end{align*}
Then $(\nu, \Omega) \in \classA{3}$.
\end{proposition}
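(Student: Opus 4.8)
The goal is to verify condition $(\classA{3})$, that is, to show $\widetilde{q}(\delta) \to \infty$ as $\delta \to 0$. Since we already noted $\widetilde{q}(\delta) < \infty$ for every $\delta > 0$, it suffices to prove that for each level $M > 0$ there is some $\delta_1 > 0$ with $\int_{\Omega_\delta} \nu(h-a)\,\d h \geq M$ for all $\delta \in (0,\delta_1)$ and all $a \in \partial\Omega$ simultaneously; taking the infimum over $a$ then gives $\widetilde{q}(\delta) \geq M$.

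So fix $M > 0$, let $\tau > 1$, $\delta_0 > 0$, $c > 0$ be the constants furnished by the strong volume density condition, and invoke hypothesis \eqref{eq:limit-at-0-explode}: there exists $\eta > 0$ such that $|z|^d \nu(z) \geq M\tau^d/c$ for all $z$ with $0 < |z| \leq \eta$. Set $\delta_1 = \min(\delta_0, \eta/\tau)$. For $\delta \in (0,\delta_1)$ and $a \in \partial\Omega$, restrict the integral to the set $\Omega_\delta \cap B_{\tau\delta}(a)$. For $h$ in this set we have, on the one hand, $|h-a| \geq \dist(h,\partial\Omega) > \delta$, and on the other hand $|h-a| < \tau\delta \leq \eta$; hence
\begin{align*}
\nu(h-a) = \frac{|h-a|^d\,\nu(h-a)}{|h-a|^d} \geq \frac{M\tau^d/c}{(\tau\delta)^d} = \frac{M}{c\,\delta^d}\,.
\end{align*}
Integrating this bound over $\Omega_\delta \cap B_{\tau\delta}(a)$ and using the strong volume density estimate $|\Omega_\delta \cap B_{\tau\delta}(a)| \geq c\,\delta^d$ yields
\begin{align*}
\int_{\Omega_\delta} \nu(h-a)\,\d h \;\geq\; \int_{\Omega_\delta \cap B_{\tau\delta}(a)} \nu(h-a)\,\d h \;\geq\; \frac{M}{c\,\delta^d}\cdot c\,\delta^d \;=\; M\,.
\end{align*}
Since $a \in \partial\Omega$ was arbitrary, $\widetilde{q}(\delta) \geq M$ for all $\delta \in (0,\delta_1)$, and as $M$ was arbitrary we conclude $\widetilde{q}(\delta) \to \infty$, i.e., $(\nu,\Omega) \in \classA{3}$.

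\textbf{Remarks on the argument.} There is no genuine obstacle here; the only point demanding care is the choice $\delta_1 \leq \eta/\tau$, which ensures that the annular region $\delta < |h-a| < \tau\delta$ swept out by $\Omega_\delta \cap B_{\tau\delta}(a)$ lies inside the ball of radius $\eta$ where the blow-up \eqref{eq:limit-at-0-explode} applies. Notice that no monotonicity or radiality of $\nu$ is used — only the scaling divergence \eqref{eq:limit-at-0-explode} — and the strong volume density condition enters precisely because it guarantees that the part of $\Omega_\delta$ within distance $\tau\delta$ of a boundary point $a$ has volume of order $\delta^d$, the exact scale that compensates the singularity $\nu(z) \sim |z|^{-d}$ of the kernel near the origin.
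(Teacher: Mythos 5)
Your proof is correct and follows essentially the same route as the paper's: restrict the integral to $\Omega_\delta \cap B_{\tau\delta}(a)$, use \eqref{eq:limit-at-0-explode} to bound $\nu(h-a) \geq R|h-a|^{-d} \geq R(\tau\delta)^{-d}$ there, and multiply by the volume lower bound $c\delta^d$. If anything, your explicit choice $\delta_1 = \min(\delta_0,\eta/\tau)$ is a touch more careful than the paper, which applies the blow-up bound on the larger ball $B_{\tau\delta}(a)$ without spelling out that the threshold must accommodate the factor $\tau$.
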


\begin{remark}{\ }
\begin{enumerate}[(i)]
\item Any bounded $C^{1,1}$-domain $\Omega \subset \R^d$ satisfies the aforementioned strong volume density condition.  
Fix $a\in \partial\Omega$, by the interior sphere condition, consider $\delta\in (0,\delta_0/4)$ for some $\delta_0$ sufficiently small. Let $x\in \Omega$  depending on $a$ and $\delta$ such that $B_{2\delta}(x) \subset \Omega$, $\operatorname{dist}(x, \partial\Omega)= |x-a|=2\delta$ and $\overline{B_{2\delta}(x)} \cap \partial\Omega= \{a\}$ then obviously, $B_\delta(x) \subset \Omega_\delta \cap B_{2\delta}(x) \subset \Omega_\delta \cap B_{4\delta}(a) $. This yields
\begin{align}\label{eq:strong-density}
|\Omega_\delta \cap B_{4\delta}(a)|\geq c_d\delta^d, \quad\text{with}~~c_d=|B_1(0)|.
\end{align}
\item It is interesting to know whether for small $\delta>0$, $\Omega_\delta$ inherits the regularity of $\Omega$. As proven in \cite[Section 6.14]{GT15} if $\Omega$ is of class $C^k$ with $k\geq 2$ then so is $\Omega_\delta$. 
\end{enumerate}
\end{remark}

\begin{proof}
Let  $R>0$ and consider $\tau>1$,  $\delta_0>0$ as above such that  if $0<\delta <\delta_0$ then $ |h|^d\nu(h)\geq R$ for $|h|<\delta$. Fix $a\in \partial\Omega$, since $|\Omega_\delta \cap B_{\tau\delta}(a)|\geq c\delta^d$  for all $0<\delta <\delta_0$. Therefore, recalling that $\nu(h-a)\geq R|h-a|^{-d}\geq \frac{R}{\tau^d\delta^d}$ when $h\in B_{\tau\delta}(a)$ we have 

\begin{align*}
\int_{\Omega_\delta}\nu(h-a)\d h \geq \frac{R}{\tau^d\delta^d} \int_{\Omega_\delta\cap B_{\tau \delta}(a) } \hspace{-3ex}\d h= \, \frac{R}{\tau^d\delta^d}|\Omega_\delta \cap B_{\tau\delta}(a)|\geq \frac{ c}{\tau^d}R.
\end{align*}
Finally, 
\begin{align*}
\widetilde{q}(\delta) \geq \frac{ c}{\tau^d}R
\end{align*}
which means that \eqref{eq:class-sing-boundary} is verified since $R$ can be arbitrarily large.
\end{proof}

\subsection{Local and global compactness results}\label{subsec:compact-poincare}



Before citing a result on local compactness, we recall a well-known result about convolutions.  

\begin{lemma}[Corollary 4.28 of \cite{Bre10}]\label{lem:compactness-convolution} 
Let $w \in L^1(\R^d)$. Then the convolution operator
\begin{align*}
T_w : L^2(\R^d )\to L^2(\R^d), \qquad T_wu=w*u 
\end{align*}
is continuous with $\|T_w\|_{\mathcal{L}(L^2(\R^d), L^2(\R^d))} \leq \|w\|_{L^1(\R^d)}$. Moreover, $T_w : L^2(\R^d) \to L^2(K)$ is compact for any compact subset $K \subset \R^d$. 
\end{lemma}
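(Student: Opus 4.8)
The first assertion is Young's convolution inequality. Writing $(w*u)(x) = \int_{\R^d} w(y)\,u(x-y)\,\d y$, I would apply Minkowski's integral inequality in $L^2$ together with the translation invariance of the Lebesgue measure to obtain
\[
\|w*u\|_{L^2(\R^d)} = \Big\| \int_{\R^d} w(y)\, u(\cdot - y)\,\d y \Big\|_{L^2(\R^d)} \le \int_{\R^d} |w(y)|\, \|u(\cdot - y)\|_{L^2(\R^d)}\,\d y = \|w\|_{L^1(\R^d)}\|u\|_{L^2(\R^d)} \,.
\]
This shows $T_w$ is well defined, linear, and bounded with $\|T_w\|_{\mathcal{L}(L^2(\R^d),L^2(\R^d))} \le \|w\|_{L^1(\R^d)}$.

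For the compactness statement I propose a two-step approximation argument. \textbf{Step 1.} Suppose first $w \in C_c(\R^d)$. Let $(u_n)$ be bounded in $L^2(\R^d)$, say $\|u_n\|_{L^2(\R^d)} \le M$, and set $v_n := w*u_n$. Since $w \in C_c(\R^d) \subset L^2(\R^d)$, the Cauchy--Schwarz inequality gives $|v_n(x)| \le \|w\|_{L^2(\R^d)} M$ for every $x$, and for $x,x' \in \R^d$,
\[
|v_n(x) - v_n(x')| \le \|w(x-\cdot) - w(x'-\cdot)\|_{L^2(\R^d)}\, M \,,
\]
which tends to $0$ as $|x-x'| \to 0$, uniformly in $n$, by the uniform continuity and compact support of $w$. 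Hence $(v_n)$ is uniformly bounded and equicontinuous on the compact set $K$; by the Arzel\`a--Ascoli theorem a subsequence converges uniformly on $K$, hence in $L^2(K)$. This proves $T_w : L^2(\R^d) \to L^2(K)$ is compact when $w \in C_c(\R^d)$.

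\textbf{Step 2.} For general $w \in L^1(\R^d)$, pick $w_j \in C_c(\R^d)$ with $\|w - w_j\|_{L^1(\R^d)} \to 0$ (density of $C_c$ in $L^1$). Applying the first assertion to $w - w_j$ yields
\[
\|T_w - T_{w_j}\|_{\mathcal{L}(L^2(\R^d),L^2(K))} \le \|T_w - T_{w_j}\|_{\mathcal{L}(L^2(\R^d),L^2(\R^d))} \le \|w - w_j\|_{L^1(\R^d)} \longrightarrow 0 \quad (j\to\infty) \,.
\]
Thus $T_w$, viewed as an operator into $L^2(K)$, is the operator-norm limit of the compact operators $T_{w_j}$, and since the compact operators form a closed subspace of $\mathcal{L}(L^2(\R^d),L^2(K))$, the operator $T_w : L^2(\R^d) \to L^2(K)$ is compact.

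The only mild subtlety---the point I would double-check---is that the pointwise convolution integral in Step 1 genuinely converges absolutely for every $x$ (immediate since $w \in C_c$ and $u_n \in L^2_{\loc}$, by Cauchy--Schwarz), so that the Arzel\`a--Ascoli argument applies to honest continuous functions. As an alternative to Steps 1--2 one may invoke the Fr\'echet--Kolmogorov compactness criterion directly: for $u$ in a bounded set, $\|\tau_h(w*u) - w*u\|_{L^2(\R^d)} = \|(\tau_h w - w)*u\|_{L^2(\R^d)} \le \|\tau_h w - w\|_{L^1(\R^d)}\|u\|_{L^2(\R^d)} \to 0$ as $h \to 0$ uniformly over the set, by continuity of translation in $L^1$, and after restriction to $K$ the mass at infinity is trivially controlled; this gives precompactness in $L^2(K)$ without passing through $C_c$.
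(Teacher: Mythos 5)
Your proof is correct. Note that the paper itself gives no argument here: the lemma is quoted verbatim as Corollary 4.28 of Brezis's book, whose proof runs through the Riesz--Fr\'echet--Kolmogorov compactness criterion, exactly the route you sketch at the end via the estimate $\|\tau_h(w*u)-w*u\|_{L^2(\R^d)}\le\|\tau_h w-w\|_{L^1(\R^d)}\|u\|_{L^2(\R^d)}$ and continuity of translation in $L^1$ (restriction to the bounded set $K$ then removes the tightness requirement). Your main argument is a genuinely different and equally valid path: first prove compactness for kernels $w\in C_c(\R^d)$ by upgrading the output to equibounded, equicontinuous functions and invoking Arzel\`a--Ascoli on $K$, then pass to general $w\in L^1(\R^d)$ by density of $C_c$ in $L^1$, the operator-norm bound from Young's inequality, and closedness of the compact operators under norm limits. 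The approximation route is slightly longer but more elementary (it avoids the Kolmogorov criterion altogether and exhibits $T_w$ explicitly as a limit of operators with smooth-ing kernels), while the Kolmogorov route is the one-line argument the cited corollary encapsulates; both the boundedness step (Minkowski/Young) and all intermediate estimates you give are sound, including the Cauchy--Schwarz justification that $w*u_n$ is an honest continuous function when $w\in C_c(\R^d)$.
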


We present the local compactness result \cite{JW19} that we are going to use in the sequel.

\begin{theorem}\label{thm:local-compactness}
Let $\nu:\R^d\setminus\{0\}\to [0, \infty)$ be a measurable symmetric function such that \eqref{eq:non-integrability-condition} holds and   
\begin{align}\label{eq:integrability-condition-near-zero}
\int_{\R^d \setminus B_\delta(0)} \nu (h) \d h<\infty~\text{ for every $\delta>0$} .
\end{align}
Then the embedding $H_\nu(\R^d) \hookrightarrow L^2(\R^d)$ is locally compact. As a consequence, for $\Omega\subset \R^d$ open and bounded, the embedding $V_{\nu,0}(\Omega|\R^d) \hookrightarrow L^2(\Omega)$ is compact. 
\end{theorem}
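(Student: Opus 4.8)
The plan is to establish the local compactness of $H_\nu(\R^d) \hookrightarrow L^2(\R^d)$ first and then deduce the statement about $V_{\nu,0}(\Omega|\R^d)$. Fix a bounded open set $U \subset \R^d$; we want to show that the identity map $H_\nu(\R^d) \to L^2(U)$ is compact. Let $(u_n)$ be a bounded sequence in $H_\nu(\R^d)$. After passing to a subsequence we may assume $u_n \rightharpoonup u$ weakly in $L^2(\R^d)$ and, by the bound on $\iint (u_n(x)-u_n(y))^2 \nu(x-y)\,\d x\,\d y$, that the energies are uniformly bounded. We will produce a subsequence converging strongly in $L^2(U)$.

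First I would fix a small parameter and a mollification/averaging device built from $\nu$ itself. Choose $R > 0$ large enough that $c_R := \int_{B_R(0)} \nu(h)\,\d h > 0$ (this is possible, in fact $c_R \to \infty$ as $R \to \infty$, by \eqref{eq:non-integrability-condition}), and set $w_R(h) = c_R^{-1} \nu(h)\mathds{1}_{B_R(0)}(h)$, so that $w_R \in L^1(\R^d)$ with $\int w_R = 1$ — here we use \eqref{eq:integrability-condition-near-zero} to ensure $\nu \mathds{1}_{B_R}$ is genuinely in $L^1$ away from the origin, while the Lévy condition handles integrability near the origin against $|h|^2$; more carefully one should truncate $\nu$ away from $0$ as well, writing $w = c^{-1}\nu \mathds{1}_{\{\delta \le |h| \le R\}}$ and letting $\delta \to 0$ at the end. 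The key estimate is that for $u \in H_\nu(\R^d)$,
\begin{align*}
\| u - w * u \|_{L^2(\R^d)}^2
&= \int_{\R^d} \Big| \int_{\R^d} \big(u(x) - u(x-h)\big) w(h)\,\d h \Big|^2 \d x \\
&\le \int_{\R^d} w(h) \int_{\R^d} \big(u(x) - u(x-h)\big)^2 \d x\, \d h \\
&\le c^{-1} \iint_{\R^d \R^d} \big(u(x) - u(y)\big)^2 \nu(x-y)\,\d x\,\d y\,,
\end{align*}
by Jensen's inequality and Fubini; when $w$ is supported in $\{\delta \le |h| \le R\}$ one gets the same bound with $c = c(\delta, R) = \int_{\{\delta \le |h| \le R\}} \nu$, and since this constant can be made arbitrarily large (taking $\delta$ small, using \eqref{eq:non-integrability-condition}), the right-hand side can be made uniformly small over the bounded sequence $(u_n)$.

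Next I would combine this with \autoref{lem:compactness-convolution}: since $w \in L^1(\R^d)$, the convolution operator $T_w$ maps $L^2(\R^d)$ compactly into $L^2(K)$ for any compact $K$. Take $K = \overline{U}$. Given $\eps > 0$, choose $\delta$ so that $\| u_n - w_{\delta}*u_n\|_{L^2(\R^d)} < \eps$ for all $n$ by the displayed estimate; then by compactness of $T_{w_\delta}$ extract a subsequence along which $(w_\delta * u_n)$ converges in $L^2(K)$, hence is Cauchy. A diagonal argument over a sequence $\eps \to 0$ yields a subsequence of $(u_n)$ that is Cauchy in $L^2(U)$, hence convergent. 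This proves the embedding $H_\nu(\R^d) \hookrightarrow L^2(\R^d)$ is locally compact. For the consequence: if $\Omega$ is open and bounded, then for $u \in V_{\nu,0}(\Omega|\R^d)$ we have $u = 0$ a.e.\ on $\Omega^c$, so $\|u\|_{L^2(\Omega)} = \|u\|_{L^2(\R^d)}$, and moreover $V_{\nu,0}(\Omega|\R^d) = \{u \in H_\nu(\R^d) : u = 0 \text{ a.e.\ on } \Omega^c\}$ embeds isometrically (in the relevant seminorm) into $H_\nu(\R^d)$ with $\|u\|_{H_\nu(\R^d)} = \|u\|_{V_{\nu,0}(\Omega|\R^d)}$; applying local compactness with $U \supset \Omega$ gives that a bounded sequence in $V_{\nu,0}(\Omega|\R^d)$ has a subsequence converging in $L^2(\Omega)$, which is the claimed compactness.

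The main obstacle I anticipate is the bookkeeping around the truncation of $\nu$ near the origin: one must verify that for \emph{fixed} truncation radius the convolution approximation error is controlled by the energy with a finite constant, and simultaneously that this constant blows up as the inner radius $\delta \to 0$ so that the approximation error genuinely tends to zero uniformly over a bounded family. This hinges precisely on \eqref{eq:non-integrability-condition} (the non-integrability of $\nu$), and conversely \eqref{eq:integrability-condition-near-zero} is exactly what makes each truncated kernel an honest $L^1$ function so that \autoref{lem:compactness-convolution} applies. Everything else — Jensen, Fubini, the diagonal extraction — is routine.
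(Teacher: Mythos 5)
Your argument is correct and is essentially the intended one: the paper itself does not reprove this theorem but defers to \cite[Theorem 1.1]{JW19}, whose proof is exactly your ``kill the singularity'' scheme --- approximate $u$ by $w_\delta * u$ with $w_\delta$ the normalized truncation $c(\delta,R)^{-1}\nu\mathds{1}_{\{\delta\le|h|\le R\}}$, bound the error by $c(\delta,R)^{-1}$ times the energy (Jensen plus Fubini), let \eqref{eq:non-integrability-condition} force $c(\delta,R)\to\infty$, and invoke the convolution compactness of \autoref{lem:compactness-convolution}, which is recalled in the paper precisely for this purpose. The only cosmetic slip is the claim that the $H_\nu(\R^d)$- and $V_{\nu,0}(\Omega|\R^d)$-norms coincide on functions vanishing off $\Omega$ (they only agree up to a universal constant because of the double counting of the $\Omega\times\Omega^c$ interaction), which does not affect the deduction of the compact embedding $V_{\nu,0}(\Omega|\R^d)\hookrightarrow L^2(\Omega)$.
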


It is worth mentioning that an earlier analogous result is provided in \cite[Proposition 6]{PZ17} and 
\cite[Proposition 1]{BJ16} for periodic functions on the  torus. This technique, which consist of killing the singularity, is also used in \cite[Lemma 3.1]{BJ13}. The assertion of \autoref{thm:local-compactness} is proved in \cite[Theorem 1.1]{JW19} under the additional assumption that $\nu$ satisfies \eqref{eq:levy-cond}.
An analogous result is also proved in \cite[Theorem 2.1]{CP18} under restrictive assumptions on the kernel, using the Pego criterion for compact compactness in $L^2(\R^d)$. Looking at the proof carefully one sees that conditions \eqref{eq:non-integrability-condition} and \eqref{eq:integrability-condition-near-zero} is sufficient. This would allow to consider densities $\nu$ with a very strong singularity at the origin, e.g., $\nu(h)= |h|^{-d-\beta}$ for $h \neq0$ with any $\beta>0$. 
\medskip

As a straightforward consequence of \autoref{thm:local-compactness} we have the following local compactness of $\HnuOm$ in $L^2(\Omega)$. 

\begin{corollary}\label{cor:local-compatcness}
Let $\Omega\subset \R^d$ be open but not necessarily bounded. Assume $\nu:\R^d\setminus\{0\} \to \mathbb{R}$ fulfills conditions \eqref{eq:levy-cond} and \eqref{eq:non-integrability-condition}. The embedding $\HnuOm\hookrightarrow L^2(\Omega) $ is locally compact. Furthermore, for every bounded sequence $(u_n)_n$ there exists $u\in \HnuOm$ and subsequence $(u_{n_j})_j$ converging to $u$ in $L^2_{\operatorname{loc}}(\Omega)$. 
\end{corollary}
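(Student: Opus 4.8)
The strategy is to localise and reduce the assertion to the compactness statement of \autoref{thm:local-compactness}. First, note that \eqref{eq:levy-cond} already yields \eqref{eq:integrability-condition-near-zero}: for $\delta\in(0,1]$ one has $1\wedge|h|^2\geq\delta^2$ on $\{|h|\geq\delta\}$, hence $\int_{\R^d\setminus B_\delta(0)}\nu(h)\d h\leq\delta^{-2}\int_{\R^d}(1\wedge|h|^2)\nu(h)\d h<\infty$. Together with \eqref{eq:non-integrability-condition} this lets us invoke \autoref{thm:local-compactness}, so that for every bounded open set $\omega'$ with $\overline{\omega'}\subset\Omega$ the embedding $V_{\nu,0}(\omega'|\R^d)\hookrightarrow L^2(\omega')$ is compact. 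Continuity of $\HnuOm\hookrightarrow L^2(\Omega)$ is immediate from $\|u\|_{L^2(\Omega)}\leq\|u\|_{\HnuOm}$, so it remains to prove local compactness and the last claim.

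\emph{Cutoff estimate (the main step).} Fix an open set $\omega$ with $\overline{\omega}\subset\Omega$ compact, choose $\varphi\in C_c^\infty(\Omega)$ with $0\leq\varphi\leq1$ and $\varphi\equiv1$ on $\omega$, put $S=\supp\varphi$, fix a bounded open $\omega'$ with $S\subset\omega'$ and $\overline{\omega'}\subset\Omega$, and set $\rho=\dist(S,\R^d\setminus\Omega)>0$. For a sequence $(u_n)$ with $\|u_n\|_{\HnuOm}^2\leq M$, let $v_n=\varphi u_n$, extended by $0$ outside $S$; then $v_n\in V_{\nu,0}(\omega'|\R^d)$. I claim $\|v_n\|_{H_\nu(\R^d)}\leq C(\varphi,\nu,\Omega)\sqrt{M}$. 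Indeed $\|v_n\|_{L^2(\R^d)}\leq\|u_n\|_{L^2(\Omega)}$, and since the integrand $(v_n(x)-v_n(y))^2$ vanishes unless $x\in S$ or $y\in S$, symmetry of $\nu$ reduces the Gagliardo seminorm to $2\int_S\int_{\R^d}(v_n(x)-v_n(y))^2\nu(x-y)\d y\d x$. On $\{x\in S,\ y\notin\Omega\}$ we have $v_n(y)=0$ and $|x-y|\geq\rho$, so that piece is $\leq\big(\int_{|h|\geq\rho}\nu(h)\d h\big)\|u_n\|_{L^2(\Omega)}^2$. On $\{x\in S,\ y\in\Omega\}$ the symmetric Leibniz identity $v_n(x)-v_n(y)=\tfrac12(\varphi(x)+\varphi(y))(u_n(x)-u_n(y))+\tfrac12(u_n(x)+u_n(y))(\varphi(x)-\varphi(y))$, combined with $(\varphi(x)-\varphi(y))^2\leq(1+\|\nabla\varphi\|_\infty^2)(1\wedge|x-y|^2)$, bounds that piece by $2\cE_\Omega(u_n,u_n)$ plus a multiple of $\|(1\wedge|h|^2)\nu\|_{L^1(\R^d)}\|u_n\|_{L^2(\Omega)}^2$. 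By \eqref{eq:levy-cond} and the consequence above, all terms are $\leq CM$, proving the claim; in particular $(v_n)$ is bounded in $V_{\nu,0}(\omega'|\R^d)$.

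\emph{Conclusion.} By the compact embedding $V_{\nu,0}(\omega'|\R^d)\hookrightarrow L^2(\omega')$, some subsequence satisfies $v_{n_j}\to w$ in $L^2(\omega')$; since $\varphi\equiv1$ on $\omega$ this gives $u_{n_j}\to w|_\omega$ in $L^2(\omega)$, which is local compactness. For the final statement, take an exhaustion $\omega_1\Subset\omega_2\Subset\cdots$ with $\bigcup_k\omega_k=\Omega$, apply the above to each $\omega_k$ and extract a diagonal subsequence $(u_{n_j})$ converging in every $L^2(\omega_k)$, hence in $L^2_{\loc}(\Omega)$, to some measurable $u$; passing to a further subsequence, $u_{n_j}\to u$ a.e. in $\Omega$. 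Fatou's lemma gives $\|u\|_{L^2(\omega_k)}\leq\liminf_j\|u_{n_j}\|_{L^2(\Omega)}\leq\sqrt{M}$ for every $k$, hence $u\in L^2(\Omega)$, and Fatou applied to the double integral gives
\[
\cE_\Omega(u,u)\leq\liminf_{j\to\infty}\cE_\Omega(u_{n_j},u_{n_j})\leq M,
\]
so $u\in\HnuOm$. The only nonroutine ingredient is the cutoff estimate, specifically controlling the far interaction $\{x\in S,\ y\notin\Omega\}$ via $\rho>0$ and the commutator term via $\nu\in L^1(\R^d,(1\wedge|h|^2)\d h)$; everything else is \autoref{thm:local-compactness} together with a standard diagonal and lower semicontinuity argument.
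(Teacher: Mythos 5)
Your proof is correct and follows essentially the same route as the paper: multiply by a cutoff $\varphi\in C_c^\infty(\Omega)$, show $\varphi u_n$ is bounded in a space covered by \autoref{thm:local-compactness} using \eqref{eq:levy-cond} together with the positive distance from $\supp\varphi$ to $\Omega^c$, then extract along an exhaustion by a diagonal argument and conclude $u\in \HnuOm$ via Fatou. The only (immaterial) variations are that you invoke the compactness of $V_{\nu,0}(\omega'|\R^d)\hookrightarrow L^2(\omega')$ instead of composing the multiplication map $u\mapsto \varphi u$ with the locally compact embedding $H_\nu(\R^d)\hookrightarrow L^2(\R^d)$, and you use a symmetric Leibniz splitting plus an explicit treatment of the far region $\{y\notin\Omega\}$ in the cutoff estimate.
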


\begin{proof}
There is no loss of generality if we assume that  a function $u\in\HnuOm$ is extended by zero outside of $\Omega$. For $\varphi \in C_c^\infty(\R^d)$,  with $\supp \varphi\subset \Omega$,  the map $J_\varphi: \HnuOm\to H_{\nu}(\R^d)$, with $J_\varphi u = u\varphi $ is continuous and is thus locally compact by \autoref{thm:local-compactness}. Therefore the embedding $\HnuOm\hookrightarrow L^2(\Omega)$ is locally compact. Indeed, for $u \in \HnuOm$ we have 
\begin{align*}
\big[u(x)\varphi(x)- u(y)\varphi(y)\big]^2& =\big[u(x)(\varphi(x)- \varphi(y))+\varphi(y)(u(x)- u(y))\big]^2\\
&\leq 2\|\varphi\|^2_{W^{1,\infty}(\R^d)}\big[  |u(x)|^2(1\land |x-y|^2)+ \mathds{1}_{\operatorname{supp}\varphi}(y) (u(x)- u(y))^2 \big].
\end{align*}
As $\operatorname{supp}\varphi\subset \Omega$ is compact, consider $0<r\leq \dist(\supp\varphi, \partial\Omega)$. Then  integrating both sides of the above estimate over $\Omega\times \R^d$ with respect to the measure $\nuxminy\d y\d x$, yields the continuity of $J_\varphi$ as follows

\begin{align*}
\iint\limits_{\R^d\R^d} \big[u(x)\varphi(x)&- u(y)\varphi(y)\big]^2\nuxminy\d y\d x \\
& \leq 2\|\varphi\|^2_{W^{1,\infty}(\R^d)} \int_{\Omega}  |u(x)|^2\d x\int_{\R^d}(1\land |h|^2)\nu(h)\d h\\
&+ \iint\limits_{\Omega\Omega}(u(x)-u(y))^2\nuxminy\d y\d x+\int_\Omega |u(x)|^2\d x\int_{B_r(0)}\nu(h)\d h\\
&\leq C_\varphi \|u\|^2_{\HnuOm}\,.
\end{align*}
%
%
\smallskip 

Next, we  prove the second statement.  Consider $\Omega'_\delta=\big\{x\in \Omega: |x| <\frac{1}{\delta},\,\, \dist(x,\partial\Omega)>\delta\big\}=\Omega_\delta \cap B_{\frac{1}{\delta}}(0)$ and define $\varphi_\delta(x) =\eta_{\delta/4}*\mathds{1}_{\Omega'_{\delta/2}}(x) $  for $\delta>0$ small enough, where $ \eta_{\delta}(x)=\frac{1}{ \delta^d}\eta(\frac{x}{\delta})$ with $\eta \in C_c^\infty(\R^d)$ is supported in the unit ball $B_1(0)$, $\eta\geq 0$ and $\int_{\R^d} \phi(x)\d x =1$.  So that $\varphi_\delta \in C_c^\infty(\Omega)$, $\varphi_\delta =1$ on $\Omega'_\delta$,  $0\leq \varphi_\delta\leq 1$  and $|\nabla \varphi_{\delta} |\leq c/\delta$. Given a sequence $(u_n)_n$ that is bounded in $\HnuOm$, the previous observation entails that for each $\delta>0$, for the sequence $(u_n\varphi_\delta)_n$ there exists  a subsequence $n_j=n_j(\delta), $ $j\geq 1$, and $u^\delta\in L^2(\Omega'_\delta)$ such that  the sequence $(u_n)_n$, 
(as $u_n\varphi_\delta=u_n$ in $\Omega'_\delta$) converges to some $u^\delta$ in $L^2(\Omega'_\delta)$ and almost everywhere in $\Omega'_\delta.$ Employing the standard  the Cantor's diagonalization procedure with $\delta=\frac{1}{2^k}$, one can construct a subsequence $(u_{n_j})_j$  converging subsequence in $L_{\operatorname{loc}}^2(\Omega)$ and almost everywhere in $\Omega$ to some function $u$. Fatou's lemma implies that $u\in \HnuOm$ since 
\begin{align*}
\|u\|_{\HnuOm} \leq \liminf_{j\to \infty}\|u_{n_j}\|_{\HnuOm}<\infty.
\end{align*}

\end{proof}

Let us turn to the result on global compactness. We will need some estimates near the boundary $\partial\Omega$. We begin with the following estimate involving cut-off functions.

\begin{lemma}\label{lem:estimate-cut-off}
Let $\Omega \subset \R^d$ be open and bounded. Assume $\nu:\R^d\setminus\{0\} \to [0, \infty)$ is even and measurable. Let $0 < \delta < \frac13 \diam (\Omega)$. Let $\varphi \in C^\infty_c(\Omega)$ be such that \footnote{Take $\varphi = 1-\varphi_\delta$ with $\varphi_\delta= \eta_{\delta/4}*\mathds{1}_{\Omega'_{\delta/2}}$ as in the proof of Corollary \ref{cor:local-compatcness}.} $ \varphi=0$ on $\Omega_\delta$, $ \varphi=1$ on $\Omega\setminus \Omega_{\delta/2}$, $0\leq \varphi\leq 1$ and $|\nabla \varphi|\leq c/\delta$. For every $u\in \HnuOm$, the following estimate holds true
%
\begin{align}\label{eq:estimate-cut-off}
\iil_{\Omega\Omega}([u\varphi](x)-[u\varphi](y))^2\nuxminy\d x\d y
\leq \frac{C}{\delta^2}\int\limits_{\Omega_{\delta/2}} |u(x)|^2\d x+ 8\iint\limits_{\Omega\Omega}(u(x)-u(y))^2\nuxminy\d x\d y
\end{align}
where, $C=8c^2 \int_{B_R(0)}|h|^2\nu(h)\d h$ and $R=\operatorname{diam}(\Omega)$.

\end{lemma}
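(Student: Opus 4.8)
The plan is to get rid of the awkward product $u\varphi$ by writing $u\varphi = u - u\psi$ on $\Omega$, where $\psi := 1-\varphi$. Note that $\psi$ is Lipschitz with $|\nabla\psi| = |\nabla\varphi| \le c/\delta$ (so $|\psi(x)-\psi(y)|\le (c/\delta)|x-y|$), that $0\le\psi\le1$, and --- this is what makes the interior set $\Omega_{\delta/2}$ appear on the right-hand side --- that $\psi = 1$ on $\Omega_\delta$ while $\psi = 0$ on $\Omega\setminus\Omega_{\delta/2}$. Set $v := u\psi$, so that $v\equiv0$ on $\Omega\setminus\Omega_{\delta/2}$, and put $M := \int_{B_R(0)}|h|^2\nu(h)\,\d h$, which is finite by \eqref{eq:levy-cond} (if $M=\infty$ there is nothing to prove). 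Then $(a-b)^2\le2a^2+2b^2$ gives
\[ \iil_{\Omega\Omega}([u\varphi](x)-[u\varphi](y))^2\nuxminy\,\d x\,\d y \;\le\; 2\,\cE_\Omega(u,u) \;+\; 2\iil_{\Omega\Omega}(v(x)-v(y))^2\nuxminy\,\d x\,\d y\,, \]
so it suffices to bound the last integral by $4c^2\delta^{-2}M\int_{\Omega_{\delta/2}}|u|^2\,\d x + 2\,\cE_\Omega(u,u)$; substituting this yields the lemma with $6\le8$ in front of $\cE_\Omega(u,u)$ and with $C = 8c^2M$, exactly as stated.

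\textbf{Estimating the energy of $v$.} To bound $\iint_{\Omega\Omega}(v(x)-v(y))^2\nu$ I would split $\Omega\times\Omega$ into the four sets coming from the partition $\Omega = \Omega_{\delta/2}\cup(\Omega\setminus\Omega_{\delta/2})$. On $(\Omega\setminus\Omega_{\delta/2})\times(\Omega\setminus\Omega_{\delta/2})$ the integrand vanishes since $v\equiv0$ there, and the two ``mixed'' integrals over $\Omega_{\delta/2}\times(\Omega\setminus\Omega_{\delta/2})$ and $(\Omega\setminus\Omega_{\delta/2})\times\Omega_{\delta/2}$ coincide by the evenness of $\nu$. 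On $\Omega_{\delta/2}\times\Omega_{\delta/2}$ I use the product rule $v(x)-v(y) = u(x)(\psi(x)-\psi(y)) + \psi(y)(u(x)-u(y))$: the term $\psi(y)(u(x)-u(y))$ contributes at most $2\,\cE_\Omega(u,u)$ because $0\le\psi\le1$, while for $u(x)(\psi(x)-\psi(y))$ the Lipschitz bound $(\psi(x)-\psi(y))^2\le c^2\delta^{-2}|x-y|^2$ together with $|x-y|\le R = \diam(\Omega)$ gives $2c^2\delta^{-2}\int_{\Omega_{\delta/2}}|u(x)|^2\big(\int_{\Omega_{\delta/2}}|x-y|^2\nuxminy\,\d y\big)\,\d x \le 2c^2\delta^{-2}M\int_{\Omega_{\delta/2}}|u|^2\,\d x$. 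On the mixed piece $\Omega_{\delta/2}\times(\Omega\setminus\Omega_{\delta/2})$ one has $v(y)=0$, hence $(v(x)-v(y))^2 = u(x)^2\psi(x)^2$, and since $\psi(y)=0$ the Lipschitz bound yields $\psi(x)^2 = |\psi(x)-\psi(y)|^2 \le c^2\delta^{-2}|x-y|^2$; integrating exactly as above bounds this piece by $c^2\delta^{-2}M\int_{\Omega_{\delta/2}}|u|^2\,\d x$. Adding the four contributions (the mixed one counted twice) gives the desired bound on $\iint_{\Omega\Omega}(v(x)-v(y))^2\nu$, and the lemma follows.

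\textbf{The main obstacle.} The point that really has to be handled with care is the presence of $\Omega_{\delta/2}$, rather than all of $\Omega$, in the first term on the right. A direct application of the product rule to $\iint_{\Omega\Omega}([u\varphi](x)-[u\varphi](y))^2\nu$ over the full square unavoidably produces $\int_\Omega|u|^2$: pairs $(x,y)$ with $x$ near $\partial\Omega$ and $y$ deep inside $\Omega_\delta$ have $\varphi(x)-\varphi(y)\approx 1$, so such an estimate drags in $u(x)^2$ with $x$ close to the boundary. Subtracting off a full copy of $u$ first --- so that the leftover $v = u\psi$ is genuinely supported in $\Omega_{\delta/2}$ --- and, crucially, exploiting $v\equiv0$ on $\Omega\setminus\Omega_{\delta/2}$ directly in the mixed terms (instead of expanding $v(x)-v(y)$ there, which would reintroduce boundary mass) is precisely what confines every $L^2$-contribution to $\Omega_{\delta/2}$. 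Everything else is routine bookkeeping: the Lipschitz bound on $\psi$ absorbs the diagonal singularity of $\nu$, and $\int_{B_R(0)}|h|^2\nu(h)\,\d h<\infty$ by the Lévy condition.
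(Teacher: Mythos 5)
Your proof is correct and follows essentially the same route as the paper: the identical partition of $\Omega\times\Omega$ into $\Omega_{\delta/2}\times\Omega_{\delta/2}$, the two mixed pieces and $(\Omega\setminus\Omega_{\delta/2})\times(\Omega\setminus\Omega_{\delta/2})$, combined with the Leibniz splitting, the Lipschitz bound $|\varphi(x)-\varphi(y)|\leq (c/\delta)|x-y|$ on $\Omega$, and the bound $\int_{\Omega}|x-y|^2\nu(x-y)\,\d y\leq \int_{B_R(0)}|h|^2\nu(h)\,\d h$. The only deviation is the preliminary rewriting $u\varphi=u-u(1-\varphi)$ together with using that $v=u(1-\varphi)$ vanishes off $\Omega_{\delta/2}$ on the mixed pieces (where the paper instead applies the product rule), which yields the same estimate with a slightly smaller factor ($6$ rather than $7$) in front of the energy term, comfortably within the stated $8$.
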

\begin{proof}
Firstly, since $\varphi=1$ on $\Omega\setminus \Omega_{\delta/2}$ we have 

\begin{align*}
\iint\limits_{\Omega\setminus \Omega_{\delta/2}~ \Omega\setminus \Omega_{\delta/2}}\hspace*{-4ex} ([u\varphi](x)-[u\varphi](y))^2\nuxminy\d x\d y 
&=\hspace*{-3ex} \iint\limits_{\Omega\setminus \Omega_{\delta/2}~ \Omega\setminus \Omega_{\delta/2}} \hspace*{-4ex}(u(x)-u(y))^2\nuxminy\d x\d y\\
&\leq \iint\limits_{\Omega\Omega}(u(x)-u(y))^2\nuxminy\d x\d y\,.
\end{align*}

In view of the fact that $0\leq \varphi\leq 1$ and $|\varphi(x)-\varphi(y)|\leq c/\delta|x-y|$ for every $x,y \in \Omega$, we have
\begin{align}
([u\varphi](x)-[u\varphi](y))^2 &= \big( \varphi(y)(u(x)-u(y)) + u(x)(\varphi(x)-\varphi(y)) \big)^2 \notag \\
&\leq 2(u(x)-u(y))^2+ \frac{2c^2}{\delta^2}  |u(x)|^2|x-y|^2\,.\label{eq:split}
\end{align} 
Secondly, noticing that $\Omega\subset B_R(x)$ for all $ x\in \Omega$ where $R= \operatorname{diam}(\Omega)$ and integrating both sides of \eqref{eq:split} over $\Omega_{\delta/2}\times \Omega_{\delta/2}$ we obtain the following estimate

\begin{align*}
&\iint\limits_{\Omega_{\delta/2} \Omega_{\delta/2}}([u\varphi](x)-[u\varphi](y))^2\nuxminy\d x\d y\\
&\leq 2\iint\limits_{\Omega\Omega}(u(x)-u(y))^2\nuxminy\d x\d y+ \frac{2c^2}{\delta^2} \int\limits_{ \Omega_{\delta/2}} |u(x)|^2\d x \int\limits_{B_R(x)} |x-y|^2\nuxminy\d y \\
&= 2\iint\limits_{\Omega\Omega}(u(x)-u(y))^2\nuxminy\d x\d y+ \frac{2c^2}{\delta^2} \Big(\int\limits_{B_R(0)} |h|^2\nu(h)\d h\Big) \int\limits_{ \Omega_{\delta/2}} |u(x)|^2\d x \,.
\end{align*}
Likewise to the previous estimate, using \eqref{eq:split} we get 
\begin{align*}
& \iint\limits_{\Omega_{\delta/2}\times \Omega\setminus \Omega_{\delta/2}} ([u\varphi](x)-[u\varphi](y))^2\nuxminy\d x\d y\\
%
%
&\leq 2\iint\limits_{\Omega\Omega}(u(x)-u(y))^2\nuxminy\d x\d y+ \frac{2c^2}{\delta^2} \Big(\int\limits_{B_R(0)} |h|^2\nu(h)\d h\Big) \int\limits_{ \Omega_{\delta/2}} |u(x)|^2\d x \,.
\end{align*}
Altogether, the desired estimate follows as claimed since by symmetry we can use the split

\begin{align*}
\iint\limits_{\Omega\times \Omega}= \iint\limits_{\Omega_{\delta/2}\times \Omega_{\delta/2}} +\, 2
\hspace*{-2ex} \iint\limits_{\Omega_{\delta/2}\times \Omega\setminus \Omega_{\delta/2}}+ \iint\limits_{\Omega\setminus\Omega_{\delta/2}\times\Omega\setminus\Omega_{\delta/2}}\,.
\end{align*}
\end{proof}

The next lemma plays a crucial role in the sequel. 

\begin{lemma}\label{lem:estimate-near-boundary}
Assume that $ \Omega\subset \R^d$ is open and bounded, and $\nu:\R^d\setminus\{0\}\to [0, \infty)$ is radial. There exists $C>0$ such that for every $u \in L^2(\Omega)$ and every positive $\delta < \frac13 \diam (\Omega)$
\begin{align}\label{eq:estimate-boundary}
\int_{\Omega} |u(x)|^2\d x\leq \frac{C}{\delta^2 \widetilde{q}(2\delta)}\int_{\Omega_{\delta/2}}|u(x)|^2 \d x+ \frac{8}{\widetilde{q}(2\delta)}\iint\limits_{\Omega\Omega}(u(x)-u(y))^2\nuxminy\d x\d y \,,
\end{align}
with $\widetilde{q}$ as is \eqref{eq:class-sing-boundary}. Moreover, if $\Omega$ has a Lipschitz boundary, then 
\begin{align}\label{eq:estimate-boundary-lip}
\int_{\Omega} |u(x)|^2\d x\leq \frac{C}{\delta^2 q(2\delta)}\int_{\Omega_{\delta/2}}|u(x)|^2 \d x+ \frac{8}{q(2\delta)}\iint\limits_{\Omega\Omega}(u(x)-u(y))^2\nuxminy\d x\d y \,,
\end{align}
with $q$ as in \eqref{eq:class-lipschitz}. 
\end{lemma}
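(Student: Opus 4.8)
The plan is to derive \eqref{eq:estimate-boundary} by comparing the $L^2$-mass of $u$ on all of $\Omega$ with its mass on the interior set $\Omega_{\delta/2}$, using the cut-off function $\varphi$ from \autoref{lem:estimate-cut-off} to transfer control from the bulk to a collar near $\partial\Omega$, and then exploiting the lower bound on the kernel near the boundary encoded in $\widetilde q$. First I would fix $\delta\in(0,\tfrac13\diam(\Omega))$ and take $\varphi\in C^\infty_c(\Omega)$ with $\varphi=0$ on $\Omega_\delta$, $\varphi=1$ on $\Omega\setminus\Omega_{\delta/2}$, $0\le\varphi\le1$, $|\nabla\varphi|\le c/\delta$, exactly as in \autoref{lem:estimate-cut-off}. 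The idea is that $w:=u\varphi$ vanishes on $\Omega_\delta$, so $w$ is ``supported near the boundary'', and for such functions the double integral over $\Omega\times\Omega$ controls the full $L^2$-norm via the kernel lower bound.

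The key step is the following Poincaré-type inequality for functions vanishing on $\Omega_\delta$: if $w\in L^2(\Omega)$ with $w=0$ a.e. on $\Omega_\delta$, then writing $a\in\partial\Omega$ and using that for $x\in\Omega\setminus\Omega_\delta$ one has $h\mapsto\nu(x-h)$ bounded below in an integrated sense over $\Omega_{2\delta}$ — more precisely, for $x\in\Omega_{\delta/2}^c\cap\Omega$ we will use $\nu(x-y)\ge$ (something controlled by $\widetilde q(2\delta)$ after integrating $y$ over $\Omega_{2\delta}$, using radiality and that $\dist(x,y)$ is comparable to a boundary distance) — we obtain
\begin{align*}
\widetilde q(2\delta)\int_{\Omega}|w(x)|^2\,\d x
\le C\iint_{\Omega\Omega}(w(x)-w(y))^2\,\nu(x-y)\,\d x\,\d y\,.
\end{align*}
Concretely, since $w=0$ on $\Omega_\delta\supset\Omega_{2\delta}$,
\begin{align*}
\iint_{\Omega\Omega}(w(x)-w(y))^2\nu(x-y)\,\d x\,\d y
\ge 2\int_{\Omega}|w(x)|^2\Big(\int_{\Omega_{2\delta}}\nu(x-y)\,\d y\Big)\d x\,,
\end{align*}
and for any $x\in\Omega$ with $\dist(x,\partial\Omega)<2\delta$ one picks $a_x\in\partial\Omega$ with $|x-a_x|<2\delta$; then $\Omega_{2\delta}\subset\{y:|y-a_x|>2\delta\}$ is not literally helpful, so instead one translates: $\int_{\Omega_{2\delta}}\nu(x-y)\,\d y\ge \int_{\Omega_{2\delta}}\nu(a_x - y + (x-a_x))\,\d y$ and one uses radiality together with $|x-a_x|<2\delta$ to compare this with $\inf_{a\in\partial\Omega}\int_{\Omega_{2\delta}}\nu(y-a)\,\d y=\widetilde q(2\delta)$, possibly after shrinking $2\delta$ to $\delta$ in the definition (this is why $\widetilde q(2\delta)$ rather than $\widetilde q(\delta)$ appears). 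For $x\in\Omega_{2\delta}$ itself $w(x)=0$, so the inequality is trivial there. This yields $\int_\Omega|w|^2\le \tfrac{C}{\widetilde q(2\delta)}\iint_{\Omega\Omega}(w(x)-w(y))^2\nu(x-y)$.

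Finally I would combine the three ingredients. Since $u=w$ on $\Omega\setminus\Omega_{\delta/2}$ (because $\varphi=1$ there) and $\|u\|_{L^2(\Omega)}^2=\|u\|_{L^2(\Omega_{\delta/2})}^2+\|u\|_{L^2(\Omega\setminus\Omega_{\delta/2})}^2\le\|u\|_{L^2(\Omega_{\delta/2})}^2+\|w\|_{L^2(\Omega)}^2$, apply the Poincaré inequality above to $w$ and then \eqref{eq:estimate-cut-off} from \autoref{lem:estimate-cut-off} to bound $\iint_{\Omega\Omega}(w(x)-w(y))^2\nu(x-y)$ by $\tfrac{C}{\delta^2}\int_{\Omega_{\delta/2}}|u|^2+8\iint_{\Omega\Omega}(u(x)-u(y))^2\nu(x-y)$. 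Collecting constants gives \eqref{eq:estimate-boundary}. For the Lipschitz case \eqref{eq:estimate-boundary-lip}, I would repeat the argument with $q$ in place of $\widetilde q$: for a bounded Lipschitz domain one has the geometric comparison $\widetilde q(2\delta)\gtrsim q(2\delta)$ (a boundary point $a$ sees a full cone of $\Omega_{2\delta}$ at distance $\asymp\delta$, and on that cone $\int|h|^2\nu(h)$ over $B_{c\delta}$ is controlled by $\delta^2 q(c\delta)\asymp\delta^2 q(2\delta)$ using radiality), so the same chain of estimates applies. The main obstacle is the geometric bookkeeping in the Poincaré step — correctly quantifying, for $x$ in the collar, how the translated integral $\int_{\Omega_{2\delta}}\nu(x-y)\,\d y$ is bounded below by $\widetilde q(2\delta)$ uniformly in $x$, which is where radiality and the precise scale $2\delta$ versus $\delta$ must be handled with care; the rest is the routine triangle-inequality splitting already packaged in \autoref{lem:estimate-cut-off}.
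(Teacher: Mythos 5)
Your treatment of the first inequality \eqref{eq:estimate-boundary} is essentially the paper's argument: cut off with $\varphi$ from \autoref{lem:estimate-cut-off}, drop to pairs with one point in the interior set where $u\varphi$ vanishes, and lower-bound the resulting kernel integral by $\widetilde q$ via a translation. Only the scale bookkeeping you flagged needs fixing: since $u\varphi$ vanishes on $\Omega_\delta$ and every $x$ with $u\varphi(x)\neq 0$ satisfies $\dist(x,\partial\Omega)\le\delta$, you should integrate $y$ over $\Omega_\delta$ (not $\Omega_{2\delta}$) and pick $a_x\in\partial\Omega$ with $|x-a_x|\le\delta$; then $\Omega_{2\delta}-a_x\subset\Omega_\delta-x$ gives $\int_{\Omega_\delta}\nuxminy\,\d y\ge\widetilde q(2\delta)$ exactly, which is the paper's inclusion (implemented there with a finite cover of $\partial\Omega$ by balls $B_{\delta/2}(a^i)$). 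With your choice of $\Omega_{2\delta}$ and $|x-a_x|<2\delta$ you only get $\widetilde q(4\delta)$, and since $\widetilde q$ is non-increasing this is strictly weaker than the stated bound.

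The genuine gap is in your route to the Lipschitz estimate \eqref{eq:estimate-boundary-lip}. The comparison $\widetilde q(2\delta)\gtrsim q(2\delta)$ is false in general, even for smooth bounded domains: $\widetilde q(2\delta)$ in \eqref{eq:class-sing-boundary} only samples $\nu$ at radii $>2\delta$ (every point of $\Omega_{2\delta}$ is at distance $>2\delta$ from any $a\in\partial\Omega$), while $q(2\delta)$ in \eqref{eq:class-lipschitz} only samples $\nu$ at radii $<2\delta$. Take for instance $\nu(h)=|h|^{-d-1}\mathds{1}_{B_1}(h)$, which is radial and satisfies \eqref{eq:levy-cond} and \eqref{eq:non-integrability-condition}, and $\Omega$ a smooth domain with $\diam(\Omega)>3/2$ and inradius larger than $1$; for $\delta=1/2$ one has $\widetilde q(2\delta)=0$ while $q(2\delta)>0$, so no cone argument can rescue the comparison, and \eqref{eq:estimate-boundary} is vacuous while \eqref{eq:estimate-boundary-lip} is not. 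This is precisely why the class $\classA{2}$ is treated separately from $\classA{3}$. The paper proves \eqref{eq:estimate-boundary-lip} by a different mechanism: for a Lipschitz domain one bounds the boundary-layer mass $\int_{\Omega\setminus\Omega_{\delta/2}}[\varphi u]^2$ directly by
\begin{align*}
2\delta^2\iint\limits_{\Omega\Omega}\big([\varphi u](x)-[\varphi u](y)\big)^2\nuxminy\,\d x\,\d y\;\ge\;\Big(\int_{B_{2\delta}(0)}|h|^2\nu(h)\,\d h\Big)\int_{\Omega\setminus\Omega_{\delta/2}}[\varphi u]^2(x)\,\d x,
\end{align*}
following the averaging/chaining argument of Ponce (Eq.~(22)--(23) in \cite{Ponce2004}), which uses only jumps of size $\lesssim\delta$ staying inside $\Omega$ thanks to the Lipschitz cone property; combined with \eqref{eq:estimate-cut-off} this gives \eqref{eq:estimate-boundary-lip}. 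You would need to supply this (or an equivalent short-range) argument rather than reduce the Lipschitz case to the $\widetilde q$ case.
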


\begin{proof}
Let $\varphi$ be as in \autoref{lem:estimate-cut-off} and fix $a\in \partial\Omega$. A routine check reveals that $\Omega_{2\delta}-a\subset \Omega_{\delta}-x$ for every $x\in \Omega \cap B_\delta(a)$ which yields,
\begin{align*}
\int_{\Omega \cap B_\delta(a)} & [\varphi u]^2 (x) \d x \int_{\Omega_\delta} \nuxminy\d y \geq \int_{\Omega \cap B_\delta(a)} [\varphi u]^2 (x) \d x \int_{\Omega_\delta-x} \nu(h)\d h\\
&\geq \int_{\Omega \cap B_\delta(a)} [\varphi u]^2 (x) \d x \int_{\Omega_{2\delta}-a } \nu(h)\d h \geq \widetilde{q}(2\delta) \int_{\Omega \cap B_\delta(a)} [\varphi u]^2 (x) \d x.
\end{align*}
%

By a compactness argument there exist $a^1,a^2,\cdots a^n\in \partial\Omega$ such that $\partial\Omega\subset \bigcup\limits_{i=1}^n B_{\delta/2}(a^i)$. So that, $\Omega\setminus \Omega_{\delta/2}\subset \bigcup\limits_{i=1}^n \Omega\cap B_{\delta}(a^i) \subset \Omega\setminus \Omega_{\delta}$.
$\varphi u=0$ on $\Omega_{\delta/2}$ trivially implies $\varphi u=0$ on $\Omega_{\delta}$. Therefore with the aid of the above estimate we obtain the following estimate
\begin{align*}
&\iint\limits_{\Omega\Omega} \big( [\varphi u] (x)- [\varphi u] (y)\big)^2\nuxminy\d x\d y
\geq 2 \iint\limits_{\Omega\setminus \Omega_{\delta} ~ \Omega_\delta} [\varphi u]^2 (x) \nuxminy\d x\d y\\
&\geq \int\limits_{ \bigcup\limits_{i=1}^n \Omega\cap B_{\delta}(a^i)} [\varphi u]^2 (x) \d x \int_{ \Omega_\delta} \nuxminy\d y
\geq 2\widetilde{q}(2\delta)\int\limits_{ \bigcup\limits_{i=1}^n \Omega\cap B_{\delta}(a^i)} [\varphi u]^2 (x) \d x \\
&\geq 2\widetilde{q}(2\delta)\int\limits_{ \Omega\setminus \Omega_{\delta/2}} [\varphi u]^2 (x) \d x= 2\widetilde{q}(2\delta)\int\limits_{ \Omega\setminus \Omega_{\delta/2}} |u(x)|^2 (x) \d x \,.
\end{align*}
This combined with \eqref{eq:estimate-cut-off} gives \eqref{eq:estimate-boundary}. Next, let us assume that $\Omega$ is a Lipschitz domain. Following the same procedure as in \cite[Eq. (22) and Eq. (23)]{Ponce2004} one arrives at 

\begin{align*}
2\delta^2 \iint\limits_{\Omega\Omega} \big( [\varphi u] (x)- [\varphi u] (y)\big)^2\nuxminy\d x\d y
\geq \Big(\int\limits_{B_{2\delta}(0)} |h|^2\nu(h)\d h\Big) \int\limits_{ \Omega\setminus \Omega_{\delta/2}} [\varphi u]^2 (x) \d x.
\end{align*}
that is, 
\begin{align*}
\iint\limits_{\Omega\Omega} \big( [\varphi u] (x)- [\varphi u] (y)\big)^2\nuxminy\d x\d y
\geq 2q(2\delta) \int\limits_{ \Omega\setminus \Omega_{\delta/2}} [\varphi u]^2 (x) \d x, 
\end{align*}
which combined with \eqref{eq:estimate-cut-off} implies \eqref{eq:estimate-boundary-lip}.
\end{proof}

\medskip

Here is our global compactness result.
\begin{theorem}\label{thm:embd-compactness}
Let $\Omega$ be an open bounded subset of $\R^d$ and $\nu:\R^d\setminus\{0\} \to [0,\infty]$ be a measurable function. If the couple $(\nu, \Omega)$ belongs to the class $\mathscr{A}_i,~ i=1,2,3$ then the embedding $\HnuOm \hookrightarrow L^2(\Omega)$ is compact. In particular, the embedding $\VnuOm \hookrightarrow L^2(\Omega)$ is compact.
\end{theorem}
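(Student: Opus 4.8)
The plan is to upgrade the local compactness provided by \autoref{cor:local-compatcness} --- which rests only on \eqref{eq:levy-cond} and \eqref{eq:non-integrability-condition}, hence is available in all three classes --- to genuine compactness, by controlling the $L^2$-mass of a function in a $\delta$-neighbourhood of $\partial\Omega$ uniformly along a sequence. I would treat the three classes separately, and $\classA{1}$ is the easy one: given $(u_n)_n$ bounded in $\HnuOm$, the extensions $Eu_n$ are bounded in $H_\nu(\R^d)$; since \eqref{eq:levy-cond} yields \eqref{eq:integrability-condition-near-zero} (on $\{|h|\ge\delta\}$ one has $1\land|h|^2\ge 1\land\delta^2>0$) and \eqref{eq:non-integrability-condition} holds, \autoref{thm:local-compactness} gives that $H_\nu(\R^d)\hookrightarrow L^2(K)$ is compact for every compact $K\subset\R^d$. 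Taking $K=\overline\Omega$ (compact since $\Omega$ is bounded), one extracts a subsequence $(Eu_{n_j})_j$ convergent in $L^2(\overline\Omega)$, and restriction to $\Omega$ yields a subsequence of $(u_n)_n$ convergent in $L^2(\Omega)$.

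For $\classA{2}$ and $\classA{3}$ I would begin with \autoref{cor:local-compatcness}: a bounded sequence $(u_n)_n$, say $\|u_n\|_{\HnuOm}\le M$, has a subsequence $u_{n_j}\to u$ in $L^2_{\loc}(\Omega)$ and a.e., with $u\in\HnuOm$; set $w_j=u_{n_j}-u$. Fatou's lemma gives $\|u\|_{\HnuOm}\le M$, so by the triangle inequality the energies $\iil_{\Omega\Omega}\big(w_j(x)-w_j(y)\big)^2\,\nuxminy\,\d x\,\d y$ are bounded by a constant $M'$ independent of $j$. Applying \autoref{lem:estimate-near-boundary} to $w_j$ then gives, for every $0<\delta<\tfrac13\diam(\Omega)$,
\[
\|w_j\|_{L^2(\Omega)}^2\le\frac{C}{\delta^2\,\widetilde q(2\delta)}\,\|w_j\|_{L^2(\Omega_{\delta/2})}^2+\frac{8M'}{\widetilde q(2\delta)}
\]
in class $\classA{3}$, and the same estimate with $\widetilde q$ replaced by $q$ in class $\classA{2}$, via the Lipschitz bound \eqref{eq:estimate-boundary-lip}.

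Now fix $\eps>0$. Since $\widetilde q(\delta)\to\infty$ as $\delta\to0$ in class $\classA{3}$ (resp.\ $q(\delta)\to\infty$ in class $\classA{2}$), I would first choose $\delta>0$ so small that $8M'/\widetilde q(2\delta)<\eps/2$. With $\delta$ now fixed, the prefactor $C/(\delta^2\,\widetilde q(2\delta))$ is a finite constant while $\overline{\Omega_{\delta/2}}$ is a compact subset of $\Omega$, so local compactness forces $\|w_j\|_{L^2(\Omega_{\delta/2})}\to0$, and hence the first term is $<\eps/2$ for $j$ large. Thus $\|w_j\|_{L^2(\Omega)}\to0$, i.e.\ $u_{n_j}\to u$ in $L^2(\Omega)$, which proves compactness of $\HnuOm\hookrightarrow L^2(\Omega)$. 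The embedding $\VnuOm\hookrightarrow L^2(\Omega)$ is then compact because $\VnuOm\hookrightarrow\HnuOm$ is continuous.

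The main obstacle is the blow-up factor $1/\delta^2$ in \eqref{eq:estimate-boundary}: one cannot pass to the limit $\delta\to0$ uniformly, and this is exactly what dictates the order of the quantifiers above --- pick $\delta$ small first (here the divergence $\widetilde q(\delta),q(\delta)\to\infty$ hard-wired into $\classA{2},\classA{3}$ is essential), and only afterwards, with $\delta$ frozen, invoke local compactness on the fixed compactly contained set $\Omega_{\delta/2}$. The remaining work is routine bookkeeping: verifying that each of $\classA{1},\classA{2},\classA{3}$ supplies exactly the hypotheses ($\Omega$ bounded, $\nu$ even resp.\ radial as required, the admissible range of $\delta$) under which \autoref{thm:local-compactness} and \autoref{lem:estimate-near-boundary} were established.
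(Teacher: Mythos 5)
Your proposal is correct and follows essentially the same route as the paper: the class $\classA{1}$ case is handled via the extension operator and \autoref{thm:local-compactness}, while for $\classA{2}$ and $\classA{3}$ you fix $\delta$ first using the divergence of $q$ resp.\ $\widetilde q$, then apply \autoref{lem:estimate-near-boundary} to $u_{n_j}-u$ and invoke the local compactness of \autoref{cor:local-compatcness} on $\Omega_{\delta/2}$, exactly as in the paper's limsup argument. The only difference is presentational (you spell out the $\classA{1}$ step and the bound $M'$ via Fatou, which the paper compresses into its constant $M$).
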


\begin{proof}
Given the continuous embedding $\VnuOm\hookrightarrow\HnuOm$, it will be sufficient only to prove that the embedding $\HnuOm\hookrightarrow L^2(\Omega)$ is compact. For $(\nu, \Omega)$ belonging to the class $\classA{1}$ the result is a direct consequence of \autoref{thm:local-compactness}. Now assume $(\nu, \Omega)$ belongs to the class $\classA{2}$ (resp. $\classA{3}$) then for $\varepsilon>0$ there is $\delta>0$ small enough such that $8q^{-1}(2\delta)<\varepsilon$ (resp. $8 \widetilde{q}^{-1}(2\delta)<\varepsilon$) If $(u_n)_n$ is a bounded sequence of $\HnuOm$ then Corollary \ref{cor:local-compatcness} infers the existence of a subsequence $(u_{n_j})_j$ of $(u_n)_n$ converging to some $u \in \HnuOm$ in $L^2(\Omega_{\delta/2})$ i.e $\|u_{n_j}-u\|_{L^2(\Omega_{\delta/2})}\to0$ as $j\to \infty$. In any case, in view of \autoref{lem:estimate-near-boundary}, passing to the limsup in \eqref{eq:estimate-boundary} or in \eqref{eq:estimate-boundary-lip} applied to $u_{n_j}-u$ we get 
\begin{align*}
\limsup_{j\to \infty}\int_{\Omega}|u_{n_j}(x)-u(x)|^2\d x\leq M\varepsilon
\end{align*}
where $$M= 2\|u\|^2_{\HnuOm}+ 2\sup_{n}\|u_n\|^2_{\HnuOm}<\infty.$$ Finally, $\limsup\limits_{j\to \infty}\|u_{n_j}-u\|_{L^2(\Omega)}=0$ since $\varepsilon>0$ is arbitrarily chosen, which achieves the proof. 
\end{proof}

\medskip 

\begin{remark}
A noteworthy consequence of what we have obtained so far is that, for an appropriate choice of $\nu,$ the well-known Rellich-Kondrachev compact embeddings $H^1_0(\Omega)\hookrightarrow L^2(\Omega) $ and $H^1(\Omega)\hookrightarrow L^2(\Omega)$ when $\Omega$ is Lipschitz, respectively derive from \autoref{thm:local-compactness} combined with the continuous embedding $H^1_0(\Omega)\hookrightarrow V_{\nu,0}(\Omega|\R^d)$ and from \autoref{thm:embd-compactness} combined with the continuous embedding $H^1(\Omega)\hookrightarrow \HnuOm$ when $\Omega$ is Lipschitz. 
\end{remark}

\medskip

The efforts made to establish \autoref{thm:embd-compactness} will be rewarded for the elaboration of the Poincar\'e type inequality which will be useful in the forthcoming section.

\begin{theorem}[Poincar\'e inequality]\label{thm:poincare-inequality}
Let $\Omega$ be an open bounded subset of $\R^d$ and $\nu:\R^d\setminus\{0\} \to [0,\infty]$ be a measurable function with full support. Assume the couple $(\nu, \Omega)$ belongs to one of the class $\mathscr{A}_i,~ i=0,1,2,3$. Then there is exists a positive constant $C= C(d,\Omega, \nu )$ depending only on $d,~ \Omega$ and $\nu$ such that

\begin{align}\label{eq:poincare-inequalityH}
\big\|u-\mbox{$\fint_{\Omega}$}u\big\|^2_{L^2(\Omega)} \leq C\iint\limits_{\Omega\Omega}(u(x)-u(y))^2\nuxminy\d x\d y\quad\text{for all }~ u \in L^2(\Omega)\,, 
\end{align}
and hence 
\begin{align}\label{eq:poincare-inequalityV}
\big\|u-\mbox{$\fint_{\Omega}$}u\big\|^2_{L^2(\Omega)} \leq C\mathcal{E}(u,u)\,\qquad\text{for all }~ u \in \VnuOm\, .
\end{align}

\end{theorem}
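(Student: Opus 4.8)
The plan is to deduce the second assertion \eqref{eq:poincare-inequalityV} from the first one \eqref{eq:poincare-inequalityH}, and to prove \eqref{eq:poincare-inequalityH} by treating the class $\classA{0}$ separately from the classes $\classA{i}$, $i=1,2,3$. For the reduction, note that for $u\in\VnuOm$ the mean $\fint_\Omega u$ makes sense because $u|_\Omega\in L^2(\Omega)$ and $\Omega$ is bounded; moreover, by \autoref{lem:energy-vs-seminorm} together with $\Omega\subset\R^d$ we have $\cE_\Omega(u,u)\le|u|^2_{\VnuOm}\le\mathcal{E}(u,u)$, so \eqref{eq:poincare-inequalityV} follows at once from \eqref{eq:poincare-inequalityH}. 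In \eqref{eq:poincare-inequalityH} itself we may assume the right-hand side is finite, i.e.\ $u\in\HnuOm$, as otherwise there is nothing to prove.

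\emph{Class $\classA{0}$.} Here $\nu$ is unimodal, hence radial with an almost decreasing profile (\autoref{def:unimodality}), and has full support. Therefore $\lambda_0:=\essinf_{|h|\le R}\nu(h)>0$, where $R:=\diam(\Omega)$. Since $x-y\in B_R(0)$ for all $x,y\in\Omega$, we get $\nuxminy\ge\lambda_0$ on $\Omega\times\Omega$, and combining this with the elementary identity
\begin{align*}
\iint_{\Omega\Omega}(u(x)-u(y))^2\,\d x\,\d y=2|\Omega|\,\big\|u-\mbox{$\fint_\Omega$}u\big\|_{L^2(\Omega)}^2
\end{align*}
gives \eqref{eq:poincare-inequalityH} with $C=(2\lambda_0|\Omega|)^{-1}$. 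This argument is needed because $\classA{0}$ does not postulate \eqref{eq:non-integrability-condition}, so $\HnuOm\hookrightarrow L^2(\Omega)$ may fail to be compact and the scheme below is unavailable.

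\emph{Classes $\classA{i}$, $i=1,2,3$.} We argue by contradiction using compactness. If \eqref{eq:poincare-inequalityH} fails for every $C$, then after subtracting means and normalizing there is a sequence $(u_n)\subset\HnuOm$ with $\fint_\Omega u_n=0$, $\|u_n\|_{L^2(\Omega)}=1$ and $\cE_\Omega(u_n,u_n)\to0$. Then $\|u_n\|^2_{\HnuOm}=1+\cE_\Omega(u_n,u_n)$ is bounded, so by \autoref{thm:embd-compactness} a subsequence converges in $L^2(\Omega)$, and (passing to a further subsequence) also a.e.\ on $\Omega$, to some $u$ with $\|u\|_{L^2(\Omega)}=1$ and $\fint_\Omega u=0$. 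Fatou's lemma applied to $(u_{n_j}(x)-u_{n_j}(y))^2\nuxminy$ gives $\cE_\Omega(u,u)\le\liminf_j\cE_\Omega(u_{n_j},u_{n_j})=0$. By full support of $\nu$ this forces $u(x)=u(y)$ for a.e.\ $(x,y)\in\Omega\times\Omega$, hence $u$ is a.e.\ constant on $\Omega$; combined with $\fint_\Omega u=0$ this yields $u=0$ a.e., contradicting $\|u\|_{L^2(\Omega)}=1$.

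\emph{Main obstacle.} There is no deep difficulty: the heavy lifting is already contained in \autoref{thm:embd-compactness}. The only points requiring care are that $\classA{0}$ cannot be handled by the compactness–contradiction argument and must be settled by the explicit lower bound on $\nu$; that one must extract an a.e.-convergent subsequence before invoking Fatou; and that full support of $\nu$ is exactly what is needed to pass from $\cE_\Omega(u,u)=0$ to ``$u$ is constant on $\Omega$'' without any connectedness assumption on $\Omega$.
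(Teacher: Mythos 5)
Your proposal is correct and takes essentially the same route as the paper: for the classes $\classA{1}$, $\classA{2}$, $\classA{3}$ it runs the same contradiction argument (normalized zero-mean sequence, compactness from \autoref{thm:embd-compactness}, Fatou's lemma, vanishing energy forces a constant), and for $\classA{0}$ it uses the same uniform lower bound $\nuxminy\ge c$ on $\Omega\times\Omega$, the only cosmetic difference being that you invoke the exact identity $\iint_{\Omega\Omega}(u(x)-u(y))^2\,\d x\,\d y=2|\Omega|\,\|u-\fint_\Omega u\|_{L^2(\Omega)}^2$ where the paper uses Jensen's inequality. The deduction of \eqref{eq:poincare-inequalityV} from \eqref{eq:poincare-inequalityH} via $\cE_\Omega(u,u)\le|u|^2_{\VnuOm}\le\cE(u,u)$ is also exactly the paper's concluding step.
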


\begin{proof}
Assume such constant does not exist then we can find a sequence $(u_n)_n$ elements of $\HnuOm$ such that for every $n$, $\fint_{\Omega}u_n =0$, $\|u_n\|_{L^2(\Omega)}=1$ and 

\begin{align*}
\iint\limits_{\Omega\Omega}(u_n(x)-u_n(y))^2\nuxminy\d x\d y\leq \frac{1}{2^n}\,.
\end{align*}

The sequence $(u_n)_n$ is thus bounded in $\HnuOm$ which by \autoref{thm:embd-compactness} is compactly embedded in $L^2(\Omega)$ whenever $(\nu,\Omega)$ is in the class $\mathscr{A}_i,~ i=1,2,3$. Therefore, if it is the case, passing through a subsequence, $(u_n)_n$ converges in $L^2(\Omega)$ to some function $u$. Clearly it follows that $\fint_{\Omega}u =0$ and $\|u\|_{L^2(\Omega)}=1$. Moreover, by Fatou's Lemma we have 
\begin{align*}
\iint\limits_{\Omega\Omega}(u(x)-u(y))^2\nuxminy\d x\d y\leq \liminf_{n \to \infty} \iint\limits_{\Omega\Omega}(u_n(x)-u_n(y))^2\nuxminy\d x\d y=0
\end{align*}
which implies that $u$ equals the constant function $x\mapsto \fint_{\Omega}u =0$ almost everywhere on $\Omega $. This goes against the fact that $\|u\|_{L^2(\Omega)}=1$ hereby showing that our initial assumption was wrong. 

\smallskip

Next assume $(\nu,\Omega)$ belongs to the class $\classA{0}$ then, as $\nu$ has full a support, is unimodal and $\Omega$ is bounded, there is a constant $c>0$ such that $\nuxminy\geq c$ for all $x,y \in \Omega$. Using this and Jensen's inequality we obtain the desired inequality as follows:
\begin{align*}
\iint\limits_{\Omega\Omega}(u(x)-u(y))^2\nuxminy\d x\d y&\geq c |\Omega| \int_{\Omega} \fint_{\Omega}(u(x)-u(y))^2\d x\d y\\
&\geq c |\Omega| \|u-\mbox{$ \fint_{\Omega} u $} \|^2_{L^2(\Omega)}\,.
\end{align*}
The proof is complete because \eqref{eq:poincare-inequalityV} is a consequence of \eqref{eq:poincare-inequalityH}.
\end{proof}

\medskip

The above Poincar\'e inequality \eqref{eq:poincare-inequalityH}-\eqref{eq:poincare-inequalityV} can be seen as the nonlocal counterpart of the classical Poincar\'e inequality which states that, for a connected bounded Lipschitz domain $\Omega$, there is $C>0$ for which 
\begin{align*}
\big\|u-\mbox{$\fint_{\Omega}$}u\big\|_{L^2(\Omega)} \leq C\|\nabla u\|_{L^2(\Omega)}\,,\qquad\text{for all }~ u \in L^2(\Omega) 
\end{align*}
where by convention we assume $\|\nabla u\|_{L^2(\Omega)} =\infty$ if $|\nabla u|$ is not in $L^2(\Omega)$. Alongside to this we also recall the classical Poincar\'e-Friedrichs inequality: there is $C>0$ such that
\begin{align*}
\|u\|_{L^2(\Omega)} \leq C\|\nabla u\|_{L^2(\Omega)}\,\qquad\text{for all }~ u \in H_0^1(\Omega)\,. 
\end{align*}
In the same spirit, as we will see below the corresponding nonlocal Poincar\'e-Friedrichs inequality $V_{\nu,0}(\Omega|\R^d)$ (which we recall is the closure of the $C_c^\infty(\Omega)$ in $\VnuOm$) is much more easier to obtain and no compactness argument is required. This provides an easier alternative proof to the Poincar\'e-Friedrichs  inequality from \cite[Lemma 2.7]{FKV15}. Furthermore, under the condition that the embedding is $V_{\nu,0}(\Omega|\R^d) \hookrightarrow L^2(\Omega)$ is compact, a similar inequality is proved in \cite{JW19} wherein the authors only assume $\Omega$ to be bounded in one direction. 

\medskip  

\begin{theorem}[Poincar\'e-Friedrichs inequality]\label{thm:poincare-friedrichs-ext}
Let $\Omega\subset \R^d$ be open and bounded. Let $\nu:\R^d\setminus\{0\}\to [0, \infty)$ be a symmetric function such that one of the two conditions holds true:
\begin{enumerate}[(i)]
\item $\nu \mathds{1}_{\R^d \setminus B_{R}}$ is nontrivial and integrable for $R=\operatorname{diam}(\Omega)$ ,	
\item $\nu \in L^1(\R^d)$ and $|\{\nu > 0\}| > 0$.
\end{enumerate}
Then for some constant $C=C(d,\Omega, \nu)>0$ 
\begin{align}
\|u\|^2_{L^2(\Omega)}\leq C\mathcal{E}(u,u)\quad \text{for all $u\in V_{\nu,0}(\Omega|\R^d)$.}
\end{align}

\end{theorem}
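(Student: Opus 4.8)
The plan is to establish, in both cases, the lower bound $\mathcal{E}(u,u)\geq c\,\|u\|^2_{L^2(\Omega)}$ for $u\in\VnuOmOm$, freely using that such $u$ vanishes a.e.\ on $\R^d\setminus\Omega$. By \autoref{lem:energy-vs-seminorm} we have $\mathcal{E}(u,u)\geq|u|^2_{\VnuOm}=\iil_{\Omega\R^d}(u(x)-u(y))^2\nuxminy\,\d x\,\d y$; since the integrand vanishes whenever both $x,y\in\Omega^c$, for such $u$ one in fact has $2\mathcal{E}(u,u)=\iil_{\R^d\R^d}(u(x)-u(y))^2\nuxminy\,\d x\,\d y$. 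Write $R=\diam(\Omega)$.

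For case $(i)$ I would argue directly: restricting the inner integration to $\Omega^c$ and using $u\equiv0$ there,
\begin{align*}
\mathcal{E}(u,u)\;\geq\;\iil_{\Omega\,\Omega^c}(u(x)-u(y))^2\nuxminy\,\d x\,\d y\;=\;\int_\Omega|u(x)|^2\Big(\int_{\Omega^c}\nu(x-y)\,\d y\Big)\d x .
\end{align*}
For every $x\in\Omega$ one has $\Omega\subset B_R(x)$, hence $\R^d\setminus B_R(x)\subset\Omega^c$ and $\int_{\Omega^c}\nu(x-y)\,\d y\geq\int_{\R^d\setminus B_R(0)}\nu(h)\,\d h=:c_0$. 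By hypothesis $(i)$ we have $c_0>0$, so $\mathcal{E}(u,u)\geq c_0\|u\|^2_{L^2(\Omega)}$, which is the claim with $C=c_0^{-1}$. Note this uses neither \eqref{eq:levy-cond} nor any regularity of $\partial\Omega$.

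For case $(ii)$ I would reduce to case $(i)$ by replacing $\nu$ with an iterated convolution. Set $\nu_m:=\nu^{\ast m}$; since $\nu\in L^1(\R^d)$, each $\nu_m$ is again a nonnegative symmetric $L^1$-function with $\|\nu_m\|_{L^1}=\|\nu\|_{L^1}^m$. Two ingredients are needed. First, a chaining estimate: expressing $\nu_m(x-y)$ as an integral over chains $x=z_0,z_1,\dots,z_m=y$ weighted by $\prod_i\nu(z_{i-1}-z_i)$, applying $(u(z_0)-u(z_m))^2\leq m\sum_{i=1}^m(u(z_{i-1})-u(z_i))^2$ and integrating out the $m-1$ ``free'' factors (each contributing $\|\nu\|_{L^1}$, by Tonelli) gives
\begin{align*}
\iil_{\R^d\R^d}(u(x)-u(y))^2\nu_m(x-y)\,\d x\,\d y\;\leq\;m^2\|\nu\|_{L^1}^{m-1}\iil_{\R^d\R^d}(u(x)-u(y))^2\nuxminy\,\d x\,\d y\;=\;2m^2\|\nu\|_{L^1}^{m-1}\,\mathcal{E}(u,u);
\end{align*}
in particular $u\in V_{\nu_m,0}(\Omega|\R^d)$. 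Second, for $m$ large $\nu_m$ satisfies hypothesis $(i)$: since $A:=\{\nu>0\}$ has positive measure, the map $h\mapsto|A\cap(A+h)|$ is continuous with value $|A|>0$ at $h=0$, hence positive on a ball $B_\rho(0)$; because $\nu$ is symmetric this means $\{\nu_2>0\}\supset B_\rho(0)$ up to a null set, and convolving repeatedly gives $\{\nu_{2^j}>0\}\supset B_{2^{j-1}\rho}(0)$ up to a null set, so that $\int_{\R^d\setminus B_R(0)}\nu_{2^j}(h)\,\d h>0$ once $2^{j-1}\rho>R$. Applying case $(i)$ to $\nu_{2^j}$ and then the chaining estimate yields $\|u\|^2_{L^2(\Omega)}\leq C(\nu_{2^j})\,\mathcal{E}_{\nu_{2^j}}(u,u)\leq 2C(\nu_{2^j})(2^j)^2\|\nu\|_{L^1}^{2^j-1}\mathcal{E}(u,u)$, which is the assertion.

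The only delicate point is the second ingredient of case $(ii)$ — controlling how the supports of the iterated convolutions $\nu_m$ spread out. I expect this to go through smoothly via the continuity/positivity of $h\mapsto|A\cap(A+h)|$ (the easy half of the Steinhaus theorem) together with the symmetry of $\nu$; the one caveat is that the support inclusions hold merely up to Lebesgue-null sets, which is harmless since all we need from case $(i)$ is the strict positivity of $\int_{\R^d\setminus B_R}\nu_m$. The chaining estimate and case $(i)$ itself are routine, and no compactness argument enters.
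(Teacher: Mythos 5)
Your proof is correct. For case $(i)$ it is exactly the paper's argument: both use that $u=0$ a.e.\ on $\Omega^c$, that $\R^d\setminus B_R(x)\subset\Omega^c$ for $x\in\Omega$ with $R=\diam(\Omega)$, and reduce the claim to the positivity of $\int_{\R^d\setminus B_R(0)}\nu(h)\,\d h$; as you note, neither \eqref{eq:levy-cond} nor boundary regularity enters. For case $(ii)$ the paper gives no argument at all but simply cites \cite[Lemma 2.7]{FKV15}, whereas you supply a self-contained proof by passing to the iterated convolutions $\nu_m=\nu^{\ast m}$: the chaining bound $\iint(u(x)-u(y))^2\nu_m(x-y)\le m^2\|\nu\|_{L^1}^{m-1}\iint(u(x)-u(y))^2\nu(x-y)$ is correct (Cauchy--Schwarz along the chain plus Tonelli, integrating the $m-1$ free links from the two ends), and the Steinhaus-type spreading step is sound: symmetry of $\nu$ gives $\nu_2(h)>0$ whenever $|A\cap(A+h)|>0$, and this positivity near $0$ (pointwise, in fact, so the ``up to null sets'' hedge is unnecessary) doubles under each further convolution, so some $\nu_{2^j}$ has nontrivial integrable tail beyond $B_R$ and case $(i)$ applies to it. The only point to touch up is the appeal to continuity of $h\mapsto|A\cap(A+h)|$: if $|A|=\infty$ this map need not be continuous (or finite), so one should first replace $A$ by a measurable subset $A'\subset A$ with $0<|A'|<\infty$ and argue with $|A'\cap(A'+h)|\ge|A'|-\|\mathds{1}_{A'}-\mathds{1}_{A'}(\cdot-h)\|_{L^1}$; this is cosmetic and does not affect the argument. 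What your route buys is independence from the external reference, at the price of a constant of order $m^2\|\nu\|_{L^1}^{m-1}$ that degenerates as the support of $\nu$ shrinks relative to $\diam(\Omega)$ — which is unavoidable in this generality and consistent with the known behaviour for kernels with small horizon.
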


\medskip

\begin{proof}
Set $R=\operatorname{diam}(\Omega)$. Then for all $x\in \Omega$ we have $B^c_R(x)\subset \Omega^c$. For $u\in V_{\nu,0}(\Omega|\R^d)$ we recall that $u=0~a.e$ on $\Omega^c$. Thus, 
\begin{align*}
\mathcal{E}(u,u)&=\frac{1}{2} \iint\limits_{\Omega\Omega}(u(x)-u(y))^2\nuxminy\d x \d y+ \int_{\Omega} |u(x)|^2\d x\int_{\Omega^c} \nuxminy\d y\\
&\geq 2\int_{\Omega} |u(x)|^2\d x\int_{B^c_R(x)} \nuxminy\d y = 2\|\nu_R\|_{L^1(\R^d)}\|u\|^2_{L^2(\Omega)}.
\end{align*}
Take C= $(2\|\nu_R\|_{L^1(\R^d)})^{-1}$ with $\nu_R=\nu \mathds{1}_{\R^d\setminus B_R(0)}$. This settles the first case. The second case is treated in \cite[Lemma 2.7]{FKV15}.
\end{proof}

\begin{remark}
Note that a Poincar\'e-Friedrichs inequality of the form  
\begin{align}\label{eq:poincare-friedrichs}
\|u\|^2_{L^2(\Omega)}\leq C\iint\limits_{\Omega\Omega}(u(x)-u(y))^2\nuxminy\d x\d y \qquad (u\in C_c^\infty(\Omega))\,.
\end{align}
does not hold in general, independently of whether the embedding $H_{\nu}(\Omega) \hookrightarrow L^2(\Omega)$ is compact or not. For example, consider $\nu(h)= |h|^{-d-\alpha}$ with $0<\alpha <1$. Then, $C_c^\infty(\Omega)$ is dense in $H^{\alpha/2}(\Omega)$ but $H^{\alpha/2}(\Omega)$ contains all constant functions. Thus \eqref{eq:poincare-friedrichs} fails. Note that, in the case $\nu(h)= |h|^{-d-\alpha}, \alpha\in (0,2)$, a necessary and sufficient condition on $\Omega$ for \eqref{eq:poincare-friedrichs} to hold is provided in \cite{DK21}.
\end{remark}

\section{Existence of weak solutions and spectral decomposition}\label{sec:existence}

This section is devoted to the following results: well-posedness of the Neumann problem in \autoref{thm:nonlocal-Neumann-var}, the spectral decomposition of the corresponding operator in \autoref{thm:existence-of-eigenvalue-Neumann}, the Robin problem in \autoref{thm:nonlocal-Robin-var}, and the definition of the nonlocal Dirichlet-to-Neumann map in \autoref{thm:DN-map} together with its spectral decomposition in \autoref{thm:DN-map-spectral}. We refer the reader to \autoref{sec:intro} for comments about related expositions in the literature.

\medskip

Throughout this section, $\Omega\subset \R^d$ is assumed to be open. We recall that the function $\nu:\R^d \setminus\{0\}\to [0,\infty] $ is assumed to be symmetric and to satisfy the L\'{e}vy integrability condition \eqref{eq:levy-cond}. Let $k: \R^d \times \R^d  \setminus \operatorname{diag} \to [0, \infty)$ be symmetric and measurable such that for some $\Lambda \geq 1$
\begin{align}\label{eq:k-elliptic}\tag{E}
\Lambda^{-1} \nu(y-x) \leq k(x,y) \leq \Lambda \nu(y-x) \quad (x,y \in \R^d).
\end{align}
We will formulate well-posedness results for equations $L u = f$ in $\Omega$, where 
\begin{align}
Lu(x) = \pv \int_{\R^d} \big(u(y) - u(x) \big) k(x,y) \d y \,.
\end{align}
Note that the expression $Lu(x)$ does not exist in general if $u$ is smooth. One would require additional assumptions on $k$. Note that $L$ can be understood as an integro-differential operator. The aforementioned phenomenon is similar to the fact that  expressions like $\operatorname{div} \big( A(x) \nabla u (x) \big)$ do not exist in general for smooth functions $u$ without further assumptions on the matrix $A(x)$. Given functions $u,v \in \VnuOm$, we define a bilinear form $\cE$ by 
\begin{align}\label{eq:def-Euu-k}
\mathcal{E}(u,v) =\frac{1}{2} \!\!\iil_{(\Omega^c\times \Omega^c)^c} \!\! \big(u(x)-u(y) \big) \big(v(x)-v(y) \big) \, k(x,y)  \d x \, \d y \,.
\end{align}
Note that under the condition \eqref{eq:k-elliptic} the expression $\mathcal{E}(u,v)$ is well defined for $u,v \in \VnuOm$.

\begin{definition}\label{def:nonlocal-normal}
We define a nonlocal operator $\cN$ acting on functions $v: \R^d \to \R$ by 
\begin{align}\label{eq:def-N}
\mathcal{N} v (y)&= \int_{\Omega}(v(y)-v(x)) k(x,y) \d x \qquad (y\in \Omega^c).
\end{align}
\end{definition}
Note that \eqref{eq:def-N} requires some integrability condition of $v$. If $\nu$ is a unimodal L\'{e}vy measure, then $\mathcal{N} v (y)$ is well defined for $v \in L^1(\R^d, \widehat{\nu})$, see the beginning of the proof of \autoref{prop:fund-prop}. Furthermore, the definition of $\mathcal{N} v (y)$, $y \in \Omega^c$, does not require any principal value integral, because there is a positive distance between $y$ and $\overline{\Omega}$.

\begin{remark} (i) We work under the assumption \eqref{eq:k-elliptic} in order to establish well-posedness for complement value problem. One could replace this assumption by the assumption 
\begin{align}\label{eq:k-elliptic-weak}\tag{E'}
\begin{split}
\Lambda^{-1} \!\!\iil_{(\Omega^c\times \Omega^c)^c} \!\! \big(u(x)-u(y) \big)^2  \, \nuxminy  \d x \, \d y \leq \mathcal{E}(u,u) \leq \Lambda \!\!\iil_{(\Omega^c\times \Omega^c)^c} \!\! \big(u(x)-u(y) \big)^2  \, \nuxminy \d x \, \d y 
\end{split}
\end{align}
for all functions $u \in L^2_{loc}(\R^d)$. This assumption allows for many more general cases of $k$, see the discussions in \cite{DyKa20, ChSi20}. 
(ii)Throughout this section we will work with the weight $\widetilde{\nu}$. Analogous results hold true when choosing $\overline{\nu}$ or $\nu^*$ from \autoref{def:different-nus}.
\end{remark}

\subsection{Neumann boundary condition}\label{subsec:neumann}

In light of the Gauss-Green formula \eqref{eq:green-gauss-nonlocal} it is reasonable to define weak solutions of the Neumann problem under consideration as follows.  Assume $\Omega\subset \R^d$ is an open set. Let $f:\Omega\to \mathbb{R}$ and $g: \R^d\setminus \Omega\to \mathbb{R}$ be two measurable functions. The Neumann problem for the operator $L$ associated to the data $f$ and $g$ is to find a measurable  function $u:\R^d\to \mathbb{R}$ such that 
\begin{align}\label{eq:nonlocal-Neumann}\tag{$N$}
L u = f \quad\text{in}~~~ \Omega \quad\quad\text{ and } \quad\quad \mathcal{N} u= g ~~~ \text{on}~~~ \R^d\setminus\Omega.
\end{align}

\begin{definition}\label{def:neumann-var-sol} Let $ f \in \VnuOm'$ and $g \in \TnuOm'$. We say that $u \in \VnuOm$ is a weak solution or a variational solution of the inhomogeneous Neumann problem \eqref{eq:nonlocal-Neumann} if 
\begin{align}\label{eq:var-nonlocal-Neumann-gen}\tag{$V'$}
\mathcal{E}(u,v) = \langle f , v \rangle  + \langle g , v \rangle  \quad \mbox{for all}~~v \in \VnuOm\,, 
\end{align}
where we use the natural embedding $\VnuOm \hookrightarrow \TnuOm$. Note that the existence of a solution $u \in \VnuOm$ implies the compatibility condition $\langle f , 1 \rangle  + \langle g , 1 \rangle = 0$.

\medskip

If, in particular,  $ f \in L^2(\Omega)$ and $g \in L^2(\Omega^c, \widetilde{\nu}^{-1})$, then $u \in \VnuOm$ is a weak solution of \eqref{eq:nonlocal-Neumann} if 
\begin{align}\label{eq:var-nonlocal-Neumann}\tag{$V$}
\mathcal{E}(u,v) = \int_{\Omega} f(x)v(x)\d x +\int_{\Omega^c} g(y)v(y)\d y,\quad \mbox{for all}~~v \in \VnuOm\,.
\end{align}
In this case, the compatibility condition reads
\begin{align}\label{eq:compatible-nonlocal}\tag{$C$}
\int_{\Omega} f(x)\d x +\int_{\Omega^c} g(y)\d y=0 \,.
\end{align}
\end{definition}

\begin{remark}
The compatibility condition \eqref{eq:compatible-nonlocal} is an implicit necessary requirement. Recall that the local counterpart of this compatibility condition associated with \eqref{eq:local-Neumann}, where $g$ is defined on $\partial\Omega$, is given by 
\begin{align}
\int_{\Omega} f(x)\d x +\int_{\partial\Omega} g(y)\d \sigma(y)=0.
\end{align}
\end{remark}

\begin{remark}
(i) Note that \cite[Def. 3.6]{DROV17} looks very similar to \eqref{eq:var-nonlocal-Neumann} at first glance. However, the norm of the test space defined in \cite[Eq. (3.1)]{DROV17} depends on the Neumann data $g$, which is not natural. Our test space $\VnuOm$ in the weak formulation \eqref{eq:var-nonlocal-Neumann} does not depend on the Neumann data $g$. Moreover for the existence of weak solutions to \eqref{eq:nonlocal-Neumann}, it is sufficient to choose $f\in L^2(\Omega)$ and $g\in L^2(\Omega^c, \widetilde{\nu}^{-1})$, see \autoref{thm:nonlocal-Neumann-var-weighted}. (ii) For non-singular kernels, \autoref{def:neumann-var-sol} coincides with the definition in \cite[Section 3.2]{DTZ22}.
\end{remark}

The next result shows that both problems \eqref{eq:nonlocal-Neumann} and \eqref{eq:var-nonlocal-Neumann} are related under additional regularity assumption. 

\begin{proposition}\label{prop:neumann-strong-weak}
Let $\Omega$ be an open bounded subset of $\R^d$ with Lipschitz boundary. Assume $k(x,y) = \nu(y-x)$, i.e., $\Lambda =1$. Let $u\in C^2_b(\R^d)$, $ f\in L^{2}(\Omega)$ and $g\in L^2(\Omega^c, \widetilde{\nu}^{-1})$. Then $u$ satisfies \eqref{eq:nonlocal-Neumann} if and only if $f$ and $g$
are compatible in the sense of \eqref{eq:compatible-nonlocal} and $u$ satisfies \eqref{eq:var-nonlocal-Neumann}. 

\end{proposition}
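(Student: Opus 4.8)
The proof rests on the nonlocal Gauss-Green formula \eqref{eq:green-gauss-nonlocal-intro} (i.e.\ Proposition~\ref{prop:gauss-green}), which for $u, v \in C^\infty_c(\R^d)$ reads
\[
\int_\Omega Lu(x)\,v(x)\,\d x = \mathcal{E}(u,v) - \int_{\Omega^c}\mathcal{N}u(y)\,v(y)\,\d y\,.
\]
Since $u \in C^2_b(\R^d)$ rather than $C^\infty_c$, the first task is to check that all three terms in this identity are meaningful and that the formula extends to the pair $(u,v)$ with $u \in C^2_b(\R^d)$ and $v \in \VnuOm$; this uses that $\Omega$ is bounded with Lipschitz boundary (so $\mathds{1}_\Omega \in \VnuOm$ and, by \autoref{thm:density}(iv), $C^\infty_c(\R^d)$ is dense in $\VnuOm$), that $k = \nu(\,\cdot\,)$ with $\nu$ a L\'evy measure, and that $u \in C^2_b$ guarantees the principal value $Lu(x)$ exists pointwise and is bounded on $\Omega$ (a Taylor expansion kills the singularity via $\int (1\wedge|h|^2)\nu(h)\,\d h < \infty$), while $\mathcal{N}u(y)$ for $y \in \Omega^c$ needs no principal value and satisfies $|\mathcal{N}u(y)| \lesssim \|u\|_{C^1}\,\widetilde{\nu}(y)$, so that $g = \mathcal{N}u \in L^2(\Omega^c,\widetilde\nu^{-1})$ is consistent with the hypothesis once we know it.

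\emph{Direction 1: \eqref{eq:nonlocal-Neumann} $\Rightarrow$ compatibility $+$ \eqref{eq:var-nonlocal-Neumann}.} Assume $Lu = f$ in $\Omega$ and $\mathcal{N}u = g$ on $\Omega^c$. Since $u \in C^2_b$ we already have $Lu \in L^\infty(\Omega) \subset L^2(\Omega)$, so $f = Lu$ is automatically in $L^2(\Omega)$ — consistent with the hypothesis. Plug $v \in C^\infty_c(\R^d)$ into the Gauss-Green formula: the left side is $\int_\Omega f v$, and $\int_{\Omega^c}\mathcal{N}u(y)v(y)\,\d y = \int_{\Omega^c} g v$, so $\mathcal{E}(u,v) = \int_\Omega fv + \int_{\Omega^c} gv$ for all $v \in C^\infty_c(\R^d)$. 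Now extend to all $v \in \VnuOm$ by density: $v \mapsto \mathcal{E}(u,v)$ is continuous on $\VnuOm$ by \eqref{eq:k-elliptic} and Cauchy–Schwarz; $v \mapsto \int_\Omega fv$ is continuous since $f \in L^2(\Omega)$; and $v \mapsto \int_{\Omega^c} gv$ is continuous because $g \in L^2(\Omega^c,\widetilde\nu^{-1})$ pairs against $v|_{\Omega^c} \in L^2(\Omega^c,\widetilde\nu)$ (the trace inequality \eqref{eq:trace-inequality}). Finally, testing \eqref{eq:var-nonlocal-Neumann} with $v \equiv 1$ (legitimate since $\Omega$ is bounded, hence $1 \in \VnuOm$) gives $0 = \mathcal{E}(u,1) = \int_\Omega f + \int_{\Omega^c} g$, which is \eqref{eq:compatible-nonlocal}.

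\emph{Direction 2: compatibility $+$ \eqref{eq:var-nonlocal-Neumann} $\Rightarrow$ \eqref{eq:nonlocal-Neumann}.} Suppose $f, g$ are compatible and $\mathcal{E}(u,v) = \int_\Omega fv + \int_{\Omega^c} gv$ for all $v \in \VnuOm$. First take $v \in C^\infty_c(\Omega)$: then the trace term in Gauss-Green vanishes and $\mathcal{E}(u,v) = \int_\Omega Lu(x)\,v(x)\,\d x$, so $\int_\Omega (Lu - f)v = 0$ for all $v \in C^\infty_c(\Omega)$, forcing $Lu = f$ a.e.\ in $\Omega$; since both sides are continuous on $\Omega$ (here $u \in C^2_b$ again), $Lu = f$ everywhere in $\Omega$. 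Now return to general $v \in C^\infty_c(\R^d)$: Gauss-Green plus $Lu = f$ gives $\int_{\Omega^c}\mathcal{N}u(y)v(y)\,\d y = \mathcal{E}(u,v) - \int_\Omega fv = \int_{\Omega^c} gv$, i.e.\ $\int_{\Omega^c}(\mathcal{N}u - g)v = 0$ for all $v \in C^\infty_c(\R^d)$, hence for all $v \in C^\infty_c(\overline{\Omega}^c)$, which yields $\mathcal{N}u = g$ a.e.\ on $\Omega^c$. (The compatibility condition is not actually needed for this direction but is recorded as a consequence; alternatively one notes it is implied by taking $v=1$.)

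\emph{Main obstacle.} The delicate point is the extension of the Gauss-Green identity from $C^\infty_c$ to the pair $(u \in C^2_b,\ v \in \VnuOm)$, together with a clean Fubini/absolute-integrability justification for rearranging the double integral defining $\mathcal{E}(u,v)$ into $\int_\Omega Lu\,v - \int_{\Omega^c}\mathcal{N}u\,v$ when $u$ is only bounded (not compactly supported) on $\R^d$. One must control the tail interactions: split the domain $(\Omega^c\times\Omega^c)^c = (\Omega\times\R^d) \cup (\R^d\times\Omega)$ and bound $\iint_{\Omega\times\R^d}|u(x)-u(y)|\,|v(x)-v(y)|\,\nu(x-y)$ using $u \in C^2_b$ to handle the near-diagonal part (Taylor) and the global bound $\|u\|_\infty$ plus the L\'evy condition to handle the far part, while $v \in \VnuOm$ supplies exactly the $L^2(\Omega\times\R^d,\nu)$ and $L^2(\Omega)$ control needed. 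Once absolute convergence is secured, Fubini licenses the pointwise reorganization and the density argument closes the gap. This is essentially a matter of careful bookkeeping with the estimates already established for $\VnuOm$, the trace space, and elementary properties of $L$ deferred to \autoref{sec:appendix}.
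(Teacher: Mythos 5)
Your proposal is correct and follows essentially the same route as the paper: both directions rest on the Gauss--Green formula of \autoref{prop:gauss-green} (already established in the appendix for $u\in C^2_b(\R^d)$, which covers the extension you flag as the main obstacle), with the strong-to-weak direction completed by continuity of the three forms on $\VnuOm$ plus density of $C^\infty_c(\R^d)$ (Lipschitz boundary), and the weak-to-strong direction by testing with functions supported in $\Omega$ and in $\R^d\setminus\overline{\Omega}$, using boundedness of $Lu$ and $\cN u$ from \autoref{prop:uniform-cont}. Your variant of deducing $\cN u=g$ by subtracting the interior identity rather than directly specializing to $v\in C^\infty_c(\R^d\setminus\overline{\Omega})$ is an immaterial difference.
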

\medskip

\begin{proof} If $u$ solves \eqref{eq:nonlocal-Neumann} i.e. $Lu=f$ in $\Omega$ and $\mathcal{N}u =g$ on $\Omega^c$, then by the Gauss-Green formula \eqref{eq:green-gauss-nonlocal} we obtain the following 
\begin{align}\label{eq:var-Neumann-regular}
\mathcal{E}(u,v)= \int_{\Omega} f(x)v(x)\d x + \int_{\Omega^c} g(y)v(y)\d y, \quad \mbox{for all}~~v \in C^1_b(\R^d).
\end{align}
As shown in \eqref{eq:linearform-f}-\eqref{eq:linearform-g} below, all terms involved in \eqref{eq:var-Neumann-regular} are linear and continuous on $\VnuOm$ with respect to the variable $v$. Moreover smooth functions of compact support are dense in $\VnuOm$ hence the relation in \eqref{eq:var-Neumann-regular} remains true for functions $v$ in $\VnuOm$ that is \eqref{eq:var-nonlocal-Neumann} is satisfied. In particular taking $v=1$ one gets the condition \eqref{eq:compatible-nonlocal}. 

Conversely, assume $u$ solves \eqref{eq:var-nonlocal-Neumann} then inserting the Gauss-Green formula \eqref{eq:green-gauss-nonlocal} with $v\in C^1_b(\R^d)\subset \VnuOm$ in \eqref{eq:var-Neumann-regular} yields 
\begin{small}
\begin{align*}
\int_{\Omega} L u(x) v(x) \d x - \int_{\Omega} f(x)v(x)\d x = \int_{\Omega^c} g(y)v(y)\d y-\int_{\Omega^c}\mathcal{N} u(y)v(y)\d y, \quad \mbox{for all}~~v \in C^1_b(\R^d).
\end{align*}
\end{small}
Specializing this relation for $ v \in C^\infty_c(\Omega)$ and $v \in C^\infty_c(\R^d\setminus \overline{\Omega})$ respectively we end up with 
\begin{alignat*}{2}
\int_{\Omega} L u(x) v(x) \d x - \int_{\Omega} f(x)v(x)\d x &= 0 \qquad &&\mbox{ for all}~~v \in C^\infty_c(\Omega),
\\
\int_{\Omega^c} g(y)v(y)\d y-\int_{\Omega^c}\mathcal{N} u(y)v(y)\d y &= 0 && \mbox{ for all}~~v \in C^\infty_c(\R^d\setminus \overline{\Omega}). 
\end{alignat*}
Recall that, by \autoref{prop:uniform-cont}, $Lu$ is well defined and bounded. Hence, $Lu$ belongs to $L^2(\Omega)$. Similarly $\mathcal{N}u$ is well defined and bounded, i.e., it belongs to $L^\infty(\Omega^c)$. Thus, up to null sets, we conclude from the above equations $L u = f$ in $\Omega$  and $\mathcal{N} u= g$ on $\R^d\setminus\Omega$, which proves \eqref{eq:nonlocal-Neumann}. 
\end{proof}

Both integrodifferential operators $L$ and $\mathcal{N}$ annihilate additive constants. Whence as long as $u$ is a solution to the system \eqref{eq:nonlocal-Neumann} or to the variational problem \eqref{eq:var-nonlocal-Neumann} so is the function $\widetilde{u}= u+c$ for any $c\in \mathbb{R}$. Accordingly, both problems are ill-posed in the sense of Hadamard. The situation is likewise in the local setting with the operators $L$ and $\mathcal{N}$ respectively replaced by the operators $-\Delta$ and $\frac{\partial}{\partial n}$. In order to overcome this issue, it is common to introduce an appropriate function space $ \VnuOm^{\perp}$ as follows:
%
\begin{align*}
\VnuOm^{\perp}:= \Big\{ u\in \VnuOm: \int_{\Omega}u(x)\d x=0\Big\}.
\end{align*}
Assuming that $\Omega$ is bounded, the space $\VnuOm^{\perp}$ endowed with the scalar product of $\VnuOm$ is Hilbert space as well. Instead of \eqref{eq:var-nonlocal-Neumann} we need to consider the following weak formulations:
\begin{alignat}{2}
\mathcal{E}(u,v) &= \langle f , v \rangle  + \langle g , v \rangle   && \text{ for all } v \in \VnuOm^\perp \,, \label{eq:var-nonlocal-Neumann-gen-bis}\tag{$V^{' \perp}$} \\
\mathcal{E}(u,v) &= \int_{\Omega} f(x)v(x)\d x +\int_{\Omega^c} g(y)v(y)\d y \quad && \text{ for all } v \in \VnuOm^{\perp} \,. \label{eq:var-nonlocal-Neumann-bis}\tag{$V^\perp$}
\end{alignat}

In contrast to \eqref{eq:var-nonlocal-Neumann}, the variational problem \eqref{eq:var-nonlocal-Neumann-bis} possesses at most one solution since $\mathcal{E}(\cdot, \cdot)$ defines a scalar product on $\VnuOm^\perp$. Analogous observations can be made in the local setting by introducing the space $H^1(\Omega)^\perp=\big\{ u\in H^1(\Omega): \int_{\Omega}u(x)\d x=0\big\}$. 

By standard procedure, a solution of the variational problem \eqref{eq:var-nonlocal-Neumann} is characterized as a critical point (a minimizer) of the functional 
\begin{align}
\mathcal{J}(v) &= \frac{1}{2} \mathcal{E}(v,v) - \int_\Omega f v\, \d x- \int_{\Omega^c} g v \d x\\
&= \frac{1}{4}\iint\limits_{(\Omega^c\times\Omega^c)^c} (v(x)-v(y))^2\nuxminy\,\d x\,\d y - \int_\Omega f v\, \d x- \int_{\Omega^c} g v \d x\notag.
\end{align}
\begin{proposition}\label{prop:neumann-min}
Let $\Omega\subset \R^d$ be an open set. Then a function $ u\in\VnuOm^{\perp} $ is a solution to \eqref{eq:var-nonlocal-Neumann-bis} if and only if $u$ is a solution of the minimization problem
\begin{align}\label{eq:nonlocal-Neumann-min}\tag{$M^\perp$}
\mathcal{J}(u) =\min_{v\in \VnuOm^{\perp}}\mathcal{J}(v). 
\end{align}
Moreover, if $ f: \Omega\to \mathbb{R}$ and $g:\Omega^c\to \mathbb{R}$ are compatible in the sense of \eqref{eq:compatible-nonlocal}, $ u\in\VnuOm^{\perp} $ solves \eqref{eq:var-nonlocal-Neumann-bis} if and only if for any $c\in \mathbb{R}$, $u+c$ solves the variational problem \eqref{eq:var-nonlocal-Neumann} and the latter problem is equivalent to the minimization problem 
\begin{align}\label{eq:nonlocal-Neumann-min-const}\tag{$M$}
\mathcal{J}(u) =\min_{v\in \VnuOm}\mathcal{J}(v).
\end{align}
\end{proposition}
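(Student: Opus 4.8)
The plan is to prove the two stated equivalences by the standard first-variation argument, being careful about the domain of the functional. First I would record that $\mathcal{J}$ is well defined and finite on $\VnuOm$ (resp. $\VnuOm^\perp$): by \autoref{lem:energy-vs-seminorm} the quadratic term is $\tfrac12\cE(v,v)\le |v|^2_{\VnuOm}<\infty$, and the linear terms are finite because $f\in L^2(\Omega)$, $v|_\Omega\in L^2(\Omega)$, while $g\in L^2(\Omega^c,\widetilde\nu^{-1})$ pairs continuously with $v|_{\Omega^c}\in L^2(\Omega^c,\widetilde\nu)$ by \autoref{lem:natural-norm-on-V}(ii). For the first equivalence, fix $u\in\VnuOm^\perp$ and, for $v\in\VnuOm^\perp$ and $t\in\R$, expand $t\mapsto \mathcal{J}(u+tv)$. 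Since $\cE$ is symmetric and bilinear, this is the quadratic polynomial
\begin{align*}
\mathcal{J}(u+tv) = \mathcal{J}(u) + t\Big(\cE(u,v) - \int_\Omega f v - \int_{\Omega^c} g v\Big) + \frac{t^2}{2}\cE(v,v).
\end{align*}
If $u$ solves \eqref{eq:var-nonlocal-Neumann-bis}, the linear coefficient vanishes for every $v\in\VnuOm^\perp$ and $\cE(v,v)\ge 0$, so $\mathcal{J}(u)\le\mathcal{J}(u+tv)$ for all $t$, hence for all competitors; thus $u$ minimizes. Conversely, if $u$ minimizes, then $\frac{d}{dt}\big|_{t=0}\mathcal{J}(u+tv)=0$ for each $v\in\VnuOm^\perp$, which is exactly \eqref{eq:var-nonlocal-Neumann-bis}.

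For the second equivalence I would first check the ``shift'' statement: since both $L$ and $\cN$ — equivalently the bilinear form $\cE$ — annihilate additive constants, $\cE(u+c,v)=\cE(u,v)$, so $u$ solves \eqref{eq:var-nonlocal-Neumann-bis} iff $u+c$ solves \eqref{eq:var-nonlocal-Neumann} \emph{as tested against $\VnuOm^\perp$}. The point needing the compatibility condition \eqref{eq:compatible-nonlocal} is the upgrade from test functions in $\VnuOm^\perp$ to all test functions in $\VnuOm$. Given an arbitrary $v\in\VnuOm$, write $v = \big(v-\fint_\Omega v\big) + \fint_\Omega v =: v^\perp + c_v$ with $v^\perp\in\VnuOm^\perp$ (note $c_v$, being constant, lies in $\VnuOm$ since $\Omega$ — hence $v^\perp|_\Omega$ and $v|_\Omega$ — is bounded... more precisely $c_v\in\VnuOm$ because constants have zero seminorm and $\Omega$ bounded gives $L^2(\Omega)$-membership; if $\Omega$ is merely open one restricts attention to the affine space where this makes sense, but for the minimization statement $\Omega$ bounded is the relevant case). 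Then
\begin{align*}
\cE(u,v) = \cE(u,v^\perp) = \int_\Omega f v^\perp + \int_{\Omega^c} g v^\perp = \int_\Omega f v + \int_{\Omega^c} g v - c_v\Big(\int_\Omega f + \int_{\Omega^c} g\Big),
\end{align*}
and the last parenthesis vanishes by \eqref{eq:compatible-nonlocal}. Hence \eqref{eq:var-nonlocal-Neumann} holds for all $v\in\VnuOm$; the reverse implication (restrict to $v\in\VnuOm^\perp$) is immediate. Finally, combining with the first equivalence applied to $u$ (and observing $\mathcal{J}(u+c)=\mathcal{J}(u)$ when \eqref{eq:compatible-nonlocal} holds, because the linear part only sees $v$ through $\int_\Omega fv+\int_{\Omega^c}gv$ which is constant-insensitive under \eqref{eq:compatible-nonlocal}), one gets that $u+c$ solving \eqref{eq:var-nonlocal-Neumann} is equivalent to $u$ minimizing $\mathcal{J}$ over $\VnuOm^\perp$, which is equivalent to $u$ (or $u+c$) minimizing over all of $\VnuOm$.

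The only genuinely delicate point is the bookkeeping around constants: one must verify that $\mathcal{J}$ is constant-insensitive exactly under \eqref{eq:compatible-nonlocal}, that decomposing a test function as mean-zero-part plus constant stays inside $\VnuOm$, and that minimizing over the constrained space $\VnuOm^\perp$ is equivalent to minimizing over $\VnuOm$ (this last uses that along the constant direction $\mathcal{J}$ is flat). Everything else is the routine first-variation computation for a convex quadratic functional and requires no compactness or regularity of $\partial\Omega$.
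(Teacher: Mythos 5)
Your proposal is correct and follows essentially the same route as the paper: the convex quadratic expansion of $t\mapsto\mathcal{J}(u+tv)$ (the paper phrases the forward direction via Cauchy--Schwarz, which is the same computation) for the equivalence on $\VnuOm^{\perp}$, and then the invariance of $\cE$ and $\mathcal{J}$ under additive constants together with the decomposition $v=v^{\perp}+\fint_\Omega v$ and the compatibility condition \eqref{eq:compatible-nonlocal} for the second part. Your parenthetical remark that constants lie in $\VnuOm$ only because $\Omega$ is bounded (so that $\VnuOm=\VnuOm^{\perp}\oplus\R$ makes sense) is a point the paper uses implicitly as well; no gap.
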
 

\begin{proof}
Let $ u\in\VnuOm^{\perp}$ so that \eqref{eq:var-nonlocal-Neumann-bis} holds true for all $ v\in\VnuOm^{\perp}$. Employing Cauchy-Schwartz inequality yields
\begin{align*}
\mathcal{E}(u,v) 
%
&\leq \frac{1}{2} \mathcal{E}(u,u)+ \frac{1}{2} \mathcal{E}(v,v)= \mathcal{E}(u,u)- \frac{1}{2} \mathcal{E}(u,u)+ \frac{1}{2} \mathcal{E}(v,v).
\end{align*} 
In virtue of \eqref{eq:var-nonlocal-Neumann-bis} we get $\mathcal{J}(u)\leq \mathcal{J}(v)$ and thus $u $ solves \eqref{eq:nonlocal-Neumann-min}.

Conversely assume that $ u $ satisfies \eqref{eq:nonlocal-Neumann-min} which means that $\mathcal{J}(u)\leq \mathcal{J}(v)$ for all $ v\in\VnuOm^{\perp}$. 
For fixed $ v\in\VnuOm^{\perp}$ the mapping $\mathcal{J}(u+\cdot v): \mathbb{R} \to \mathbb{R}$, 
\begin{align*}
t\mapsto \mathcal{J}(u+tv)= \mathcal{J}(u)+ t\left[\mathcal{E}(u,v)- \int_{\Omega} f(x)v(x)\d x -\int_{\Omega^c} g(y)v(y)\d y \right] + \frac{t^2}{2} \mathcal{E}(v,v) 
\end{align*}
is a polynomial of second order. For all $t \in \mathbb{R}$, $u+tv\in \VnuOm$ and since $u$ minimizes $\mathcal{J}$ we get that $\mathcal{J}(u)\leq \mathcal{J}(u+tv)$ for all $t\in \mathbb{R}$. Thus $\mathcal{J}(u+\cdot v): \mathbb{R} \to \mathbb{R}$ has a critical point at $t=0$ which implies that 
\begin{align*}
0= \lim_{t\to 0}\frac{\mathcal{J}(u+tv)-\mathcal{J}(u)}{t} = \lim_{t\to 0} \left[\mathcal{E}(u,v)- \int_{\Omega} f(x)v(x)\d x -\int_{\Omega^c} g(y)v(y)\d y + \frac{t}{2} \mathcal{E}(v,v) \right]
\end{align*}
equivalently 
\begin{align*}
\mathcal{E}(u,v)= \int_{\Omega} f(x)v(x)\d x +\int_{\Omega^c} g(y)v(y)\d y \,.
\end{align*}
This shows the equivalence between variational problem \eqref{eq:var-nonlocal-Neumann-bis} and the minimization problem \eqref{eq:nonlocal-Neumann-min}. Meanwhile, if the compatibility condition \eqref{eq:compatible-nonlocal} holds, then it is easy to observe that the relation in \eqref{eq:var-nonlocal-Neumann-bis} remains unchanged under additive constant and $\mathcal{J}(v+c)=\mathcal{J}(v)$ for all $v \in\VnuOm$ and all $c\in \mathbb{R}.$ Accordingly, if $u\in \VnuOm^\perp$ solves \eqref{eq:var-nonlocal-Neumann-bis} then we have $\mathcal{J}(u+c) =\min\limits_{v\in \VnuOm}\mathcal{J}(v)$ which, by similar arguments as above, is equivalent to \eqref{eq:var-nonlocal-Neumann}. 
\end{proof}

From \autoref{prop:neumann-strong-weak} and \autoref{prop:neumann-min} we deduce that, analogous to the case of the Laplace operator, the complement condition $\cN u = 0$ turns out to be a natural condition in the variational context:
\begin{corollary}\label{cor:natural-cond}
Let $f \in L^2(\Omega)$. Assume $u \in \VnuOm$ minimizes the functional $v\mapsto \frac12 \cE(v,v) - \int_{\Omega} fv$ in the space $\VnuOm$. Then $\cN u = 0$ in $\Omega^c$.
\end{corollary}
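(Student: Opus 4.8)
The plan is to invoke Proposition~\ref{prop:neumann-min} with $g = 0$, observing that the compatibility condition \eqref{eq:compatible-nonlocal} with $g=0$ reduces to $\int_\Omega f\,\d x = 0$, which must hold automatically: indeed, testing the minimization property against the admissible variation $v = 1 \in \VnuOm$ shows $t \mapsto \mathcal{J}(u+t) = \mathcal{J}(u) - t\int_\Omega f\,\d x$ (since $\mathcal{E}$ annihilates constants), and a function of $t$ that is affine with nonzero slope has no minimum, forcing $\int_\Omega f = 0$. Hence $f$ and $g \equiv 0$ are compatible in the sense of \eqref{eq:compatible-nonlocal}.

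With compatibility in hand, the first statement of Proposition~\ref{prop:neumann-min} (the equivalence between the minimization problem \eqref{eq:nonlocal-Neumann-min-const} and the variational problem \eqref{eq:var-nonlocal-Neumann}, taken with $g=0$) applies directly — modulo the harmless shift by a constant to land in $\VnuOm^\perp$ — and yields that $u$ solves
\begin{align*}
\mathcal{E}(u,v) = \int_\Omega f(x)v(x)\,\d x \qquad \text{for all } v \in \VnuOm.
\end{align*}
In particular this holds for all $v \in C_c^\infty(\R^d\setminus\overline{\Omega})$, for which the right-hand side vanishes because $v \equiv 0$ on $\Omega$; thus $\mathcal{E}(u,v) = 0$ for every such $v$.

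It then remains to extract $\cN u = 0$ from $\mathcal{E}(u,v)=0$ for all $v \in C_c^\infty(\R^d\setminus\overline\Omega)$. Since $u \in \VnuOm$ and $v$ has compact support away from $\overline\Omega$, Fubini's theorem (as already used in the introduction, see \autoref{sec:intro}) gives
\begin{align*}
0 = \mathcal{E}(u,v) = \int_{\Omega^c} \Big(\int_\Omega \big(u(y)-u(x)\big)\nuxminy\,\d x\Big) v(y)\,\d y = \int_{\Omega^c} \cN u(y)\, v(y)\,\d y,
\end{align*}
where the interchange is justified because the integrand is absolutely integrable on $\supp v \times \Omega$ (there is a positive distance between $\supp v$ and $\overline\Omega$, so $\nu(x-y)$ is bounded there, and $u|_\Omega \in L^2(\Omega)$ while $v$ is bounded). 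Since $C_c^\infty(\R^d\setminus\overline\Omega)$ is dense in $L^2(\Omega^c \setminus \partial\Omega)$ and $\cN u \in L^1_{\loc}(\Omega^c)$, it follows that $\cN u = 0$ a.e.\ in $\Omega^c$.

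\textbf{Main obstacle.} The only delicate point is the rigorous justification of the Fubini step and the claim $\cN u \in L^1_{\loc}(\Omega^c)$ — one needs that $\int_\Omega |u(x)|\,\nu(x-y)\,\d x < \infty$ locally uniformly in $y \in \Omega^c$, which follows from the boundedness of $\nu$ on compact subsets of $\R^d\setminus\{0\}$ together with $u|_\Omega \in L^2(\Omega) \subset L^1(\Omega)$; this is exactly the kind of estimate appearing at the start of the proof of \autoref{prop:fund-prop} referenced after \autoref{def:nonlocal-normal}. Everything else is a bookkeeping application of Proposition~\ref{prop:neumann-min}.
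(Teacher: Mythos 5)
Your proposal is correct and follows essentially the same route as the paper: you pass to the variational identity $\cE(u,v)=\int_\Omega f v$ (your detour through the compatibility condition and the ``Moreover'' part of Proposition~\ref{prop:neumann-min} works when constants belong to $\VnuOm$, but the first-variation argument applied over all of $\VnuOm$ gives this identity directly, with no need to test against constants), and then exterior test functions, Fubini and the fundamental lemma yield $\cN u=0$, exactly as sketched in \autoref{sec:intro}. One small repair to your Fubini justification: \eqref{eq:levy-cond} does not make $\nu$ bounded away from the origin (only integrable there), so argue instead via Cauchy--Schwarz with respect to the measure $\nuxminy\,\d x\,\d y$, namely
\begin{align*}
\iint\limits_{\supp v\times\Omega}|u(x)-u(y)|\,\nuxminy\,\d x\,\d y\;\le\; |u|_{\VnuOm}\Big(|\supp v|\int_{|h|\ge\delta}\nu(h)\,\d h\Big)^{1/2}<\infty,
\qquad \delta=\dist(\supp v,\overline{\Omega})>0,
\end{align*}
which justifies the interchange without any boundedness of $\nu$ or integrability of $u$ on $\Omega^c$, and also gives $\cN u\in L^1_{\loc}(\R^d\setminus\overline{\Omega})$ for the density step.
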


A different version of this observation is given in \cite[Theorem 2.1]{DLV21}. \cite[Theorem 2.8]{MuPr19} is similar in the translation invariant case. We are now in position to state the existence and the uniqueness of a solution to \eqref{eq:var-nonlocal-Neumann-bis} and hence to \eqref{eq:var-nonlocal-Neumann} up to additive constant. A direct application of the Lax-Milgram lemma leads to the following observation.

\begin{theorem} \label{thm:nonlocal-Neumann-var-trivial}
We assume that $\Omega\subset \R^d$ is open and bounded. Let $\nu: \R^d\to [0,\infty]$ be the density of a symmetric L\'{e}vy measure with full support. We further assume  that the couple $(\nu, \Omega)$ belongs to one of the class $\mathscr{A}_i,~i=0,1,2,3$. Let $ f \in \VnuOm'$ and $g\in \TnuOm'$.
\begin{enumerate}[(i)]
\item There exists a unique solution $u\in\VnuOm^{\perp} $ to the problem \eqref{eq:var-nonlocal-Neumann-gen-bis} satisfying
\begin{align*}
\|u\|_{\VnuOm}\leq C \left(\|f\|_{\VnuOm'}+\|g\|_{\TnuOm'}\right)
\end{align*}
with a positive constant $C$, which depends only on $d,\Omega, \Lambda$ and $\nu$.
\item  Problem \eqref{eq:var-nonlocal-Neumann-gen} is solvable if and only if $\langle f , 1 \rangle  + \langle g , 1 \rangle = 0$. All solutions $w$ are of the form $w=u+c$ with $c\in \mathbb{R}$ and satisfy 
\begin{align*}
\|w-\hbox{$\fint_{\Omega}w$} \|_{\VnuOm}\leq C \left(\|f\|_{\VnuOm'}+\|g\|_{\TnuOm'}\right) \,.
\end{align*}
\end{enumerate}
\end{theorem}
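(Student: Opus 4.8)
The plan is to apply the Lax--Milgram lemma on the Hilbert space $\VnuOm^{\perp}$, using the Poincar\'e inequality from \autoref{thm:poincare-inequality} to establish coercivity. First I would record that under the stated hypotheses (bounded $\Omega$, $(\nu,\Omega) \in \mathscr{A}_i$ for some $i \in \{0,1,2,3\}$), the bilinear form $\cE(\cdot,\cdot)$ restricted to $\VnuOm^{\perp}$ is bounded: boundedness on all of $\VnuOm$ is immediate from \eqref{eq:k-elliptic} and \autoref{lem:energy-vs-seminorm}, since $|\cE(u,v)| \le \Lambda \, |u|_{\VnuOm} |v|_{\VnuOm} \le \Lambda \|u\|_{\VnuOm}\|v\|_{\VnuOm}$ after Cauchy--Schwarz. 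For coercivity on $\VnuOm^{\perp}$: for $u \in \VnuOm^{\perp}$ we have $\fint_\Omega u = 0$, so $\|u\|_{L^2(\Omega)}^2 = \|u - \fint_\Omega u\|_{L^2(\Omega)}^2 \le C \iint_{\Omega\Omega}(u(x)-u(y))^2 \nuxminy\,\d x\,\d y \le 2C \cE(u,u)$ by \eqref{eq:poincare-inequalityV} (the extra factor using $\cE \ge \tfrac12 \iint_{\Omega\Omega}$ or directly \autoref{lem:energy-vs-seminorm} together with the ellipticity lower bound $k \ge \Lambda^{-1}\nu$). Combining, $\|u\|_{\VnuOm}^2 = \|u\|_{L^2(\Omega)}^2 + |u|_{\VnuOm}^2 \le (2C + 2\Lambda)\cE(u,u)$, so $\cE$ is coercive on $\VnuOm^{\perp}$ with constant depending only on $d, \Omega, \Lambda, \nu$.

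Next I would check that the right-hand side $v \mapsto \langle f, v\rangle + \langle g, v\rangle$ defines a bounded linear functional on $\VnuOm^{\perp}$. This is where the duality pairings enter: $f \in \VnuOm'$ gives $|\langle f, v\rangle| \le \|f\|_{\VnuOm'} \|v\|_{\VnuOm}$ directly, and for $g \in \TnuOm'$ one uses the natural continuous embedding $\VnuOm \hookrightarrow \TnuOm$ (the trace/restriction map has operator norm $\le 1$ by definition of $\|\cdot\|_{\TnuOm}$, since $\|v|_{\Omega^c}\|_{\TnuOm} \le \|v\|_{\VnuOm}$), so $|\langle g, v\rangle| \le \|g\|_{\TnuOm'}\|v|_{\Omega^c}\|_{\TnuOm} \le \|g\|_{\TnuOm'}\|v\|_{\VnuOm}$. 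Hence the functional $\ell(v) = \langle f,v\rangle + \langle g,v\rangle$ satisfies $\|\ell\|_{(\VnuOm^\perp)'} \le \|f\|_{\VnuOm'} + \|g\|_{\TnuOm'}$. Lax--Milgram then yields a unique $u \in \VnuOm^{\perp}$ with $\cE(u,v) = \ell(v)$ for all $v \in \VnuOm^{\perp}$, and the standard a priori bound gives $\|u\|_{\VnuOm} \le C_{\mathrm{coer}}^{-1}\|\ell\| \le C(\|f\|_{\VnuOm'} + \|g\|_{\TnuOm'})$, which is claim $(i)$.

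For claim $(ii)$ I would argue as follows. Necessity of the compatibility condition is already noted in \autoref{def:neumann-var-sol}: testing \eqref{eq:var-nonlocal-Neumann-gen} with $v \equiv 1 \in \VnuOm$ (constants lie in $\VnuOm$ since $\Omega$ is bounded, and $\cE(u,1)=0$ because $L$ and $\cN$ annihilate constants, i.e. the integrand in $\cE(u,1)$ vanishes) forces $\langle f,1\rangle + \langle g,1\rangle = 0$. For sufficiency, suppose $\langle f,1\rangle + \langle g,1\rangle = 0$. Every $v \in \VnuOm$ decomposes uniquely as $v = v^\perp + c$ with $v^\perp \in \VnuOm^{\perp}$, $c = \fint_\Omega v$; then $\cE(u, v) = \cE(u, v^\perp) = \ell(v^\perp)$ where $u$ is the solution from part $(i)$, and $\ell(v) = \ell(v^\perp) + c\,(\langle f,1\rangle + \langle g,1\rangle) = \ell(v^\perp)$ by the compatibility assumption, so $u$ solves \eqref{eq:var-nonlocal-Neumann-gen}. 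Since $L$ and $\cN$ annihilate constants, $u + c$ is also a solution for every $c \in \mathbb{R}$; conversely if $w$ is any solution then $w - u$ has $\cE(w-u, v) = 0$ for all $v \in \VnuOm$, in particular for $v = w - u - \fint_\Omega(w-u) \in \VnuOm^{\perp}$, and coercivity forces $w - u$ to be constant a.e., so $w = u + c$ with $c = \fint_\Omega w$ (as $\fint_\Omega u = 0$). The bound on $\|w - \fint_\Omega w\|_{\VnuOm} = \|u\|_{\VnuOm}$ is then exactly the bound from $(i)$.

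The main obstacle, such as it is, is purely bookkeeping with the two different dual norms and the embedding $\VnuOm \hookrightarrow \TnuOm$: one must be careful that the functional $\ell$ is well-defined on $\VnuOm^{\perp}$ and that the norm $\|\cdot\|_{\VnuOm}$ restricted to $\VnuOm^{\perp}$ is genuinely equivalent to the energy norm $\cE(\cdot,\cdot)^{1/2}$ — this equivalence is precisely the content of the Poincar\'e inequality \autoref{thm:poincare-inequality}, which requires $(\nu,\Omega) \in \mathscr{A}_i$, and is the only place the compactness theory of \autoref{sec:compactness} is used. Everything else is a direct invocation of Lax--Milgram and the fact that constants span the kernel of $\cE$ on the bounded domain $\Omega$.
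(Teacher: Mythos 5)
Your proposal is correct and follows essentially the same route as the paper: Lax--Milgram on $\VnuOm^{\perp}$ with coercivity supplied by the Poincar\'e inequality of \autoref{thm:poincare-inequality} (combined with the ellipticity bound \eqref{eq:k-elliptic}), continuity of the functional via the trace embedding $\VnuOm \hookrightarrow \TnuOm$, and the decomposition $\VnuOm = \VnuOm^{\perp}\oplus\mathbb{R}$ together with invariance of $\cE$ under additive constants for part (ii). Your write-up of the sufficiency and uniqueness-up-to-constants step is merely more explicit than the paper's, which leaves it as an easy check.
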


\begin{proof}[Proof] 
The existence and the uniqueness of solutions of \eqref{eq:var-nonlocal-Neumann-gen-bis} follow from the Lax-Milgram lemma. The bilinear form $\mathcal{E}(\cdot, \cdot)$ is continuous on $\VnuOm^\perp$. From the Poincar\'{e} inequality \eqref{eq:poincare-inequalityV} we conclude 
\begin{align*}
\|v\|_{L^2(\Omega)}^2\leq C \mathcal{E}(v,v) \quad\quad \text{for all }~~~v\in \VnuOm^{\perp}
\end{align*}
for some positive constant $C$. This implies coercivity of $\mathcal{E}(\cdot, \cdot)$ on $\VnuOm^\perp$ and we obtain
\begin{align}\label{eq:coercivity-quadratic}
\mathcal{E}(v,v)\geq \big(1+ C\big)^{-1}\|v\|^2_{\VnuOm}.
\end{align}
Note that, due to the continuity  of the trace operator $\operatorname{Tr}:\VnuOm \to\TnuOm$, the mapping $v \mapsto \langle f , v \rangle  + \langle g , v \rangle$ is linear and continuous on $\VnuOm^\perp$. The Lax-Milgram lemma implies (i).

For $v\in\VnuOm$ set $v= \widetilde{v}+c'$ with $c'=\mbox{$ \fint_{\Omega} v\d x$}$ so that $ \widetilde{v} \in\VnuOm^\perp$. 
In addition, every constant function $w=c$ belongs to $\VnuOm$ for every $c\in \R^d$ because $\Omega$ is bounded. 
Hence, $\VnuOm= \VnuOm^{\perp}\oplus \mathbb{R}$. With this observation along with the identity 
$\mathcal{E}(u+c,v+c')=\mathcal{E}(u,v) $ for all $c,c'\in \mathbb{R}$ and the uniqueness of $u\in\VnuOm^{\perp}$
solving \eqref{eq:var-nonlocal-Neumann-gen-bis} it becomes easy to check that under the compatibility condition $\langle f , 1 \rangle  + \langle g , 1 \rangle = 0$, all solutions of \eqref{eq:var-nonlocal-Neumann-gen} are of the form $u+c$.

\end{proof}

\begin{remark}
It worth to mention that, \autoref{thm:nonlocal-Neumann-var-trivial} (i) implies that the operator $\Phi : \VnuOm' \times \TnuOm' \to \VnuOm^\perp $ mapping $(f,g)$ to the unique solution $ u\in\VnuOm^\perp $ of the variational problem
\eqref{eq:var-nonlocal-Neumann-gen-bis} is linear, one-to-one, continuous with 	%
\begin{align*}
\|\Phi(f,g)\|_{\VnuOm}\leq C \|(f,g)\|_{\VnuOm' \times \TnuOm'} \,.
\end{align*} 
\end{remark}

Let us apply \autoref{thm:nonlocal-Neumann-var-trivial} in order to prove our main existence result.

\begin{theorem} \label{thm:nonlocal-Neumann-var}
Under the assumptions of \autoref{thm:nonlocal-Neumann-var-trivial} with $ f \in L^2 (\Omega)$ and $g\in L^2(\Omega^c, \widetilde{\nu}^{-1})$ the following holds true:
\begin{enumerate}[(i)]
\item There exists a unique solution $u\in\VnuOm^{\perp} $ to the problem \eqref{eq:var-nonlocal-Neumann-bis} satisfying
\begin{align*}
\|u\|_{\VnuOm}\leq C \left(\|f\|_{L^2(\Omega)}+\|g\|_{L^{2}(\Omega^c, \widetilde{\nu}^{-1})}\right)
\end{align*}
with a positive constant $C$, which depends only on $d,\Omega,\Lambda$ and $\nu$.
\item  Problem \eqref{eq:var-nonlocal-Neumann} is solvable if and only if \eqref{eq:compatible-nonlocal} holds true. All solutions $w$ are of the form $w=u+c$ with $c\in \mathbb{R}$ and satisfy 
\begin{align}\label{eq:weak-regular}
\|w-\hbox{$\fint_{\Omega}w$} \|_{\VnuOm}\leq C \left(\|f\|_{L^2(\Omega)}+\|g\|_{L^{2}(\Omega^c, \widetilde{\nu}^{-1})}\right) \,.
\end{align}
\end{enumerate}
\end{theorem}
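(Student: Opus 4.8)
The plan is to obtain \autoref{thm:nonlocal-Neumann-var} as a direct specialization of \autoref{thm:nonlocal-Neumann-var-trivial}. The only thing to do is to check that the concrete data $f\in L^2(\Omega)$ and $g\in L^2(\Omega^c,\widetilde{\nu}^{-1})$ induce, via the usual integral pairings, an admissible pair $(f,g)\in\VnuOm'\times\TnuOm'$ for the abstract theorem, and that the abstract duality pairings $\langle f,v\rangle$, $\langle g,v\rangle$ then literally become $\int_\Omega fv$ and $\int_{\Omega^c}gv$; everything else is transported verbatim.

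First I would record the two relevant continuous embeddings. Since $\Omega$ is bounded, the restriction $\VnuOm\hookrightarrow L^2(\Omega)$ has norm $1$, so every $f\in L^2(\Omega)$ defines an element of $\VnuOm'$ through $v\mapsto\int_\Omega fv$, with $\|f\|_{\VnuOm'}\le\|f\|_{L^2(\Omega)}$. For the Neumann datum, note that $B\subset\Omega$ is bounded, hence $|B|<\infty$, so \autoref{lem:natural-norm-on-V} gives $\widetilde{\nu}\in L^1(\R^d)\cap L^\infty(\R^d)$ together with the continuity of the trace embedding $\operatorname{Tr}:\VnuOm\to L^2(\Omega^c,\widetilde{\nu})$, say with constant $C_0$. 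Combining this with the weighted Cauchy--Schwarz estimate
\begin{align*}
\Big|\int_{\Omega^c}g v\Big|\le\Big(\int_{\Omega^c}|g|^2\widetilde{\nu}^{-1}\Big)^{1/2}\Big(\int_{\Omega^c}|v|^2\widetilde{\nu}\Big)^{1/2}
\end{align*}
shows that $v\mapsto\int_{\Omega^c}gv$ is a continuous linear functional on $\VnuOm$ that factors through $\operatorname{Tr}$, i.e. it defines an element of $\TnuOm'$ with $\|g\|_{\TnuOm'}\le C_0\|g\|_{L^2(\Omega^c,\widetilde{\nu}^{-1})}$. In particular the functional $v\mapsto\int_\Omega fv+\int_{\Omega^c}gv$ on $\VnuOm$ coincides with $\langle f,v\rangle+\langle g,v\rangle$ from \autoref{def:neumann-var-sol}, so \eqref{eq:var-nonlocal-Neumann-gen-bis} is exactly \eqref{eq:var-nonlocal-Neumann-bis} and \eqref{eq:var-nonlocal-Neumann-gen} is exactly \eqref{eq:var-nonlocal-Neumann} for this class of data.

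Then I would simply invoke \autoref{thm:nonlocal-Neumann-var-trivial}. For (i), it yields a unique $u\in\VnuOm^{\perp}$ solving \eqref{eq:var-nonlocal-Neumann-bis} with $\|u\|_{\VnuOm}\le C(\|f\|_{\VnuOm'}+\|g\|_{\TnuOm'})$, and the last two estimates of the previous paragraph bound the right-hand side by $C(\|f\|_{L^2(\Omega)}+\|g\|_{L^2(\Omega^c,\widetilde{\nu}^{-1})})$ after enlarging $C$; the constant still depends only on $d,\Omega,\Lambda,\nu$ (the dependence on $\nu$ now also absorbs $C_0$). For (ii), the abstract solvability criterion $\langle f,1\rangle+\langle g,1\rangle=0$ becomes $\int_\Omega f+\int_{\Omega^c}g=0$, i.e. \eqref{eq:compatible-nonlocal}: here the constant function $1$ belongs to $\VnuOm$ because $\Omega$ is bounded, and $\int_{\Omega^c}g$ is finite since $1|_{\Omega^c}\in\TnuOm\hookrightarrow L^2(\Omega^c,\widetilde{\nu})$ and $\widetilde{\nu}\in L^1$. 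The description of the solution set as $\{u+c:c\in\R\}$ and the bound \eqref{eq:weak-regular} on $\|w-\fint_\Omega w\|_{\VnuOm}$ then follow directly from \autoref{thm:nonlocal-Neumann-var-trivial}(ii) and the same estimates.

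Since the whole argument reduces to two embeddings already established earlier, there is no genuine obstacle. The one point needing a moment's care is that the weight used to topologize the target of the trace, $L^2(\Omega^c,\widetilde{\nu})$, must be precisely the reciprocal of the weight placed on the Neumann datum $g$; this matching is exactly what makes the pairing $\int_{\Omega^c}gv$ continuous, and it is where the decision to work throughout \autoref{sec:existence} with the weight $\widetilde{\nu}$ pays off. The analogous statements with $\overline{\nu}$ or $\nu^*$ in place of $\widetilde{\nu}$ are proved in the same way, using the corresponding trace embeddings.
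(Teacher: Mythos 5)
Your proposal is correct and follows essentially the same route as the paper: the paper's proof consists precisely of the two continuity estimates for the linear forms $v\mapsto\int_\Omega fv$ and $v\mapsto\int_{\Omega^c}gv$ (via Cauchy--Schwarz and the trace embedding $\VnuOm\hookrightarrow L^2(\Omega^c,\widetilde{\nu})$ from \autoref{lem:natural-norm-on-V}), followed by an application of \autoref{thm:nonlocal-Neumann-var-trivial}. Your additional remarks on identifying the abstract pairings with the integral pairings and on the finiteness of $\int_{\Omega^c}g$ are harmless elaborations of the same argument.
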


\begin{proof}
It suffices to show the continuity of the associated linear forms. For $v \in\VnuOm^{\perp}$
\begin{align}\label{eq:linearform-f}
\Big|\int_{\Omega} fv\d x \Big|\leq \|f\|_{L^{2}(\Omega)}\|v\|_{L^{2}(\Omega)} \leq \|f\|_{L^{2}(\Omega)}\|v\|_{\VnuOm}\,.
\end{align}
From $g\in L^2(\Omega^c, \widetilde{\nu}^{-1})$ and the continuity of $ \operatorname{Tr}: \VnuOm \hookrightarrow \TnuOm,$ we obtain
\begin{align}\label{eq:linearform-g}
\Big|\int_{\Omega^c} g(x) v(x)\d x \Big|\leq \|g\|_{ L^2(\Omega^c, \widetilde{\nu}^{-1})}\|v\|_{L^2(\Omega^c, \widetilde{\nu})} \leq C\|g\|_{ L^2(\Omega^c, \widetilde{\nu}^{-1})}\|v\|_{\VnuOm}.
\end{align}
Application of \autoref{thm:nonlocal-Neumann-var-trivial} completes the proof. 
\end{proof}

There is an alternative formulation of \autoref{thm:nonlocal-Neumann-var}, which allows for more general inhomogeneities $g$.  Let us define a modified Neumann problem for the operator $L$ associated to the data $f$ and $g$ as follows: 
\begin{align}\label{eq:nonlocal-Neumann-mod}\tag{$N_*$}
L u = f \quad\text{in}~~~ \Omega \quad\quad\text{ and } \quad\quad \mathcal{N} u= g \widetilde{\nu} \quad \text{ on } \R^d\setminus\Omega.
\end{align}

\medskip

\begin{theorem} \label{thm:nonlocal-Neumann-var-weighted}
Under the assumptions of \autoref{thm:nonlocal-Neumann-var-trivial} with $ f \in L^2 (\Omega)$ and $g\in L^2(\Omega^c, \widetilde{\nu})$, then the following holds true:
\begin{enumerate}[(i)]
\item There exists a unique weak solution $u_{*}\in\VnuOm^{\perp} $ to the  problem \eqref{eq:nonlocal-Neumann-mod},  that is
\begin{align}\label{eq:var-nonlocal-Neumann-bis-weigthed}\tag{$V_{*}^\perp$}
\mathcal{E}(u_{*}, v) = \int_{\Omega} f(x)v(x)\d x +\int_{\Omega^c} g(y)v(y)\widetilde{\nu}(y)\d y \quad \text{ for all } v \in \VnuOm^{\perp}\,.
\end{align}
satisfying
\begin{align*}
\|u_*\|_{\VnuOm}\leq C \left(\|f\|_{L^2(\Omega)}+\|g\|_{L^{2}(\Omega^c, \widetilde{\nu})}\right)
\end{align*}
with a positive constant $C$, which depends only on $d,\Omega,\Lambda$ and $\nu$.

\item Problem \eqref{eq:var-nonlocal-Neumann-weigthed} is solvable if and only if \eqref{eq:compatible-nonlocal-weighted} holds true, where 
\begin{align}
\label{eq:var-nonlocal-Neumann-weigthed}\tag{$V_{*}$}
\mathcal{E}(u,v) &= \int_{\Omega} f(x)v(x)\d x +\int_{\Omega^c} g(y)v(y)\widetilde{\nu}(y)\d y  \quad \text{ for all } v \in \VnuOm\,, \\
\tag{$C_{*}$}\label{eq:compatible-nonlocal-weighted}
&\int_{\Omega} f(x)\d x +\int_{\Omega^c} g(y)\widetilde{\nu}(y)\d y = 0 \,.
\end{align}
All solutions $w_*$ are of the form $w=u_{*}+c$ with $c\in \mathbb{R}$ and satisfy 
\begin{align*}
\|w-\hbox{$\fint_{\Omega}w$} \|_{\VnuOm}\leq C \left(\|f\|_{L^2(\Omega)}+\|g\|_{L^{2}(\Omega^c, \widetilde{\nu})}\right)\,.
\end{align*}

\end{enumerate}
\end{theorem}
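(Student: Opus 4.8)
The plan is to reduce the whole statement to \autoref{thm:nonlocal-Neumann-var} by absorbing the weight $\widetilde{\nu}$ into the Neumann datum. Given $g\in L^2(\Omega^c,\widetilde{\nu})$, set $\widehat{g}:=g\,\widetilde{\nu}$ on $\Omega^c$. Since $\widetilde{\nu}$ is nonnegative and measurable (indeed $\widetilde{\nu}\in L^\infty(\R^d)$ by \autoref{lem:natural-norm-on-V}) and $\widetilde{\nu}>0$ a.e.\ because $\nu$ has full support, the function $\widehat{g}$ is well defined and measurable, and
\begin{align*}
\int_{\Omega^c}|\widehat{g}(y)|^2\,\widetilde{\nu}(y)^{-1}\,\d y=\int_{\Omega^c}|g(y)|^2\,\widetilde{\nu}(y)\,\d y=\|g\|^2_{L^2(\Omega^c,\widetilde{\nu})}<\infty ,
\end{align*}
so that $\widehat{g}\in L^2(\Omega^c,\widetilde{\nu}^{-1})$ with $\|\widehat{g}\|_{L^2(\Omega^c,\widetilde{\nu}^{-1})}=\|g\|_{L^2(\Omega^c,\widetilde{\nu})}$. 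This is the only place where the positivity of $\widetilde{\nu}$, hence the full-support hypothesis, is used.

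Next I would apply \autoref{thm:nonlocal-Neumann-var} to the pair $(f,\widehat{g})$. For every $v\in\VnuOm$ one has, by Cauchy--Schwarz together with the embedding $\VnuOm\hookrightarrow L^2(\R^d,\widetilde{\nu})$ from \autoref{lem:natural-norm-on-V}, that $\int_{\Omega^c}\widehat{g}\,v\,\d y=\int_{\Omega^c}g\,v\,\widetilde{\nu}\,\d y$ is a well-defined continuous linear functional of $v$. Consequently the weak formulation \eqref{eq:var-nonlocal-Neumann-bis} for $(f,\widehat{g})$ coincides with \eqref{eq:var-nonlocal-Neumann-bis-weigthed}; the formulation \eqref{eq:var-nonlocal-Neumann} for $(f,\widehat{g})$ coincides with \eqref{eq:var-nonlocal-Neumann-weigthed}; and the compatibility condition \eqref{eq:compatible-nonlocal} for $(f,\widehat{g})$ reads $\int_\Omega f+\int_{\Omega^c}g\,\widetilde{\nu}=0$, i.e.\ \eqref{eq:compatible-nonlocal-weighted}. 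Thus part (i) follows from \autoref{thm:nonlocal-Neumann-var}(i) with $u_*$ the solution produced there, the a priori bound being inherited after substituting $\|\widehat{g}\|_{L^2(\Omega^c,\widetilde{\nu}^{-1})}=\|g\|_{L^2(\Omega^c,\widetilde{\nu})}$; and part (ii) follows from \autoref{thm:nonlocal-Neumann-var}(ii), using once more $\VnuOm=\VnuOm^\perp\oplus\R$ and the fact that, under \eqref{eq:compatible-nonlocal-weighted}, both $\mathcal{E}(\cdot,\cdot)$ and the right-hand side of \eqref{eq:var-nonlocal-Neumann-bis-weigthed} are unchanged when a constant is added to $v$.

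There is no genuine obstacle here: the argument is a change of variables in the datum. The only points that deserve a sentence are the measurability and a.e.-positivity of $\widetilde{\nu}$ (which legitimizes the substitution $g\mapsto g\,\widetilde{\nu}$ and is where full support enters) and the observation that the result is strictly more permissive than \autoref{thm:nonlocal-Neumann-var}, since $\widetilde{\nu}$ may decay at infinity (e.g.\ $\widetilde{\nu}(x)\asymp(1+|x|)^{-d-\alpha}$ in the fractional case, cf.\ \autoref{exa:standard}), so $L^2(\Omega^c,\widetilde{\nu})$ admits data $g$ with polynomial growth while $L^2(\Omega^c,\widetilde{\nu}^{-1})$ forces decay.
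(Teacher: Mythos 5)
Your proposal is correct and follows essentially the paper's own route: the paper proves the theorem ``analogously'' to \autoref{thm:nonlocal-Neumann-var}, the whole content being the Cauchy--Schwarz bound $\big|\int_{\Omega^c} g\,v\,\widetilde{\nu}\big|\leq C\|g\|_{L^2(\Omega^c,\widetilde{\nu})}\|v\|_{\VnuOm}$, which is exactly what your substitution $\widehat{g}=g\,\widetilde{\nu}$ with $\|\widehat{g}\|_{L^2(\Omega^c,\widetilde{\nu}^{-1})}=\|g\|_{L^2(\Omega^c,\widetilde{\nu})}$ encodes. Your packaging as a literal reduction to \autoref{thm:nonlocal-Neumann-var} (using $\widetilde{\nu}>0$ everywhere, guaranteed by the full support of $\nu$) is a fine, equivalent way to state it.
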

The proof of \autoref{thm:nonlocal-Neumann-var-weighted} is analogous to the one of \autoref{thm:nonlocal-Neumann-var}. Note that, if $g\in L^2(\Omega^c, \widetilde{\nu})$, then 
\begin{align*}
\Big|\int_{\Omega^c} g(x) v(x)\widetilde{\nu}(x)\d x \Big| \leq C\|g\|_{L^2(\Omega^c, \widetilde{\nu})} \|v\|_{\VnuOm} \,.
\end{align*}

The last result in this section concerns the non-existence of weak solutions when the Neumann data $g$ is not in the weighted trace space $L^{2}(\Omega^c,\widetilde{\nu}^{-1})$. 

\begin{theorem}[\textbf{Non-existence of weak solution}]\label{thm:non-existence-Neumann}
Let $B_1= B_1(0)$ be the unit ball in $\R^d$. Let $\nu(h)=|h|^{-d-\alpha}$, $\alpha\in (0,2)$ so that $\widetilde{\nu}(h)\asymp (1+|h|)^{-d-\alpha}$. Let $f=0$. There exists $g\in L^1(B_1^c)\setminus L^2(B_1^c, \widetilde{\nu}^{-1})$ with $\int_{B_1^c} g (y)\d y=0$, for which  the  Neumann problem $Lu=0$ on $B_1$ and $\mathcal{N} u=g$ on $\R^d\setminus B_1$ has no weak solution in $V_{\nu}(B_1 |\R^d)$. 
\end{theorem}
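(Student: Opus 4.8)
The plan is a proof by contradiction. We first single out a concrete datum $g$, then show that any hypothetical weak solution $u\in\Vnu{B_1}$ would be forced to grow like $|y|^{\alpha/2}$ as $|y|\to\infty$, which is incompatible with the continuous embedding $\Vnu{B_1}\hookrightarrow L^2(\R^d,\widetilde{\nu})$ from \autoref{lem:natural-norm-on-V}(ii). For the construction I would start from the scale-critical function $g_0(y)=|y|^{-d-\alpha/2}\mathds{1}_{\{|y|>2\}}$. Since $\alpha>0$ one has $\int_{B_1^c}|g_0|=|\mathbb{S}^{d-1}|\int_2^\infty r^{-1-\alpha/2}\,\d r<\infty$, so $g_0\in L^1(B_1^c)$, whereas, using $\widetilde{\nu}(y)^{-1}\asymp(1+|y|)^{d+\alpha}$, $\int_{B_1^c}|g_0|^2\widetilde{\nu}^{-1}\asymp\int_2^\infty r^{-1}\,\d r=\infty$, so $g_0\notin L^2(B_1^c,\widetilde{\nu}^{-1})$. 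To meet the compatibility condition I would set $g=g_0-c_0\mathds{1}_{B_2\setminus B_1}$ with $c_0=|B_2\setminus B_1|^{-1}\int_{B_1^c}g_0$; the correction term is bounded with compact support, so it affects neither the $L^1$-membership nor the failure of $L^2(B_1^c,\widetilde{\nu}^{-1})$-membership, while $g\equiv g_0$ on $\{|y|>2\}$ and $\int_{B_1^c}g=0$.

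Next I would turn the weak formulation into the pointwise identity $\mathcal{N}u=g$ a.e.\ on $B_1^c$. Assume $u\in\Vnu{B_1}$ is a weak solution; then in particular $\mathcal{E}(u,\phi)=\int_{B_1^c}g(y)\phi(y)\,\d y$ for every $\phi\in C_c^\infty(\R^d\setminus\overline{B_1})$ (such $\phi$, extended by zero, lie in $\Vnu{B_1}$, and the integral converges absolutely since $g\in L^1(B_1^c)$). Noting that $\phi$ vanishes on $B_1$ while $\nu(x-y)$ is bounded for $x\in B_1$, $y\in\supp\phi$, Fubini's theorem gives $\mathcal{E}(u,\phi)=\int_{B_1^c}\mathcal{N}u(y)\phi(y)\,\d y$, exactly as in the derivation of the Gauss--Green formula \eqref{eq:green-gauss-nonlocal-intro}. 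Hence $\int_{B_1^c}(\mathcal{N}u-g)\phi=0$ for every such $\phi$; since $u\in L^2(\R^d,\widetilde{\nu})\subset L^2_{\mathrm{loc}}(\R^d)$ by \autoref{lem:natural-norm-on-V}(ii), the function $\mathcal{N}u$ is in $L^1_{\mathrm{loc}}(\R^d\setminus\overline{B_1})$, and the fundamental lemma of the calculus of variations yields $\mathcal{N}u=g$ a.e.\ on $B_1^c$.

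Finally I would exploit the exact splitting $\mathcal{N}u(y)=u(y)\phi_B(y)-\psi_B(y)$, valid for a.e.\ $y\in B_1^c$, where $\phi_B(y)=\int_{B_1}|x-y|^{-d-\alpha}\,\d x$ and $\psi_B(y)=\int_{B_1}u(x)|x-y|^{-d-\alpha}\,\d x$ (here $u|_{B_1}\in L^2(B_1)\subset L^1(B_1)$). For $|y|\ge 2$ one has $\tfrac12|y|\le|x-y|\le\tfrac32|y|$ on $B_1$, hence $\phi_B(y)\asymp|y|^{-d-\alpha}$ and $|\psi_B(y)|\le C\|u\|_{L^1(B_1)}|y|^{-d-\alpha}$. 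Since $g\equiv g_0$ on $\{|y|>2\}$, solving for $u$ gives $u(y)=\bigl(|y|^{-d-\alpha/2}+\psi_B(y)\bigr)/\phi_B(y)$ for a.e.\ $|y|>2$; as $|\psi_B(y)|=o(|y|^{-d-\alpha/2})$ when $|y|\to\infty$, this forces $|u(y)|\ge c\,|y|^{\alpha/2}$ for all sufficiently large $|y|$. Consequently $\int_{\R^d}|u|^2\widetilde{\nu}\ge c'\int_{\{|y|\ \mathrm{large}\}}|y|^{\alpha}(1+|y|)^{-d-\alpha}\,\d y\asymp\int^\infty r^{-1}\,\d r=\infty$, contradicting $u\in L^2(\R^d,\widetilde{\nu})$; this completes the argument. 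The routine ingredients --- $\phi\in\Vnu{B_1}$, the Fubini interchange, and the elementary two-sided bounds on $\phi_B,\psi_B$ --- are straightforward; the only genuinely delicate point, and the one I would think hardest about, is calibrating the exponent so that $g$ lands strictly between $L^1(B_1^c)$ and $L^2(B_1^c,\widetilde{\nu}^{-1})$ while simultaneously driving $u$ out of $L^2(\R^d,\widetilde{\nu})$ --- the choice $g\sim|y|^{-d-\alpha/2}$ is precisely the critical one, because far from $\Omega$ the operator $\mathcal{N}$ improves decay exactly by the factor $\widetilde{\nu}(y)\asymp|y|^{-d-\alpha}$.
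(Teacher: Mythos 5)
Your proposal is correct, but it takes a genuinely different route from the paper's proof. The paper argues by duality: it chooses $g=g_\gamma\widetilde{\nu}$ with $g_\gamma(x)=\frac{x_1}{|x|}(|x|-1)^{\gamma}\mathds{1}_{B_1^c}(x)$ (odd in $x_1$, so the compatibility condition is automatic), observes that the existence of a weak solution would force the bound $\bigl|\int_{B_1^c} g\,v\bigr|\le C\|v\|_{\Vnu{B_1}}$ for all admissible $v$, and then tests with a second member $v=g_\beta$ of the same family, calibrated so that $g_\beta\in\Vnu{B_1}$ while the nonnegative pairing $\int_{B_1^c} g_\gamma g_\beta\widetilde{\nu}$ diverges; with $\gamma+\beta\in\{-1,\alpha\}$ the divergence sits either in the boundary layer $|x|\to 1$ (case $\alpha\le 1$) or at infinity (case $\alpha\ge 1$). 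You instead take data with critical decay $g\sim|y|^{-d-\alpha/2}$ at infinity plus a compactly supported correction for compatibility, localize the weak formulation to $\phi\in C_c^\infty(\R^d\setminus\overline{B_1})$ to recover the pointwise identity $\mathcal{N}u=g$ a.e.\ on $B_1^c$, and then read off from $\phi_B(y)\asymp|y|^{-d-\alpha}$ and $|\psi_B(y)|\le C|y|^{-d-\alpha}$ that any solution must satisfy $|u(y)|\gtrsim|y|^{\alpha/2}$ far out, contradicting the embedding $\Vnu{B_1}\hookrightarrow L^2(\R^d,\widetilde{\nu})$ of \autoref{lem:natural-norm-on-V}. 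Your steps check out: the zero extension of such $\phi$ does lie in $\Vnu{B_1}$, the Fubini interchange is justified by Cauchy--Schwarz against $|u|_{\Vnu{B_1}}$ together with the boundedness of $\nu$ on $B_1\times\supp\phi$, and the exponent bookkeeping ($g\in L^1(B_1^c)\setminus L^2(B_1^c,\widetilde{\nu}^{-1})$, $\int|u|^2\widetilde{\nu}\gtrsim\int r^{-1}\,\d r=\infty$) is right. What each approach buys: the paper's argument never needs $\mathcal{N}u$ pointwise and produces counterexamples whose obstruction can concentrate at $\partial B_1$ rather than at infinity; yours is more constructive, gives an explicit far-field description of any putative solution, and in fact excludes solutions in the weaker sense where the equation is tested only against functions supported away from $\overline{B_1}$, though its obstruction lives exclusively at infinity. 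One small remark: $-d-\alpha/2$ is the endpoint of an admissible range (any $g\sim|y|^{-d-\sigma}$ with $0<\sigma\le\alpha/2$ works), not the unique viable exponent.
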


\begin{proof}
We will construct a function $g$ of the form $g= g_\gamma \widetilde{\nu}$ where, given  $x \in \R^d$,  
\begin{align*}
g_\gamma(x)=\frac{x_1}{|x|} (|x|-1)^{\gamma}\mathds{1}_{B^c_1(0)} (x) \,,
\end{align*}
with an appropriate choice of $\gamma\in (-1, -\frac{\alpha+1}{2}) \cup (\frac{\alpha}{2}, \frac{\alpha+1}{2})$. Note that for $x\in B^c_1(0)$ we have $\dist(x, \partial B_1(0)) = (|x|-1)$ and 
\begin{align*}
\int_{B_1(0)}\frac{\d y}{|x-y|^{d+\alpha}}\asymp (|x|-1)^{-\alpha}\land (|x|-1)^{-d-\alpha}. 
\end{align*}
Claim 1: $g_\gamma\in V_{\nu}(B_1 |\R^d)$ if and only if $\gamma\in (\frac{\alpha-1}{2},\frac{\alpha}{2})$. Integration in polar coordinates yields
\begin{align*}
\|g_\gamma\|^2_{V_{\nu}(B_1 |\R^d)}&= 2\int_{B^c_1(0) }\frac{x_1^2}{|x|^2} (|x|-1)^{2\gamma} \int_{ B_1(0) }|x-y|^{-d-\alpha}\d y\,\d x\\
&\asymp \int_{B^c_1(0)} \frac{x_1^2}{|x|^2} (|x|-1)^{2\gamma-\alpha}(1\land (|x|-1)^{-d})\d x  \\
&\asymp 
\Big( \int_0^1 r^{2\gamma-\alpha} \d r+ \int_1^\infty r^{2\gamma-\alpha-1} \d r\Big).
\end{align*}
Claim 2: $g_{\gamma+\beta}\in L^1(B_1^c,\widetilde{\nu})$ if and only if $\gamma+\beta\in (-1,\alpha)$. Indeed, 
\begin{align*}
\|g_{\gamma+\beta}\|_{L^1(B_1^c, \widetilde{\nu})}&= \int_{B^c_1(0) }\frac{x_1^2}{|x|^2} (|x|-1)^{\gamma+\beta} (1+|x|)^{-d-\alpha} \d x\\
&\asymp |\mathbb{S}^{d-1}|\Big( \int_0^1 r^{\gamma+\beta} \d r+ \int_1^\infty r^{\gamma+\beta-\alpha-1} \d r\Big).
\end{align*}
Claim 3: Analogously, $g_{\gamma}\in L^2(B_1^c,\widetilde{\nu})$ if and only if $\gamma\in (-\frac{1}{2}, \frac{\alpha}{2})$.  \\
Claim 4: $g\in L^1(B_1^c)\setminus L^2(B_1^c, \widetilde{\nu}^{-1})$ if and only if $g_\gamma\in L^1(B_1^c, \widetilde{\nu})\setminus L^2(B_1^c, \widetilde{\nu})$ if and only if $\gamma\in (-1, -\frac{1}{2}]\cup [\frac{\alpha}{2}, \alpha)$ by Claim 2 and Claim 3.

\medskip
From now on, we assume $\gamma\in (1, -\frac{\alpha+1}{2}) \cup (\frac{\alpha}{2}, \frac{\alpha+1}{2}) \subset (-1, -\frac{1}{2}]\cup [\frac{\alpha}{2}, \alpha)$. \\
Claim 5: Since $g_\gamma(x)=-g_\gamma(-x)$ it follows that $g= g_\gamma\widetilde{\nu}$ satisfies the compatibility condition
\begin{align*}
	\int_{B_1^c} g(y)\d y=\int_{B_1^c} g_\gamma(y)\widetilde{\nu}(y)\d y=0. 
\end{align*}  

\medskip

 We assume the Neumann problem $Lu=0$ on $B_1$ and $\mathcal{N} u=g$ on $\R^d \setminus B_1$ has a weak solution $u\in V_{\nu}(B_1 |\R^d)$, that is we have 
\begin{align*}
\mathcal{E}(u,v)=\int_{B_1^c} g_\gamma(y)v(y)\widetilde{\nu}(y)\d y\qquad \text{for all}\quad v\in V_{\nu}(B_1 |\R^d).
\end{align*}
This implies that for the constant $C= 1+ \|u\|_{V_{\nu}(B_1 |\R^d)} >0$ we have 
\begin{align}\label{eq:estim-guy}
\Big|\int_{B_1^c} g_\gamma(y)v(y)\widetilde{\nu}(y)\d y\Big|\leq C \|v\|_{V_{\nu}(B_1 |\R^d)} 
\quad \text{ for all}\quad  v\in V_{\nu}(B_1 |\R^d).
\end{align}
Claim 1 allows us to take $v=g_\beta\in V_{\nu}(B_1 |\R^d)$ for  $\beta\in (\tfrac{\alpha-1}{2}, \tfrac{\alpha}{2})$. Then we obtain  
\begin{align}\label{eq:estim-guy-second}
\|g_{\gamma+\beta}\|_{L^1(B_1^c, \widetilde{\nu})} \leq c(d) \int_{B_1^c} g_\gamma(y) g_\beta(y)\widetilde{\nu}(y)\d y \leq C \|g_\beta\|_{V_{\nu}(B_1 |\R^d)} \,,
\end{align}
where we have used \eqref{eq:estim-guy}. Finally, we consider two cases. If $\alpha\geq 1$, then we choose $\gamma\in (\frac{\alpha}{2}, \frac{\alpha+1}{2})$ and $\beta=\alpha-\gamma$. If $\alpha\leq1$, then we choose $\gamma\in (-1, -\frac{\alpha+1}{2})$ and $\beta= -\gamma-1$.  In both cases, $\gamma\in (-1, -\frac{1}{2}]\cup [\frac{\alpha}{2}, \alpha)$,  $\beta\in (\frac{\alpha-1}{2}, \frac{\alpha}{2})$ and $\gamma+\beta\in \{-1,\alpha\}$. This implies $g_\gamma\in L^1(B_1^c, \widetilde{\nu})\setminus L^2(B_1^c, \widetilde{\nu})$ and $g_\beta\in V_{\nu}(B_1 |\R^d)$ whereas $\|g_{\gamma+\beta}\|_{L^1(B_1^c, \widetilde{\nu})}=\infty$. This contradicts \eqref{eq:estim-guy-second} since $\|g_{\beta}\|_{V_{\nu}(B_1 |\R^d)}<\infty$. 

%
%
%
%
\end{proof}

\subsection{Neumann eigenvalues of $L$}
Let $f\in L^2(\Omega)$, for $g=0$ it is worthwhile to see that the variational problem \eqref{eq:var-nonlocal-Neumann} coincides with \eqref{eq:var-nonlocal-Neumann-weigthed} and both correspond to the variational(weak) formulation of the homogeneous Neumann problem $Lu=f$ in $\Omega$ and $\mathcal{N}u=0$ on $\Omega^c$. 

\begin{definition}[Neumann eigenvalue of $L$]
A non-zero function $u \in \VnuOm$ is called a Neumann eigenfunction of the operator $L$ on $\Omega$ if there exists a real number $\mu$, which is the eigenvalue associated to $u$, such that for all $v \in \VnuOm$
\begin{align*}
\mathcal{E}( u ,v) = \mu \int_{\Omega} u(x)v(x)\d x.
\end{align*}
One formally writes $Lu=\mu u $ in $\Omega$ and $\mathcal{N}u=0$ on $\Omega^c$, which corresponds to the aforementioned weak formulation provided that $u $ is sufficiently regular.
\end{definition} 

It is worth noticing that, if $u$ is a Neumann eigenfunction of $L$ with associated eigenvalue $\mu$, then either $u \in \VnuOm^\perp$ when $\mu\neq 0$ or else, $\mu=0$ and the constant functions $u=c, ~c\in \mathbb{R}\setminus\{0\}$, are the related eigenfunctions.

\begin{theorem} \label{thm:existence-of-eigenvalue-Neumann}
Assume $\Omega\subset \R^d$ is bounded and open and $\nu: \R^d\setminus\{0\}\to [0,\infty)$ is the density of a symmetric L\'{e}vy measure with full support. Assume that the couple $(\nu, \Omega)$ belongs to one of the classes $\mathscr{A}_i,~i=1,2,3$. Then there exists a sequence $(\phi_n)_{\in \mathbb{N}_0}$ in $\VnuOm$, which forms an orthonormal basis of $L^2(\Omega)$, and an increasing sequence of real numbers $ 0=\mu_0<\mu_1\leq \cdots\leq \mu_n\leq \cdots. $ such that $\mu_n \to \infty$ as $n \to \infty$ and each $\phi_n$ is a Neumann eigenfunction of $L$ with corresponding eigenvalue $\mu_n$. The number of each eigenvalue is given by its geometric multiplicity. 
\end{theorem}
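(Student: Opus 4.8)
The statement is the standard spectral theorem for a compact, self-adjoint, positive operator, applied to the solution operator of the homogeneous Neumann problem. So the first step is to produce the right operator. By \autoref{thm:nonlocal-Neumann-var} (with $g=0$), for every $f\in L^2(\Omega)$ there is a unique $u=Sf\in\VnuOm^\perp$ with $\mathcal{E}(Sf,v)=\int_\Omega fv\,\d x$ for all $v\in\VnuOm^\perp$, and $\|Sf\|_{\VnuOm}\le C\|f\|_{L^2(\Omega)}$. Composing with the inclusion $\VnuOm^\perp\hookrightarrow L^2(\Omega)$ — which is compact by \autoref{thm:embd-compactness} since $(\nu,\Omega)\in\mathscr{A}_i$, $i=1,2,3$ — gives a compact operator $T:=\iota\circ S:L^2(\Omega)\to L^2(\Omega)$. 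Actually it is cleaner to work on the Hilbert space $\mathcal{H}:=\VnuOm^\perp$ equipped with the inner product $\mathcal{E}(\cdot,\cdot)$, which is an equivalent inner product by the Poincar\'e inequality \eqref{eq:poincare-inequalityV} and coercivity \eqref{eq:coercivity-quadratic}; then define $K:\mathcal{H}\to\mathcal{H}$ by $\mathcal{E}(Kw,v)=\int_\Omega wv\,\d x$ for all $v\in\mathcal{H}$. This $K$ is well-defined by Lax–Milgram (or Riesz), and it is compact because it factors as $\mathcal{H}\hookrightarrow L^2(\Omega)\to \mathcal{H}$ where the first arrow is the compact embedding and the second is bounded.

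**Self-adjointness and positivity.** For $w_1,w_2\in\mathcal{H}$ one computes $\mathcal{E}(Kw_1,w_2)=\int_\Omega w_1w_2\,\d x=\int_\Omega w_2 w_1\,\d x=\mathcal{E}(Kw_2,w_1)=\mathcal{E}(w_1,Kw_2)$, so $K$ is self-adjoint on $(\mathcal{H},\mathcal{E})$. Moreover $\mathcal{E}(Kw,w)=\|w\|_{L^2(\Omega)}^2\ge 0$, and it vanishes only if $w=0$ in $L^2(\Omega)$, hence $w=0$ since $\mathcal{H}\subset L^2(\Omega)$; thus $K$ is positive and injective. The spectral theorem for compact self-adjoint operators then yields an $\mathcal{E}$-orthonormal basis $(\psi_n)_{n\ge1}$ of $\mathcal{H}$ consisting of eigenvectors of $K$, with eigenvalues $\lambda_n>0$ (positivity and injectivity exclude $0$ as an eigenvalue, though $0\in\sigma(K)$), and $\lambda_n\to 0$; we order them decreasingly, $\lambda_1\ge\lambda_2\ge\cdots$, so that $\mu_n:=\lambda_n^{-1}$ for $n\ge 1$ forms an increasing sequence tending to $\infty$. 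Unwinding the definition, $K\psi_n=\lambda_n\psi_n$ means $\lambda_n\mathcal{E}(\psi_n,v)=\int_\Omega\psi_n v\,\d x$ for all $v\in\mathcal{H}$, i.e. $\mathcal{E}(\psi_n,v)=\mu_n\int_\Omega\psi_n v\,\d x$ for all $v\in\mathcal{H}=\VnuOm^\perp$.

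**Upgrading the test space and building the $L^2$-basis.** The remaining point is that the eigenvalue relation, established for $v\in\VnuOm^\perp$, extends to all $v\in\VnuOm$. Write an arbitrary $v\in\VnuOm$ as $v=\widetilde v+c$ with $c=\fint_\Omega v$ and $\widetilde v\in\VnuOm^\perp$ (valid since $\Omega$ is bounded, so constants lie in $\VnuOm$, as used in the proof of \autoref{thm:nonlocal-Neumann-var-trivial}). Then $\mathcal{E}(\psi_n,v)=\mathcal{E}(\psi_n,\widetilde v)$ because $\mathcal{E}$ annihilates additive constants, while $\int_\Omega\psi_n v=\int_\Omega\psi_n\widetilde v+c\int_\Omega\psi_n=\int_\Omega\psi_n\widetilde v$ since $\int_\Omega\psi_n=0$; hence $\mathcal{E}(\psi_n,v)=\mu_n\int_\Omega\psi_n v\,\d x$ for all $v\in\VnuOm$, i.e. each $\psi_n$ is a Neumann eigenfunction with eigenvalue $\mu_n$. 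Finally, set $\phi_0:=|\Omega|^{-1/2}\mathbbm{1}_\Omega$, which satisfies $\mathcal{E}(\phi_0,v)=0=0\cdot\int_\Omega\phi_0 v$, so it is a Neumann eigenfunction with eigenvalue $\mu_0=0$, and set $\phi_n:=\|\psi_n\|_{L^2(\Omega)}^{-1}\psi_n=\mu_n^{1/2}\psi_n$ (using $\|\psi_n\|_{L^2(\Omega)}^2=\mathcal{E}(\psi_n,\psi_n)/\mu_n\cdot\mu_n\cdot\ldots$ — more directly, $\|\psi_n\|_{L^2(\Omega)}^2=\lambda_n\mathcal{E}(\psi_n,\psi_n)=\lambda_n$, so $\phi_n=\lambda_n^{-1/2}\psi_n$). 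One checks $(\phi_n)_{n\ge0}$ is orthonormal in $L^2(\Omega)$: for $m,n\ge1$, $\int_\Omega\phi_m\phi_n=\lambda_m^{-1/2}\lambda_n^{-1/2}\lambda_n^{1/2}\lambda_m^{1/2}\mathcal{E}(\psi_m,\psi_n)=\delta_{mn}$ (using $\int_\Omega\phi_m\phi_n=\lambda_m^{-1/2}\lambda_n^{-1/2}\int_\Omega\psi_m\psi_n$ and $\int_\Omega\psi_m\psi_n=\lambda_n\mathcal{E}(\psi_m,\psi_n)=\delta_{mn}\lambda_n$), and orthogonality to $\phi_0$ holds since $\int_\Omega\psi_n=0$. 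Completeness in $L^2(\Omega)$ follows because if $f\in L^2(\Omega)$ is $L^2$-orthogonal to all $\phi_n$, then orthogonality to $\phi_0$ gives $\fint_\Omega f=0$, so $Kf\in\mathcal{H}$ is defined and $\mathcal{E}(Kf,\psi_n)=\int_\Omega f\psi_n\propto\int_\Omega f\phi_n=0$ for all $n$, whence $Kf=0$ since $(\psi_n)$ spans $\mathcal{H}$; injectivity of $K$ gives $f=0$ (as an element of $L^2(\Omega)$, $f$ orthogonal to constants and to $\mathrm{range}$ forces $f=0$). The geometric multiplicity statement is automatic since we simply enumerate an orthonormal eigenbasis.

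**Main obstacle.** No single step is deep; the only thing requiring care is the bookkeeping of the two inner products ($\mathcal{E}$ versus $L^2$) and, relatedly, making sure the eigenvalue identity — proved on $\VnuOm^\perp$ via Lax–Milgram — genuinely holds against all test functions $v\in\VnuOm$, which is exactly where boundedness of $\Omega$ (so that $\VnuOm=\VnuOm^\perp\oplus\mathbb{R}$) and the fact that $\mathcal{E}$ kills constants get used. The hypotheses $(\nu,\Omega)\in\mathscr{A}_i$, $i=1,2,3$, enter only through the compactness of $\VnuOm\hookrightarrow L^2(\Omega)$ in \autoref{thm:embd-compactness}; the class $\mathscr{A}_0$ is excluded precisely because it need not give this compactness.
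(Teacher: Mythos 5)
Your overall strategy -- Lax--Milgram solution operator, compactness of $\VnuOm\hookrightarrow L^2(\Omega)$ from \autoref{thm:embd-compactness}, the spectral theorem for a compact self-adjoint operator, then upgrading the test space from $\VnuOm^{\perp}$ to $\VnuOm$ and adjoining the constant eigenfunction -- is the same as the paper's, but one step fails as written. You claim that $\mathcal{E}(Kw,w)=\|w\|_{L^2(\Omega)}^2=0$ forces $w=0$ ``since $\mathcal{H}\subset L^2(\Omega)$'', hence that $K$ is injective and $0$ is not an eigenvalue. This is false: elements of $\mathcal{H}=\VnuOm^{\perp}$ are functions on all of $\R^d$, and restriction to $\Omega$ is far from injective. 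For instance $w=\mathds{1}_B$, with $B$ a ball at positive distance from $\overline{\Omega}$, lies in $\VnuOm^{\perp}$ (its seminorm equals $\int_\Omega\int_B \nuxminy\,\d y\,\d x<\infty$ by \eqref{eq:levy-cond}) and is a nonzero element of $\mathcal{H}$ because $\|\cdot\|_{\VnuOm}$ is a norm when $\nu$ has full support, yet $Kw=0$ since $\mathcal{E}(Kw,v)=\int_\Omega wv\,\d x=0$ for every $v$. So $\ker K=\{w\in\VnuOm^{\perp}: w|_\Omega=0 \text{ a.e.}\}$ is infinite-dimensional, $0$ is an eigenvalue, and the spectral theorem only gives an $\mathcal{E}$-orthonormal basis $(\psi_n)$ of the orthogonal complement of $\ker K$ in $(\mathcal{H},\mathcal{E})$, not of $\mathcal{H}$. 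This matters exactly where you use it: in the completeness step you conclude ``$Sf=0$ since $(\psi_n)$ spans $\mathcal{H}$'' and then invoke ``injectivity of $K$'' to get $f=0$; neither is available.

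The repair is short and the rest of your construction survives. For zero-mean $f\in L^2(\Omega)$ the solution $Sf$ is automatically $\mathcal{E}$-orthogonal to $\ker K$, because $\mathcal{E}(Sf,w)=\int_\Omega f\,w\,\d x=0$ whenever $w|_\Omega=0$; hence $Sf$ lies in the closed span of the $\psi_n$, and $\mathcal{E}(Sf,\psi_n)=0$ for all $n$ still yields $Sf=0$. Then $\int_\Omega f v\,\d x=\mathcal{E}(Sf,v)=0$ for all $v\in\VnuOm^{\perp}$, and together with $\fint_\Omega f=0$ this gives $\int_\Omega f v\,\d x=0$ for all $v\in\VnuOm$, in particular for all $v\in C_c^\infty(\Omega)$ extended by zero (these belong to $\VnuOm$ by \eqref{eq:H1-inequality}); density of $C_c^\infty(\Omega)$ in $L^2(\Omega)$ gives $f=0$. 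This density argument is what replaces ``injectivity of $K$'', and it also shows there are infinitely many positive eigenvalues, so that $\mu_n\to\infty$ is meaningful. For comparison, the paper sidesteps the kernel issue by applying the spectral theorem to the solution operator viewed on $L^2(\Omega)^{\perp}$ itself, i.e.\ to $f\mapsto u_f|_\Omega$, which hands over an orthonormal basis of $L^2(\Omega)^{\perp}$ directly; your variant diagonalizes on the energy space, which makes positivity and the relation $\mu_n=\lambda_n^{-1}$ transparent, but it obliges you to prove $L^2$-completeness separately -- precisely the place where $\ker K$ must be handled correctly.
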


\begin{proof}
For $f_1, f_2\in L^2(\Omega)$ let us denote $ u_{f_k}=\Phi_0(f_k) =\Phi(f_k,0)\in \VnuOm^{\perp}, ~k=1,2$ the unique solution of \eqref{eq:var-nonlocal-Neumann-bis} with Neumann data $f=f_k$ and $g=0$. Precisely,

\begin{align}\label{eq:var-nonlocal-Neumann-test}
\mathcal{E}( \Phi_0(f_k),v) = \int_{\Omega} f_k(x)v(x)\d x \quad \mbox{for all}~~v \in \VnuOm^{\perp}\,.
\end{align}

Testing \eqref{eq:var-nonlocal-Neumann-test} against $v= \Phi_0(f_2)$ and $v=\Phi_0(f_1)$ successively when $k=1$ and $k=2$ yields 
\begin{align*}
\big(f_1, \Phi_0(f_2)\big)_{L^2(\Omega)} = \mathcal{E}(\Phi_0(f_1), \Phi_0(f_2)) = \mathcal{E}(\Phi_0(f_2), \Phi_0(f_1)) = \big(f_2,\Phi_0(f_1)\big)_{L^2(\Omega)} .
\end{align*}
Therefore, the operator $R_\Omega\circ \Phi_0: L^2(\Omega)\xrightarrow[]{\Phi_0} \VnuOm^{\perp}\xrightarrow[]{R_\Omega} L^2(\Omega)^{\perp}$ is compact (by \autoref{thm:embd-compactness}) and symmetric hence self-adjoint. It is a fact from the spectral theory of compact self-adjoint operators that $L^2(\Omega)^\perp$ has an orthonormal basis $(e_n)_n$ whose elements are eigenfunctions of $R_\Omega\circ \Phi_0$ and the sequence of the corresponding eigenvalues are non-negative real numbers $(r_n)_n$ which we assume ordered in the decreasing order, $r_1\geq r_2\geq \cdots\geq r_n\geq \cdots 0$ such that $r_n\to 0$ as $n\to \infty$. Precisely, for each $n\geq 1$, $R_\Omega\circ \Phi_0(e_n) = r_n e_n$ or simply write $\Phi_0(e_n) = r_n e_n$ a.e in $\Omega$. Combining the latter relation with definition of $ \Phi_0(e_n)$ we get 
\begin{align*}
\mathcal{E}( \Phi_0(e_n),v) &= \int_{\Omega} e_n(x)v(x)\d x= r^{-1}_n \int_{\Omega}\Phi_0( e_n) (x)v(x)
\d x\, \quad \mbox{for all}~~v \in \VnuOm^{\perp}\,. 
\end{align*}
Equivalently, setting $\mu_n= r^{-1}_n$ and $\phi_n = \Phi_0(e_n)/\|\Phi_0(e_n)\|_{L^2(\Omega)}= r_n^{-1} \Phi_0(e_n) $ which is clearly an element of $\in \VnuOm^\perp$ yields 
\begin{align*}
\mathcal{E}( \phi_n,v) &= \mu_n \int_{\Omega} \phi_n (x)v(x)\d x\, \quad \mbox{for all}~~v \in \VnuOm^{\perp}\, .
\end{align*}
Hereby, along with $\mu_0=0$ and $\phi_0= |\Omega|^{-1}$ provides the sequences sought for. 
Now if we assume $\mu_1=0$ then we have $\phi_1\in \VnuOm^\perp$ and $\mathcal{E}(\phi_1, v) = 0$ for all $v \in \VnuOm^\perp$ in particular $\mathcal{E}(\phi_1, \phi_1) = 0$ i.e $\phi_1$ is a constant function in $\VnuOm^\perp$ necessarily $\phi_1=0$ since $u_1$ has zero mean over $\Omega$. We have therefore reached a contradiction as $\phi_1 $ is supposed to be an eigenfunction i.e $\phi_1\neq 0$. Thus, $\mu_1>0$ and the proof is complete.

\end{proof}


\subsection{Robin boundary condition}
In this section we treat a Robin-type problem with respect to the nonlocal operator $L$ on $\Omega$. In the classical setting for the Laplace operator, the Robin boundary problem -- also known as Fourier boundary problem or third boundary problem -- is a combination of the Dirichlet and Neumann boundary problem in the form\footnote{According to the over 20 years survey work \cite{GuAb98},  there is no historical evidence why the problem \eqref{eq:robin-local} is termed  after Robin's name. The survey \cite[p.69]{GuAb98} also points out that the first mathematical appearance of the problem \eqref{eq:robin-local} goes back at least to the works on cooling law by Fourier(1822) and/or Newton (1701, but mathematical contribution  by Newton is uncertain).}
\begin{align}\label{eq:robin-local}
-\Delta u = f \text{ in }  \Omega\quad \text{ and } \quad \frac{\partial u}{\partial n} + \beta u= g \text{ on } \partial\Omega.
\end{align}
Here $f\in L^2(\Omega)$ and the measurable functions $\beta, g: \partial\Omega\to \mathbb{R}$ are given. Analogously, in the nonlocal set up, we assume that $\beta, g: \Omega^c\to \mathbb{R}$ are measurable functions. The Robin problem consists in finding a measurable function $u:\R^d\to \mathbb{R}$ such that 
\begin{align}\label{eq:robin-problem}
L u = f \text{ in }  \Omega\quad\text{ and }\quad \mathcal{N}u + \beta u= g \text{ on } \Omega^c.
\end{align}
\noindent
Note that, for  $\beta=0$ one recovers the inhomogeneous Neumann problem. Informally, for $\beta \to \infty$ it leads to the homogeneous Dirichlet problem. Define the quadratic form 
\begin{align*}
Q_\beta(u,v)= \mathcal{E}( u,v)+ \int_{\Omega^c} u(y)v(y)\beta(y)\d y. 
\end{align*} 
A function $u \in \VnuOm$ is called a weak solution of the Robin problem \eqref{eq:robin-problem} if 
\begin{align}\label{eq:weak-robin-problem}
Q_\beta(u,v)=\int_{\Omega} f(x)v(x)\d x+ \int_{\Omega^c} g(y)v(y)\d y \text{ for all } v\in \VnuOm.
\end{align}

\medskip

\begin{theorem} \label{thm:nonlocal-Robin-var}
Let $\nu$ and $\Omega$ be as in \autoref{thm:existence-of-eigenvalue-Neumann}. Assume that  $ \beta\widetilde{\nu}^{-1}: \Omega^c\to [0, \infty) $ is essentially bounded  and $\beta$ is non-trivial that is,  $|\Omega^c\cap \{ \beta>0\}|>0$.  Let $ f \in L^2 (\Omega)$ and $g\in L^2(\Omega^c, \widetilde{\nu}^{-1})$. There exists a unique function $u\in\VnuOm $ solution to \eqref{eq:weak-robin-problem} satisfying 
\begin{align}\label{eq:weak-regular-robin}
\|u \|_{\VnuOm}\leq C \left(\|f\|_{L^2(\Omega)}+\|g\|_{L^{2}(\Omega^c, \widetilde{\nu}^{-1})}\right) \,,
\end{align}
where $C: = C(d,\Omega,\Lambda, \nu, \beta)>0$ can be chosen independently of $u$, $f$ and $g$.
\end{theorem}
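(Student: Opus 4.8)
The plan is to solve \eqref{eq:weak-robin-problem} by a direct application of the Lax--Milgram lemma to the bilinear form $Q_\beta$ on the Hilbert space $\VnuOm$, in the same way as in the proof of \autoref{thm:nonlocal-Neumann-var-trivial}; the only genuinely new ingredient is the coercivity of $Q_\beta$, which is where the hypotheses on $\beta$ enter.

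First I would verify that $Q_\beta$ is a bounded bilinear form on $\VnuOm$. Continuity of $\mathcal{E}$ on $\VnuOm$ follows from \eqref{eq:k-elliptic} together with \autoref{lem:energy-vs-seminorm}, while for the Robin term one uses $\beta\widetilde{\nu}^{-1}\in L^\infty(\Omega^c)$ and the continuity of the trace map $\operatorname{Tr}:\VnuOm\to L^2(\Omega^c,\widetilde{\nu})$ from \autoref{lem:natural-norm-on-V} (see \eqref{eq:trace-inequality}) to get
\begin{align*}
\Big|\int_{\Omega^c}\beta(y)u(y)v(y)\,\d y\Big|\leq \|\beta\widetilde{\nu}^{-1}\|_{L^\infty(\Omega^c)}\,\|u\|_{L^2(\Omega^c,\widetilde{\nu})}\|v\|_{L^2(\Omega^c,\widetilde{\nu})}\leq C\|u\|_{\VnuOm}\|v\|_{\VnuOm}\,.
\end{align*}
The same inequality together with \eqref{eq:linearform-f}--\eqref{eq:linearform-g} shows that $v\mapsto \int_\Omega fv+\int_{\Omega^c}gv$ is a bounded linear functional on $\VnuOm$ whose norm is controlled by $\|f\|_{L^2(\Omega)}+\|g\|_{L^2(\Omega^c,\widetilde{\nu}^{-1})}$.

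The main obstacle is coercivity: I claim there is $c_0=c_0(d,\Omega,\Lambda,\nu,\beta)>0$ such that $Q_\beta(u,u)\geq c_0\|u\|^2_{\VnuOm}$ for all $u\in\VnuOm$, and I would prove this by contradiction, in the spirit of the proof of \autoref{thm:poincare-inequality}. If the claim fails, there are $u_n\in\VnuOm$ with $\|u_n\|_{\VnuOm}=1$ and $Q_\beta(u_n,u_n)\to 0$; since $\beta\geq 0$ this forces $\mathcal{E}(u_n,u_n)\to 0$ and $\int_{\Omega^c}\beta u_n^2\to 0$. Writing $u_n=w_n+c_n$ with $c_n=\fint_\Omega u_n$ and $w_n\in\VnuOm^\perp$, the Poincar\'e inequality \eqref{eq:poincare-inequalityV} gives $\|w_n\|^2_{L^2(\Omega)}\leq C\mathcal{E}(w_n,w_n)=C\mathcal{E}(u_n,u_n)\to 0$, and since adding a constant leaves the seminorm unchanged, $|w_n|^2_{\VnuOm}=|u_n|^2_{\VnuOm}\to 0$; hence $\|w_n\|_{\VnuOm}\to 0$. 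Because $\|c_n\|_{\VnuOm}=|c_n|\,|\Omega|^{1/2}$ while $\|u_n\|_{\VnuOm}=1$, the numbers $c_n$ remain bounded away from $0$ and $\infty$, so along a subsequence $c_n\to c\neq 0$ and $u_n\to c$ in $\VnuOm$, and thus $u_n\to c$ in $L^2(\Omega^c,\widetilde{\nu})$ by continuity of the trace. Using $\beta\leq\|\beta\widetilde{\nu}^{-1}\|_{L^\infty}\widetilde{\nu}$ once more,
\begin{align*}
\Big|\int_{\Omega^c}\beta u_n^2-c^2\int_{\Omega^c}\beta\Big|\leq \|\beta\widetilde{\nu}^{-1}\|_{L^\infty(\Omega^c)}\,\|u_n-c\|_{L^2(\Omega^c,\widetilde{\nu})}\,\|u_n+c\|_{L^2(\Omega^c,\widetilde{\nu})}\longrightarrow 0\,,
\end{align*}
so $c^2\int_{\Omega^c}\beta=0$. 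But $0<\int_{\Omega^c}\beta<\infty$ (finiteness since $\widetilde{\nu}\in L^1(\R^d)$ by \autoref{lem:natural-norm-on-V}$(i)$ as $B\subset\Omega$ is bounded, positivity since $\beta\geq 0$ and $|\Omega^c\cap\{\beta>0\}|>0$), forcing $c=0$ and contradicting $c\neq 0$.

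Granted boundedness and coercivity, the Lax--Milgram lemma produces a unique $u\in\VnuOm$ satisfying \eqref{eq:weak-robin-problem}, and testing with $v=u$ gives $c_0\|u\|^2_{\VnuOm}\leq Q_\beta(u,u)=\int_\Omega fu+\int_{\Omega^c}gu\leq C\big(\|f\|_{L^2(\Omega)}+\|g\|_{L^2(\Omega^c,\widetilde{\nu}^{-1})}\big)\|u\|_{\VnuOm}$, which is \eqref{eq:weak-regular-robin}. I expect the coercivity step to be the delicate one; note it really uses the non-triviality of $\beta$, consistent with the fact that for $\beta\equiv 0$ one has $Q_0=\mathcal{E}$, which is not coercive on all of $\VnuOm$ (constants lie in its kernel), so that the Neumann problem had to be restricted to $\VnuOm^\perp$.
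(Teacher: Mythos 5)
Your proposal is correct, and its overall skeleton (boundedness of $Q_\beta$, coercivity proved by contradiction, then Lax--Milgram and testing with $v=u$ for the a priori bound) coincides with the paper's. The difference lies in how the contradiction is reached inside the coercivity step. The paper feeds the bounded sequence $(u_n)$ into the compact embedding of \autoref{thm:embd-compactness}, extracts a subsequence converging in $L^2(\Omega)$ to some $u$ with $\|u\|_{L^2(\Omega)}=1$, upgrades this to convergence in $\VnuOm$ with $\cE(u,u)=0$, concludes that $u$ is constant, and then shows $\int_{\Omega^c}u^2\beta=0$ using the trace embedding, which forces $u=0$ and contradicts $\|u\|_{L^2(\Omega)}=1$. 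You instead split $u_n=w_n+c_n$ with $c_n=\fint_\Omega u_n$, kill the mean-zero part $w_n$ via the Poincar\'e inequality \eqref{eq:poincare-inequalityV} (together with \eqref{eq:k-elliptic} and \autoref{lem:energy-vs-seminorm} to pass between the $k$-form and the $\nu$-seminorm, a point worth stating explicitly), and obtain directly that $u_n\to c\neq 0$ in $\VnuOm$, so that the Robin term converges to $c^2\int_{\Omega^c}\beta>0$, contradicting $Q_\beta(u_n,u_n)\to 0$. Your route avoids re-invoking the compact embedding and identifies the limit as a constant without a separate subsequence extraction in $L^2(\Omega)$; since the Poincar\'e inequality also holds for the class $\classA{0}$, your argument would in fact cover that case as well, whereas it relies on compactness only indirectly (through the proof of \autoref{thm:poincare-inequality} for the classes $\classA{1}$--$\classA{3}$). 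Both arguments use the same hypotheses on $\beta$ in the same way, and your observation that coercivity genuinely needs the non-triviality of $\beta$ matches the paper's treatment of the Neumann case on $\VnuOm^\perp$.
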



\begin{remark}
The operator $\Psi : L^{2}(\Omega) \times L^{2}(\Omega^c, \widetilde{\nu}^{-1})\to\VnuOm$ mapping the data $(f,g)$ to the unique solution $ u\in\VnuOm$ of the variational problem \eqref{eq:weak-robin-problem} is linear, one-to-one, and continuous. Moreover, with $C$ as above, 
\begin{align*}
\|\Psi(f,g)\|_{\VnuOm}\leq C \|(f,g)\|_{L^{2}(\Omega)\times L^{2}(\Omega^c, \widetilde{\nu}^{-1})}.
\end{align*} 
\end{remark}

\medskip

\begin{proof}
First of all, we claim that the form $Q_\beta(\cdot, \cdot)$ is coercive on $\VnuOm$. Assume it is not true. Then for each $n\geq 1$ there exists $u_n \in\VnuOm $ with $\|u_n\|_{\VnuOm}=1$ such that 
\begin{align*}
\mathcal{E}( u_n,u_n)+ \int_{\Omega^c} |u_n(y)|^p\beta(y)\d y= Q_\beta(u_n , u_n )
<\frac{1}{2^n}\,. 	
\end{align*}
In virtue of our compactness result, \autoref{thm:embd-compactness}, $(u_n)_n$ converges up to a subsequence in $L^2(\Omega)$ to some $u\in \VnuOm$. We deduce $\|u\|_{L^2(\Omega)}= 1$, since $\mathcal{E}(u_n , u_n ) \xrightarrow[]{n \to \infty}0$ and for all $n\geq 1$, $\|u_n\|_{\VnuOm}=1$. From $\mathcal{E}(u_n , u_n )\xrightarrow{n \to \infty} 0$ and $\|u_n-u\|_{L^p(\Omega)} \xrightarrow{n \to \infty}0$ we obtain that $u_n$ converges to $u$ in $\VnuOm$ with $\mathcal{E}(u,u)=0$. Thus $u $ is constant almost everywhere in $\R^d$. On the other hand, since $\beta$ is bounded and the embedding $\VnuOm\hookrightarrow L^2(\Omega^c, \widetilde{\nu})$, see \autoref{lem:natural-norm-on-V}, is continuous, we have 
\begin{align*}
\int_{\Omega^c} u^2(y)\beta(y)\d y&\leq 2 \int_{\Omega^c} u_n^2(y)\beta(y)\d y+2\|\beta\widetilde{\nu}^{-1}\|_{L^\infty (\Omega^c)}\int_{\Omega^c} (u_n(y)-u(y))^2\widetilde{\nu}(y)\d y\\
&\leq 2Q_\beta(u_n, u_n)+ C\|u_n-u\|^2_{\VnuOm}\xrightarrow[]{n \to \infty}0\,.
\end{align*}

From this, we conclude $u=0$ since we know that $u$ is a constant function and $\beta>0$ almost everywhere on a set of positive measure $U\subset \Omega^c$ on which $u $ vanishes. This contradicts $\|u\|_{L^2(\Omega)}=1$ and hence our initial assumption was wrong. Therefore there exists a constant $C=C(d, \Omega, \nu, \beta)>0$ such that
\begin{align}\label{eq:coercive-robin}
Q_\beta(u,u)\geq C\|u\|^2_{\VnuOm}\quad\text{for all }~u\in\VnuOm.
\end{align}

The remaining requirements for the application of the Lax-Milgram lemma can be checked easily. Existence of a unique solution to \eqref{eq:weak-robin-problem} follows. The estimate \eqref{eq:weak-regular-robin} is a direct consequence of \eqref{eq:coercive-robin}. 
\end{proof}

\subsection{Dirichlet-to-Neumann map}

\smallskip

In this section we define the Dirichlet-to-Neumann map related to  the nonlocal L\'evy operator $L$ under consideration. Afterwards we prove that its spectrum is strongly connected to the Robin eigenvalues of the operator $L$. This was originally introduced in \cite{guy-thesis}. We refer the  interested reader to the expositions \cite{ArMa12,Bet15} where the Dirichlet-to-Neumann map is treated in the local setting for the Laplacian. We point out that an attempt to define the Dirichlet-to-Neumann map is provided in \cite{Von21}. For the case for the fractional Laplacian a different Dirichlet-to-Neumann map to ours is derived in \cite{GSU16}, see also the variant for fractional regional operators in \cite{War15,War18}. Let us first review the nonlocal Dirichlet problem. In the spirit of \cite{FKV15} one can easily prove the following 
\begin{theorem}\label{thm:nonlocal-Dirichlet-var}
Let $\Omega\subset \R^d$ be open and bounded.  Given $f\in L^2(\Omega)$ and $g\in \TnuOm$, there exists a unique function $u\in \VnuOm$ with $u=g$ a.e. on $\Omega^c$ and 
\begin{align}
\mathcal{E}(u,v)
=\int_{\Omega} f(x) v(x)\d x\qquad \text{for all }~~ v\in V_{\nu,0}(\Omega|\R^d).
\end{align}
In fact, $u$ is the weak solution to the nonlocal Dirichlet problem $Lu=f$ in $\Omega$ and $u=g$ on $\Omega^c$. Moreover, there exists $C= C(\Omega, d,\Lambda,\nu)>0$ independent of $f$ and $g$, 
\begin{align}\label{eq:dirichlet-regular}
\|u\|_{\VnuOm}\leq C(\|f\|_{L^2(\Omega)} + \|g\|_{\TnuOm}).
\end{align}
%
\end{theorem}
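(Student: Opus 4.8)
The plan is to reduce the inhomogeneous Dirichlet problem to a homogeneous one by subtracting an extension of the boundary data, and then apply the Lax–Milgram lemma on the closed subspace $V_{\nu,0}(\Omega|\R^d)$. First I would pick, by the very definition of the trace space $\TnuOm$ and its norm, a function $G \in \VnuOm$ with $G|_{\Omega^c} = g$ and $\|G\|_{\VnuOm} \le 2\|g\|_{\TnuOm}$ (any near-minimizer of the defining infimum works). Writing $u = w + G$, the problem becomes: find $w \in V_{\nu,0}(\Omega|\R^d)$ such that
\begin{align*}
\mathcal{E}(w,v) = \int_\Omega f(x)v(x)\,\d x - \mathcal{E}(G,v) \qquad \text{for all } v \in V_{\nu,0}(\Omega|\R^d).
\end{align*}

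Next I would verify the hypotheses of Lax–Milgram on the Hilbert space $H := V_{\nu,0}(\Omega|\R^d)$. Boundedness of $\mathcal{E}$ on $H$ follows from \eqref{eq:k-elliptic} together with \autoref{lem:energy-vs-seminorm}: $|\mathcal{E}(w,v)| \le \Lambda\,\mathcal{E}_\nu(w,w)^{1/2}\mathcal{E}_\nu(v,v)^{1/2} \le C\|w\|_{\VnuOm}\|v\|_{\VnuOm}$, where $\mathcal{E}_\nu$ denotes the form with kernel $\nu$. Coercivity is exactly where the hypotheses on $(\nu,\Omega)$ enter: since $\Omega$ is bounded and $\nu$ has full support, the Poincaré–Friedrichs inequality $\|v\|_{L^2(\Omega)}^2 \le C\,\mathcal{E}(v,v)$ for $v \in V_{\nu,0}(\Omega|\R^d)$ holds — this is available either from \autoref{thm:poincare-friedrichs-ext}$(i)$ (the term $\int_\Omega |v|^2 \int_{\Omega^c}\nu$ in $\mathcal{E}(v,v)$ controls $\|v\|_{L^2(\Omega)}^2$ because $\Omega$ is bounded so $B_R^c(x)\subset \Omega^c$ for $x\in\Omega$ with $R=\operatorname{diam}\Omega$) combined with the lower bound in \eqref{eq:k-elliptic}; adding $|v|_{\VnuOm}^2 \le 2\mathcal{E}_\nu(v,v) \le 2\Lambda\mathcal{E}(v,v)$ gives $\|v\|_{\VnuOm}^2 \le C\,\mathcal{E}(v,v)$. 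Finally the right-hand side is a bounded linear functional on $H$: $|\int_\Omega fv| \le \|f\|_{L^2(\Omega)}\|v\|_{\VnuOm}$ and $|\mathcal{E}(G,v)| \le C\|G\|_{\VnuOm}\|v\|_{\VnuOm} \le 2C\|g\|_{\TnuOm}\|v\|_{\VnuOm}$ by boundedness of $\mathcal{E}$. Lax–Milgram then yields a unique $w \in H$ with $\|w\|_{\VnuOm} \le C(\|f\|_{L^2(\Omega)} + \|g\|_{\TnuOm})$, and $u := w + G$ solves the stated problem with the claimed estimate.

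For uniqueness of $u$ in $\VnuOm$: if $u_1,u_2$ both solve the problem, then $u_1 - u_2 \in V_{\nu,0}(\Omega|\R^d)$ (both equal $g$ a.e.\ on $\Omega^c$) and $\mathcal{E}(u_1-u_2,v) = 0$ for all $v \in V_{\nu,0}(\Omega|\R^d)$; testing with $v = u_1-u_2$ and using coercivity forces $u_1 = u_2$. To identify $u$ as the weak solution of $Lu = f$ in $\Omega$, $u = g$ on $\Omega^c$, one notes that the variational identity restricted to $v \in C_c^\infty(\Omega) \subset V_{\nu,0}(\Omega|\R^d)$ is precisely the distributional form of $Lu = f$ in $\Omega$ (formally via the Gauss–Green formula \eqref{eq:green-gauss-nonlocal}, whose boundary term vanishes since $v = 0$ on $\Omega^c$), while the complement condition $u = g$ on $\Omega^c$ is built into the membership $u - G \in V_{\nu,0}(\Omega|\R^d)$.

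The main obstacle — and the only place any real work is needed — is establishing coercivity of $\mathcal{E}$ on $V_{\nu,0}(\Omega|\R^d)$, i.e.\ the Poincaré–Friedrichs inequality; but this is essentially \autoref{thm:poincare-friedrichs-ext}$(i)$ together with \eqref{eq:k-elliptic} and \autoref{lem:energy-vs-seminorm}, so it does not require the compactness machinery of \autoref{sec:compactness}. Everything else is a routine application of Lax–Milgram, with the choice of a controlled extension $G$ of the trace data being the one structural ingredient specific to the nonlocal setting. Note in particular that, unlike the Neumann problem, no quotient by constants and no spectral/compactness hypothesis $(\nu,\Omega)\in\mathscr{A}_i$ is needed here, since the Dirichlet form $\mathcal{E}$ is genuinely coercive on the subspace where functions vanish outside $\Omega$.
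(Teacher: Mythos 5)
Your proposal is correct and follows essentially the same route as the paper, which proves the theorem by applying the Lax--Milgram lemma on $V_{\nu,0}(\Omega|\R^d)$, with coercivity supplied by the Poincar\'e--Friedrichs inequality of \autoref{thm:poincare-friedrichs-ext} (valid here since $\nu$ has full support and is integrable away from the origin) together with \eqref{eq:k-elliptic}. Your explicit lifting of the datum by a near-minimizing extension $G$ with $\|G\|_{\VnuOm}\le 2\|g\|_{\TnuOm}$ is exactly the step the paper leaves implicit in order to obtain the estimate \eqref{eq:dirichlet-regular}, so the two arguments coincide in substance.
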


The result follows from the Lax-Milgram lemma because the linear form $v\mapsto\int_{\Omega} fv$ is continuous on 
$V_{\nu,0}(\Omega|\R^d) $ 
and the bilinear form $\mathcal{E}(\cdot, \cdot)$ bounded and coercive on $V_{\nu,0}(\Omega|\R^d) $ (see \autoref{thm:poincare-friedrichs-ext}). It is noteworthy to recall that under the non-integrability condition \eqref{eq:non-integrability-condition} and the L\'{e}vy integrability condition \eqref{eq:levy-cond}, $V_{\nu,0}(\Omega|\R^d) $ is compactly embedded in $L^2(\Omega)$. With this at hand, analogously to \autoref{thm:existence-of-eigenvalue-Neumann} there exist a family $(\psi_n)_n$ elements of $V_{\nu,0}(\Omega|\R^d)$, orthonormal basis of $L^2(\Omega)$ and an increasing sequence of real number $ 0<\lambda_1\leq \cdots\leq \lambda_n\leq \cdots. $ such that $\lambda_n \to \infty$ as $n \to \infty$ and each $\psi_n$ is a Dirichlet eigenfunction of $L$ whose corresponding eigenvalue is $\lambda_n$ namely
\begin{align*}
\mathcal{E}(\psi_n,v) = \lambda_n \int_{\Omega} \psi_n(x) v(x)\d x\quad \text{for all }~~ v\in V_{\nu,0}(\Omega|\R^d).
\end{align*}
Note that the constants $\mu_1>0$ and $\lambda_1>0$ respectively satisfy the Poincar\'e inequalities
\begin{align*}
\mathcal{E}(u,u)&\geq \mu_1\|u\|^2_{L^2(\Omega)}, \quad\text{for all}~ u \in \VnuOm^\perp,\\
\mathcal{E}(u,u)&\geq \lambda_1\|u\|^2_{L^2(\Omega)}, \quad\text{for all}~ u \in V_{\nu,0}(\Omega|\R^d).
\end{align*}
Before we formally define the Dirichlet-to-Neumann map, some prerequisites are required. Let $f\in L^2(\Omega)$ and $g\in \TnuOm$. Assume, $\lambda< \lambda_1$, then the bilinear form $\mathcal{E}_{-\lambda}(u, u) = \mathcal{E}(u, u) -\lambda \|u\|^2_{L^2(\Omega)}$ is coercive on $V_{\nu,0}(\Omega|\R^d)$. Thus there exists a function $u \in \VnuOm$ unique weak solution to the Dirichlet problem $Lu - \lambda u = f$ in $\Omega$ and $u=g$ on $\Omega^c$. Explicitly, $u=g$ on $\Omega^c$ and 
\begin{align}\label{eq:weak-dirichlet-lambda}
\mathcal{E}(u,v)- \lambda \int_{\Omega} u(x) v(x)\d x = \int_{\Omega} f(x) v(x)\d x \quad \text{for all }~~ v\in V_{\nu,0}(\Omega|\R^d).
\end{align}
Moreover, the estimate \eqref{eq:dirichlet-regular} (with the estimating constant depending on $\lambda$) remains true. More generally, by the mean of Fredholm alternative and the closed graph theorem, the preceding facts \eqref{eq:weak-dirichlet-lambda} and \eqref{eq:dirichlet-regular} respectively remain true for the operator $L -\lambda$, whenever $\lambda\in \mathbb{R}\setminus\{\lambda_n:n\geq1\}$. 

\medskip

From now on we suppose $f=0$ and $\lambda\in \mathbb{R}\setminus\{\lambda_n:n\geq1\}$ and label the solution of \eqref{eq:weak-dirichlet-lambda} by $u =u_g$. Then the mapping $g\mapsto u_g$ is linear and continuous from $\TnuOm$ to $\VnuOm$ since by \eqref{eq:dirichlet-regular} we have 
\begin{align*}
\|u_g\|_{\VnuOm}\leq C \|g\|_{\TnuOm}. 
\end{align*} 
Given $v \in \TnuOm$, put $\widetilde{v}= \operatorname{ext}(v)\in \VnuOm$ as an extension of $v$. Let $\langle\cdot, \cdot \rangle$ be the dual pairing between $\TnuOm$ and $\TnuOm'$. 

\medskip

\begin{definition}
Let $\lambda\in \mathbb{R}\setminus\{\lambda_n:n\geq1\}$. We call the mapping $\mathscr{D}_{\lambda}: \TnuOm \to \TnuOm'$ with $g \mapsto \mathscr{D}_\lambda g= \mathcal{E}_{-\lambda}(u_g, \widetilde{\cdot})$ such that $\langle\mathscr{D}_\lambda g, v\rangle = \mathcal{E}_{-\lambda}(u_g, \widetilde{v})$, the Dirichlet-to-Neumann map with respect to the operator $L-\lambda$. 
\end{definition}

\medskip

\begin{theorem}\label{thm:DN-map}
The Dirichlet-to-Neumann operator $\mathscr{D}_{\lambda}: \TnuOm \to \TnuOm'$ with $g \mapsto \mathscr{D}_\lambda g= \mathcal{E}_{-\lambda}(u_g, \widetilde{\cdot})$ is well defined, linearly bounded and self-adjoint. Moreover, if we take $c=\min(1,-\lambda)$ then for all $g \in \TnuOm$ we have 
\begin{align*}
\langle \mathscr{D}_\lambda g , g\rangle\geq c \|u_g\|^2_{\VnuOm}.
\end{align*}
In particular if $c>0$ it follows that 
\begin{align*}
	\langle \mathscr{D}_\lambda g , g\rangle\geq c \|g\|^2_{\TnuOm}.
\end{align*}
\end{theorem}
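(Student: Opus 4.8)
The plan is to verify the three assertions---well-definedness and linear boundedness, self-adjointness, and the two lower bounds---in that order, all by unwinding the definition $\langle \mathscr{D}_\lambda g, v\rangle = \mathcal{E}_{-\lambda}(u_g,\widetilde v)$ and using the a priori estimate for the Dirichlet problem together with the continuity of the trace operator. First I would check that $\mathscr{D}_\lambda g$ is a well-defined element of $\TnuOm'$. The subtle point is that $\widetilde v = \operatorname{ext}(v)$ is only one extension of $v$, so I must show $\mathcal{E}_{-\lambda}(u_g,\widetilde v)$ is independent of the choice. This follows because the difference of two extensions lies in $V_{\nu,0}(\Omega|\R^d)$, and $u_g$ solves \eqref{eq:weak-dirichlet-lambda} with $f=0$, so $\mathcal{E}_{-\lambda}(u_g,\phi)=0$ for all $\phi\in V_{\nu,0}(\Omega|\R^d)$; hence the value depends only on $v=\widetilde v|_{\Omega^c}$. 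Linearity of $g\mapsto\mathscr{D}_\lambda g$ is inherited from linearity of $g\mapsto u_g$ (noted just before the definition) and bilinearity of $\mathcal{E}_{-\lambda}$. For boundedness, I estimate $|\langle\mathscr{D}_\lambda g,v\rangle| = |\mathcal{E}_{-\lambda}(u_g,\widetilde v)| \leq C\|u_g\|_{\VnuOm}\|\widetilde v\|_{\VnuOm}$ by Cauchy--Schwarz for $\mathcal{E}$ plus the $L^2$-term, then take the infimum over extensions $\widetilde v$ of $v$ to get $\leq C\|u_g\|_{\VnuOm}\|v\|_{\TnuOm}$, and finally use $\|u_g\|_{\VnuOm}\leq C\|g\|_{\TnuOm}$ from \eqref{eq:dirichlet-regular}.

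Next, self-adjointness. Given $g,h\in\TnuOm$ with associated solutions $u_g,u_h$, I want $\langle\mathscr{D}_\lambda g,h\rangle = \langle\mathscr{D}_\lambda h,g\rangle$. Write $\langle\mathscr{D}_\lambda g,h\rangle = \mathcal{E}_{-\lambda}(u_g,\widetilde h)$ and, using that $u_h - \widetilde h \in V_{\nu,0}(\Omega|\R^d)$ together with $\mathcal{E}_{-\lambda}(u_g,\phi)=0$ for $\phi\in V_{\nu,0}(\Omega|\R^d)$, replace $\widetilde h$ by $u_h$ to get $\mathcal{E}_{-\lambda}(u_g,u_h)$. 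This expression is symmetric in $g,h$ because $\mathcal{E}$ is a symmetric bilinear form and the $L^2$ inner product is symmetric; running the argument in reverse gives $\langle\mathscr{D}_\lambda h,g\rangle = \mathcal{E}_{-\lambda}(u_h,u_g) = \mathcal{E}_{-\lambda}(u_g,u_h)$, so the two pairings agree.

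For the lower bounds, I compute $\langle\mathscr{D}_\lambda g,g\rangle = \mathcal{E}_{-\lambda}(u_g,\widetilde g) = \mathcal{E}_{-\lambda}(u_g,u_g)$ by the same substitution $\widetilde g \rightsquigarrow u_g$ (valid since $u_g - \widetilde g\in V_{\nu,0}(\Omega|\R^d)$), so that
\begin{align*}
\langle\mathscr{D}_\lambda g,g\rangle = \mathcal{E}(u_g,u_g) - \lambda\|u_g\|^2_{L^2(\Omega)}.
\end{align*}
By \autoref{lem:energy-vs-seminorm}, $\mathcal{E}(u_g,u_g)\geq |u_g|^2_{\VnuOm}$, and since $c=\min(1,-\lambda)$ we have $\mathcal{E}(u_g,u_g)\geq c\,|u_g|^2_{\VnuOm}$ and $-\lambda\|u_g\|^2_{L^2(\Omega)}\geq c\,\|u_g\|^2_{L^2(\Omega)}$; adding these gives $\langle\mathscr{D}_\lambda g,g\rangle\geq c\big(|u_g|^2_{\VnuOm}+\|u_g\|^2_{L^2(\Omega)}\big) = c\|u_g\|^2_{\VnuOm}$. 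Finally, when $c>0$, since $u_g$ is an admissible competitor in the infimum defining $\|g\|_{\TnuOm}$ (it is an extension of $g$ off $\Omega^c$), we have $\|u_g\|_{\VnuOm}\geq\|g\|_{\TnuOm}$, yielding $\langle\mathscr{D}_\lambda g,g\rangle\geq c\|g\|^2_{\TnuOm}$. The main obstacle is getting the extension-independence and the substitution $\widetilde v\rightsquigarrow u_v$ cleanly stated and used consistently; once that identity $\mathcal{E}_{-\lambda}(u_g,\widetilde v)=\mathcal{E}_{-\lambda}(u_g,u_v)$ is in hand, all three claims are short.
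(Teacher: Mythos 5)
Your proposal is correct and follows essentially the same route as the paper: extension-independence via $\mathcal{E}_{-\lambda}(u_g,\phi)=0$ for $\phi\in V_{\nu,0}(\Omega|\R^d)$, Cauchy--Schwarz plus the a priori Dirichlet estimate \eqref{eq:dirichlet-regular} for boundedness, the substitution $\widetilde h\rightsquigarrow u_h$ for self-adjointness, and $\langle\mathscr{D}_\lambda g,g\rangle=\mathcal{E}_{-\lambda}(u_g,u_g)\geq c\|u_g\|^2_{\VnuOm}$ with $c=\min(1,-\lambda)$. Your explicit splitting of the coercivity estimate and the final step $\|u_g\|_{\VnuOm}\geq\|g\|_{\TnuOm}$ only spell out details the paper leaves implicit.
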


\medskip

\begin{proof}

Consider $v' \in \VnuOm$ another extension of $v$ then $v'-\widetilde{v}\in V_{\nu,0}(\Omega|\R^d)$ and by definition of $u_g$ we have 
\begin{align*}
\mathcal{E}_{-\lambda}(u_g,v'- \widetilde{v})=0\quad \text{that is}\quad \mathcal{E}_{-\lambda}(u_g, \widetilde{v})=\mathcal{E}_{-\lambda}(u_g,v').
\end{align*}
Therefore the mapping $v\mapsto\mathcal{E}_{-\lambda}(u_g, \widetilde{v})$ is well defined, linear and bounded on $\TnuOm$. Indeed, 
\begin{align*}
|\mathcal{E}_{-\lambda}(u_g, \widetilde{v})|\leq (|\lambda|+1)\|u_g\|_{\VnuOm} \|\widetilde{v}\|_{\VnuOm}.
\end{align*}
Since the extension $\widetilde{v}$ of $v$ is arbitrarily chosen, upon the estimate \eqref{eq:weak-dirichlet-lambda} we obtain
\begin{align*}
|\mathcal{E}_{-\lambda}(u_g, \widetilde{v})|\leq C\|g\|_{\TnuOm} \|v\|_{\TnuOm}.
\end{align*}
This shows that, $\mathcal{E}_{-\lambda}(u_g, \widetilde{\cdot})$ belongs $\TnuOm'$. Subsequently it also 
follows from this estimate that the mapping $\mathscr{D}_{\lambda}: \TnuOm \to \TnuOm'$ with $g \mapsto \mathscr{D}_\lambda g= \mathcal{E}_{-\lambda}(u_g, \widetilde{\cdot})$ is linear and bounded. Now let $g,h \in \TnuOm$ specializing the definition of $\mathscr{D}_\lambda$ with $\widetilde{g} = u_g$ and $\widetilde{h} =u_h$ the self-adjointness is obtained as follows 
\begin{align*}
\langle \mathscr{D}_\lambda g , h\rangle= \mathcal{E}_{-\lambda}(u_g, u_h) = \mathcal{E}_{-\lambda}(u_h, u_g)= \langle \mathscr{D}_\lambda h, g\rangle. 
\end{align*}
The choice $c=\min(1,-\lambda)$ leads to $\langle \mathscr{D}_\lambda g , g\rangle= \mathcal{E}_{-\lambda}(u_g, u_g)\geq c \|u_g\|^2_{\VnuOm}$. 
\end{proof}

\medskip

\begin{remark}
The above definition is motivated by the following observation. Assume $u_g$ is as before and $\varphi\in C_c^\infty(\R^d)$. The Gauss-Green formula \eqref{eq:green-gauss-nonlocal} gives 
\begin{align}\label{eq:DN-weak-formula}
\langle \mathscr{D}_\lambda g ,\varphi\rangle &= \mathcal{E}_{-\lambda}(u_g, \varphi) = \int_{\Omega^c} \mathcal{N}u_g(y)\varphi(y)\d y.
\end{align}
From the second  equality we can identify $\mathscr{D}_\lambda g= \mathcal{N}u_g\in L^2(\Omega^c, \widetilde{\nu}^{-1}) \subset \TnuOm'$. Hence $\mathscr{D}_\lambda : g\mapsto \mathcal{N}u_g$, which agrees with conceptual idea behind the Dirichlet-to-Neumann map in the classical case.
\end{remark}

%
\begin{theorem}
Let the assumptions of \autoref{thm:nonlocal-Robin-var} be in force. Denote by $L_\beta$ the operator $L$ subject to the Robin boundary condition $\mathcal{N} u+ \beta u=0$. Then the point spectrum $\sigma_p(L_\beta)=(\gamma_n(\beta))_n $ of $L_\beta$ is infinitely countable say $0<\gamma_1(\beta)\leq \gamma_2(\beta) \leq \cdots\leq \gamma_n(\beta)\leq \cdots$, with $\gamma_n(\beta)\to \infty$ as $n\to\infty$, and the corresponding eigenfunctions belong to $\VnuOm$ and form an orthonormal basis of $L^2(\Omega)$. 
\end{theorem}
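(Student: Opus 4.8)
The plan is to follow the same functional-analytic scheme that was used for the Neumann eigenvalues in \autoref{thm:existence-of-eigenvalue-Neumann}, replacing the form $\cE$ by the Robin form $Q_\beta$ and the space $\VnuOm^\perp$ by the full space $\VnuOm$. First I would invoke \autoref{thm:nonlocal-Robin-var} (more precisely, the operator $\Psi$ from the remark following it) restricted to the case $g=0$: for every $f\in L^2(\Omega)$ there is a unique $u_f=\Psi(f,0)\in\VnuOm$ with $Q_\beta(u_f,v)=\int_\Omega fv\,\d x$ for all $v\in\VnuOm$, and $\|u_f\|_{\VnuOm}\le C\|f\|_{L^2(\Omega)}$. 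Here one uses the coercivity estimate \eqref{eq:coercive-robin}, $Q_\beta(u,u)\ge C\|u\|^2_{\VnuOm}$, which is genuinely needed since, unlike the Neumann case, there is no kernel of constants to quotient out — the zeroth-order boundary term $\beta$ makes $Q_\beta$ coercive directly on $\VnuOm$. In particular $0$ is not an eigenvalue, which is why the Robin spectrum starts at $\gamma_1(\beta)>0$.

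Next I would form the solution operator $S:=\iota\circ\Psi(\cdot,0):L^2(\Omega)\xrightarrow{\Psi(\cdot,0)}\VnuOm\xrightarrow{\iota}L^2(\Omega)$, where $\iota$ is the embedding $\VnuOm\hookrightarrow L^2(\Omega)$, which is \emph{compact} by \autoref{thm:embd-compactness} (the hypotheses of \autoref{thm:nonlocal-Robin-var}, hence of \autoref{thm:existence-of-eigenvalue-Neumann}, put $(\nu,\Omega)$ in one of the classes $\classA{i}$, $i=1,2,3$). Thus $S$ is compact. Symmetry of $S$ follows by testing the defining relation twice: for $f_1,f_2\in L^2(\Omega)$,
\begin{align*}
\big(f_1,Sf_2\big)_{L^2(\Omega)} = Q_\beta(u_{f_1},u_{f_2}) = Q_\beta(u_{f_2},u_{f_1}) = \big(f_2,Sf_1\big)_{L^2(\Omega)},
\end{align*}
using that $Q_\beta$ is symmetric. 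Positivity is immediate: $(Sf,f)_{L^2(\Omega)}=Q_\beta(u_f,u_f)\ge C\|u_f\|^2_{\VnuOm}\ge0$, and it vanishes only if $u_f=0$, i.e. $f=0$; so $S$ is injective. Hence $S$ is a compact, self-adjoint, positive, injective operator on the separable Hilbert space $L^2(\Omega)$.

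The spectral theorem for such operators then supplies an orthonormal basis $(\phi_n)_{n\ge1}$ of $L^2(\Omega)$ consisting of eigenfunctions $S\phi_n=s_n\phi_n$ with $s_n>0$, $s_1\ge s_2\ge\cdots\to0$. Setting $\gamma_n(\beta):=s_n^{-1}$ gives an increasing sequence $0<\gamma_1(\beta)\le\gamma_2(\beta)\le\cdots$ with $\gamma_n(\beta)\to\infty$. Unwinding, $\phi_n=s_n^{-1}S\phi_n=\gamma_n(\beta)^{-1}u_{\phi_n}$, so $\phi_n\in\VnuOm$ and from $Q_\beta(u_{\phi_n},v)=\int_\Omega\phi_n v$ one gets $Q_\beta(\phi_n,v)=\gamma_n(\beta)\int_\Omega\phi_n v\,\d x$ for all $v\in\VnuOm$, i.e. $\phi_n$ is a Robin eigenfunction with eigenvalue $\gamma_n(\beta)$, and the multiplicities are the geometric ones. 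Finally, to identify $(\gamma_n(\beta))_n$ with the \emph{whole} point spectrum $\sigma_p(L_\beta)$, I would note conversely that any $u\ne0$ with $Q_\beta(u,v)=\gamma\int_\Omega uv$ for all $v\in\VnuOm$ forces $Su=\gamma^{-1}u$ (again $\gamma=0$ is excluded by coercivity), so $\gamma=s_n^{-1}=\gamma_n(\beta)$ for some $n$; hence $\sigma_p(L_\beta)=\{\gamma_n(\beta):n\ge1\}$ and it is countably infinite.

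The routine parts — symmetry, positivity, injectivity, the translation between $S$-eigenpairs and Robin eigenpairs — are exactly as in the proof of \autoref{thm:existence-of-eigenvalue-Neumann}. The one genuinely different point, and the step I would be most careful about, is that the compact embedding $\VnuOm\hookrightarrow L^2(\Omega)$ is available only under $(\nu,\Omega)\in\classA{i}$ for $i\in\{1,2,3\}$ (not $i=0$), which is consistent with the hypothesis inherited from \autoref{thm:nonlocal-Robin-var} via \autoref{thm:existence-of-eigenvalue-Neumann}; and that the boundary weight $\beta\widetilde\nu^{-1}\in L^\infty(\Omega^c)$ together with the continuous embedding $\VnuOm\hookrightarrow L^2(\Omega^c,\widetilde\nu)$ from \autoref{lem:natural-norm-on-V} is what makes $Q_\beta$ continuous (hence $S$ well defined and bounded) in the first place — so I would record those two continuity bounds explicitly before invoking Lax–Milgram/the spectral theorem.
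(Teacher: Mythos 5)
Your proposal is correct and takes essentially the same route as the paper, whose proof of this theorem is just the instruction to ``proceed as in the proof of \autoref{thm:existence-of-eigenvalue-Neumann}'': build the solution operator for the coercive Robin form $Q_\beta$, use the compact embedding from \autoref{thm:embd-compactness} to get a compact, symmetric, positive, injective operator on $L^2(\Omega)$, and apply the spectral theorem, with the only structural difference (which you correctly flag) being that coercivity holds on all of $\VnuOm$ so no constants are quotiented out and $\gamma_1(\beta)>0$. Only a harmless slip: since $\gamma_n(\beta)=s_n^{-1}$, one has $\phi_n=s_n^{-1}u_{\phi_n}=\gamma_n(\beta)\,u_{\phi_n}$ rather than $\gamma_n(\beta)^{-1}u_{\phi_n}$, which does not affect the subsequent identities.
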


\medskip

\begin{proof}
It suffices to proceed as  in  the proof of \autoref{thm:existence-of-eigenvalue-Neumann}, see also \cite[Theorem 4.36]{guy-thesis}. 
\end{proof}

\medskip

Next, we see the relation between the spectrum of the operator $L$ subject to Robin boundary condition and that of Dirichlet-to-Neumann operator.

\begin{theorem}\label{thm:DN-map-spectral} Let $\lambda\in \mathbb{R}\setminus\{\lambda_n:n\geq1\}$ and $\beta: \Omega^c \to \R$ be measurable. Consider the Dirichlet-to-Neumann map $\mathscr{D}_\lambda : \TnuOm\to \TnuOm', \,\, \mathscr{D}_\lambda g =\mathcal{N}u_g$. 
Then, $0\in \sigma_p(\mathscr{D}_\lambda+\beta)$ if and only if $\lambda\in \sigma_p(L_\beta)$. In addition, $\dim \ker ( L_\beta -\lambda) = \dim \ker (\mathscr{D}_\lambda + \beta) $. 
\end{theorem}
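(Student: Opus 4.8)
The plan is to establish a bijection between the eigenspace $\ker(L_\beta - \lambda)$ and the eigenspace $\ker(\mathscr{D}_\lambda + \beta)$ via the trace map $\operatorname{Tr}: u \mapsto u|_{\Omega^c}$ and its inverse, the solution operator $g \mapsto u_g$ for the Dirichlet problem $Lu - \lambda u = 0$ in $\Omega$, $u = g$ on $\Omega^c$. First I would unwind the definitions. A nonzero $u \in \VnuOm$ lies in $\ker(L_\beta - \lambda)$ precisely when $\mathcal{E}(u,v) - \lambda\int_\Omega uv = -\int_{\Omega^c}\beta u v$ for all $v \in \VnuOm$; equivalently, using the decomposition $v = \widetilde{v} + (v - \widetilde{v})$ with $v - \widetilde{v} \in V_{\nu,0}(\Omega|\R^d)$, this splits into the statement that $u = u_g$ solves the Dirichlet problem $Lu - \lambda u = 0$ in $\Omega$ with $u|_{\Omega^c} = g$, \emph{and} $\mathcal{E}_{-\lambda}(u_g,\widetilde{v}) = -\int_{\Omega^c}\beta g v$ for all $v \in \TnuOm$. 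By the Gauss--Green identity \eqref{eq:DN-weak-formula}, the left-hand side is $\langle \mathscr{D}_\lambda g, v\rangle$, so the second condition reads exactly $\langle (\mathscr{D}_\lambda + \beta)g, v\rangle = 0$ for all $v \in \TnuOm$, i.e. $g \in \ker(\mathscr{D}_\lambda + \beta)$.

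The equivalence $0 \in \sigma_p(\mathscr{D}_\lambda + \beta) \iff \lambda \in \sigma_p(L_\beta)$ then follows once I check that neither eigenspace collapses under the correspondence: if $u \in \ker(L_\beta - \lambda)$ is nonzero, then $g = u|_{\Omega^c}$ cannot be zero, for otherwise $u \in V_{\nu,0}(\Omega|\R^d)$ would satisfy $\mathcal{E}(u,v) = \lambda\int_\Omega uv$ for all $v \in V_{\nu,0}(\Omega|\R^d)$, forcing $\lambda = \lambda_n$ for some $n$, contrary to the hypothesis $\lambda \notin \{\lambda_n\}$. Conversely, if $g \in \ker(\mathscr{D}_\lambda+\beta)$ is nonzero, then $u_g$ is nonzero since $u_g|_{\Omega^c} = g$. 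For the dimension count, I would exhibit the two maps explicitly: $\operatorname{Tr}: \ker(L_\beta - \lambda) \to \ker(\mathscr{D}_\lambda + \beta)$, $u \mapsto u|_{\Omega^c}$, and $S: \ker(\mathscr{D}_\lambda + \beta) \to \ker(L_\beta - \lambda)$, $g \mapsto u_g$. Both are linear; $\operatorname{Tr}$ is injective by the argument just given (its kernel would be an eigenfunction in $V_{\nu,0}(\Omega|\R^d)$, impossible for $\lambda \notin \{\lambda_n\}$), and $S \circ \operatorname{Tr} = \operatorname{id}$ on $\ker(L_\beta - \lambda)$ because $u_{u|_{\Omega^c}}$ and $u$ both solve the same well-posed Dirichlet problem $Lu - \lambda u = 0$, $u|_{\Omega^c} = g$ (unique solvability for $\lambda \notin \{\lambda_n\}$ via the Fredholm alternative, as recalled before the definition of $\mathscr{D}_\lambda$). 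Hence $\operatorname{Tr}$ and $S$ are mutually inverse bijections and $\dim\ker(L_\beta - \lambda) = \dim\ker(\mathscr{D}_\lambda + \beta)$.

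The main obstacle I anticipate is purely bookkeeping: making sure the pairing $\langle(\mathscr{D}_\lambda + \beta)g, v\rangle$ is interpreted consistently in $\TnuOm'$, i.e. that the term $\int_{\Omega^c}\beta g v$ is a legitimate element of $\TnuOm'$ under the standing assumption that $\beta\widetilde{\nu}^{-1}$ is essentially bounded — this is where one uses the continuity of $\operatorname{Tr}: \VnuOm \to L^2(\Omega^c,\widetilde{\nu})$ from \autoref{lem:natural-norm-on-V}, exactly as in the proof of \autoref{thm:nonlocal-Robin-var} — and that testing against all of $\TnuOm$ versus all of $\VnuOm$ genuinely carries the same information once the interior equation has been accounted for. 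A minor subtlety is that $\mathscr{D}_\lambda g$ is a priori only in $\TnuOm'$, but on $\ker(\mathscr{D}_\lambda+\beta)$ one has $\mathscr{D}_\lambda g = -\beta g \in L^2(\Omega^c,\widetilde{\nu}^{-1})$, which is consistent with the identification $\mathscr{D}_\lambda g = \mathcal{N}u_g$ in the remark following \autoref{thm:DN-map}; I would remark on this but it is not needed for the argument.
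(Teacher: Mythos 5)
Your proposal is correct and follows essentially the same route as the paper: both identify $\ker(L_\beta-\lambda)$ with $\ker(\mathscr{D}_\lambda+\beta)$ via the trace map $u\mapsto u|_{\Omega^c}$ (with inverse $g\mapsto u_g$), using $\mathcal{E}_{-\lambda}(u_g,\widetilde{v})=\langle\mathscr{D}_\lambda g,v\rangle$ and the fact that injectivity of the trace on the eigenspace reduces to $\lambda\notin\{\lambda_n\}$. Your write-up is in fact slightly more explicit than the paper's about surjectivity and the mutual-inverse structure, but the underlying argument is the same.
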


\medskip

\begin{proof}
Let $u\in \ker ( L_\beta -\lambda)$ then for all $v\in \VnuOm$, 
\begin{align*}
Q_\beta(u,v) = \lambda\int_{\Omega} u(x)v(x)\d x\quad
\text{equivalently} \quad
\mathcal{E}_{-\lambda}(u,v) = -\int_{\Omega^c}u(y)v(y)\beta(y)\d y.
\end{align*}

Set $g= \operatorname{Tr}(u)= u|_{\Omega^c}$, with the aid of \eqref{eq:DN-weak-formula} the above relation reduces to 
\begin{align*}
\int_{\Omega^c}\mathcal{N} u_g(y)v(y)\d y= -\int_{\Omega^c}g(y)v(y)\beta(y)\d y.
\end{align*}
Thus $ g\in \ker ( \mathscr{D}_\lambda + \beta)$. We have shown that the mapping $T: \ker ( L_\beta -\lambda)\to \ker ( \mathscr{D}_\lambda + \beta)$ with $u\mapsto
\operatorname{Tr}(u)$ is well defined and onto. Both assertions will follow once we show that $T$ defines a bijection, in other words we only have to show that $T$ is one-to-one. For $u\in \ker ( L_\beta -\lambda)$ if $\operatorname{Tr}(u)=0$ then from the first relation above, we have $\mathcal{E}(u,v) = \lambda\int_{\Omega} u(x)v(x)\d x$ for all $v\in V_{\nu,0}(\Omega|\R^d)$. Necessarily, $u=0$ otherwise $\lambda$ is a Dirichlet eigenvalue which is not the case by assumption.
\end{proof}

%
%

\section{Transition from nonlocal to local}\label{sec:transition}

The main purpose of this section is to prove the convergence of a sequence of nonlocal Neumann problems to a local Neumann problem, i.e., the corresponding solutions converge. The main result of this section is \autoref{thm:phase-transition}. We consider the following set-up: Let $(\nu_\alpha)_{\alpha\in(0,2)}$ be a family of L\' evy radial functions  approximating the Dirac measure at the origin, i.e., for every $\alpha, \delta > 0$
\begin{align}\label{eq:assumption-nu-alpha}
\begin{split}
\nu_\alpha\geq 0\,\,\text{ is radial}, \quad \int_{\mathbb{R}^d}	(1\land |h|^2)\nu_\alpha (h)\d h=d , \quad \lim_{\alpha\to 2}\int_{|h|>\delta}	\nu_\alpha(h)\d h=0\,.
\end{split}
\end{align}                      
Note that there is no restriction on the support of $\nu_\alpha$. The above definition of  $(\nu_\alpha)_{0<\alpha<2}$ generalizes the spectrum of possible approximation sequences in \cite{FGKV20, DTZ22}. Note that, convergence of nonlocal variational structures including finite dimensional Galerkin methods have already been considered in \cite{MD15} and \cite{BMP15} for homogeneous nonlocal problems of vanishing horizon-type. 

\medskip

We denote $L_\alpha$ and $\mathcal{N}_\alpha$ to be the nonlocal operators associated with $\nu_{\alpha}$, i.e.,
\begin{align*}
L_\alpha u(x) &= 2 \pv \int_{\R^d }(u(x)-u(y)) \nu_{\alpha}(x-y)\,\d y,\\
\mathcal{N}_\alpha u(x) &= 2\int_{\Omega}(u(x)-u(y)) \nu_{\alpha}(x-y)\,\d y.
\end{align*}
The associated energy forms are defined by 
\begin{align*}
\mathcal{E}^{\alpha}_{\Omega}(u,v) &=  \iil_{\Omega \Omega} \big(u(y)-u(x)\big) \big(v(y)-v(x)\big) \nu_\alpha(x-y)\d x \, \d y\,, \\
\mathcal{E}^{\alpha}(u,v) &= \iil_{(\Omega^c\times \Omega^c)^c} \big(u(y)-u(x)\big) \big(v(y)-v(x) \nu_\alpha(x-y) \d x \, \d y.
\end{align*}

Let us mention two prototypical examples of interest here. For more concrete examples we refer the reader to \cite{FGKV20, guy-thesis}. 

\begin{example}
Define $\nu_\alpha(h) = a_{d,\alpha}  |h|^{-d-\alpha}$ with $ a_{d,\alpha}  = \tfrac{d\alpha(2-\alpha)}{2 |\mathbb{S}^{d-1}|} .$ Indeed, passing through polar coordinates yields 
\begin{align*}
& \int_{\mathbb{R}^d} (1\land |h|^2) |h|^{-d-\alpha}\,\d h= |\mathbb{S}^{d-1}| \Big(\int_{0}^{1} r^{1-\alpha}\,\d r+ \int_{1}^{\infty}r^{-1-\alpha}\,\d r\Big) 
=\frac{2d|\mathbb{S}^{d-1}| }{d\alpha(2-\alpha)}= da^{-1}_{d,\alpha}. 
\end{align*}
For $\delta>0,$ a similar computation gives
\begin{align*}
&a_{d,\alpha}
\il_{|h|\geq \delta} (1\land |h|^2) |h|^{-d-\alpha}\,\d h\leq	\frac{d\alpha(2-\alpha)}{2d} \int_{\delta}^{\infty}r^{-1-\alpha}\,\d r
= \frac{d} {2} (2-\alpha)\delta^{-\alpha}\xrightarrow{\alpha \to 2}0.
\end{align*}
The choice of $\nu_\alpha(h) = a_{d,\alpha}  |h|^{-d-\alpha}$ gives rise to a multiple of fractional Laplace operator, i.e., $L_\alpha= \frac{ a_{d,\alpha} }{C_{d,\alpha}} (-\Delta)^{\alpha/2}$, where we recall that  $C_{d, \alpha}$ is  the normalizing constant of $(-\Delta)^{\alpha/2}$. Note however that $ \frac{ a_{d,\alpha} }{C_{d,\alpha}}\to 1$  as $\alpha\to 2$ see \cite{AAS67,Hitchhiker, guy-thesis}.

\end{example}

\begin{example}\label{exa:vanishing-horizon}
Let $\nu\in L^1(\R^d, 1\land|h|^2) $ be any radial L\'evy density that is normalized, i.e.,  $$\int_{\R^d}1\land|h|^2\,\nu(h)\d h=d.$$	
Let a family $(\nu_{\alpha})_{\alpha\in (0,2)}$ be defined by $\nu_{\alpha}= \nu^{2-\alpha}$ where $\nu^\varepsilon$ is a rescaled version of $\nu$ in the following sense:
\begin{align*}
\begin{split}
\nu^\varepsilon(h) = 
\begin{cases}
\varepsilon^{-d-2}\nu\big(h/\varepsilon\big)& \text{if}~~|h|\leq \varepsilon\\
\varepsilon^{-d}|h|^{-2}\nu\big(h/\varepsilon\big)& \text{if}~~\varepsilon<|h|\leq 1\\
\varepsilon^{-d}\nu\big(h/\varepsilon\big)& \text{if}~~|h|>1.
\end{cases}
\end{split}
\end{align*}
\end{example}

Then, as shown in \cite[Proposition 2.2]{Fog21}, $(\nu_{\alpha})_{\alpha\in (0,2)}$ satisfies \eqref{eq:assumption-nu-alpha}. Note that, as a possible simple example, one could consider $\nu(h)= c \mathds{1}_{B_1(0)}(h)$, so that $(\nu_\alpha)$ would correspond to what is known as vanishing horizon in peridynamics, see \cite{DEYu21, DTZ22}.

\medskip

The next result implies the convergence of the nonlocal normal derivative to the local one.  We point out that a similar convergence has been recently established in \cite{HK23}. 

\begin{lemma}\label{lem:colapsing-to-boundary}
Assume $\Omega\subset \R^d$ is an open bounded set with Lipschitz boundary. Let $\varphi \in C^2_b(\R^d)$ and $v\in \VnuOma$. The following assertions hold true. 
\begin{enumerate}[$(i)$]
\item There is a constant $C>0$ independent of $\alpha$ such that 
\begin{align*}
\sup_{\alpha\in (0,2)}\Big|\int_{\Omega^c} \mathcal{N}_\alpha \varphi(y)v(y)\d y\Big|\leq C\|\varphi\|_{C^2_b(\R^d)} \|v\|_{\VnuOma}\,.
\end{align*}
\item Assume $v\in H^1(\R^d)$ then
\begin{align*}
\lim_{\alpha\to 2}\int_{\Omega^c} \mathcal{N}_\alpha \varphi(y)v(y)\d y =\int_{\partial\Omega} \frac{\partial \varphi}{\partial n}(x) v(x)\d\sigma(x)\,.
\end{align*}
\end{enumerate}
\end{lemma}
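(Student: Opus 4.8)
The plan is to prove both assertions by symmetrizing the integral defining $\int_{\Omega^c}\mathcal{N}_\alpha\varphi(y)v(y)\d y$ so that it can be written in terms of the energy form on $(\Omega^c\times\Omega^c)^c$, and then to exploit the Gauss--Green formula \eqref{eq:green-gauss-nonlocal-intro} together with a Taylor expansion of $\varphi$. For $(i)$, by definition $\mathcal{N}_\alpha\varphi(y) = 2\int_\Omega(\varphi(y)-\varphi(x))\nu_\alpha(x-y)\d x$, so
\begin{align*}
\int_{\Omega^c}\mathcal{N}_\alpha\varphi(y)v(y)\d y = 2\iil_{\Omega^c\Omega}(\varphi(y)-\varphi(x))v(y)\,\nu_\alpha(x-y)\d x\d y\,.
\end{align*}
Here I would extend $v$ to all of $\R^d$ as an element of $\VnuOma$ (its values on $\Omega$ are irrelevant) and use the Gauss--Green identity together with $L_\alpha\varphi\in L^\infty$ and $\varphi\in C^2_b$; more directly, I would bound the double integral by splitting $|\varphi(y)-\varphi(x)| \le \|\varphi\|_{C^1_b}(1\wedge|x-y|) + \|\varphi\|_{C^1_b}\mathds{1}_{|x-y|\ge 1}$ is not quite enough, so instead I write $\int_{\Omega^c}\mathcal N_\alpha\varphi(y)v(y)\d y = \mathcal E^\alpha(\varphi,v) - \int_\Omega L_\alpha\varphi(x)v(x)\d x$ via \eqref{eq:green-gauss-nonlocal-intro}, estimate $|\mathcal E^\alpha(\varphi,v)|\le \mathcal E^\alpha(\varphi,\varphi)^{1/2}\mathcal E^\alpha(v,v)^{1/2}$ with $\mathcal E^\alpha(\varphi,\varphi)\le 4\|\varphi\|_{H^1}^2\|\nu_\alpha\|_{L^1(1\wedge|h|^2\d h)} = 4d\|\varphi\|_{H^1}^2$ by \eqref{eq:H1-inequality} and the normalization in \eqref{eq:assumption-nu-alpha}, and bound $|\int_\Omega L_\alpha\varphi\cdot v|\le \|L_\alpha\varphi\|_{L^2(\Omega)}\|v\|_{L^2(\Omega)}$ using the standard pointwise bound $|L_\alpha\varphi(x)|\le C\|\varphi\|_{C^2_b}\int_{\R^d}(1\wedge|h|^2)\nu_\alpha(h)\d h = Cd\|\varphi\|_{C^2_b}$. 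All bounds are uniform in $\alpha$ since the relevant moment of $\nu_\alpha$ is fixed at $d$, which gives the claimed estimate with $C$ independent of $\alpha$.

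For $(ii)$ I would assume $v\in H^1(\R^d)$ and work with the symmetrized form above. The key analytic input is a pointwise/uniform convergence statement for $\nu_\alpha$ as a smoothing-to-the-boundary kernel: since $\nu_\alpha$ concentrates at the origin with fixed second moment $d = \int(1\wedge|h|^2)\nu_\alpha$ and mass escaping to $|h|>\delta$ vanishes, the bilinear functional $(\psi,w)\mapsto \iil_{\Omega^c\Omega}(\psi(y)-\psi(x))w(y)\nu_\alpha(x-y)\d x\d y$ converges, for $\psi\in C^2_b$ and $w\in H^1(\R^d)$, to the boundary integral $\int_{\partial\Omega}\frac{\partial\psi}{\partial n}w\,\d\sigma$. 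To see this I would use a Taylor expansion $\varphi(y)-\varphi(x) = \nabla\varphi(x)\cdot(y-x) + O(\|\varphi\|_{C^2_b}|y-x|^2)$; the quadratic remainder, integrated against $\nu_\alpha(x-y)$ over $y\in\Omega^c$, $x\in\Omega$, is controlled by $\int_\Omega\big(\int_{|h|\le\delta}|h|^2\nu_\alpha(h)\d h + \mathds 1_{\dist(x,\partial\Omega)\le\delta}\cdot d\big)\d x$ plus a far-field term that vanishes as $\alpha\to 2$ by \eqref{eq:assumption-nu-alpha}; the first-order term is handled by a change of variables (writing the region $\{x\in\Omega, x+h\in\Omega^c\}$ in boundary coordinates, $h=x-y$) together with the identity $\fint_{\mathbb S^{d-1}}|w\cdot e|^2 = 1/d$ from Theorem~\ref{thm:levy-meets-energy}, exactly the computation that produces the factor $d$ in the normalization. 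This is essentially the ``collapse to the boundary'' computation of \cite{Ponce2004}; I would cite it for the Lipschitz-boundary change of variables and only sketch the adaptation. One should handle the possible low integrability of $v$ near $\partial\Omega$ by first proving the limit for $v\in C^1_c(\R^d)$ and then passing to general $v\in H^1(\R^d)$ using the uniform bound from $(i)$ together with density of $C^1_c(\R^d)$ in $H^1(\R^d)$ and continuity of the trace operator $H^1(\R^d)\to L^2(\partial\Omega)$.

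The main obstacle I anticipate is the first-order term in $(ii)$: making rigorous the ``flattening'' of the boundary near each point of $\partial\Omega$, controlling the error from the curved geometry, and showing that the contribution from pairs $(x,y)$ with $x$ near $\partial\Omega$ but $x-y$ not aligned with the normal is negligible. This requires the Lipschitz regularity of $\partial\Omega$ in an essential way (to parametrize $\Omega$ and $\Omega^c$ near the boundary and to identify the outward normal $\sigma$-a.e.), and it is precisely the point where one must invoke or reproduce the argument of \cite[Eq.~(22)--(23)]{Ponce2004}. A secondary technical point is uniform integrability: one must rule out a loss of mass of $\nu_\alpha$ either at the origin (handled by the fixed second moment) or near the boundary at an intermediate scale; both are dispatched by the three conditions in \eqref{eq:assumption-nu-alpha}, but keeping track of all three simultaneously in the estimates requires care.
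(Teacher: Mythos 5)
Your part $(i)$ follows the paper's proof: write $\int_{\Omega^c}\mathcal{N}_\alpha\varphi\, v = \mathcal{E}^\alpha(\varphi,v)-\int_\Omega L_\alpha\varphi\, v$ via the Gauss--Green formula \eqref{eq:green-gauss-nonlocal}, then Cauchy--Schwarz plus the uniform moment normalization in \eqref{eq:assumption-nu-alpha}. One slip: you bound $\mathcal{E}^\alpha(\varphi,\varphi)\le 4d\|\varphi\|^2_{H^1(\R^d)}$ via \eqref{eq:H1-inequality}, but $\varphi\in C^2_b(\R^d)$ need not lie in $L^2(\R^d)$, so $\|\varphi\|_{H^1(\R^d)}$ may be infinite. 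The fix is what the paper does: use $|\varphi(x)-\varphi(y)|\le 2\|\varphi\|_{C^1_b(\R^d)}(1\wedge|x-y|)$ as in \eqref{eq:first-order-diff} and integrate over $\Omega\times\R^d$, which gives $\mathcal{E}^\alpha(\varphi,\varphi)\le 4d\,|\Omega|\,\|\varphi\|^2_{C^1_b(\R^d)}$ uniformly in $\alpha$; together with the uniform bound $|L_\alpha\varphi|\le 4d\|\varphi\|_{C^2_b(\R^d)}$ coming from \eqref{eq:second-difference} this yields $(i)$ exactly as you intend.

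For $(ii)$ you take a genuinely different, and much heavier, route than the paper. The paper keeps the Gauss--Green split and argues softly: $L_\alpha\varphi\to-\Delta\varphi$ pointwise with the domination \eqref{eq:second-difference} gives $\int_\Omega L_\alpha\varphi\,v\to\int_\Omega(-\Delta\varphi)v$, while the known nonlocal-to-local convergences \eqref{eq:convergence-energy} (from \cite{Fog21}, \cite{FGKV20}, valid for Lipschitz $\Omega$) give $\mathcal{E}^\alpha(\varphi,v)\to\int_\Omega\nabla\varphi\cdot\nabla v$; the classical Green formula then identifies the limit as $\int_{\partial\Omega}\frac{\partial\varphi}{\partial n}v\,\d\sigma$ with no Taylor expansion or boundary flattening at all. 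Your plan instead reproves the ``collapse to the boundary'' by hand, and its central step --- convergence of the first-order term $2\iint_{\Omega^c\times\Omega}\nabla\varphi(x)\cdot(y-x)\,v(y)\,\nu_\alpha(x-y)\,\d x\,\d y$ to the boundary integral over a merely Lipschitz $\partial\Omega$ --- is precisely the hard part and is not supplied. The reference you lean on, \cite[Eq.~(22)--(23)]{Ponce2004}, does not contain this computation: those are lower bounds for the energy near the boundary (used in this paper in \autoref{lem:estimate-near-boundary}), not an identification of a surface-measure limit. A direct argument of the kind you sketch is essentially the content of the nonlocal divergence theorem asymptotics in \cite{HK23}, so your route is workable in principle, but as written there is a gap at this step; you would either have to carry out the full boundary-layer computation (with the constant bookkeeping you mention, which does come out right given the normalization $\int(1\wedge|h|^2)\nu_\alpha=d$ and the factor $2$ in $\mathcal{N}_\alpha$) or cite a result that actually proves it. Your reduction from $v\in H^1(\R^d)$ to $v\in C^1_c(\R^d)$ is fine: \eqref{eq:H1-inequality} gives $\|w\|_{\VnuOma}\le C\|w\|_{H^1(\R^d)}$ uniformly in $\alpha$, so part $(i)$ plus continuity of the trace $H^1(\R^d)\to L^2(\partial\Omega)$ closes the approximation argument.
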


\medskip 

\begin{proof}
In view of the estimates  \eqref{eq:second-difference} and \eqref{eq:first-order-diff} respectively,  we have 
\begin{align*}
|L_{\alpha}\varphi|\leq  4d \|\varphi\|_{C^2_b(\R^d)}\quad\text{and}\quad \mathcal{E}^\alpha(\varphi, \varphi)\leq 4d|\Omega|\|\varphi\|^2_{C^1_b(\R^d)} \quad\text{for all }~~\alpha\in (0,2)\,.
\end{align*}
%
By the continuity of the  linear mapping $ v\mapsto \mathcal{E}^\alpha(\varphi, v) - \int_{\Omega} L_{\alpha}\varphi(x)v(x)\d x$, the Gauss-Green formula \eqref{eq:green-gauss-nonlocal} is applicable for $\varphi\in C_b^2(\R^d)$ and $v\in \VnuOma$.  Therefore, with the help of the above estimates we get $(i)$ as follows

\begin{align*}
\Big|\int_{\Omega^c} \mathcal{N}_\alpha \varphi(y)v(y)\d y\Big|
&= \Big| \mathcal{E}^\alpha(\varphi, v) - \int_{\Omega} L_{\alpha}\varphi(x)v(x)\d x\Big|\\
&\leq \mathcal{E}^\alpha(\varphi, \varphi)^{1/2} \mathcal{E}^\alpha(v,v)^{1/2} +\|L_\alpha\varphi\|_{L^2(\Omega)} \|v\|_{L^2(\Omega)}\\
&\leq C\|\varphi\|_{C^2_b(\R^d)}\|v\|_{\VnuOma}\,.
\end{align*}
Noting that $L_{\alpha}\varphi(x)\xrightarrow[]{\alpha\to 2} -\Delta\varphi(x)$ for all $x\in \R^d$ (see \cite[Proposition 2.4]{Fog21}) and that $|L_{\alpha}\varphi|\leq \frac{4}{d} \|\varphi\|_{C^2_b(\R^d)}$, the Lebesgue dominated convergence theorem yields
\begin{align*}
\int_{\Omega}L_{\alpha}\varphi(x)v(x)\d x \xrightarrow[]{\alpha\to 2} \int_{\Omega}-\Delta \varphi(x)v(x)\d x\,.
\end{align*}
On the other hand, according to \cite{Fog21} and \cite[Theorem 3.4]{FGKV20}, we have that 
\begin{align}\label{eq:convergence-energy}
\begin{split}
&\iint\limits_{\Omega\Omega}(v(x)-v(y))^2\nu_\alpha(x-y)\d x\d y \xrightarrow[]{\alpha\to 2}\int_{\Omega}|\nabla v(x)|^2\d x\\
&\iint\limits_{\Omega\Omega^c} (v(x)-v(y))^2 \nu_{\alpha}(x-y)\d x\d y\xrightarrow{\alpha\to 2}0.
\end{split}
\end{align}
So that, $\mathcal{E}^\alpha(v, v) \xrightarrow[]{\alpha\to 2}\int_{\Omega}|\nabla v(x)|^2\d x$. Thus we also have 
\begin{align*}
\mathcal{E}^\alpha(\varphi, v) \xrightarrow[]{\alpha\to 2}\int_{\Omega}\nabla\varphi(x)\cdot \nabla v(x)\d x.
\end{align*}

Finally from the foregoing and the local Gauss-Green formula we obtain $(ii)$ as follows
\begin{align*}
\lim_{\alpha\to 2}\int_{\Omega^c} \mathcal{N}_\alpha \varphi(y)v(y)\d y 
&=
\lim_{\alpha\to 2} \mathcal{E}^\alpha(\varphi, v)-\lim_{\alpha\to 2} \int_{\Omega}L_{\alpha}\varphi(x)v(x)\d x\\
&= \int_{\Omega}\nabla\varphi(x)\cdot \nabla v(x)\d x - \int_{\Omega}\Delta\varphi(x) v(x)\d x\\
&=\int_{\partial\Omega} \frac{\partial \varphi}{\partial n}(x) v(x)\d\sigma(x)\,.
\end{align*}

\end{proof}


\begin{theorem}[\textbf{Convergence of weak solution}]\label{thm:phase-transition} Let $\Omega\subset \R^d$ be an open bounded and connected domain with Lipschitz boundary. Let $(f_\alpha)_\alpha$ be functions converging in the weak sense to another function $f$ in $L^2(\Omega)$ and let $g_\alpha =\mathcal{N}_\alpha\varphi$ and $g=\frac{\partial \varphi}{\partial n}$ for some $\varphi \in C^2_b(\R^d)$. Assume $u_\alpha \in  \VnuOma^\perp$ is a weak solution to $L_\alpha u= f_\alpha$ on $\Omega$ and $\mathcal{N}_\alpha u= g_\alpha$ on $\Omega^c$ that is, 
\begin{align*}
\mathcal{E}^\alpha(u_\alpha, v) = \int_{\Omega}f_\alpha(x) v(x) + \int_{\Omega^c}g_\alpha(x) v(x)\quad\text{for all }~~v\in \VnuOma^\perp\,.
\end{align*}
Let $u\in H^1(\Omega)^\perp$ be the unique weak solution in  to the Neumann problem $-\Delta u=f$ in $\Omega$ and $\frac{\partial u}{\partial n} =g$ on $\partial\Omega$ i.e. 
\begin{align*}
\int_{\Omega } \nabla u(x)\cdot\nabla v(x)\d x= \int_{\Omega } f(x) v(x)\d x + \int_{\partial\Omega } g(x)v(x)\d\sigma(x)\qquad\text{for all $u\in H^1(\Omega)^\perp$}. 
\end{align*}

Then $(u_\alpha)_\alpha$ strongly converges to $u$ in $L^2(\Omega)$, i.e.,  $\|u_\alpha-u\|_{L^2(\Omega)}\xrightarrow[]{\alpha\to 2}0$. Moreover, the following weak convergence of the energy forms holds true
\begin{align}\label{eq:weak-convergence-energy}
\mathcal{E}^\alpha(u_\alpha, v) \xrightarrow[]{\alpha\to 2} \int_{\Omega}\nabla u(x)\cdot \nabla v(x)\d x
\qquad\text{for all $v\in H^1(\R^d)$}.
\end{align}
\end{theorem}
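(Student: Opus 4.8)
The strategy is the standard variational compactness argument: first obtain a uniform bound on $\|u_\alpha\|_{\VnuOma}$, then extract a limit and identify it as the weak solution of the local Neumann problem, and finally upgrade to strong $L^2$-convergence and weak convergence of the energy forms. To make the energy bounds meaningful across different $\alpha$, I would work with the norm $\vertiii{\cdot}^2_{\VnuOma}$ (or, equivalently thanks to the Poincar\'e inequality \autoref{thm:poincare-inequality} on $\VnuOma^\perp$, with the combination $\cE^\alpha(u_\alpha,u_\alpha) + \|u_\alpha\|^2_{L^2(\Omega)}$), keeping in mind that on $\VnuOma^\perp$ the Poincar\'e constant $C$ in \eqref{eq:poincare-inequalityV} can be taken uniform in $\alpha$ under the normalization \eqref{eq:assumption-nu-alpha} --- this uniformity is the first technical point to nail down, and I would deduce it by a contradiction/compactness argument in the spirit of the proof of \autoref{thm:poincare-inequality} together with the convergence \eqref{eq:convergence-energy}, or cite \cite{FGKV20, Fog21}.

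\textbf{Step 1: uniform a priori bound.} Testing the equation for $u_\alpha$ with $v = u_\alpha \in \VnuOma^\perp$ gives $\cE^\alpha(u_\alpha,u_\alpha) = \int_\Omega f_\alpha u_\alpha + \int_{\Omega^c} g_\alpha u_\alpha$. The first term is bounded by $\|f_\alpha\|_{L^2(\Omega)}\|u_\alpha\|_{L^2(\Omega)}$, and the sequence $(f_\alpha)$ is bounded in $L^2(\Omega)$ since it converges weakly. For the second term I invoke \autoref{lem:colapsing-to-boundary}$(i)$ with $v = u_\alpha$: $\big|\int_{\Omega^c}\mathcal{N}_\alpha\varphi\, u_\alpha\big| \le C\|\varphi\|_{C^2_b(\R^d)}\|u_\alpha\|_{\VnuOma}$, with $C$ independent of $\alpha$. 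Combining with the uniform Poincar\'e inequality and a Young inequality absorption, I get $\sup_{\alpha}\|u_\alpha\|_{\VnuOma} < \infty$, hence also $\sup_\alpha\|u_\alpha\|_{L^2(\Omega)}<\infty$ and $\sup_\alpha\cE^\alpha(u_\alpha,u_\alpha)<\infty$.

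\textbf{Step 2: extraction and identification of the limit.} By a compactness result for the family $(\nu_\alpha)$ --- this is the analogue of \autoref{thm:embd-compactness} in the $\alpha$-dependent setting, proved in \cite{FGKV20, Fog21, guy-thesis} using that $\int_{|h|>\delta}\nu_\alpha \to 0$ --- the uniform bound yields a subsequence with $u_{\alpha}\to w$ strongly in $L^2(\Omega)$ for some $w\in H^1(\Omega)$ with $\int_\Omega w = 0$, and moreover $\liminf \cE^\alpha(u_\alpha,u_\alpha)\ge \int_\Omega|\nabla w|^2$ (lower semicontinuity) while the localized exterior contribution $\iint_{\Omega\Omega^c}(u_\alpha(x)-u_\alpha(y))^2\nu_\alpha$ tends to $0$. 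To identify $w$ as the solution $u$, fix $v\in C^2_b(\R^d)$ with $\int_\Omega v = 0$ (a dense class of test functions after restriction), use it as a test function in the equation for $u_\alpha$, and pass to the limit term by term: $\int_\Omega f_\alpha v \to \int_\Omega f v$ by weak convergence of $f_\alpha$; $\int_{\Omega^c}\mathcal{N}_\alpha\varphi\, v \to \int_{\partial\Omega}\frac{\partial\varphi}{\partial n} v\,\d\sigma$ by \autoref{lem:colapsing-to-boundary}$(ii)$ (applicable since $v\in H^1(\R^d)$); and $\cE^\alpha(u_\alpha,v)\to \int_\Omega\nabla w\cdot\nabla v$ --- this last limit is the delicate one, since it couples the weak $L^2$-convergence of $u_\alpha$ with the $\alpha$-dependent bilinear form, and I would handle it by writing $\cE^\alpha(u_\alpha,v)$ via the symmetry trick $\cE^\alpha(u_\alpha,v) = \frac12\iint(u_\alpha(x)-u_\alpha(y))(v(x)-v(y))[\mathds{1}_\Omega(x)\lor\mathds{1}_\Omega(y)]\nu_\alpha$ and using the convergence $\cE^\alpha(\varphi',v)\to\int_\Omega\nabla\varphi'\cdot\nabla v$ for smooth $\varphi'$ together with a uniform equicontinuity/density estimate (as in \cite{FGKV20}). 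Thus $w\in H^1(\Omega)^\perp$ satisfies the variational formulation of $-\Delta u = f$, $\frac{\partial u}{\partial n}=g$; by uniqueness $w = u$, and since the limit is independent of the subsequence, the whole family converges.

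\textbf{Step 3: the energy convergence \eqref{eq:weak-convergence-energy}.} For $v\in H^1(\R^d)$, write $\cE^\alpha(u_\alpha,v) = \int_\Omega f_\alpha v + \int_{\Omega^c}g_\alpha v$ directly from the equation (the identity holds for all $v\in\VnuOma$, in particular for $v\in H^1(\R^d)\hookrightarrow H_{\nu_\alpha}(\R^d)\subset \VnuOma$), then pass to the limit using weak convergence of $f_\alpha$ and \autoref{lem:colapsing-to-boundary}$(ii)$, obtaining $\int_\Omega f v + \int_{\partial\Omega} g v\,\d\sigma = \int_\Omega\nabla u\cdot\nabla v$ by the weak formulation satisfied by $u$. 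I expect the \textbf{main obstacle} to be Step 2's passage to the limit in $\cE^\alpha(u_\alpha,v)$ together with establishing the uniform Poincar\'e constant and the $\alpha$-uniform compactness; all three rely on the approximation properties of $(\nu_\alpha)$ and are essentially where the genuinely nonlocal-to-local analysis lives, but they are available from \cite{FGKV20, Fog21, guy-thesis} and from \autoref{lem:colapsing-to-boundary} and \eqref{eq:convergence-energy} in the excerpt.
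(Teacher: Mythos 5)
Your plan follows the same skeleton as the paper's proof: uniform coercivity of $\cE^\alpha$ on $\VnuOma^{\perp}$ for $\alpha$ close to $2$ (the paper obtains \eqref{eq:uniform-coercivity} by the compactness argument of Ponce), the a priori bound via \autoref{lem:colapsing-to-boundary}$(i)$, asymptotic (Bourgain--Brezis--Mironescu type) compactness giving strong $L^2(\Omega)$-convergence along a subsequence, identification of the limit by testing with fixed mean-zero test functions and using \autoref{lem:colapsing-to-boundary}$(ii)$ together with the vanishing of the $\Omega\times\Omega^c$ cross term from \eqref{eq:convergence-energy}, and uniqueness of the limit to upgrade to the whole family. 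However, the step you yourself flag as the main obstacle, namely $\cE^{\alpha_n}(u_{\alpha_n},v)\to\int_\Omega\nabla u\cdot\nabla v$ for a fixed test function, is only gestured at (``symmetry trick plus uniform equicontinuity/density estimate''). The paper closes it with a specific tool: since $\|w\|_{H_{\nu_\alpha}(\Omega)}\to\|w\|_{H^1(\Omega)}$ for every $w\in H^1(\Omega)$, Lemma 2.2 of \cite{KS03} (weak convergence in varying Hilbert spaces) applies to the bounded sequence $(u_{\alpha_n})$ and yields, along a subsequence, $(u_{\alpha_n},v)_{H_{\nu_{\alpha_n}}(\Omega)}\to(u',v)_{H^1(\Omega)}$ for all $v\in H^1(\Omega)$; this, plus the cross-term estimate, is exactly what produces the limit of the bilinear form. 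Citing the Mosco convergence of \cite{FGKV20} does not by itself do this job, because Mosco convergence of the quadratic forms does not give convergence of $\cE^\alpha(u_\alpha,v)$ with one frozen argument and only weak control on the other; you need a statement of the Kuwae--Shioya type or an explicit replacement argument here.

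There is also a concrete error in your Step 3: the weak formulation does \emph{not} hold ``for all $v\in\VnuOma$''; it is posed only on $\VnuOma^{\perp}$, and extending it to constants would require the compatibility condition $\int_\Omega f_\alpha+\int_{\Omega^c}\mathcal N_\alpha\varphi=0$, which is not assumed (by \eqref{eq:integration-by-part-nonlocal} the second integral equals $-\int_\Omega L_\alpha\varphi$, and nothing forces this to coincide with $-\int_\Omega f_\alpha$). The repair is easy: test with $v-\fint_\Omega v$ and observe that constants are annihilated both by $\cE^\alpha(u_\alpha,\cdot)$ and by $\int_\Omega\nabla u\cdot\nabla(\cdot)$, or, as the paper does, deduce \eqref{eq:weak-convergence-energy} directly from the already established convergence of the forms (the $\Omega\times\Omega$ part from the \cite{KS03} limit and the vanishing $\Omega\times\Omega^c$ part), without invoking the equation for test functions of nonzero mean. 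With these two points supplied, your argument coincides with the paper's.
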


\begin{remark}{\ }
\begin{enumerate}[(i)]
	\item In case of the homogeneous problem, i.e., for $\varphi=0$ and $f_\alpha=f$, the corresponding result is a direct consequence of the Mosco-convergence of $(\mathcal{E}^\alpha(\cdot, \cdot), \VnuOma)_\alpha$ to the gradient form $\int_{\Omega}|\nabla u(x)|^2\d x$ with domain $H^1(\Omega)$, see \cite{FGKV20}.
	\item The convergence in result of Theorem \ref{thm:phase-transition} remains true if one replaces the Neumann condition with the Dirichlet condition, see \cite{guy-thesis}. 
	\item Examples of the type of \autoref{exa:vanishing-horizon} have been considered in relation to models in peridynamics, see \cite[Section 4.2]{DTZ22} and \cite{BMP15} for a natural nonlinear setting.
	\item The assertion of the theorem remains true under the weaker assumption that $(g_\alpha, \psi)_{L^2(\Omega^c)}$ convergences to $(g, \psi)_{L^2(\partial \Omega)}$ for all $\psi \in H^1(\R^d)$.
	\item It is desirable to study \autoref{thm:phase-transition} under more general assumptions, e.g., under a weaker assumption than $\varphi \in C^2_b(\R^d)$. A sufficient condition to be expected is $g \in H^{1/2}(\partial \Omega)$.
\end{enumerate}
\end{remark}	

\begin{proof}
A compactness argument as in \cite[Corollary 2.1]{Ponce2004}, see \cite[Chapter 5]{guy-thesis}, shows that for certain $\alpha_0\in (0,2)$ there exists a constant positive $C>0$ depending only on $\alpha_0, \Omega$ and $d$ such that for all $v \in L^2(\Omega)^\perp$ and all $\alpha\in (\alpha_0, 2)$
\begin{align}\label{eq:uniform-coercivity}
\|v\|^2_{\VnuOma}\leq C\mathcal{E}^\alpha(v,v). 
\end{align}

In view of the weak convergence, we can assume without loss  generality that $\sup_{\alpha\in (0,2)}\|f_\alpha\|_{L^2(\Omega)}<\infty$. This together with the definition of $u_\alpha$ along with \autoref{lem:colapsing-to-boundary} $(i)$ yields 
\begin{align*}
\mathcal{E}^\alpha(u_\alpha,u_\alpha) &= \int_{\Omega} f_\alpha(x) u_\alpha (x)\d x + \int_{\Omega^c} g_\alpha(y) u_\alpha (y)\d y\\
&\leq \|u_\alpha\|_{\VnuOma}(\|f_\alpha\|_{L^2(\Omega)} + \|\varphi\|_{C_b^2(\R^d)})\\
&\leq C \|u_\alpha\|_{\VnuOma}. 
\end{align*}
Combining this with \eqref{eq:uniform-coercivity}, then for a generic constant $C>0$ independent of $\alpha$ we have the following uniform boundedness
\begin{align}\label{eq:uniform-boundedness}
\|u_\alpha\|_{\HnuOma}\leq \|u_\alpha\|_{\VnuOma}\leq C\qquad\text{for all } \alpha\in (\alpha_0, 2)\,.
\end{align}
Recall that, see \cite{Fog21,guy-thesis}, $ \|u\|_{\HnuOma}\xrightarrow[]{\alpha\to 2} \|u\|_{H^1(\Omega)}$ for all $u \in H^1(\Omega)$. Whence from \cite[Lemma 2.2]{KS03} there exists $u'\in H^1(\Omega)$ and a subsequence $\alpha_n \xrightarrow[]{n\to \infty}2$ such that, 
\begin{align*}
\lim_{n\to \infty}\big(u_{\alpha_n}, v \big)_{H_{\nu_{\alpha_n}} (\Omega)} = \big(u', v \big)_{H^1 (\Omega)}, 
\end{align*}
where, we recall that
\begin{align*}
\big(w ,\, v \big)_{H_{\nu_{\alpha}} (\Omega)}&= \int_{\Omega} w(x)v(x)\d x+ \iint\limits_{\Omega\Omega} (w(x)-w(y))(v(x)-w(y))\nu_\alpha(x-y)\d x\d y\\
\big(w, v \big)_{H^1 (\Omega)} &= \int_{\Omega} w(x)v(x)\d x+ \int_{\Omega} \nabla w(x)\cdot \nabla v(x)\d x\,.
\end{align*}

By virtue of the asymptotic compactness, see \cite{BBM01,guy-thesis, Ponce2004}; see also  \cite[Section 4]{AAS67}, there exists a further subsequence that we still denote by $(\alpha_n)_n$ and a function $ u''\in H^1(\Omega)$ such that $\|u_{\alpha_n}-u''\|_{L^2(\Omega)} \xrightarrow[]{n\to \infty}0$. It is not difficult to show that $u'= u''$ almost everywhere in $\Omega$, $u\in H^1(\Omega)^\perp$ where we let $u=u'$, and that for all $v\in H^1(\Omega)$
\begin{align}\label{eq:weak-con-semi-omega}
\iint\limits_{\Omega\Omega} (u_{\alpha_n}(x)-u_{\alpha_n}(y))(v(x)-v(y))\nu_{\alpha_n}(x-y)\d x\d y\xrightarrow[]{n\to \infty} \int_{\Omega} \nabla u(x)\cdot \nabla v(x)\d x\,.
\end{align}

It remains to show that $u$ is the weak solution of the corresponding local Neumann problem. To this end, we fix $v\in H^1(\Omega)^\perp$, given that $\Omega$ has a Lipschitz boundary we let $\overline{v}\in H^1(\R^d)$ be an extension of $v$. The uniform boundedness in \eqref{eq:uniform-boundedness}  and the convergence in \eqref{eq:convergence-energy} yield 
\begin{align*}
\iint\limits_{\Omega\Omega^c} \big|(u_{\alpha_n}(x)-u_{\alpha_n}(y))(\overline{v}(x)-\overline{v}(y))\big|&\nu_{\alpha_n}(x-y)\d x\d y\\& \leq C \iint\limits_{\Omega\Omega^c} (\overline{v}(x)-\overline{v}(y))^2 \nu_{\alpha_n}(x-y)\d x\d y\xrightarrow[]{n\to \infty}0\,.
\end{align*}
This combined with \eqref{eq:weak-con-semi-omega} gives 
\begin{align*}
\mathcal{E}^{\alpha_n}(u_{\alpha_n}, \overline{v}) \xrightarrow[]{n\to \infty} \int_{\Omega} \nabla u(x)\cdot \nabla v(x)\d x\,.
\end{align*}
In particular, since $\overline{v}\in H^1(\R^d)$ can be arbitrarily chosen, we have the weak convergence 
\begin{align*}
\mathcal{E}^{\alpha_n}(u_{\alpha_n}, v) \xrightarrow[]{n\to \infty} \int_{\Omega} \nabla u(x)\cdot \nabla v(x)\d x\,\qquad\text{for all $v\in H^1(\R^d)$}.
\end{align*}
We know that $\overline{v}\in\VnuOma^\perp$ for all $\alpha\in (0,2)$, thus by definition of $u_{\alpha_n} $ it follows that, 
\begin{align*}
\mathcal{E}^{\alpha_n}(u_{\alpha_n}, \overline{v}) = \int_{\Omega}f_{\alpha_n}(x) v (x)\d x+ \int_{\Omega^c}g_{\alpha_n}(y) \overline{v}(y)\d y\,.
\end{align*}
By \autoref{lem:colapsing-to-boundary} $(ii)$ and the fact that $f_{\alpha_n}\rightharpoonup f$ weakly in $L^2(\Omega)$, letting $n \to \infty$ we obtain 
\begin{align*}
\int_{\Omega} \nabla u(x)\cdot \nabla v(x)\d x\, = \int_{\Omega}f(x) v (x)\d x+ \int_{\partial \Omega}g(x)v(x)\d \sigma(x)\,.
\end{align*}
By virtue of the uniqueness of the limit  $u\in H^1(\Omega)^\perp$, the same reasoning can be applied to any other subsequence $(\alpha_n)_n$ with $\alpha_n\xrightarrow[]{n\to \infty}2$ and hence the claimed convergences hold true for the whole sequence as desired. 
\end{proof}

\appendix
\section{} \label{sec:appendix}

In the following appendices we explain basic properties of translation-invariant nonlocal operators $L$ driven by the density of a L\'{e}vy measure $\nu :\R^d \setminus \{0\} \to [0, \infty)$ satisfying condition \eqref{eq:levy-cond}. Throughout this section we assume $k(x,y)=\nu(x-y)$ for all $x \ne y$. The main goals include a definition of $Lu$ as a distribution in Proposition \ref{prop:fund-prop} and the Gauss-Green formula for nonlocal operators in \autoref{prop:gauss-green}.

\subsection{Basics on the operator $L$} Given $k\in \mathbb{N}$, denote $C_b^k(\R^d)$ as the space of bounded functions of class $C^k$ whose derivatives up to order $k$ are bounded. Recall that for  a sufficiently smooth function $v:\R^d\to \mathbb{R}$,  the operator $L$ is defined by 
\begin{align*}
Lv (x)&= \pv \int\limits_{\R^d}(v(x)-v(y))\nuxminy\d y = \lim_{\varepsilon\to 0^+} L_\varepsilon v (x)
\end{align*}
where
\begin{align*}
L_\varepsilon v (x)&= \int\limits_{\R^d\setminus B_{\varepsilon}(x) }\!\! (v(x)-v(y))\nuxminy\d y \qquad\hbox{$(x\in \R^d; \eps >0)$}.
\end{align*}

Here are some basic properties of the operator $L$.

\begin{proposition}\label{prop:uniform-cont}
Let $ u\in C^2_b(\R^d) $. Then the following properties are satisfied.
\begin{enumerate}[(i)]
\item The map $x\mapsto L u(x)$ is bounded and uniformly continuous. Moreover,  
\begin{align*}
Lu (x)= -\frac12\int_{\R^d}(u(x+h)+u(x-h)-2u(x)) \nu(h)\,\d h. 
\end{align*}

\item For each $\varepsilon>0$, the map $x\mapsto L_\varepsilon u(x)$ is uniformly continuous.
\item The family $(L_\varepsilon u(x))_\varepsilon$ is uniformly bounded and uniformly converges to $Lu,$ i.e.
\begin{align*}
\|L_\eps u-Lu\|_{L^\infty(\R^d)}\xrightarrow[]{\eps \to 0}0.
\end{align*}
\end{enumerate}
\end{proposition}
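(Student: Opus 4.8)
\textbf{Proof plan for Proposition~\ref{prop:uniform-cont}.}

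The plan is to exploit the Taylor expansion of $u \in C^2_b(\R^d)$ to second order, together with the L\'evy condition \eqref{eq:levy-cond}, which guarantees that the relevant integrals converge. First I would establish the symmetrized formula in $(i)$. Since $\nu$ is symmetric, for each fixed $\varepsilon > 0$ a change of variables $h \mapsto -h$ gives
\begin{align*}
L_\varepsilon u(x) = \frac12 \int_{\R^d \setminus B_\varepsilon(0)} \big(2u(x) - u(x+h) - u(x-h)\big)\nu(h)\,\d h\,,
\end{align*}
so it suffices to control the second symmetric difference $\delta^2_h u(x) := u(x+h)+u(x-h)-2u(x)$. By Taylor's theorem with integral remainder, $|\delta^2_h u(x)| \leq |h|^2 \|D^2 u\|_{L^\infty(\R^d)}$ for all $h$, and trivially $|\delta^2_h u(x)| \leq 4\|u\|_{L^\infty(\R^d)}$; hence $|\delta^2_h u(x)| \leq C (1 \wedge |h|^2)\|u\|_{C^2_b(\R^d)}$ with $C$ universal. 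Combined with \eqref{eq:levy-cond} this shows the integrand $\delta^2_h u(x)\nu(h)$ is dominated by an integrable function uniformly in $x$; dominated convergence as $\varepsilon \to 0$ then yields both the existence of the principal value and the claimed formula, as well as the uniform bound $\|Lu\|_{L^\infty(\R^d)} \leq C\|u\|_{C^2_b(\R^d)}$, which also gives the uniform boundedness part of $(iii)$.

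Next I would prove uniform continuity of $x \mapsto Lu(x)$ in $(i)$ and of $x \mapsto L_\varepsilon u(x)$ in $(ii)$. For $Lu$, write
\begin{align*}
Lu(x) - Lu(x') = -\frac12 \int_{\R^d} \big(\delta^2_h u(x) - \delta^2_h u(x')\big)\nu(h)\,\d h\,.
\end{align*}
Split the integral at $|h| = \rho$. On $|h| > \rho$, bound $|\delta^2_h u(x) - \delta^2_h u(x')|$ crudely by $8\|u\|_{L^\infty}$ and use $\int_{|h|>\rho}\nu(h)\,\d h < \infty$ together with — after first choosing $\rho$ — the absolute continuity of this tail integral; actually cleaner: on $|h| > \rho$ use $|\delta^2_h u(x) - \delta^2_h u(x')| \leq 2\|\nabla u\|_{L^\infty}|x-x'|$ (from the gradient bound applied to each of the three terms, giving a Lipschitz estimate in $x$ with constant independent of $h$) times... no — the simplest uniform estimate is $|\delta^2_h u(x)-\delta^2_h u(x')| \leq C\min(|x-x'|, 1)$ on $\{|h|>\rho\}$ via boundedness of $u$ and its gradient. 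On $|h| \leq \rho$ use $|\delta^2_h u(x) - \delta^2_h u(x')| \leq 2|h|^2 \|D^2 u\|_{L^\infty}$ and $\int_{|h|\leq\rho}|h|^2\nu(h)\,\d h \to 0$ as $\rho \to 0$. Given $\eta > 0$, first pick $\rho$ to make the inner contribution $< \eta/2$, then pick $|x-x'|$ small to make the outer contribution $< \eta/2$; this is exactly the standard $\varepsilon$–$\delta$ argument and yields uniform continuity. For $(ii)$, the same split works with the integral over $\R^d \setminus B_\varepsilon(0)$; here one uses continuity of $h \mapsto u(x \pm h)$ and dominated convergence in $h$, the point being that $\nu \mathds{1}_{\{|h| > \varepsilon\}} \in L^1(\R^d)$, so $L_\varepsilon u = u \, \|\nu\mathds{1}_{B_\varepsilon^c}\|_{L^1} - u * (\nu \mathds{1}_{B_\varepsilon^c})$ is a difference of a bounded continuous function and a convolution of an $L^\infty$ function with an $L^1$ function, hence (uniformly) continuous.

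Finally, for the uniform convergence in $(iii)$, estimate
\begin{align*}
\|L_\varepsilon u - Lu\|_{L^\infty(\R^d)} \leq \frac12 \sup_x \int_{B_\varepsilon(0)} |\delta^2_h u(x)|\,\nu(h)\,\d h \leq \frac12 \|D^2 u\|_{L^\infty(\R^d)}\int_{B_\varepsilon(0)} |h|^2 \nu(h)\,\d h\,,
\end{align*}
and the right-hand side tends to $0$ as $\varepsilon \to 0^+$ by \eqref{eq:levy-cond}. I expect no serious obstacle here; the only point requiring care is the uniform-continuity argument in $(i)$ and $(ii)$, where one must handle the near-origin singularity of $\nu$ and the far field separately and in the right order — fix the cutoff $\rho$ first using the $|h|^2$-integrability, then exploit continuity of $u$ (resp. of translations) on the truncated region. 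Everything else is a routine application of Taylor's theorem and dominated convergence.
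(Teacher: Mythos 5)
Your proposal is correct and follows essentially the same route as the paper: symmetrize $L_\varepsilon u$ by the change of variables $h\mapsto -h$, bound the second difference by $C(1\wedge|h|^2)\|u\|_{C^2_b(\R^d)}$ via Taylor, and then use the integrability of $(1\wedge|h|^2)\nu(h)$ for boundedness, uniform convergence, and the near/far split in the uniform-continuity step (the paper packages that split as a single bound $\int(\delta\wedge|h|^2)\nu(h)\,\d h\to 0$, which is the same estimate). Your convolution representation $L_\varepsilon u = u\,\|\nu\mathds{1}_{B_\varepsilon^c}\|_{L^1(\R^d)} - u*(\nu\mathds{1}_{B_\varepsilon^c})$ for part (ii) is a clean shortcut where the paper simply says the argument is analogous; it is valid since $\nu\mathds{1}_{B_\varepsilon^c}\in L^1(\R^d)$ by \eqref{eq:levy-cond}.
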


\bigskip

\begin{proof} 
Let $u \in C^2_b(\R^d)$. A simple change of variables implies 
\begin{align*}
L_\varepsilon u(x)= -\frac12\int\limits_{\R^d \setminus B_\varepsilon(0)}(u(x+h)+u(x-h)-2u(x)) \nu(h)\,\d h.
\end{align*}
An application of the fundamental theorem of calculus yields 
\begin{align*}
(u(x+h)+u(x-h)-2u(x)) &= \int_{0}^{1} \big[\nabla u(x+th) - \nabla u(x-th)\big]\cdot h\, \d t\\
&= \int_{0}^{1} \int_{0}^{1} 2t \big[D^2 u(x-th + 2sth) \cdot h\big]\cdot h\, \d s\d t\, .
\end{align*}
Since $u$ and its Hessian $D^2 u $ are bounded functions, we deduce 
\begin{align}\label{eq:second-difference}
\left|u(x+h)+u(x-h)-2u(x)\right|\leq 2\|u\|_{ C^2_b(\R^d)}(1 \land |h|^2),\quad \quad x,h \in \R^d.
\end{align}
The integrability of the function $h\mapsto (1 \land |h|^2)\nu(h)$ entails the boundedness of $x\mapsto L u(x)$
and the uniform boundedness of $x\mapsto L_\varepsilon u(x)$. It also allows us to get rid of the principal value formulation. Furthermore, we can prove the uniform convergence of $(L_\varepsilon u)_\varepsilon$ to $Lu $ by 
\begin{align*}
\|L_\eps u-Lu\|_{L^\infty(\R^d)} \leq 2\|u\|_{ C^2_b(\R^d)} \int_{B_\varepsilon(0)}(1 \land |h|^2)\nu(h)\d h \xrightarrow[]{\varepsilon\to0}0. 
\end{align*}
In order to prove the uniform continuity, we fix $x,z\in \R^d $ close enough, say $|x-z|\leq \delta$ with $0<\delta<1$. Then for every h $\in \R^d$, $h \ne 0$,
\begin{align*}
2|u(x)-u(z)| + |u(x+h)-u(z+h)|+ |u(x-h)-u(z-h)| \leq 4\delta \|u\|_{ C^2_b(\R^d)}.
\end{align*} 
This combined with \eqref{eq:second-difference} yields the uniform continuity via the integrability of $h\mapsto (1 \land |h|^2)\nu(h)$ as follows, 
\begin{align*}
\|L u(x)-L u(z) \|_{L^\infty(\R^d)} \leq 2\|u\|_{ C^2_b(\R^d)} \int_{\R^d}(\delta \land |h|^2)\nu(h)\d h \xrightarrow[]{\delta \to0}0. 
\end{align*}
The uniform continuity of $x\mapsto L_\varepsilon u(x)$ follows analogously. 
\end{proof}

\medskip

In order for $Lu(x)$ to be defined, $u$ needs to possess two properties: some regularity in the neighborhood of the point $x$ and some weighted integrability for $|x| \to \infty$. As shown above, being $C^2$ in the neighborhood of $x$ is more than sufficient as is boundedness for $|x| \to \infty$. Let us investigate some mild condition on $u$ as $|x| \to \infty$ that still allows a suitable definition of $Lu$. In order to do so, we additionally assume that $\nu$ is unimodal.

\begin{proposition}\label{prop:fund-prop} Let $\nu$ be a unimodal L\'{e}vy measure. Define a weight $\widehat{\nu}$ on $\R^d$ by $\widehat{\nu}(x)=\nu(\frac{1}{2}(1+|x|))$.  
\begin{enumerate}[(i)]
\item For $u\in C^2(\R^d)\cap L^1(\R^d, \widehat{\nu})$, the expression $Lu(x)$ exists for every $x\in \R^d$.
\item Assume that $\nu$ has full support. For $u \in L^1(\R^d, \widehat{\nu})$ the expression $Lu$ is defined in the 
distributional sense via the mapping $\varphi\mapsto \langle Lu, \varphi \rangle = (u, L\varphi)_{L^2(\R^d)}$. 
\item Assume that $\nu$ satisfies the scaling condition \eqref{eq:global-scaling_infinity}. Let $\Omega \subset \R^d$ be open and bounded and $u \in V_\nu(\Omega|\R^d)$. Then $Lu$ is defined in the 
distributional sense.
\end{enumerate}
\end{proposition}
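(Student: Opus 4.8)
The plan is to prove the three assertions of \autoref{prop:fund-prop} in sequence, each building on the previous one by a density/duality argument.

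First I would handle $(i)$. The point $Lu(x)$ splits naturally into a near-field and a far-field contribution: write
\begin{align*}
Lu(x) = \pv\!\!\int_{B_1(x)} (u(x)-u(y))\nu(x-y)\,\d y + \int_{\R^d\setminus B_1(x)} (u(x)-u(y))\nu(x-y)\,\d y.
\end{align*}
For the near-field term, $u\in C^2$ near $x$ gives the second-difference bound $|u(x+h)+u(x-h)-2u(x)|\le C(x)|h|^2$ for $|h|\le 1$, so after symmetrizing (as in the proof of \autoref{prop:uniform-cont}) this integral converges absolutely because $\int_{B_1(0)}|h|^2\nu(h)\,\d h<\infty$ by \eqref{eq:levy-cond}; the principal value is not actually needed there once symmetrized. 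For the far-field term I would show $\int_{\R^d\setminus B_1(x)}|u(y)|\nu(x-y)\,\d y<\infty$ using $u\in L^1(\R^d,\widehat\nu)$: the key elementary estimate is that for $|x-y|\ge 1$ one has, by unimodality, $\nu(x-y)\le c\,\nu(\tfrac12(1+|y|))=c\,\widehat\nu(y)$ whenever $|y|$ is comparable to or larger than $|x-y|$, and one treats the region $|y|\le \tfrac12|x|$ separately where $\nu(x-y)$ is bounded by $\nu$ evaluated at a radius $\ge 1$, hence by a constant depending on $x$. Together with $|u(x)|\int_{\R^d\setminus B_1(x)}\nu(x-y)\,\d y<\infty$ (again \eqref{eq:levy-cond}), this gives existence of $Lu(x)$ for every $x$.

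For $(ii)$, the natural route is duality. For $\varphi\in C^\infty_c(\R^d)$ we have $L\varphi\in L^\infty(\R^d)$ and, more importantly, $|L\varphi(y)|$ decays like $\widehat\nu(y)$ as $|y|\to\infty$: indeed for $y$ far from $\supp\varphi$, $L\varphi(y)=-\int_{\R^d}\varphi(z)\nu(y-z)\,\d z$ and unimodality gives $|L\varphi(y)|\le \|\varphi\|_{L^1}\,\nu(\dist(y,\supp\varphi))\le C_\varphi\,\widehat\nu(y)$. Hence the pairing $(u,L\varphi)_{L^2(\R^d)}=\int_{\R^d} u(y)L\varphi(y)\,\d y$ is an absolutely convergent integral for $u\in L^1(\R^d,\widehat\nu)$, and $\varphi\mapsto (u,L\varphi)_{L^2}$ is a well-defined linear functional on $C^\infty_c(\R^d)$; continuity with respect to the test-function topology follows from the estimate $|L\varphi(y)|\le C(\|\varphi\|_{C^2_b},\supp\varphi)\,(1\land\widehat\nu(y)/\text{const})$ just established, so this defines a distribution. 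When $u$ is additionally $C^2$, a Fubini argument (justified by the integrability bounds from $(i)$ and the decay of $L\varphi$) shows $\langle Lu,\varphi\rangle=\int u\,L\varphi$ agrees with the pointwise $Lu$, so the distributional and pointwise definitions are consistent; the full-support hypothesis is used to ensure $L$ is genuinely nonlocal/nondegenerate so that $L\varphi$ is not identically zero on large sets, but the argument itself only needs the decay estimate.

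For $(iii)$ the main work is to show $V_\nu(\Omega|\R^d)\subset L^1(\R^d,\widehat\nu)$ when $\nu$ satisfies \eqref{eq:global-scaling_infinity}, after which $(ii)$ applies verbatim. By \autoref{thm:comp-nu-tilde}, under unimodality and \eqref{eq:global-scaling_infinity} we have $\widehat\nu\asymp 1\land\nu\asymp\widetilde\nu$ on $\R^d$ (note $\widehat\nu(x)=\nu(\tfrac12(1+|x|))$ is exactly $\nu^*$ with $R=\tfrac12$... here one should instead note $\widehat\nu\le c(1\land\nu)$ directly and that $\widehat\nu\asymp\nu^*$ by the scaling condition). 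Then \autoref{lem:natural-norm-on-V-mustarbis}(ii) (or \autoref{lem:natural-norm-on-V}(ii) together with \autoref{thm:comp-nu-tilde}) gives the continuous embeddings $V_\nu(\Omega|\R^d)\hookrightarrow L^2(\R^d,\nu^*)\hookrightarrow L^1(\R^d,\nu^*)$, hence $V_\nu(\Omega|\R^d)\hookrightarrow L^1(\R^d,\widehat\nu)$. So every $u\in V_\nu(\Omega|\R^d)$ lies in $L^1(\R^d,\widehat\nu)$ and $(ii)$ yields the distributional definition of $Lu$. One should also remark that unimodality is implicitly needed here (it is part of the standing context of \autoref{prop:fund-prop}) to invoke \autoref{thm:comp-nu-tilde}.

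The main obstacle I anticipate is the far-field estimate in $(i)$ and the decay estimate $|L\varphi(y)|\lesssim\widehat\nu(y)$ in $(ii)$: both rest on the elementary but slightly fiddly geometric fact that, for $|x-y|\ge 1$ and unimodal $\nu$, $\nu(x-y)$ can be controlled by $\nu$ evaluated at a radius comparable to $\max(1,|y|)$ — this requires splitting into the regimes $|y|\lesssim|x|$ and $|y|\gtrsim|x|$ and using almost-monotonicity carefully. Everything else is either a direct citation of \autoref{thm:comp-nu-tilde} and the embedding lemmas, or the routine second-difference/Fubini bookkeeping already carried out in the proof of \autoref{prop:uniform-cont}.
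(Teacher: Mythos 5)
Your overall plan is the same as the paper's: split $Lu(x)$ into a near part handled by the second-difference bound and a far part controlled by $|u|\,\widehat{\nu}$ via unimodality, prove $|L\varphi|\lesssim\widehat{\nu}$ for test functions to define $Lu$ by duality, and in (iii) use the scaling condition \eqref{eq:global-scaling_infinity} to embed $\VnuOm\hookrightarrow L^1(\R^d,\widehat{\nu})$ (your route through \autoref{thm:comp-nu-tilde} and \autoref{lem:natural-norm-on-V-mustarbis} is a legitimate variant of the paper's direct computation). However, two of your key estimates are not correct as stated. In (i), the claim ``$\nu(x-y)\le c\,\widehat{\nu}(y)$ whenever $|y|$ is comparable to or larger than $|x-y|$'' is false: take $\nu$ rapidly decaying, $|x|$ large and $y=x+e$ with $|e|=1$; then $|y|\gg|x-y|=1$ but $\nu(x-y)\approx\nu(1)$ while $\widehat{\nu}(y)=\nu(\tfrac12(1+|y|))$ is tiny, and almost-monotonicity only gives the reverse inequality. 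The estimate holds when $|x-y|\gtrsim 1+|y|$, e.g.\ on $\{|y|\ge 2|x|+1\}$, which is exactly the paper's cut; with your cut at $|y|\le\tfrac12|x|$ the annulus $\tfrac12|x|\le|y|\le 2|x|+1$ is left uncovered and must be handled separately by local boundedness of $u$ (fine for $u\in C^2$, but it is not covered by the inequality you invoke).

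The same issue propagates into (ii), where it becomes a genuine gap: you establish $|L\varphi(y)|\le C_\varphi\widehat{\nu}(y)$ only for $y$ far from $\supp\varphi$, and then assert the global bound $|L\varphi(y)|\le C\,(1\wedge\widehat{\nu}(y)/c)$ as ``just established''. On the bounded region near $\supp\varphi$ you only have $|L\varphi|\le C$, and upgrading this to $|L\varphi|\le C'\widehat{\nu}$ requires the lower bound $\widehat{\nu}\ge c_1>0$ there, which follows from unimodality \emph{together with full support} (this is precisely how the paper uses full support, via $\widehat{\nu}(x)\ge c^{-1}\nu(4R)>0$ for $|x|\le 4R$). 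Your remark that full support is only needed to make $L$ ``nondegenerate'' and that ``the argument itself only needs the decay estimate'' is therefore wrong: without full support, $\widehat{\nu}$ may vanish on a set where $L\varphi\neq 0$, a function $u\in L^1(\R^d,\widehat{\nu})$ is completely unconstrained there, and the pairing $\int u\,L\varphi$ need not converge. Both defects are fixable by adopting the paper's thresholds ($\{|y|\le 2|x|+1\}$ versus $\{|y|>2|x|+1\}$ in (i), $\{|x|\le 4R\}$ versus $\{|x|>4R\}$ in (ii)) and by explicitly proving the near-region bound $|L\varphi|\le c_1^{-1}\,C\,\|\varphi\|_{C^2_b(\R^d)}\widehat{\nu}$ from full support, but as written the proof of (ii) is incomplete at exactly this point.
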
	

\begin{remark}
Note that $\widehat{\nu}\in L^1(\R^d)\cap L^\infty(\R^d)$ and that $L^1(\R^d,\widehat{\nu})$ contains $L^\infty(\R^d)$. 
\end{remark}

\begin{example}
If $\nu(h) = |h|^{-d-\alpha}$ for some $\alpha\in (0,2)$, then $\widehat{\nu}(h) \asymp (1+|h|)^{-d-\alpha}$.
\end{example}

\begin{proof}
For the proof of (i) we decompose the integral in the definition of $Lu(x)$ into the two domains $\{|y| \leq 2|x|+1\}$ and $\{|y| > 2|x|+1\}$. In the first domain we employ the Taylor formula as in the proof of \autoref{prop:uniform-cont}. For $y$ in the second domain we observe 
\begin{align*}
|x-y| \geq |y|-|x| \geq \frac{|y|}{2} + \frac{|y|}{2} - |x| \geq \frac{|y|+1}{2} \,.
\end{align*}
Thus, for $y$ from the second domain, by property \autoref{def:unimodality} we conclude $\nuxminy \leq c \widehat{\nu}(y)$ and thus 
\begin{align*}
\int\limits_{\{|y| > 2|x|+1\}} |u(x)| \nuxminy \d y  + \int\limits_{\{|y| > 2|x|+1\}} |u(y)| \nuxminy \d y \leq K_\nu |u(x)| + \|u\|_{L^1(\R^d, \widehat{\nu})}. 
\end{align*}
For the proof of (ii), let $\varphi\in C_c^\infty(\R^d)$ be supported in $B_R(0)$ for some $R\geq 1$. We claim 
\begin{align}\label{eq:estimate-test}
|L\varphi(x)|\leq C\|\varphi\|_{ C^2_b(\R^d)} \widehat{\nu}(x)\qquad \text{for all}~x\in \R^d. 
\end{align}
with some constant $C=C(R,d,\nu)$ depending only on $R,d$ and $\nu$. Indeed, suppose $|x|\geq 4R $, so that $\varphi(x)=0$. Since $|x-y|\geq \frac{|x|}{2}+R\geq \frac{1}{2}(1+|x|)$ for $y \in B_R(0)$, the property \autoref{def:unimodality} implies $\nuxminy\leq c \widehat{\nu}(x)$. Accordingly, 
\begin{align*}
|L\varphi(x) |\leq \int_{B_R(0)} |\varphi(y)|\nuxminy\d y \leq c|B_R(0)|\|\varphi\|_{C^2(\R^d)} \widehat{\nu}(x).
\end{align*}
Whereas, if $|x|\leq 4R$ the proof of \eqref{eq:estimate-test} is complete using \eqref{eq:second-difference} as follows. Since $\frac{1}{2}(1+ |x|)\leq 4R$ we have $\widehat{\nu}(x)\geq c_1$ for an appropriate constant $c_1>0$ depending on $R$ and $\nu$. Thus we conclude
\begin{align*}
|L\varphi(x)|\leq 4\Theta \|\varphi\|_{C^2_b(\R^d)}\leq c_1^{-1}4\Theta \|\varphi\|_{C^2_b(\R^d)} \widehat{\nu}(x)
\end{align*} 
with $\Theta= \int_{\R^d} (1\land |h|^2)\nu(h)\d h$. Note that in case of the fractional Laplace operator the estimate \eqref{eq:estimate-test} is analogous to \cite[Lemma 2.1]{FW12}. Finally, \eqref{eq:estimate-test} yields 
\begin{align*}
|(u, L\varphi)_{L^2(\R^d)}|\leq C \|\varphi\|_{ C^2_b(\R^d)}\int_{\R^d}|u(x)|\widehat{\nu}(x)\d x. 
\end{align*}
This shows that $Lu $ is a distribution when $u\in L^1(\R^d, \widehat{\nu})$. With regard to (iii) let $\Omega\subset \R^d$ be open and bounded. We show that the embedding $\VnuOm\hookrightarrow L^1(\R^d, \widehat{\nu})$ is continuous under the additional scaling assumption \eqref{eq:global-scaling_infinity}. 
Indeed, for $u\in\VnuOm$ we assume $ \Omega\subset B_R(0)$ for some $R\geq 1 $. Then $ |x-y|\leq R(1+|x|)$ for all $x\in \R^d$ and all $y\in \Omega$ so that by \eqref{eq:global-scaling_infinity} and \autoref{def:unimodality} we deduce $\widehat{\nu}(x)\leq  C\nu(R(1+|x|))\leq c C\nuxminy$. Here $c,C>0$ are constants independent of $x$ and $y$. Proceeding as in  \autoref{lem:natural-norm-on-V}, one arrives at the estimate
\begin{align*}
\int_{\R^d}|u(x)|\widehat{\nu}(x)\d x\leq C\|u\|_{\VnuOm}. 
\end{align*}
Therefore, regarding the preceding arguments $Lu$ is also a distribution whenever $u\in \VnuOm$. 
\end{proof}

\medskip
\subsection{Gauss-Green type formula}
Having at hand a nonlocal analog of the normal derivative as in \autoref{def:nonlocal-normal}, it makes sense to study a formula that resembles the classical Gauss-Green formula. Such formulas have been established in several contexts. See  \cite{DGLK13a} for numerous identities of a nonlocal vector calculus in the case of bounded kernels and \cite{DROV17} for the case of the fractional Laplace operator. Recall the classical Gauss-Green formula (see \cite[Chap 3]{Necas67}, \cite[Appendix A.3]{Trie92} or \cite[Theorem III.1.8]{BF13}) says for all $u\in H^{2}(\Omega) ~\hbox{and}~v\in H^{1}(\Omega),$
\begin{align}\label{eq:green-Gauss}
\int_{\Omega} (-\Delta) u(x) v(x) \, \d x = \int_{\Omega} \nabla u(x) \cdot \nabla v(x) \, \d x- \int_{\partial \Omega} \gamma_{1} u(x) \gamma_{0}v (x)\, \d \sigma(x). 
\end{align}
A reasonable explanation to this terminology is given in the \autoref{lem:colapsing-to-boundary}. 
For a function $u\in C_b^1(\R^d)$ we know 
\begin{align}\label{eq:first-order-diff}
&|u(x)-u(y)|\leq 2\|u\|_{C_b^1(\R^d)}(1\land |x-y)|)\qquad\qquad (x,y\in \R^d) \,,  
\end{align}
which implies
\begin{align*}
&\iil_{(\Omega^c\times \Omega^c)^c} \big(u(x)-u(y) \big)^2 \, \nuxminy \d x \, \d y\leq 8 \|u\|^2_{C_b^1(\R^d)}  \iil_{\Omega\R^d}(1\land |x-y|^2)\, \nuxminy \d x \, \d y<\infty. 
\end{align*}

\medskip

\begin{proposition}[\textbf{Gauss-Green type formula}]\label{prop:gauss-green}
Let $\Omega$ be open and bounded. For $u \in C_b^{2}(\R^d)$ and $v \in C_b^{1}(\R^d)$
\begin{align}\label{eq:green-gauss-nonlocal}
\int_{\Omega} [Lu(x)]v(x)\d x= \mathcal{E}(u,v) -\int_{\Omega^c}\mathcal{N}u(y)v(y)\d y.
\end{align}
In particular, by choosing $v=1$ one deduces
\begin{align}\label{eq:integration-by-part-nonlocal}
\int_{\Omega} Lu(x)\d x=-\int_{\Omega^c}\mathcal{N}u(y)\d y.
\end{align}
\end{proposition}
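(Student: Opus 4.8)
\textbf{Proof proposal for the Gauss-Green formula \eqref{eq:green-gauss-nonlocal}.}

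The plan is to start from the pointwise expression for $Lu(x)$, insert its integral form into $\int_\Omega Lu(x)v(x)\,\d x$, and reorganize the double integral by splitting the $y$-integration into $\Omega$ and $\Omega^c$, then symmetrizing over the $\Omega\times\Omega$ block. First I would verify that all integrals involved are absolutely convergent so that Fubini's theorem applies: by \autoref{prop:uniform-cont} the function $x\mapsto Lu(x)$ is bounded, hence $\int_\Omega Lu(x)v(x)\,\d x$ is finite since $\Omega$ is bounded and $v\in C_b^1(\R^d)$; by \eqref{eq:first-order-diff} and the remark following it, $\iil_{(\Omega^c\times\Omega^c)^c}\big(u(x)-u(y)\big)^2\nuxminy\,\d x\,\d y<\infty$, so $\cE(u,v)$ is finite by Cauchy-Schwarz (the same bound applies to $v$ in place of $u$ up to a constant); and the $\Omega^c$-integral of $\cN u$ against $v$ is controlled similarly once we check $\cN u$ is integrable against $v$ on $\Omega^c$ using the bound $|u(x)-u(y)|\le 2\|u\|_{C_b^1}(1\land|x-y|)$ and $x\in\Omega$ bounded.

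The key computation: write
\begin{align*}
\int_\Omega Lu(x)v(x)\,\d x = \iil_{\Omega\,\R^d}\big(u(x)-u(y)\big)\nuxminy v(x)\,\d y\,\d x\,,
\end{align*}
where the principal value is harmless because the integrand is absolutely integrable thanks to \eqref{eq:second-difference}. Split the inner integral as $\int_{\R^d}=\int_\Omega+\int_{\Omega^c}$. For the $\Omega^c$ part one gets directly
\begin{align*}
\iil_{\Omega\,\Omega^c}\big(u(x)-u(y)\big)\nuxminy v(x)\,\d y\,\d x = -\int_{\Omega^c}\Big(\int_\Omega\big(u(y)-u(x)\big)\nuxminy v(x)\,\d x\Big)\d y + \iil_{\Omega\,\Omega^c}\big(u(x)-u(y)\big)\nuxminy (v(x)-v(y))\,\d y\,\d x\,,
\end{align*}
wait — more cleanly: write $v(x)=\big(v(x)-v(y)\big)+v(y)$, so the $\Omega\times\Omega^c$ contribution equals $\iil_{\Omega\,\Omega^c}\big(u(x)-u(y)\big)\big(v(x)-v(y)\big)\nuxminy\,\d y\,\d x + \iil_{\Omega\,\Omega^c}\big(u(x)-u(y)\big)v(y)\nuxminy\,\d y\,\d x$, and the second term is exactly $-\int_{\Omega^c}\cN u(y)v(y)\,\d y$ by Fubini and the definition of $\cN$. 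For the $\Omega\times\Omega$ contribution, $\iil_{\Omega\,\Omega}\big(u(x)-u(y)\big)v(x)\nuxminy\,\d y\,\d x$, I would symmetrize: swap $x\leftrightarrow y$ (using $\nu$ even), average the two expressions, and obtain $\tfrac12\iil_{\Omega\,\Omega}\big(u(x)-u(y)\big)\big(v(x)-v(y)\big)\nuxminy\,\d y\,\d x$. Adding the $\Omega\times\Omega$ and the first $\Omega\times\Omega^c$ piece reconstructs precisely $\cE(u,v)=\tfrac12\iil_{(\Omega^c\times\Omega^c)^c}\big(u(x)-u(y)\big)\big(v(x)-v(y)\big)\nuxminy\,\d x\,\d y$, because $(\Omega^c\times\Omega^c)^c=(\Omega\times\Omega)\cup(\Omega\times\Omega^c)\cup(\Omega^c\times\Omega)$ and the last two are symmetric. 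This yields \eqref{eq:green-gauss-nonlocal}; setting $v\equiv 1$ kills $\cE(u,1)=0$ and gives \eqref{eq:integration-by-part-nonlocal}.

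The main obstacle is the bookkeeping around the principal value and the justification of Fubini near the diagonal: one must confirm that $\iil_{\Omega\,\R^d}|u(x)-u(y)|\,|v(x)|\,\nuxminy\,\d y\,\d x<\infty$ so that the $\pv$ can be dropped and the order of integration freely exchanged. This follows from \eqref{eq:second-difference}, which gives $|u(x)-u(y)|\le$ (a symmetrized bound) $\le 2\|u\|_{C_b^2}(1\land|x-y|^2)$ after using the mean-value identity $u(x)-u(y)$ together with the $\pv$-cancellation of the odd first-order term — more precisely one uses that $\pv\int(u(x)-u(y))\nuxminy\,\d y = -\tfrac12\int(u(x+h)+u(x-h)-2u(x))\nu(h)\,\d h$ as in \autoref{prop:uniform-cont}(i), so the integrand against $v(x)$ is genuinely absolutely integrable over $\Omega\times\R^d$. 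Once this is in place, every step above is an application of Fubini and the evenness of $\nu$, and no further subtlety arises.
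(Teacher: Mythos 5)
Your overall route is the same as the paper's (split the $y$-integration into $\Omega$ and $\Omega^c$, symmetrize over $\Omega\times\Omega$, and write $v(x)=(v(x)-v(y))+v(y)$ on $\Omega\times\Omega^c$ to produce the $\cN u$-term), but the way you dispose of the principal value contains a genuine gap. You claim that $\iil_{\Omega\,\R^d}|u(x)-u(y)|\,|v(x)|\,\nuxminy\,\d y\,\d x<\infty$, justified by \eqref{eq:second-difference} ``after using the p.v.-cancellation of the odd first-order term''. This is not correct: \eqref{eq:second-difference} bounds the \emph{symmetric second difference} $u(x+h)+u(x-h)-2u(x)$ by $(1\land|h|^2)$, not the first difference $u(x)-u(y)$, which for $u\in C^2_b(\R^d)$ is only $O(1\land|x-y|)$ by \eqref{eq:first-order-diff}. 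Since \eqref{eq:levy-cond} only guarantees integrability of $(1\land|h|^2)\nu(h)$, the double integral above is infinite in general — take $\nu(h)=|h|^{-d-\alpha}$ with $\alpha\in[1,2)$ and $u$ non-constant on $\Omega$; then $\int_{\R^d}|u(x)-u(y)|\,\nuxminy\,\d y=\infty$ for every $x$ with $\nabla u(x)\neq 0$. The identity $Lu(x)=-\tfrac12\int(u(x+h)+u(x-h)-2u(x))\nu(h)\,\d h$ from \autoref{prop:uniform-cont} expresses a \emph{cancellation} inside the inner integral; it does not convert the $(x,y)$-double integral into an absolutely convergent one, so Fubini and the free splitting/reordering you perform on $\iil_{\Omega\,\R^d}(u(x)-u(y))v(x)\nuxminy\,\d y\,\d x$ are not licensed at that stage.

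The repair is exactly what the paper does on the $\Omega\times\Omega$ block: keep the truncation $|x-y|>\varepsilon$ (equivalently work with $L_\varepsilon u$, which converges to $Lu$ uniformly by \autoref{prop:uniform-cont}), symmetrize at fixed $\varepsilon>0$, where the exchange $x\leftrightarrow y$ is a legitimate manipulation of an absolutely convergent integral, and only then let $\varepsilon\to 0$. The passage to the limit is justified by dominated convergence because the \emph{symmetrized} integrand satisfies $|u(x)-u(y)|\,|v(x)-v(y)|\le 4\|u\|_{C^1_b(\R^d)}\|v\|_{C^1_b(\R^d)}(1\land|x-y|^2)$, i.e. one uses \eqref{eq:first-order-diff} applied to both $u$ and $v$, not \eqref{eq:second-difference}. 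Your treatment of the $\Omega\times\Omega^c$ block and the identification of $-\int_{\Omega^c}\cN u\,v$ then go through as in the paper, and the conclusion \eqref{eq:green-gauss-nonlocal} with $v\equiv 1$ follows; but as written, the step dropping the p.v.\ is where the argument breaks for strongly singular L\'evy densities.
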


%
%
%


\begin{proof}
Let $u \in C_b^{2}(\R^d)$ and $v \in C_b^{1}(\R^d)$. With the aid of \autoref{prop:uniform-cont} we can write 
\begin{align*}
&\int_{\Omega} [Lu(x)] v(x)\d x =\lim_{\varepsilon\to 0} \int_{\Omega} v(x) \d x 
\int\limits_{\R^d\setminus B_\varepsilon(x)} ((u(x)-u(y))\nuxminy\,\d y\\
&= \lim_{\varepsilon\to 0} \int\limits_{\Omega} \int\limits_{\Omega \setminus B_\varepsilon(x)}(u(x)-u(y))v(x)\nuxminy\,\d y \d x + \int\limits_{\Omega} \int\limits_{\Omega^c }(u(x)-u(y))v(x)\nuxminy\,\d y \d x .
\end{align*}

On one side, by a symmetry argument we have 
\begin{align*}
\lim_{\varepsilon\to 0} &\int\limits_{\Omega} \int\limits_{\Omega \setminus B_\varepsilon(x)}(u(x)-u(y))v(x)\nuxminy\,\d y \d x
= \lim_{\varepsilon\to 0} \hspace{-2ex}\iint\limits_{\Omega \times \Omega\cap \{|x-y|>\varepsilon\}}\hspace*{-3ex}(u(x)-u(y))v(x)\nuxminy\,\d y \d x \\
&= \lim_{\varepsilon\to 0} \frac{1}{2}\hspace{-4ex} \iint\limits_{\Omega \times \Omega\cap \{|x-y|>\varepsilon\}} \hspace{-4ex} (u(x)-u(y))(v(x)-v(y))\nuxminy\,\d y \d x \\
&= \frac{1}{2} \iint\limits_{\Omega \Omega } (u(x)-u(y))(v(x)-v(y))\nuxminy\,\d y \d x
\end{align*}
where one gets rid of the principal value using the estimate \eqref{eq:first-order-diff} applied to $u$ and $v$. 
On the other side, with the help of Fubini's theorem we have
\begin{align*}
& \iint\limits_{\Omega \Omega^c}(u(x)-u(y))v(x)\nuxminy\,\d y \d x \\
&= \iint\limits_{\Omega \Omega^c} (u(x)-u(y))(v(x)-v(y))\nuxminy\,\d y \d x
+ \int\limits_{\Omega^c} v(y)\d y \int\limits_{\Omega }(u(x)-u(y))\nuxminy\, \d x\\
&= \frac{1}{2}\iint\limits_{\Omega \Omega^c} (u(x)-u(y))(v(x)-v(y))\nuxminy\,\d y \d x
+ \frac{1}{2}\iint\limits_{\Omega^c \Omega}(u(x)-u(y))(v(x)-v(y))\nuxminy\,\d y \d x\\
& \qquad\qquad- \int\limits_{\Omega^c}\mathcal{N} u(y) v(y)\d y \, .
\end{align*}
Altogether inserted in the initial relation provide the desired relation.
\end{proof}

\medskip 

As a direct consequence of \autoref{prop:gauss-green} we have the following. 
\begin{corollary}[Second Gauss-Green identity]
For all $u,v\in C_b^2(\R^d)$ we have 
\begin{align}\label{eq:secondgreen-gauss-nonlocal}
\int_{\Omega} v(x)Lu(x)-u(x)Lv(x) \, \d x= \int_{\Omega^c} u(y)\mathcal{N}v(y)- v(y)\mathcal{N}u(y)\,\d y.
\end{align}
\end{corollary}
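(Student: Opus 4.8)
The identity \eqref{eq:secondgreen-gauss-nonlocal} is obtained by invoking the Gauss--Green formula \eqref{eq:green-gauss-nonlocal} of \autoref{prop:gauss-green} twice --- once as stated and once with the roles of $u$ and $v$ interchanged --- and subtracting. The key structural fact that makes the bulk terms cancel is the symmetry of the bilinear form $\mathcal{E}$, which is immediate from its definition since the integrand contains the product $\big(u(x)-u(y)\big)\big(v(x)-v(y)\big)$, visibly symmetric under swapping $u \leftrightarrow v$.

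\textbf{Steps.} First I would note that the hypotheses are admissible for \autoref{prop:gauss-green}: we have $u \in C_b^2(\R^d)$ and, since $C_b^2(\R^d) \subset C_b^1(\R^d)$, also $v \in C_b^1(\R^d)$; moreover $\Omega$ is bounded, so by the estimates preceding \autoref{prop:gauss-green} (namely \eqref{eq:first-order-diff} and the consequence that the double integral over $(\Omega^c \times \Omega^c)^c$ is finite), together with the boundedness and uniform continuity of $Lu$ and $\mathcal{N}u$ from \autoref{prop:uniform-cont}, every integral appearing below is absolutely convergent. Applying \eqref{eq:green-gauss-nonlocal} as stated gives
\begin{align*}
\int_{\Omega} [Lu(x)]v(x)\,\d x = \mathcal{E}(u,v) - \int_{\Omega^c} \mathcal{N}u(y)v(y)\,\d y.
\end{align*}
Since $u, v \in C_b^2(\R^d)$, the same proposition applies with $u$ and $v$ exchanged (using $u \in C_b^1(\R^d)$ as the second slot), yielding
\begin{align*}
\int_{\Omega} [Lv(x)]u(x)\,\d x = \mathcal{E}(v,u) - \int_{\Omega^c} \mathcal{N}v(y)u(y)\,\d y.
\end{align*}
Subtracting the second identity from the first and using $\mathcal{E}(u,v) = \mathcal{E}(v,u)$, the energy terms cancel and one is left with
\begin{align*}
\int_{\Omega} v(x)Lu(x) - u(x)Lv(x)\,\d x = \int_{\Omega^c} u(y)\mathcal{N}v(y) - v(y)\mathcal{N}u(y)\,\d y,
\end{align*}
which is precisely \eqref{eq:secondgreen-gauss-nonlocal}.

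\textbf{Main obstacle.} There is essentially no serious obstacle here, as the statement is a formal corollary; the only point requiring a word of care is to confirm that each of the four integrals is separately well defined (so that the subtraction is legitimate rather than an $\infty - \infty$), which is handled by the regularity $u,v \in C_b^2(\R^d)$, the boundedness of $\Omega$, and the a priori bounds already recorded in \autoref{prop:uniform-cont} and in the display just before \autoref{prop:gauss-green}.
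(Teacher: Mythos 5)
Your proof is correct and is exactly the argument the paper intends: the corollary is stated as a direct consequence of \autoref{prop:gauss-green}, obtained by applying \eqref{eq:green-gauss-nonlocal} twice with $u$ and $v$ interchanged (both lie in $C_b^2(\R^d)\subset C_b^1(\R^d)$), using the symmetry $\mathcal{E}(u,v)=\mathcal{E}(v,u)$, and subtracting. Your additional remark on the finiteness of each integral, via \autoref{prop:uniform-cont} and \eqref{eq:first-order-diff}, is a fine (if routine) precaution and changes nothing about the route.
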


We now look at certain aspect of the dual of the trace space $\TnuOm$ in relation with the nonlocal normal derivative operator $\mathcal{N}$. 

\begin{theorem}\label{thm:linear-form-characto}
For any linear continuous form $\ell :\TnuOm\to \mathbb{R}$ there exists a function $w\in \VnuOm$ such that for every $v\in C^\infty_c(\overline{\Omega}^c)$ 
\begin{align*}
\ell(v) = \int_{\Omega^c} \mathcal{N} w(y)v(y)\d y.
\end{align*}
In particular, given a measurable function $g:\Omega^c \to \mathbb{R}$ if the linear mapping $ \ell_g: v\mapsto \int_{\Omega^c} g(y)v(y)\d y$ is continuous on $\TnuOm $ then, there exists $w\in \VnuOm $ such that $g= \mathcal{N} w$ almost everywhere on $\Omega^c$.
\end{theorem}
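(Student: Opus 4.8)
\textbf{Proof proposal for Theorem \ref{thm:linear-form-characto}.}

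The plan is to use the Riesz representation theorem on the Hilbert space $\VnuOm$ together with the Gauss-Green formula \eqref{eq:green-gauss-nonlocal} and a density argument. First I would observe that if $\ell:\TnuOm\to\R$ is linear and continuous, then the composition $\ell\circ\operatorname{Tr}:\VnuOm\to\R$ is linear and continuous, since the trace operator $\operatorname{Tr}:\VnuOm\to\TnuOm$ is continuous with $\operatorname{Tr}(\VnuOm)=\TnuOm$ (as recorded in the proposition following \autoref{prop:trace-comparability}). By the Riesz representation theorem applied in the Hilbert space $\big(\VnuOm,\|\cdot\|_{\VnuOm}\big)$, there exists $w\in\VnuOm$ such that
\begin{align*}
\ell(\operatorname{Tr}(u)) = (w,u)_{\VnuOm} = \int_\Omega w(x)u(x)\,\d x + \cE(w,u) \qquad\text{for all } u\in\VnuOm,
\end{align*}
where I have used \autoref{lem:energy-vs-seminorm} to write the bilinear part via $\cE$ (strictly, one may need to polarise $|u|^2_{\VnuOm}$; the precise scalar product is immaterial for what follows since only the value on test functions matters).

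Next I would specialise to test functions $v\in C^\infty_c(\overline{\Omega}^c)$. Extending such $v$ by zero on $\Omega$ gives an element of $\VnuOm$ (the zero extension of a smooth function vanishing near $\overline\Omega$ lies in $\VnuOm$, as noted in the proof of the density of $\TnuOm$ in $L^2(\Omega^c,\widetilde\nu)$), and $\operatorname{Tr}$ of this extension is $v$ itself; moreover $v|_\Omega=0$, so the first term $\int_\Omega w\,v\,\d x$ vanishes. Thus $\ell(v)=\cE(w,v)$. Now I would like to rewrite $\cE(w,v)$ using the Gauss-Green formula. Since $v$ vanishes on a neighbourhood of $\overline\Omega$, we have $Lw=0$ tested against $v$ on $\Omega$ in the appropriate sense, and \eqref{eq:green-gauss-nonlocal} (with the roles chosen so that the $\Omega$-integral of $Lw\cdot v$ drops out because $v|_\Omega=0$) should give
\begin{align*}
\cE(w,v) = \int_{\Omega^c}\cN w(y)\,v(y)\,\d y.
\end{align*}
Combining the two displays yields $\ell(v)=\int_{\Omega^c}\cN w(y)v(y)\,\d y$ for all $v\in C^\infty_c(\overline\Omega^c)$, which is the first assertion.

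For the second assertion, if $g:\Omega^c\to\R$ is such that $\ell_g(v)=\int_{\Omega^c}g(y)v(y)\,\d y$ is continuous on $\TnuOm$, apply the first part to $\ell=\ell_g$ to get $w\in\VnuOm$ with $\int_{\Omega^c}g(y)v(y)\,\d y=\int_{\Omega^c}\cN w(y)v(y)\,\d y$ for all $v\in C^\infty_c(\overline\Omega^c)$. Since $C^\infty_c(\overline\Omega^c)$ is dense in $L^2(\Omega^c,\widetilde\nu)$ and, by the remark following \autoref{def:nonlocal-normal} (or the estimate used in \autoref{thm:nonlocal-Neumann-var}), $\cN w\in L^2(\Omega^c,\widetilde\nu^{-1})$, while one checks $g\in L^2(\Omega^c,\widetilde\nu^{-1})$ as well from continuity of $\ell_g$, the equality of the two integrals against a dense set of test functions forces $g=\cN w$ a.e.\ on $\Omega^c$.

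The main obstacle I anticipate is the justification of the identity $\cE(w,v)=\int_{\Omega^c}\cN w(y)v(y)\,\d y$ for a general $w\in\VnuOm$ (not smooth) and $v\in C^\infty_c(\overline\Omega^c)$: the Gauss-Green formula \autoref{prop:gauss-green} is stated for $w\in C^2_b(\R^d)$. One route is a direct Fubini computation: because $v$ is supported away from $\overline\Omega$, the double integral defining $\cE(w,v)$ splits cleanly, the "$\Omega\times\Omega$" part vanishes since $v|_\Omega=0$, and the "$\Omega\times\Omega^c$" part rearranges — using symmetry of $\nu$ and absolute convergence (guaranteed by $w\in\VnuOm$, $v\in C^\infty_c(\overline\Omega^c)$, and $\dist(\supp v,\overline\Omega)>0$) — precisely into $\int_{\Omega^c}\cN w(y)v(y)\,\d y$, with no principal value needed. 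This elementary rearrangement, rather than an approximation argument, is the cleanest way to close the gap; I would also need to double-check the measurability/integrability bookkeeping so that all Fubini applications are legitimate.
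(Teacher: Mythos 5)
Your proof of the first assertion is essentially the paper's: compose $\ell$ with the trace operator to get a continuous functional on $\VnuOm$, apply the Riesz representation theorem there, and then, for $v\in C^\infty_c(\overline{\Omega}^c)$ extended by zero, compute the inner product directly. The "obstacle" you anticipate is not one: the paper does not invoke \autoref{prop:gauss-green} at this point but performs exactly the elementary Fubini/rearrangement you sketch (the $\Omega\times\Omega$ contribution vanishes because $v|_\Omega=0$, the cross terms regroup into $\int_{\Omega^c}\cN w\,v$, and absolute convergence holds since $\dist(\supp v,\overline{\Omega})>0$); also, as you note, it is irrelevant which of the equivalent inner products on $\VnuOm$ is used, since every candidate bilinear part reduces to $\int_{\Omega^c}\cN w(y)v(y)\,\d y$ on such $v$.

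For the second assertion, however, the justification you give would not go through as written. Continuity of $\ell_g$ on $\TnuOm$ does not imply $g\in L^2(\Omega^c,\widetilde{\nu}^{-1})$: that space embeds continuously into $\TnuOm'$, but the dual is larger in general, so this implication is false. Likewise, $\cN w\in L^2(\Omega^c,\widetilde{\nu}^{-1})$ is not available; the paper's appendix remark only gives $\cN w\in L^2(\R^d\setminus\Omega,\omega^{-1})$ with $\omega(x)=\int_\Omega \nu(x-y)\,\d y\geq \widetilde{\nu}(x)$, which is a weaker weight. Neither claim is needed: from the first part you have $\int_{\Omega^c}\bigl(g(y)-\cN w(y)\bigr)v(y)\,\d y=0$ for all $v\in C^\infty_c(\overline{\Omega}^c)$, and since $\cN w\in L^2_{\loc}(\R^d\setminus\overline{\Omega})$ for $w\in\VnuOm$ and $g$ is locally integrable (implicit in the statement for $\ell_g$ to make sense), the fundamental lemma of the calculus of variations already yields $g=\cN w$ almost everywhere on $\Omega^c$ — which is how the paper concludes. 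With that replacement your argument coincides with the paper's proof.
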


\begin{proof}

Let $\ell \in \TnuOm'$ then because of the continuity of the trace operator $\operatorname{Tr}$, the linear form $\ell \circ \operatorname{Tr}$ is also continuous on $ \VnuOm $. By Riesz's representation theorem there exists $w \in \VnuOm$ such that $\ell \circ \operatorname{Tr}(v) = \left(v, w\right)_{( \VnuOm } $ for each $v \in \VnuOm$. In particular, for $v\in C^\infty_c(\overline{\Omega}^c)$ identified with its zero extension on $\Omega$ so that $\operatorname{Tr}(v) =v$, we remain with 

\begin{align*}
\ell(v) &= \int_{\Omega} w(x)v(x) \d x +\frac12 \iint\limits_{(\Omega^c\times\Omega^c)^c} (w(x)-w(y))(v(x)-v(y)) \nuxminy\,\d x\,\d y\\ 
%
%
&= \int_{\Omega^c}v(y)\d y \int_{\Omega} (w(y)-w(x))\nuxminy\,\d x= \int_{\Omega^c} \mathcal{N} w(y)v(y)\d y.
\end{align*}

Furthermore, if $g: \Omega^c\to \mathbb{R}$ is such that $\ell_g$ is continuous on $\TnuOm$ then by the above computation, it follows that $g=\mathcal{N} w$ almost everywhere on $\Omega^c$ since 

\begin{align*}
\int_{\Omega^c} g(y)v(y)\d y= \int_{\Omega^c} \mathcal{N} w(y)v(y)\d y\qquad \hbox{for all }~~v\in C_c^\infty(\overline{\Omega}^c).
\end{align*}
\end{proof}

\begin{remark}
The second statement of \autoref{thm:linear-form-characto} particularly suggests that the space of all measurable functions $g:\Omega^c\to \mathbb{R}$ for which the  linear form $v\mapsto\int_{\Omega^c}g(y)v(y)\d y$ is continuous on $\TnuOm$ is contained in $\mathcal{N}(\VnuOm)$ (the range of $\mathcal{N}$). 
\end{remark}

\begin{remark}
The nonlocal normal derivative $\mathcal{N}u$ of a function measurable $u:\R^d\to \R$ can be thought of as the restriction of the regional operator on $\Omega$ associated with $k(x,y)=\nu(x-y)$ on $\R^d\setminus\Omega$. It might be interesting to know some situations where the pointwise definition $\mathcal{N}u(x)$ makes sense at least almost everywhere. It is straightforward to verify  the following: (i) if $u\in L^\infty(\Omega)$ then $\mathcal{N}u(x)$ exists for almost every $x\in \R^d\setminus \overline{\Omega}$, (ii) if $u\in \VnuOm$ then $\mathcal{N}u\in L^2_{\loc}(\R^d\setminus \overline{\Omega})$, (iii) more generally, if $u\in \VnuOm$ then $\mathcal{N}u\in L^2(\R^d\setminus \Omega, w^{-1}(x)\d x )$ where $w(x)=\int_{\Omega}\nu(x-y)\d y$, $x\in\R^d\setminus\Omega$. 
\end{remark}


\makeatletter
\providecommand\@dotsep{5}
\makeatother

\bibliographystyle{alpha}

\newcommand{\etalchar}[1]{$^{#1}$}

\end{document}